\providecommand{\U}[1]{\protect\rule{.1in}{.1in}}
\newtheoremstyle{myexample}
{\dimexpr \topsep+6pt \relax}
{\dimexpr \topsep+6pt \relax}
{\normalfont}
{}
{\bfseries}
{.}
{.5em}
{}
{}
\newtheorem{theorem}{Theorem}
\numberwithin{theorem}{section}
\theoremstyle{plain}
\newtheorem*{acknowledgement}{Acknowledgement}
\newtheorem{corollary}[theorem]{Corollary}
\newtheorem{definition}[theorem]{Definition}
\newtheorem{maintheorem}{Theorem}
\newtheorem{lemma}[theorem]{Lemma}
\newtheorem{proposition}[theorem]{Proposition}
\theoremstyle{myexample}
\newtheorem{example}[theorem]{Example}
\newtheorem{remark}[theorem]{Remark}
\newtheorem{caution}[theorem]{Caution}
\theoremstyle{remark}
\numberwithin{equation}{section}
\definecolor{lg}{rgb}{0.8,0.8,0.8}
\begin{document}
\title[ ]{Structure theorems for the heart of LCA}
\author[O. Braunling]{Oliver Braunling}
\address{Dortmund University of Applied Sciences, Germany}
\author[F. Ren]{Fei Ren}
\address{Bergische Universit\"{a}t Wuppertal, Germany}
\thanks{O.B. acknowledges support for this article as part of Grant CNS2023-145167
funded by MICIU/AEI/10.13039/501100011033.}

\begin{abstract}
Cohomology theories with values in LCA\ (locally compact abelian) groups
suffer from the problem that the latter do not form an abelian category.
However, the category LCA has a canonical abelian category envelope, the heart
of a suitable $t$-structure. It adds formal cokernel objects. We show the
surprising result that these abstract cokernels can also be interpreted as
Hausdorff topological abelian groups, at least up to lattice isogenies. These
need not be locally compact.

\end{abstract}
\maketitle

%

\tableofcontents

\section{Overview}

The category $\mathsf{LCA}$ of locally compact abelian groups is \textit{not
quite} an abelian category. For many applications, e.g.
\cite{MR4711130,MR4699875} or
\cite{MR4831262,artusa2025dualitycondensedweiletalerealisation}, one has to
work with its abelian envelope instead, the heart of a suitable $t$-structure
$\mathcal{LH}(\mathsf{LCA})$. The exotic \textit{non-}LCA objects which then
show up next to the classical LCA groups, and which effectively contribute new
cokernels to make the category abelian, are often regarded as pathological and
mysterious, which is what our paper aims to dispel. A priori these new
artificial cokernels have no reason to admit an interpretation as Hausdorff
topological groups. But they do, as we prove in this paper.

Write $\mathsf{PGA}$ for the category of groups which are isomorphic to a (not
necessarily closed) subgroup of a connected LCA group. Such a group can,
\textit{but need not}, be locally compact, e.g., take $\mathbb{Q}[\sqrt
{2}]\subset\mathbb{R}$ with the subspace topology from the real line or a
one-dimensional space-filling geodesic in a $2$-torus passing through the
neutral element $0$, again with its subspace topology. These are two Hausdorff
topological abelian groups, but neither is locally compact. A \emph{lattice
isogeny} is a continuous group morphism $\varphi\colon G^{\prime}\rightarrow
G$ such that both kernel and cokernel are discrete finitely generated abelian
groups. These morphisms form a multiplicative system of arrows which we denote
by $S_{\operatorname*{Lattice}}$.

\begin{maintheorem}
[Main Correspondence]\label{thma_main}Every object $X\in\mathcal{LH}%
(\mathsf{LCA})$ is a unique extension%
\[%
{
\begin{tikzcd}
	A & X & B
	\arrow[hook, from=1-1, to=1-2]
	\arrow[two heads, from=1-2, to=1-3]
\end{tikzcd}
}%
\]
with $B\in\mathsf{LCA}$ a classical LCA group and a non-classical part $A$.
All objects which show up as non-classical parts form a full subcategory
$\mathsf{Ghost}$ in the heart which is itself quasi-abelian and there is an
exact equivalence of categories%
\[
\mathsf{Ghost}\overset{\sim}{\longrightarrow}\mathsf{PGA}%
[S_{\operatorname*{Lattice}}^{-1}]\text{,}%
\]
where $\mathsf{PGA}[S_{\operatorname*{Lattice}}^{-1}]$ is the category
$\mathsf{PGA}$ up to lattice isogenies. This means that we have formed the
quotient category of $\mathsf{PGA}$ in which all morphisms whose kernel and
cokernel are discrete finitely generated groups become isomorphisms.
\end{maintheorem}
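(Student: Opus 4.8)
I would work with Schneiders' description of the left heart: objects of $\mathcal{LH}(\mathsf{LCA})$ are represented by monomorphisms $\iota\colon H'\rightarrowtail H$ in $\mathsf{LCA}$ (continuous injections), the canonical functor $I\colon\mathsf{LCA}\hookrightarrow\mathcal{LH}(\mathsf{LCA})$ is fully faithful, exact and reflects isomorphisms, and the object attached to $\iota$ is $[\iota]:=\operatorname{coker}_{\mathcal{LH}}(I\iota)$, which equals $I(H/H')$ when $\iota$ is strict. To obtain the extension in the statement I factor a chosen $\iota$ through the closure of its image, as $H'\xrightarrow{g}K:=\overline{\iota(H')}$ followed by the closed (hence strict) embedding $K\hookrightarrow H$. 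Since $\mathcal{LH}(\mathsf{LCA})$ is abelian and $I$ is exact, the strict embedding gives $I(H)/I(K)=I(H/K)$, and the third isomorphism theorem yields a short exact sequence $0\to[g]\to[\iota]\to I(H/K)\to 0$. With $B:=H/K\in\mathsf{LCA}$ and $A:=[g]$ (the cokernel of the dense monomorphism $g$) this is the desired extension $A\hookrightarrow X\twoheadrightarrow B$.

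Next I would identify $\mathsf{Ghost}$ intrinsically. Using the triangle $H'\xrightarrow{\iota}H\to[\iota]\xrightarrow{+1}$ in $D^{b}(\mathsf{LCA})$ and the vanishing of negative Ext-groups between heart objects, one gets $\operatorname{Hom}_{\mathcal{LH}}([\iota],I(B))=\{u\in\operatorname{Hom}_{\mathsf{LCA}}(H,B):u\iota=0\}$, which is $0$ for all $B\in\mathsf{LCA}$ exactly when $\iota$ has dense image (as $B$ is Hausdorff); combined with the surjection $[\iota]\twoheadrightarrow I(H/K)$ from the first step, this shows that the non-classical parts are precisely the $X$ with $\operatorname{Hom}_{\mathcal{LH}}(X,I(B))=0$ for every $B\in\mathsf{LCA}$. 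Hence $(\mathsf{Ghost},I(\mathsf{LCA}))$ is a torsion pair on $\mathcal{LH}(\mathsf{LCA})$: orthogonality is the displayed vanishing, the glueing sequences are the extensions just built, and each class is its own double orthogonal (a nonzero ghost subobject obstructs lying in $I(\mathsf{LCA})$, and a nonzero LCA quotient obstructs lying in $\mathsf{Ghost}$). Uniqueness of the extension, functoriality of $X\mapsto(A_{X},B_{X})$, and coreflectivity of $\mathsf{Ghost}\hookrightarrow\mathcal{LH}(\mathsf{LCA})$ are then formal. Finally $\mathsf{Ghost}$, being a torsion class, is closed under quotients, extensions and coproducts; its cokernels are those of $\mathcal{LH}(\mathsf{LCA})$ and its kernels are the coreflections of those of $\mathcal{LH}(\mathsf{LCA})$, and checking stability of strict epimorphisms under pullback and of strict monomorphisms under pushout (equivalently, quoting that a torsion class is quasi-abelian) shows $\mathsf{Ghost}$ is quasi-abelian — though not abelian, since $[\mathbb{Q}_{\mathrm{disc}}\rightarrowtail\mathbb{R}]$ has a nonzero subobject of the form $I(\mathbb{Z})$.

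It then remains to produce the exact equivalence with $\mathsf{PGA}[S_{\operatorname*{Lattice}}^{-1}]$. I would define a functor on representatives, attaching to a dense monomorphism $g\colon H'\rightarrowtail K$ in $\mathsf{LCA}$ a group in $\mathsf{PGA}$; the construction rests on the structure of $\mathsf{LCA}$ — connected LCA groups are $\mathbb{R}^{n}\times(\text{compact connected})$, and dually every compact LCA group is a closed subgroup of a compact connected one (realise $\widehat{K}$ as a quotient of a free group $\mathbb{Z}^{(S)}$ to get $K\hookrightarrow\mathbb{T}^{S}$) — which is what guarantees that the relevant groups genuinely embed in connected LCA groups. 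One then verifies, in order: the assignment is independent of the representing dense monomorphism up to lattice isogeny, hence descends to $\mathsf{Ghost}$; it carries short exact sequences of ghosts to strictly exact sequences in $\mathsf{PGA}[S_{\operatorname*{Lattice}}^{-1}]$, so it is exact; it is essentially surjective, since a subgroup $G\subseteq C$ of a connected LCA group yields, through the structure of such subgroups, a dense monomorphism in $\mathsf{LCA}$ whose cokernel maps back to the lattice-isogeny class of $G$; and it is fully faithful.

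The full faithfulness, together with the precise form of the well-definedness, is where I expect the real difficulty to lie. One must compare $\operatorname{Hom}_{\mathcal{LH}}$ between ghosts — which the triangles above express through $\operatorname{Hom}$- and $\operatorname{Ext}^{1}$-groups in $\mathsf{LCA}$ — with $\operatorname{Hom}$ in the localization $\mathsf{PGA}[S_{\operatorname*{Lattice}}^{-1}]$, which the calculus of fractions for $S_{\operatorname*{Lattice}}$ expresses through roofs by lattice isogenies; matching the two amounts to proving that two dense monomorphisms in $\mathsf{LCA}$ have isomorphic cokernels in $\mathcal{LH}(\mathsf{LCA})$ if and only if the associated objects of $\mathsf{PGA}$ are connected by a zig-zag of lattice isogenies. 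Setting up this dictionary — pinning down exactly which modifications of a pair $(H',K)$ leave the cokernel in the heart unchanged, and recognising them as precisely the lattice isogenies — is the structure-theoretic core of the theorem and is where most of the argument will be spent.
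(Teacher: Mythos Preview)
Your treatment of the torsion pair is correct and matches the paper: the decomposition $A\hookrightarrow X\twoheadrightarrow B$ is exactly the cotilting torsion pair $(\mathsf{T},\mathsf{F})$ of Schneiders/Bondal--van den Bergh on $\mathcal{LH}(\mathsf{LCA})$, with $\mathsf{F}\cong\mathsf{LCA}$ and $\mathsf{T}=\mathsf{Ghost}$ the complexes $[X'\to X]$ with $X'\to X$ monic and epic. Your factorization through the closure is precisely how that torsion pair acts on a general representative, and the quasi-abelianness of $\mathsf{Ghost}$ is then formal.

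The genuine gap is in the equivalence with $\mathsf{PGA}[S_{\operatorname*{Lattice}}^{-1}]$. You say you would ``attach to a dense monomorphism $g\colon H'\rightarrowtail K$ a group in $\mathsf{PGA}$'' but never say which group, and this is the entire content of the theorem. The paper's construction goes in the opposite direction: the functor $\Theta\colon\mathsf{PGA}\to\mathsf{Ghost}$ sends $X$ to the complex $[X_d\to cX]$, where $X_d$ is $X$ with the discrete topology and $cX$ is its Raikov--Weil completion. The inverse, on a representative $[D\to C]$ with $D$ discrete and $C$ group-theoretically compactly generated, returns $D$ equipped with the \emph{subspace} topology from $C$. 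Neither direction is obvious from what you wrote: for the forward direction one needs that completion is an exact functor $\mathsf{LPA}\to\mathsf{LCA}$ (which the paper proves and which appears to be new), and for the inverse one needs ``rewriting rules'' showing that every ghost object is quasi-isomorphic to one of this discrete-to-compactly-generated shape. A general dense monomorphism $H'\rightarrowtail K$ in $\mathsf{LCA}$ has neither $H'$ discrete nor $K$ compactly generated, so there is substantial work in reducing to that form.

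You correctly anticipate that matching the two kinds of roofs is the hard part, but the paper's resolution is concrete rather than cohomological: given a roof in $\mathcal{LH}(\mathsf{LCA})$ between two objects of discrete-to-compactly-generated type, one shows by successive bicartesian-square manipulations (quotienting out vector and compact-open pieces, restricting to compact-open subgroups) that the apex can also be brought to that type; then a direct kernel/cokernel computation on the resulting bicartesian squares shows that the legs have finitely generated discrete kernel and cokernel, i.e.\ are lattice isogenies. Your plan to compare $\operatorname{Hom}$ and $\operatorname{Ext}^1$ in $\mathsf{LCA}$ against the calculus of fractions would have to rediscover all of this structure theory, and as written it does not.
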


See Theorem \ref{thm_main_correspondence} below. Aside from L\"{u}ck's
equivalence \cite[Theorem 0.9]{MR1474192}, demystifying Farber's abelian
envelope of the category of Hilbert space representations of finite von
Neumann algebras, this theorem is one of the very few cases where the abelian
envelope of a category of topological abelian groups is fully understood. We
list a few other settings in \S \ref{sect_OtherSettings}.\medskip

We describe the groups in $\mathsf{PGA}$ a little: A discrete group lives in
$\mathsf{PGA}$ if and only if it is finitely generated. But in $\mathsf{PGA}%
[S_{\operatorname*{Lattice}}^{-1}]$ these become isomorphic to zero, so they
become irrelevant. All compact abelian groups and all of their subgroups are
in $\mathsf{PGA}$. For example, $\mathbb{Z}_{p}$ itself or $\mathbb{Z}%
\subset\mathbb{Z}_{p}$ with the subspace topology inside the $p$-adics are in
$\mathsf{PGA}$. On the other hand, $\mathbb{Q}_{p}$ or the ad\`{e}les are not
in $\mathsf{PGA}$. A Banach space is in $\mathsf{PGA}$ if and only if it is
finite-dimensional. The rational numbers $\mathbb{Q}$ with the discrete
topology are not in $\mathsf{PGA}$, but when equipped with the subspace
topology inside the real line, they are.

Suppose $A$ is a Hausdorff topological abelian group. We call a subset $T$
\emph{precompact} if for every non-empty open $U\subseteq A$, finitely many
translates $U+t$ cover all of $T$. All precompact groups live in
$\mathsf{PGA}$.

The objects in $\mathsf{PGA}$ admit an intrinsic characterization as the
precompactly generated and locally precompact Hausdorff groups, see the
Appendix \S \ref{Appendix_LocallyPrecompactGroups} for a review of the
necessary definitions.

The functor in Theorem \ref{thma_main} is defined as follows: The ghost
objects in $\mathcal{LH}(\mathsf{LCA})$ are formal cokernels for arrows%
\begin{equation}
x\colon X^{\prime}\longrightarrow X \label{l_intro_1}%
\end{equation}
of LCA\ groups $X^{\prime},X$ such that $x$ is injective with dense image. The
first intuition might be to carry out this quotient, but in the category
$\mathsf{LCA}$ that quotient is zero, and in the category of all topological
abelian groups it would be a non-Hausdorff group. This quotient loses too much
information: For example $\mathbb{R}_{d}\rightarrow\mathbb{R}$ (mapping the
real line with the discrete topology to the ordinary line) inevitably will
have quotient zero despite not being an isomorphism. This is the wrong path to pursue.

Instead, our functor does the following: We first show that up to
quasi-isomorphism in $\operatorname*{D}\nolimits^{b}(\mathsf{LCA})$, the
complex of Eq. \ref{l_intro_1} has a representative with $X^{\prime}$ discrete
and $X$ compact. Now equip $X^{\prime}$ instead with the subspace topology as
a subgroup in the compact $X$. We will prove that this always yields a group
in $\mathsf{PGA}$ (there is something to show since $X$ need not be
connected). The miracle is that this process can be reversed: Given a group
$Z$ in $\mathsf{PGA}$, we can complete it, giving a continuous map%
\begin{equation}
Z\longrightarrow cZ\text{,} \label{l_intro_1a}%
\end{equation}
with $cZ$ the completion (e.g., if the topology on $Z$ comes from a metric,
$cZ$ is really just the completion as a metric space. The metric completion of
a topological group is again a topological group in a canonical fashion). This
is not yet the original complex because $Z$ still carries a possibly
non-trivial topology, so we just force it: Replace the topology on $Z$ in Eq.
\ref{l_intro_1a} by the discrete topology. We claim that this restores the
original complex. In a sense, it is clear: We gave $Z$ the subspace topology
of $X^{\prime}$ inside $X$ and it was dense. Whenever you complete a dense
subspace in a complete space, this recovers the original complete space.

This is the correspondence $-$ roughly. Several critical complications will
cloud the simplicity of our basic construction:

\begin{enumerate}
\item The choice of a representative $x\colon X^{\prime}\longrightarrow X$
with $X^{\prime}$ discrete and $X$ compact is not unique. There will be
several such choices, but it turns out they can at worst differ by lattice
isogenies. We therefore replace $\mathsf{PGA}$ by $\mathsf{PGA}%
[S_{\operatorname*{Lattice}}^{-1}]$. In turn, we have to show that the reverse
process remains well-defined.

\item One would feel that one needs to show that for $Z\in\mathsf{PGA}$ the
completion $cZ$ is always compact. This is actually false, but whatever $cZ$
is, we show that $Z\longrightarrow cZ$ is always quasi-isomorphic to a complex
where the second group is indeed compact and that is good enough.

\item If the topology on $Z$ is not metrizable, one needs to use a more
complicated completion using nets or filters. This is due to Raikov and Weil.
We show that this type of completion is an exact functor on the groups of
interest to us, which appears to be a new result.

\item The most complicated part of the proof is to show that the morphisms in
the categories $\mathsf{Ghost}$ (certain roofs coming from the fact that the
heart lies in $\operatorname*{D}\nolimits^{b}(\mathsf{LCA})$, which is the
quotient category of complexes by acyclic complexes) and in $\mathsf{PGA}%
[S_{\operatorname*{Lattice}}^{-1}]$ agree (a different type of roof, now
coming from lattice isogenies).\medskip
\end{enumerate}

A possible point of confusion is that some groups are in both $\mathsf{LCA}$
\textit{and} $\mathsf{PGA}$, e.g., the circle $\mathbb{T}$ or the $p$-adic
integers $\mathbb{Z}_{p}$. These live a double life in $\mathcal{LH}%
(\mathsf{LCA})$ since they can now be regarded as objects in the heart in two
distinct ways, either as an LCA group \textit{or} as a ghost
group\footnote{Oren Ben-Bassat used the term `\textit{ghost}' during a video
call and we like the descriptive nature of the term very much.}. And these
objects admit non-split extensions. There exists, for example, a group $G$ in
the heart such that%
\[%
{
\begin{tikzcd}
	{\widetilde{\mathbb{T}}} && G && {\mathbb{T}}
	\arrow[hook, from=1-1, to=1-3]
	\arrow[two heads, from=1-3, to=1-5]
\end{tikzcd}
}%
\]
with $\mathbb{T}$ the ordinary circle in $\mathsf{LCA}$ and
$\mathbb{\widetilde{T}}$ \textit{also} the circle, but regarded as coming from
$\mathsf{PGA}$ (i.e., as a ghost group living in the non-LCA piece of the
heart), and this extension only lives in the heart, it is neither a genuine
topological group in $\mathsf{LCA}$ nor $\mathsf{PGA}$ alone.\medskip

Our hope is that our correspondence might make the non-LCA objects that can
show up in cohomology groups as they could occur in \cite{MR4699875,MR4711130}
or \cite{MR4831262} more accessible to a concrete intepretation. Certainly,
rendering them Hausdorff topological abelian groups makes them a lot more
material and touchable than as quasi-isomorphism classes of objects of a
certain type in the derived category $\operatorname*{D}\nolimits^{b}%
(\mathsf{LCA})$, or categories of monic-epic morphisms modulo bicartesian
squares (this is the interpretation that the general theory of abelian
envelopes of Schneiders \cite{MR1779315} offers).

Instead of working in the left heart $\mathcal{LH}(\mathsf{LCA})$, a reader
might prefer to embed $\mathsf{LCA}$ into the category of Clausen--Scholze
condensed abelian groups $\mathsf{Cond}(\mathsf{Ab})$ \cite{condensedmath}.
Both approaches are compatible (see \S \ref{sect_CondensedPicture}), and one
may regard these as full subcategories%
\[
\mathsf{LCA}\subset\mathcal{LH}(\mathsf{LCA})\subset\mathsf{Cond}%
(\mathsf{Ab})\text{.}%
\]
In particular, $\mathsf{Ghost}\cong\mathsf{PGA}[S_{\operatorname*{Lattice}%
}^{-1}]$ is a full subcategory of $\mathsf{Cond}(\mathsf{Ab})$ (Theorem
\ref{thm_p1}). We believe that this might only be the first example of
possibly a whole series of exotic embeddings of topological abelian groups
into $\mathsf{Cond}(\mathsf{Ab})$. There is no classical embedding of
$\mathsf{PGA}$ into $\mathsf{Cond}(\mathsf{Ab})$ since the underlying spaces
of precompact groups need not be $k$-spaces, so the usual embedding of
Clausen--Scholze collapses for them (Examples \ref{example_p1},
\ref{example_p2} by Gabriyelyan and Trigos-Arrieta).

\subsection{Variations of the correspondence}

Our main result comes in various shapes and forms. First, the picture is
simpler if we consider the full subcategory $\mathsf{LCA}_{\operatorname*{vf}%
}\subset\mathsf{LCA}$ of those LCA groups which do not have a real vector
space as a direct summand. These groups are known as \emph{vector-free}. Then
it is enough to use precompact groups to model ghost objects, and moreover we
also only need to quotient out classical isogenies. We write
$S_{\operatorname*{Isogeny}}$ for the multiplicative system of arrows which
have finite kernel and cokernel:

\begin{maintheorem}
[Correspondence, Vector-free Variant]\label{thma_vfversion}Every object
$X\in\mathcal{LH}(\mathsf{LCA}_{\operatorname*{vf}})$ is a unique extension%
\[%
{
\begin{tikzcd}
	A & X & B
	\arrow[hook, from=1-1, to=1-2]
	\arrow[two heads, from=1-2, to=1-3]
\end{tikzcd}
}%
\]
with $B\in\mathsf{LCA}_{\operatorname*{vf}}$ a classical LCA group and a
non-classical part $A$. All objects which show up as non-classical parts form
a full subcategory $\mathsf{Ghost}_{\operatorname*{vf}}$ in the heart which is
itself quasi-abelian and there is an exact equivalence of categories%
\[
\mathsf{Ghost}_{\operatorname*{vf}}\overset{\sim}{\longrightarrow}%
\mathsf{PCA}[S_{\operatorname*{Isogeny}}^{-1}]\text{,}%
\]
where $\mathsf{PCA}[S_{\operatorname*{Isogeny}}^{-1}]$ is the category of
precompact Hausdorff abelian groups up to isogenies. This means that we have
formed the quotient category of $\mathsf{PCA}$ in which all morphisms with
finite kernel and finite cokernel become isomorphisms.
\end{maintheorem}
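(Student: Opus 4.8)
The plan is to derive the theorem from Theorem~\ref{thma_main} by restricting it along the inclusion $\mathsf{LCA}_{\operatorname*{vf}}\hookrightarrow\mathsf{LCA}$ and tracking the vector-free hypothesis throughout; equivalently, one may rerun the argument proving Theorem~\ref{thma_main} with every chosen representative vector-free.

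The structural input is the classical structure theorem for LCA groups: every $G$ splits as $\mathbb{R}^{n}\times H$ with $H$ containing a compact open subgroup, so $G$ is vector-free precisely when $n=0$, i.e.\ when $G$ itself has a compact open subgroup, equivalently when it is an extension of a discrete group by a compact group. In particular $\mathsf{LCA}_{\operatorname*{vf}}$ is closed under closed subgroups and Hausdorff quotients inside $\mathsf{LCA}$, hence inherits a quasi-abelian structure; I would then produce the induced exact functor $\mathcal{LH}(\mathsf{LCA}_{\operatorname*{vf}})\to\mathcal{LH}(\mathsf{LCA})$ and verify it is fully faithful, realizing $\mathcal{LH}(\mathsf{LCA}_{\operatorname*{vf}})$ as a full subcategory of the heart treated in Theorem~\ref{thma_main}. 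Some care is needed because $\mathsf{LCA}_{\operatorname*{vf}}$ is \emph{not} extension-closed in $\mathsf{LCA}$ --- witness $0\to\mathbb{Z}\to\mathbb{R}\to\mathbb{T}\to 0$ --- so one checks directly that the two-term complexes of vector-free groups appearing in the left heart, together with the kernels and cokernels among them, are computed identically in $\operatorname*{D}\nolimits^{b}(\mathsf{LCA}_{\operatorname*{vf}})$ and in $\operatorname*{D}\nolimits^{b}(\mathsf{LCA})$.

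Next I would record the three simplifications that the vector-free setting provides. (A) A ghost in $\mathcal{LH}(\mathsf{LCA}_{\operatorname*{vf}})$ is the formal cokernel of an injective arrow $x\colon X'\to X$ with dense image and $X',X$ vector-free; since $X$ carries a compact open subgroup, the reduction ``up to quasi-isomorphism one may assume $X'$ discrete and $X$ compact'' is immediate and stays inside $\mathsf{LCA}_{\operatorname*{vf}}$, and then $X'$ with the subspace topology inherited from the \emph{compact} group $X$ is precompact; thus $\mathsf{PCA}$, and not all of $\mathsf{PGA}$, suffices to model ghosts. (B) For a precompact Hausdorff group $Z$ the Raikov--Weil completion $cZ$ is complete and totally bounded, hence compact, so complication~(2) of the Overview is vacuous and $cZ$ needs no further adjustment; one still invokes the exactness of the Raikov--Weil completion functor, as a precompact group need not be metrizable. (C) A lattice isogeny $\varphi\colon Z'\to Z$ between precompact groups is automatically an isogeny: its kernel is a discrete subgroup of the precompact group $Z'$, hence finite (a discrete subgroup of a precompact group is a discrete subgroup of its compact completion, hence finite), and its cokernel is a discrete, therefore finite, precompact group. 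Consequently $S_{\operatorname*{Lattice}}$ and $S_{\operatorname*{Isogeny}}$ induce the same localization of $\mathsf{PCA}$, and $\mathsf{PCA}[S_{\operatorname*{Isogeny}}^{-1}]$ embeds into $\mathsf{PGA}[S_{\operatorname*{Lattice}}^{-1}]$.

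It remains to assemble the statement. The unique extension $A\hookrightarrow X\twoheadrightarrow B$ is the restriction of the one furnished by Theorem~\ref{thma_main}; by its construction there $B$ is a Hausdorff quotient in $\mathsf{LCA}$ of a vector-free group, hence vector-free, while the ghost part $A$ is modeled by vector-free --- equivalently precompact --- data, so $A\in\mathsf{Ghost}_{\operatorname*{vf}}$, and uniqueness is inherited. The subcategory $\mathsf{Ghost}_{\operatorname*{vf}}$ is quasi-abelian since it is closed under the kernels and cokernels of the quasi-abelian category $\mathsf{Ghost}$, and the equivalence of Theorem~\ref{thma_main} carries it onto $\mathsf{PCA}[S_{\operatorname*{Isogeny}}^{-1}]$ by (A)--(C). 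I expect the principal technical point to be the comparison of morphisms --- the analogue of complication~(4) of the Overview --- namely that every roof computing a $\operatorname{Hom}$ in $\mathsf{PGA}[S_{\operatorname*{Lattice}}^{-1}]$ between precompact objects can be rewritten as a finite-isogeny roof between precompact objects, for instance by dividing out the finitely generated discrete kernel of its lattice-isogeny leg; this identifies $\operatorname{Hom}$ in $\mathsf{PCA}[S_{\operatorname*{Isogeny}}^{-1}]$ with $\operatorname{Hom}$ in $\mathsf{Ghost}_{\operatorname*{vf}}$, the bookkeeping being lighter than in the general case precisely because real vector spaces and infinite discrete groups no longer occur.
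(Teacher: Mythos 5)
Your reduction of Theorem \ref{thma_vfversion} to Theorem \ref{thma_main} by restriction along $\mathsf{LCA}_{\operatorname*{vf}}\hookrightarrow\mathsf{LCA}$ does not work, and the obstruction is exhibited in the paper itself. The pivotal unproved claims --- that $\mathcal{LH}(\mathsf{LCA}_{\operatorname*{vf}})\rightarrow\mathcal{LH}(\mathsf{LCA})$ is fully faithful, that $\mathsf{PCA}[S_{\operatorname*{Isogeny}}^{-1}]$ embeds fully faithfully into $\mathsf{PGA}[S_{\operatorname*{Lattice}}^{-1}]$, and that every lattice-isogeny roof between precompact objects can be rewritten as a finite-isogeny roof of precompact groups --- are all false. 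Concretely, for $\alpha$ irrational the example following Corollary \ref{cor_EveryObjectIsOfTypeDC} produces an isomorphism $X_{\alpha}\simeq X_{\alpha^{-1}}$ in $\mathsf{Ghost}\subset\mathcal{LH}(\mathsf{LCA})$, i.e.\ $\alpha\mathbb{Z}_{\subseteq\mathbb{T}}\simeq\alpha^{-1}\mathbb{Z}_{\subseteq\mathbb{T}}$ in $\mathsf{PGA}[S_{\operatorname*{Lattice}}^{-1}]$, via a roof whose apex is the non-precompact, non-vector-free group $(\alpha\mathbb{Z}+\mathbb{Z})_{\subseteq\mathbb{R}}$ (completion $\mathbb{R}$) and whose legs are lattice isogenies with infinite discrete kernel; for transcendental $\alpha$ the completion argument of Step 1 of that example, applied to a putative finite-isogeny roof, shows that no such isomorphism exists in $\mathsf{PCA}[S_{\operatorname*{Isogeny}}^{-1}]$. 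Since fully faithful functors reflect isomorphisms, neither comparison functor is fully faithful, and the vector-free theorem cannot be a formal corollary of the general one. Relatedly, the failure of extension-closure of $\mathsf{LCA}_{\operatorname*{vf}}$ in $\mathsf{LCA}$ is not merely a point requiring ``care'': the class of $\mathbb{Z}\hookrightarrow\mathbb{R}\twoheadrightarrow\mathbb{T}$ lies in $\operatorname{Ext}^{1}_{\mathsf{LCA}}(\mathbb{T},\mathbb{Z})$ but not in the image of $\operatorname{Ext}^{1}_{\mathsf{LCA}_{\operatorname*{vf}}}(\mathbb{T},\mathbb{Z})$, and it is precisely such classes that manufacture the extra roofs above.

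What survives is the alternative you mention in your first sentence and then do not pursue: rerunning the entire argument with vector-free representatives. That is what the paper does --- $\Theta$ is defined on $\mathsf{PCA}$ with target $\mathcal{LH}(\mathsf{LCA}_{\operatorname*{vf}})$ from the outset, and exactness, fullness, faithfulness and essential surjectivity (Lemmas \ref{lemma_f_exact}, \ref{lemma_f_full}, \ref{lemma_f_faithful}, Prop.\ \ref{prop_ess_surjectivity}, via the roof-rewriting Theorem \ref{thm_PrepareRoof_PCA}) are proved for the d-c case in parallel with, not as a consequence of, the d-cg case, with all roofs manipulated intrinsically inside $\mathcal{LH}(\mathsf{LCA}_{\operatorname*{vf}})$. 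Your observations (A)--(C) are correct and do reflect the simplifications available there (Step 1 of Theorem \ref{thm_PrepareRoof_PCA} is vacuous, completions of precompact groups are compact, and lattice isogenies between precompact groups are finite isogenies), but they simplify the parallel proof rather than build a bridge from Theorem \ref{thma_main}. Finally, your proposed rewriting ``divide out the finitely generated discrete kernel of the lattice-isogeny leg'' fails even locally: the other leg of the roof need not annihilate that kernel, and the quotient of the apex need not remain precompact; handling exactly this is the content of Steps 2--3 of Theorem \ref{thm_PrepareRoof_PCA}, which instead passes to a compact open subgroup of the apex after shrinking by finite index.
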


This is also part of Theorem \ref{thm_main_correspondence} below. Besides the
left $t$-structure and its left heart $\mathcal{LH}(\mathsf{LCA})$, Schneiders
also defines a mirrored right $t$-structure whose heart is known as the right
heart $\mathcal{RH}(\mathsf{LCA})$. We will not discuss this variant much in
this text, but it turns out that essentially the same structural results also
apply to the right heart. For future reference, we spell them out explicitly.

\begin{maintheorem}
[Correspondence, Right Heart Versions]\label{thma_main_rightheartversions}%
Every object $X\in\mathcal{RH}(\mathsf{LCA})$ [resp. $\mathcal{RH}%
(\mathsf{LCA}_{\operatorname*{vf}})$] is a unique extension%
\[%
{
\begin{tikzcd}
	B & X & A
	\arrow[hook, from=1-1, to=1-2]
	\arrow[two heads, from=1-2, to=1-3]
\end{tikzcd}
}%
\]
with $B\in\mathsf{LCA}$ [resp. $\mathsf{LCA}_{\operatorname*{vf}}$] a
classical LCA group [resp. without real line summand] and a non-classical part
$A$. All objects which show up as non-classical parts form a full subcategory
$\mathsf{Ghost}$ [resp. $\mathsf{Ghost}_{\operatorname*{vf}}$] in the heart
which is itself quasi-abelian and there is an exact equivalence of categories%
\[
\mathsf{Ghost}\overset{\sim}{\longrightarrow}\mathsf{PGA}%
[S_{\operatorname*{Lattice}}^{-1}]
\]
with $S_{\operatorname*{Lattice}}$ as in Theorem \ref{thma_main} [resp.%
\[
\mathsf{Ghost}_{\operatorname*{vf}}\overset{\sim}{\longrightarrow}%
\mathsf{PCA}[S_{\operatorname*{Isogeny}}^{-1}]
\]
with $S_{\operatorname*{Isogeny}}$ as in Theorem \ref{thma_vfversion}].
\end{maintheorem}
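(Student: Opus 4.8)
The plan is to deduce Theorem~\ref{thma_main_rightheartversions} from Theorems~\ref{thma_main} and~\ref{thma_vfversion} by Pontryagin duality. Pontryagin duality is an exact contravariant autoequivalence $D\colon\mathsf{LCA}\to\mathsf{LCA}$, and it restricts to an exact contravariant autoequivalence of $\mathsf{LCA}_{\operatorname*{vf}}$, because an LCA group contains a compact open subgroup if and only if its dual does, so the vector-free condition is $D$-stable. Schneiders' right heart is the mirror of the left heart, i.e.\ $\mathcal{RH}(\mathcal{E})\simeq\mathcal{LH}(\mathcal{E}^{\operatorname{op}})^{\operatorname{op}}$ for every quasi-abelian $\mathcal{E}$ \cite{MR1779315}; feeding $\mathcal{E}=\mathsf{LCA}$ and $\mathcal{E}=\mathsf{LCA}_{\operatorname*{vf}}$ into this and using $D$ to identify each $\mathcal{E}^{\operatorname{op}}$ with $\mathcal{E}$ gives exact equivalences $\mathcal{RH}(\mathsf{LCA})\simeq\mathcal{LH}(\mathsf{LCA})^{\operatorname{op}}$ and $\mathcal{RH}(\mathsf{LCA}_{\operatorname*{vf}})\simeq\mathcal{LH}(\mathsf{LCA}_{\operatorname*{vf}})^{\operatorname{op}}$ which on the subcategory $\mathsf{LCA}$ restrict to $D$ itself; in particular they send classical LCA objects to classical LCA objects, hence non-classical parts to non-classical parts.

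Granting this, the structural part of Theorem~\ref{thma_main_rightheartversions} is formal. As $D$ reverses arrows and interchanges monomorphisms with epimorphisms, the unique extension $A\hookrightarrow X\twoheadrightarrow B$ of Theorem~\ref{thma_main} (with $B$ classical, $A$ a ghost) is carried to a unique extension $D(B)\hookrightarrow D(X)\twoheadrightarrow D(A)$ in $\mathcal{RH}(\mathsf{LCA})$ with $D(B)$ classical and $D(A)$ a ghost, which is exactly the shape asserted; existence and uniqueness transfer since $D$ is an equivalence, and likewise in the vector-free case. The full subcategory $\mathsf{Ghost}\subseteq\mathcal{RH}(\mathsf{LCA})$ cut out by the non-classical parts is identified by $D$ with the opposite of the $\mathsf{Ghost}$ of Theorem~\ref{thma_main}, so it is quasi-abelian, and combining with Theorems~\ref{thma_main} and~\ref{thma_vfversion} we get exact equivalences
\[
\mathsf{Ghost}\;\simeq\;\mathsf{PGA}[S_{\operatorname*{Lattice}}^{-1}]^{\operatorname{op}}\text{,}\qquad
\mathsf{Ghost}_{\operatorname*{vf}}\;\simeq\;\mathsf{PCA}[S_{\operatorname*{Isogeny}}^{-1}]^{\operatorname{op}}
\]
for the right heart.

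It remains to remove the two $\operatorname{op}$'s, that is, to exhibit exact contravariant autoequivalences of $\mathsf{PGA}[S_{\operatorname*{Lattice}}^{-1}]$ and of $\mathsf{PCA}[S_{\operatorname*{Isogeny}}^{-1}]$. In the vector-free case this is clean: for $H\in\mathsf{PCA}$ put $H^{\ast}=\operatorname{Hom}_{\operatorname{cont}}(H,\mathbb{T})$ with the topology of pointwise convergence on $H$, i.e.\ the subspace topology from the compact group $\operatorname{Hom}(H_{d},\mathbb{T})\subseteq\mathbb{T}^{H}$, where $H_{d}$ is $H$ with the discrete topology. Then $H^{\ast}$ is a dense subgroup of a compact group, hence precompact, and the evaluation map $H\to H^{\ast\ast}$ is an isomorphism: injective because the continuous characters of a precompact Hausdorff group separate points, surjective because a continuous character of the dense subgroup $H^{\ast}$ extends over $\operatorname{Hom}(H_{d},\mathbb{T})$ and so comes from $H$, and bicontinuous because the topology of a precompact group is the initial one for its continuous characters. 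Since $D$ carries finite groups to finite groups, $(-)^{\ast}$ preserves $S_{\operatorname*{Isogeny}}$ and descends to an exact contravariant autoequivalence of $\mathsf{PCA}[S_{\operatorname*{Isogeny}}^{-1}]$, which settles the vector-free statement. For the general case one obtains the self-duality of $\mathsf{PGA}[S_{\operatorname*{Lattice}}^{-1}]$ in the same spirit, the representative $X'\hookrightarrow X$ with $X'$ discrete and $X$ compact built in the proof of Theorem~\ref{thma_main} having a Pontryagin dual of the very same shape; equivalently one may simply re-run the construction of the functor of Theorem~\ref{thma_main} with the dual conventions, assigning to the formal kernel of a dense-image morphism $Y\to Y'$ of LCA groups --- which by the dual of the reduction in Theorem~\ref{thma_main} can be represented with $Y$ discrete and $Y'$ compact --- the group $Y$ with the subspace topology from $Y'$.

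The main obstacle is exactly this last step: one must verify that nothing in the proof of Theorem~\ref{thma_main} used a genuine left/right asymmetry. The two sensitive inputs are the exactness of the Raikov--Weil completion functor (point~(3) of the introduction) and, above all, the matching of the two sorts of morphism roofs (point~(4), the technically hardest part of Theorem~\ref{thma_main}); both have to be shown to intertwine with $D$. Since $D$ literally swaps the left-heart picture ``discrete, densely embedded in compact'' with its right-heart counterpart, this is bookkeeping rather than new mathematics, but it must be done while keeping track of which multiplicative system --- $S_{\operatorname*{Lattice}}$ or $S_{\operatorname*{Isogeny}}$ --- and which hypothesis --- general or vector-free --- is in force. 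In the self-duality approach the analogous obstacle is to verify that $H\mapsto H^{\ast\ast}$ is an isomorphism in $\mathsf{PGA}[S_{\operatorname*{Lattice}}^{-1}]$ beyond the precompact case and that $(-)^{\ast}$ is functorial and exact modulo lattice isogenies. Either way the genuinely new content is small: Theorem~\ref{thma_main_rightheartversions} is Theorem~\ref{thma_main} read through Pontryagin duality.
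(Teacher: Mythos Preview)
Your approach via Pontryagin duality is viable but considerably more roundabout than the paper's. The paper does not use Pontryagin duality for this theorem at all. Instead it invokes the general Schneiders/Rump/Bondal--van den Bergh theory: the right heart $\mathcal{RH}(\mathsf{Q})$ of any quasi-abelian $\mathsf{Q}$ carries a \emph{tilting} torsion pair $(\mathsf{T}',\mathsf{F}')$ with $\mathsf{T}'\cong\mathsf{Q}$ and with $\mathsf{F}'$ consisting of two-term complexes whose differential is monic and epic. But that is \emph{literally the same category} as the torsion part $\mathsf{T}$ of the cotilting pair in the left heart --- not its opposite, the same. Hence the $\mathsf{Ghost}$ of $\mathcal{RH}(\mathsf{LCA})$ is the $\mathsf{Ghost}$ of $\mathcal{LH}(\mathsf{LCA})$ on the nose, and Theorems~\ref{thma_main} and~\ref{thma_vfversion} apply verbatim. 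No opposite categories arise, so there is nothing to cancel and no self-duality of the target categories is required.

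Your route does get there in the end, and your weak-duality argument for $\mathsf{PCA}$ is correct (this is exactly the paper's Theorem~\ref{thm_dual_1}). But for the general $\mathsf{PGA}$ case you gesture at a self-duality ``in the same spirit'', and here you should be warned: the paper's duality section explicitly records that weak duality does \emph{not} furnish a contravariant autoequivalence of $\mathsf{PGA}/\mathsf{Ab}_{\operatorname*{fg}}$ (for instance $\mathbb{Z}^{\ast\ast}\ncong\mathbb{Z}$, and weak duality is not essentially surjective there). The self-duality exists only tautologically as $(\Theta^{op})^{-1}\circ(-)^{\vee}\circ\Theta$, which is circular for your purposes since it presupposes the very equivalence you are trying to establish. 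So your ``bookkeeping'' step in the $\mathsf{PGA}$ case is not as innocent as you suggest; the paper's direct observation that the two $\mathsf{Ghost}$ categories coincide avoids this entirely.
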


See \S \ref{sect_Proofs} for the proof. Another interesting story is that
Pontryagin duality on $\mathsf{LCA}$ extends to a categorical equivalence
$\mathcal{LH}(\mathsf{LCA})\overset{\sim}{\longrightarrow}\mathcal{RH}%
(\mathsf{LCA})^{op}$ with the right heart, and this induces a duality on ghost
groups%
\[
\mathsf{Ghost}^{op}\longrightarrow\mathsf{Ghost}\text{.}%
\]
We give a partial description of this, and show that for a certain full
subcategory, the duality agrees with weak duality, i.e., $X\mapsto
\operatorname*{Hom}(X,\mathbb{T})$, but equipped with the weak topology
instead of the usual compact-open topology (Theorem \ref{thm_dual_1}).

\subsection{Abelian envelopes of other categories of topological
modules\label{sect_OtherSettings}}

It is possible that our techniques extend to hearts of other categories of
topological abelian groups or modules. We do not explore this potential for
generalization in the present text, but closely related investigations in
various topological settings can be found in the literature. We list a few.

\begin{itemize}
\item In the context of $L^{2}$-cohomology of \textit{real manifolds}, the
category of Hilbert space representations of von Neumann algebras is not
abelian, and Farber studied an abelian envelope under the name of
\textquotedblleft extended categories\textquotedblright%
\ \cite{MR1406667,MR1656223}. A correspondence giving a structural
understanding of this abelian envelope was found by L\"{u}ck \cite{MR1474192}.
These papers do not use $t$-structures or quasi-abelian categories as in
Schneiders, but could easily be rephrased in this terminology.

\item In the context of $L^{2}$-cohomology of \textit{complex manifolds} and
complex analytic spaces, Eyssidieux \cite{MR1776117} and Dingoyan
\cite{MR3118627} have developed elaborate complex geometry analogues which
again output objects living in such an abelian category envelope in the style
of Farber. Recent work of Eyssidieux \cite{AHL25} extends the latter picture
to categories of mixed Hodge modules.

\item In non-archimedean analytic geometry, Banach spaces fail to form an
abelian category (both in the real and non-archimedean setting) and the left
heart abelian envelope is used by Ben-Bassat and Kremnizer in \cite{MR3626003}.

\item The articles \cite{MR4699875,MR4711130} or
\cite{MR4831262,artusa2025dualitycondensedweiletalerealisation} are in the
setting of LCA groups, just as the present paper.
\end{itemize}

Articles of a more functional analytic nature are:

\begin{itemize}
\item Work of Lupini studies the heart of categories of Polish abelian groups
\cite{MR4779743}.

\item Work of Wegner studies the heart of the category of Banach and
Fr\'{e}chet spaces and shows that various categories of topological spaces
which fail to be quasi-abelian (e.g., $LB$-spaces), still permit categories
similar to Schneiders' left heart \cite{MR3655709}. This has its origins in
older works of Waelbroeck to find abelian envelopes of categories in
functional analysis. Recent developments in this direction are, among others,
for example \cite{MR4441467,MR4575371,MR4590331}.
\end{itemize}

This list is incomplete and we apologize to the authors of the works we have
not mentioned.

\section{Quasi-abelian categories}

\subsection{Definitions}

Suppose $\mathsf{C}$ is an additive category such that for every morphism
kernel and cokernel exist. Suppose $f\colon A\rightarrow B$ is any morphism.
Then the \emph{co-image} $\operatorname*{coim}f$ is defined as
$\operatorname*{coker}(\ker f\rightarrow A)$ and the \emph{image} is defined
as $\ker(B\rightarrow\operatorname*{coker}f)$.\ The universal properties of
kernels and cokernels define a natural diagram%
\[
A\longrightarrow\operatorname*{coim}f\overset{\widehat{f}}{\longrightarrow
}\operatorname*{im}f\longrightarrow B\text{.}%
\]

\begin{definition}
\label{def_AdmissibleMorphism}A morphism $f\colon A\rightarrow B$ is called
\emph{admissible} if $\widehat{f}$ is an isomorphism.\footnote{Other authors
call this a \emph{strict} morphism. Our terminology is in line with B\"{u}hler
\cite{MR2606234}.}
\end{definition}

It is important to stress that the class of admissible morphisms need not be
closed under composition.

\begin{example}
The category $\mathsf{C}$ is called \emph{abelian} if and only if every
morphism is admissible \cite[Exercise 8.6]{MR2606234}.
\end{example}

The category $\mathsf{C}$ is called \emph{quasi-abelian} if

\begin{enumerate}
\item the pushout of an admissible monic $f$ along an arbitrary arrow always
exists and is an admissible monic as well:%
\[%
{
\begin{tikzcd}
	A && B \\
	\\
	X && {X \cup_A B}
	\arrow["f", hook, from=1-1, to=1-3]
	\arrow[from=1-1, to=3-1]
	\arrow[from=1-3, to=3-3]
	\arrow["{f'}"', hook, from=3-1, to=3-3]
\end{tikzcd}
}%
\]
and dually

\item the pullback of an admissible monic always exists and is also an
admissible monic.
\end{enumerate}

Equivalently, $\mathsf{C}$ is quasi-abelian if the admissible monics and
admissible epics (with admissibility in the sense of Definition
\ref{def_AdmissibleMorphism}) satisfy the axioms of an exact structure
\cite[\S 2]{MR2606234}.

For $\mathsf{C}$ quasi-abelian, the morphism $\widehat{f}$ is always both
monic and epic (\cite[Cor. 1.1.5]{MR1779315}). We will use the notation%
\[%
{
\begin{tikzcd}
	{X'} && X
	\arrow["x"{inner sep=.8ex}, "\bullet"{marking}, from=1-1, to=1-3]
\end{tikzcd}
}%
\]
throughout this text to stress when a morphism $x$ is both monic and epic.

\begin{lemma}
\label{lemma_S0}A morphism $f\colon A\rightarrow B$ is admissible in the sense
of Definition \ref{def_AdmissibleMorphism} if and only if a factorization
$A\overset{e}{\twoheadrightarrow}X\overset{m}{\hookrightarrow}B$ exists such
that $e$ is an admissible epic and $m$ an admissible monic.
\end{lemma}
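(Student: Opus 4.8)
The plan is to prove both implications directly from the definitions of image, co-image, and admissibility, exploiting the canonical factorization $A\to\operatorname{coim}f\xrightarrow{\widehat f}\operatorname{im}f\to B$ together with the fact that $A\twoheadrightarrow\operatorname{coim}f$ is always an admissible epic and $\operatorname{im}f\hookrightarrow B$ is always an admissible monic (these are cokernel and kernel maps, hence admissible by construction). The forward direction is then immediate: if $f$ is admissible, $\widehat f$ is an isomorphism, so we may take $X=\operatorname{coim}f$ (equivalently $\operatorname{im}f$), $e$ the canonical epic $A\twoheadrightarrow\operatorname{coim}f$, and $m$ the composite $\operatorname{coim}f\xrightarrow{\widehat f}\operatorname{im}f\hookrightarrow B$, which is an admissible monic since it is the composition of an isomorphism with an admissible monic.

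For the converse, suppose we are given a factorization $A\xrightarrow{e}X\xrightarrow{m}B$ with $e$ an admissible epic and $m$ an admissible monic. First I would identify $\ker f$ with $\ker e$: since $m$ is monic, $m e=f$ forces $\ker f=\ker e$, and because $e$ is an admissible epic, $e$ is the cokernel of $\ker e\to A$, hence the cokernel of $\ker f\to A$. Therefore the universal property identifies $X$ with $\operatorname{coim}f$ and $e$ with the canonical map $A\twoheadrightarrow\operatorname{coim}f$. Dually, using that $e$ is epic, $m e=f$ forces $\operatorname{coker}f=\operatorname{coker}m$, and since $m$ is an admissible monic it is the kernel of $B\to\operatorname{coker}m=\operatorname{coker}f$, so $X$ is identified with $\operatorname{im}f$ and $m$ with $\operatorname{im}f\hookrightarrow B$. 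Under these two identifications the map $\widehat f\colon\operatorname{coim}f\to\operatorname{im}f$ corresponds to $\operatorname{id}_X$, hence is an isomorphism, so $f$ is admissible.

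The one point that requires care — and the only real obstacle — is justifying the two identifications $X\cong\operatorname{coim}f$ and $X\cong\operatorname{im}f$ are compatible, i.e.\ that under both of them the comparison morphism $\widehat f$ becomes the identity on $X$ rather than merely some isomorphism. This follows by tracing through the universal properties: the canonical map $A\to\operatorname{coim}f\xrightarrow{\widehat f}\operatorname{im}f\to B$ equals $f=m e$ by construction, and since $A\to\operatorname{coim}f$ is epic and $\operatorname{im}f\to B$ is monic, the middle factor is uniquely determined; matching this against the factorization $e$ followed by $m$ pins down $\widehat f$ as the identity of $X$. I would also remark that this lemma is essentially \cite[Lemma 8.4 or its proof]{MR2606234} in B\"uhler's exposition, so the argument is standard; it is included here for completeness and to fix notation for later use.
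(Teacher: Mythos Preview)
Your proposal is correct and follows essentially the same approach as the paper's proof, which simply notes that any factorization $A\twoheadrightarrow X\hookrightarrow B$ forces $X\cong\operatorname{coim}f\cong\operatorname{im}f$, and conversely that one may take $X$ to be the (co)image. Your version spells out the universal-property identifications in considerably more detail than the paper, which dispatches the lemma in two sentences as ``fairly clear''; but the underlying argument is the same.
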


\begin{proof}
Fairly clear. If a factorization $A\overset{e}{\twoheadrightarrow
}X\overset{m}{\hookrightarrow}B$ of $f$ exists, it forces $X\cong%
\operatorname*{coim}f$ $\cong\operatorname*{im}f$, showing that $f$ is
admissible in the sense of Definition \ref{def_AdmissibleMorphism}.
Conversely, take $X$ to be the (co)image of $f$.
\end{proof}

\subsection{Quotients}

Suppose $\mathsf{Q}$ is a quasi-abelian category and $\mathsf{C}%
\subseteq\mathsf{Q}$ is a full subcategory which is closed under subobjects
and quotients inside $\mathsf{Q}$, i.e., if%
\[
X\hookrightarrow C
\]
is an admissible monic in $\mathsf{Q}$ with $C\in\mathsf{C}$, then $X$ must
also be in $\mathsf{C}$, and symmetrically, if%
\[
C\twoheadrightarrow Y
\]
is an admissible epic in $\mathsf{Q}$ with $C\in\mathsf{C}$, then $Y$ must
also lie in $\mathsf{C}$. Suppose moreover that $\mathsf{C}$ is closed under
extensions in $\mathsf{Q}$. Jointly with the above properties, this implies
that $\mathsf{C}$ is a Serre subcategory of $\mathsf{Q}$, i.e., whenever%
\[
X^{\prime}\hookrightarrow X\twoheadrightarrow X^{\prime\prime}%
\]
is exact in $\mathsf{Q}$, we have $X\in\mathsf{C}$ if and only if both
$X^{\prime},X^{\prime\prime}\in\mathsf{C}$. In this case, we may form the
quotient category $\mathsf{Q}/\mathsf{C}$, which is defined as the
Gabriel--Zisman localization $\mathsf{Q}[S_{\mathsf{C}}^{-1}]$, where
$S_{\mathsf{C}}$ refers to the multiplicative system of finite chains of
compositions of admissible epics with $\ker(f)\in\mathsf{C}$ and admissible
monics with $\operatorname*{coker}(f)\in\mathsf{C}$. This is a left and right
multiplicative system.

\begin{definition}
Let $\mathsf{C}\subseteq\mathsf{Q}$ be a Serre subcategory closed under
subobjects and quotients. Call $\mathsf{Q}/\mathsf{C}:=\mathsf{Q}%
[S_{\mathsf{C}}^{-1}]$ the \emph{quotient category} of $\mathsf{Q}$ by
$\mathsf{C}$.

\begin{enumerate}
\item The functor%
\[
q\colon\mathsf{Q}\longrightarrow\mathsf{Q}/\mathsf{C}%
\]
is additive and commutes with finite limits and colimits.

\item Every additive functor $F\colon\mathsf{Q}\longrightarrow\mathsf{Q}%
^{\prime}$ to an additive category $\mathsf{Q}^{\prime}$ such that $F(f)$ is
an isomorphism for all $f\in S_{\mathsf{C}}$ has a $2$-universal factorization%
\begin{equation}%
{
\begin{tikzcd}
	{\mathsf{Q}} & {\mathsf{Q}'} \\
	{\mathsf{Q}/\mathsf{C}.}
	\arrow["F", from=1-1, to=1-2]
	\arrow["q"', from=1-1, to=2-1]
	\arrow["{\overline{F}}"', dashed, from=2-1, to=1-2]
\end{tikzcd}
}
\label{dg4}%
\end{equation}

\end{enumerate}
\end{definition}

For the abstract existence and properties of the quotient we rely on the
standard techniques of Gabriel--Zisman \cite[Ch. I, \S 3]{MR0210125}.

\begin{remark}
[{\cite[Prop. 1.4, Prop. 8.29]{hr}}]The quotient $\mathsf{Q}/\mathsf{C}$ is
also quasi-abelian, $q$ is an exact functor and if $\mathsf{Q}^{\prime}$ is an
exact category and $F$ an exact functor, so is $\overline{F}$ in Diagram
\ref{dg4}.
\end{remark}

\section{Cotilting torsion pairs}

If $\mathsf{E}$ is any exact category, there is a category of exact sequences
$\mathcal{E}\mathsf{E}$ whose objects are exact sequences $E^{\prime
}\hookrightarrow E\twoheadrightarrow E^{\prime\prime}$ in $\mathsf{E}$ and
morphisms are compatible diagrams of exact sequences. This is itself an exact
category with respect to levelwise admissible monics and epics (\cite[Exercise
3.9, Rmk. 3.10]{MR2606234}).

\begin{definition}
[{\cite[\S 5.4]{MR1996800}, \cite[Ch. II, \S 2]{MR1327209}}]%
\label{def_TorsionPair}Suppose $\mathsf{A}$ is an abelian category. A
\emph{torsion pair} $(\mathsf{T},\mathsf{F})$ in $\mathsf{A}$ are full
subcategories $\mathsf{T},\mathsf{F}\subseteq\mathsf{A}$ and a functor
$e\colon\mathsf{A}\rightarrow\mathcal{E}\mathsf{A}$ such that

\begin{enumerate}
\item $\operatorname*{Hom}\nolimits_{\mathsf{A}}(T,F)=0$ for all
$T\in\mathsf{T}$ and $F\in\mathsf{F}$,

\item the functor $e$ sends any object $A\in\mathsf{A}$ to an exact sequence%
\[
0\longrightarrow T\longrightarrow A\longrightarrow F\longrightarrow0
\]
with $T\in\mathsf{T}$ (called the \emph{torsion part} of $A$) and
$F\in\mathsf{F}$ (called the \emph{torsion-free part} of $A$).
\end{enumerate}

The torsion pair is \emph{tilting} if every object in $\mathsf{A}$ can be
realized as a subobject of an object in $\mathsf{T}$. It is \emph{cotilting}
if every object in $\mathsf{A}$ can be realized as a quotient of an object in
$\mathsf{F}$.
\end{definition}

\begin{example}
Instead of (2) it is sufficient to demand that one such sequence exists for
any object. Axiom (1) makes it possible to lift any morphism $A^{\prime
}\rightarrow A$ to a unique morphism in $\mathcal{E}\mathsf{E}$.
\end{example}

\begin{lemma}
\label{lemma8}If $(\mathsf{T},\mathsf{F})$ is a torsion pair in an abelian
category $\mathsf{A}$, then both $\mathsf{T}$ and $\mathsf{F}$ are
extension-closed subcategories in $\mathsf{A}$, i.e., they carry natural
induced exact structures. Both $\mathsf{T}$ and $\mathsf{F}$ are also
quasi-abelian and this pins down the same exact structure.
\end{lemma}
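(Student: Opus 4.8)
The plan is to isolate a handful of closure properties of $\mathsf{T}$ and $\mathsf{F}$ and read off both assertions from them. From axiom (1), $\operatorname{Hom}_{\mathsf{A}}(\mathsf{T},\mathsf{F})=0$, together with the torsion decomposition of axiom (2), I would first record: (i) $\mathsf{F}$ is closed under subobjects in $\mathsf{A}$ — for a monic $S\hookrightarrow F$ with $F\in\mathsf{F}$, the composite $tS\hookrightarrow S\hookrightarrow F$ is a monic lying in $\operatorname{Hom}_{\mathsf{A}}(\mathsf{T},\mathsf{F})=0$, so $tS=0$ and $S\in\mathsf{F}$; dually (ii) $\mathsf{T}$ is closed under quotients; (iii) $\mathsf{F}$ is extension-closed — in an extension $F'\hookrightarrow X\twoheadrightarrow F''$ with $F',F''\in\mathsf{F}$, the composite $tX\to X\twoheadrightarrow F''$ vanishes for the same reason, so $tX$ factors monically through $F'$, forcing $tX=0$; dually (iv) $\mathsf{T}$ is extension-closed. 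Closure under extensions is exactly what is needed for $\mathsf{T}$ and $\mathsf{F}$ to carry the natural induced exact structure of an extension-closed full subcategory of $\mathsf{A}$, whose conflations are the sequences of objects of $\mathsf{T}$ (resp.\ $\mathsf{F}$) that are short exact in $\mathsf{A}$ (see \cite[Lemma 10.20]{MR2606234}). This settles the first assertion.

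For the quasi-abelian claim I would reduce to one case. The opposite of the torsion pair $(\mathsf{T},\mathsf{F})$ in $\mathsf{A}$ is a torsion pair $(\mathsf{F}^{op},\mathsf{T}^{op})$ in $\mathsf{A}^{op}$ whose torsion-free class is $\mathsf{T}^{op}$; since quasi-abelianness is self-dual, it suffices to show that the torsion-free class $\mathsf{F}$ of a torsion pair is quasi-abelian, and $\mathsf{T}$ follows by passing to $\mathsf{A}^{op}$. Now $\mathsf{F}$ is pre-abelian: it is additive with biproducts those of $\mathsf{A}$ (by (iii)), kernels in $\mathsf{F}$ are computed as in $\mathsf{A}$ and land in $\mathsf{F}$ by (i), and the cokernel in $\mathsf{F}$ of $f\colon F_1\to F_2$ is the torsion-free reflection $\operatorname{coker}_{\mathsf{A}}(f)\big/\, t\operatorname{coker}_{\mathsf{A}}(f)$ — indeed $\mathsf{F}\hookrightarrow\mathsf{A}$ is reflective with reflector $A\mapsto A/tA$. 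One checks that the admissible monics of the induced exact structure are precisely the monics in $\mathsf{F}$ whose $\mathsf{A}$-cokernel already lies in $\mathsf{F}$, while the admissible epics are precisely the $\mathsf{A}$-epimorphisms in $\mathsf{F}$ (their $\mathsf{A}$-kernel automatically lies in $\mathsf{F}$ by (i)).

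The two quasi-abelian axioms are then verified by forming the universal square in $\mathsf{A}$ and checking it stays in $\mathsf{F}$. Given an admissible monic $m\colon F'\hookrightarrow F$ and an arbitrary $g\colon F'\to G$ in $\mathsf{F}$, the pushout $P$ of $m$ along $g$ computed in $\mathsf{A}$ sits in a short exact sequence $G\hookrightarrow P\twoheadrightarrow F/F'$ with both ends in $\mathsf{F}$, so $P\in\mathsf{F}$ by (iii); by fullness the $\mathsf{A}$-square is also a pushout in $\mathsf{F}$, and $G\hookrightarrow P$ has $\mathsf{A}$-cokernel $F/F'\in\mathsf{F}$, hence is an admissible monic. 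Dually, the pullback $P'$ in $\mathsf{A}$ of an admissible epic $e$ with kernel $K$, along an arbitrary morphism $g\colon G\to F''$ of $\mathsf{F}$, fits into a short exact sequence $K\hookrightarrow P'\twoheadrightarrow G$ with both ends in $\mathsf{F}$, so $P'\in\mathsf{F}$ by (iii) and $P'\twoheadrightarrow G$ is an admissible epic there. Hence $\mathsf{F}$, and therefore $\mathsf{T}$, is quasi-abelian.

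It remains to see that this pins down the same exact structure, i.e.\ that the induced exact structure on $\mathsf{F}$ coincides with the canonical (maximal) one of the quasi-abelian category $\mathsf{F}$, whose conflations are all kernel--cokernel pairs. The key observation is that for any $p\colon F\to Q$ in $\mathsf{F}$, writing $K:=\ker_{\mathsf{A}}p$, one has $\operatorname{coker}_{\mathsf{A}}(K\to F)\cong\operatorname{im}_{\mathsf{A}}(p)$, which is a subobject of $Q\in\mathsf{F}$ and hence lies in $\mathsf{F}$ by (i); consequently $t\operatorname{coker}_{\mathsf{A}}(K\to F)=0$, so the cokernel of $K\to F$ in $\mathsf{F}$ agrees with the one in $\mathsf{A}$. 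Thus every kernel--cokernel pair in $\mathsf{F}$ is a short exact sequence of $\mathsf{A}$ with all terms in $\mathsf{F}$, i.e.\ a conflation of the induced structure, and conversely. I expect this last identification to be the step requiring the most care: cokernels in $\mathsf{F}$ generally differ from cokernels in $\mathsf{A}$, so the inclusion $\mathsf{F}\hookrightarrow\mathsf{A}$ is not exact, and one must pin down exactly why it nonetheless preserves and reflects conflations; the quasi-abelian axioms themselves, by contrast, are routine once the closure properties (i)--(iv) are in hand.
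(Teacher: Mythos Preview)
Your argument is correct and, in contrast to the paper, actually proves the lemma: the paper's own proof consists entirely of citations (Rump, Bondal--van den Bergh, Bodzenta--Bondal, Tattar) with no argument given. So you are not reproducing the paper's approach but supplying a self-contained elementary proof of a result the authors chose to outsource.

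One organizational point is worth tightening. You verify the two stability conditions (pushout of admissible monics, pullback of admissible epics) for the \emph{induced} exact structure, and only afterwards identify the induced conflations with the kernel--cokernel pairs intrinsic to $\mathsf{F}$. Strictly speaking, ``$\mathsf{F}$ is quasi-abelian'' is a statement about strict monics and strict epics in $\mathsf{F}$, so the identification should logically precede the stability check. Your last paragraph contains exactly what is needed for this: since $\ker_{\mathsf{F}}=\ker_{\mathsf{A}}$ and, for any $p\colon F\to Q$ in $\mathsf{F}$, $\operatorname{coker}_{\mathsf{A}}(\ker p\hookrightarrow F)=\operatorname{im}_{\mathsf{A}}(p)\in\mathsf{F}$ by (i), one sees that the strict monics in $\mathsf{F}$ are precisely the $\mathsf{A}$-monics with $\mathsf{A}$-cokernel in $\mathsf{F}$, and the strict epics in $\mathsf{F}$ are precisely the $\mathsf{A}$-epics between objects of $\mathsf{F}$. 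With that in hand, your pushout and pullback verifications are exactly the quasi-abelian axioms, and the coincidence of exact structures is immediate. Reordering these two paragraphs would make the logic linear; the mathematics is already there.
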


\begin{proof}
This can be found in Rump \cite[Theorem 2]{MR1856638} or Bondal--van den Bergh
\cite{MR1996800}. The comparison of exact structures as stated here is also
explicitly proven in Bodzenta--Bondal \cite[Prop. A.4]{MR4650627}. A more
general result than what we need here was recently developed in Tattar
\cite[Prop. 5.12]{MR4340852}.
\end{proof}

\begin{definition}
Suppose $\mathsf{Q}$ is a quasi-abelian category. Then the \emph{left heart}
$\mathcal{LH}(\mathsf{Q})$ is the full subcategory of the derived category
$\operatorname*{D}\nolimits^{b}(\mathsf{Q})$ of complexes%
\begin{equation}
\left[
{
\begin{tikzcd}
	{X'} && X
	\arrow["x", from=1-1, to=1-3]
\end{tikzcd}
}%
\right]  \label{l_3c}%
\end{equation}
concentrated in homological degrees $[1,0]$, $X^{\prime},X\in\mathsf{Q}$ and
$x$ a monic\footnote{the monic need \textit{not} be an admissible monic} in
$\mathsf{Q}$.
\end{definition}

As the derived category has been formed from complexes up to
quasi-isomorphism, this unravels to the following description:

\begin{remark}
\label{rmk_SchneidersDescriptionOfLH}$\mathcal{LH}(\mathsf{Q})$ is the
localization of the category of complexes as in Eq. \ref{l_3c} by the
multiplicative system formed from morphisms $u$ of two-term complexes such
that%
\begin{equation}%
{
\begin{tikzcd}
	{X'} && X \\
	\\
	{Y'} && Y
	\arrow["f", from=1-1, to=1-3]
	\arrow["{u'}"', from=1-1, to=3-1]
	\arrow["u", from=1-3, to=3-3]
	\arrow["g"', from=3-1, to=3-3]
\end{tikzcd}
}
\label{l_3d}%
\end{equation}
is a bicartesian square in the category $\mathsf{Q}$ \cite[Cor. 1.2.21]%
{MR1779315}.
\end{remark}

Following Schneiders and later independent work of Rump and Bondal--van den
Bergh, there is a standard relation between quasi-abelian categories and
abelian categories:

\begin{proposition}
\label{prop_QAbTorsionPairInLeftHeart}Suppose $\mathsf{Q}$ is a quasi-abelian
category. Take

\begin{enumerate}
\item $\mathsf{T}$ to be complexes $\left[
{
\begin{tikzcd}
	{X'} & X
	\arrow["\bullet"{marking}, from=1-1, to=1-2]
\end{tikzcd}
}%
\right]  $ whose arrow is both monic and epic, and

\item $\mathsf{F}$ ($\cong\mathsf{Q}$) to be complexes $\left[
{
\begin{tikzcd}
	{0} & X
	\arrow[, from=1-1, to=1-2]
\end{tikzcd}
}%
\right]  $ whose first object is zero.
\end{enumerate}

Then $(\mathsf{T},\mathsf{F})$ is a cotilting torsion pair in the left heart
$\mathcal{LH}(\mathsf{Q})$. The natural inclusion $\mathsf{Q}\longrightarrow
\mathcal{LH}(\mathsf{Q})$ sending $X$ to $\left[
{
\begin{tikzcd}
	{0} & X
	\arrow[, from=1-1, to=1-2]
\end{tikzcd}
}%
\right]  $ induces a derived equivalence $\operatorname*{D}\nolimits^{b}%
(\mathsf{Q})\overset{\sim}{\longrightarrow}\operatorname*{D}\nolimits^{b}%
(\mathcal{LH}(\mathsf{Q}))$.
\end{proposition}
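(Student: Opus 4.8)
The plan is to verify the three assertions of Proposition~\ref{prop_QAbTorsionPairInLeftHeart} in turn: that $(\mathsf{T},\mathsf{F})$ is a torsion pair, that it is cotilting, and that the inclusion $\mathsf{Q}\hookrightarrow\mathcal{LH}(\mathsf{Q})$ induces a derived equivalence. Throughout I would work with Schneiders' concrete description of $\mathcal{LH}(\mathsf{Q})$ from Remark~\ref{rmk_SchneidersDescriptionOfLH}, so an object is a two-term complex $[X'\overset{x}{\to}X]$ in degrees $[1,0]$ with $x$ monic, and morphisms are roofs under bicartesian squares.

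First I would check that $\mathsf{T}$ and $\mathsf{F}$ are full subcategories and that $\operatorname{Hom}_{\mathcal{LH}(\mathsf{Q})}(T,F)=0$ for $T\in\mathsf{T}$, $F\in\mathsf{F}$. The key computation here is that a morphism from $[X'\overset{\bullet}{\to}X]$ to $[0\to Y]$ in the derived category is represented by a chain map whose degree-$1$ component lands in $0$ and whose degree-$0$ component $X\to Y$ must vanish on the image of the degree-$1$ differential; since $x$ is epic (as a morphism in $\mathsf{Q}$, and using Cor.~1.1.5 of \cite{MR1779315} that such $x$ has $\widehat{x}$ both monic and epic), the map $X\to Y$ factors through $\operatorname{coker}$-type data forcing it to be zero up to homotopy — one has to be a little careful since $x$ need not be admissible, but the argument that the induced map on the degree-$1$ cohomology object of the target (which is $0$) kills everything still goes through. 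Next, for the decomposition axiom: given any object $[X'\overset{x}{\to}X]$, I would take the canonical factorization $X'\overset{e}{\twoheadrightarrow}\operatorname{im}x\overset{m}{\hookrightarrow}X$; wait — $x$ is only monic, not admissible, so instead the torsion subobject is $[X'\overset{=}{\to}X']$ mapping via $x$, with quotient $[0\to X]$, and one checks $[X'\overset{x}{\to}X]$ sits in an exact sequence $[X'\overset{=}{\to}X'] \hookrightarrow [X'\overset{x}{\to}X] \twoheadrightarrow [0\to X]$ in $\mathcal{LH}(\mathsf{Q})$, where the first term lies in $\mathsf{T}$ because the identity is both monic and epic, and the last lies in $\mathsf{F}$. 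Functoriality of this assignment follows from the uniqueness guaranteed by the $\operatorname{Hom}$-vanishing, as noted in the Example after Definition~\ref{def_TorsionPair}.

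For the cotilting property I must show every object of $\mathcal{LH}(\mathsf{Q})$ is a quotient of an object of $\mathsf{F}\cong\mathsf{Q}$. Given $[X'\overset{x}{\to}X]$, the natural candidate is the admissible epic $[0\to X]\twoheadrightarrow[X'\overset{x}{\to}X]$ induced by the chain map that is the identity in degree $0$; I would verify this is an epic in $\mathcal{LH}(\mathsf{Q})$ with kernel $[X'\overset{x}{\to}0]\cong[X'\overset{=}{\to}X']$ shifted, i.e. an object of $\mathsf{T}[{-}1]$ — actually its kernel, computed in the left heart, is the object represented by $X'$ placed appropriately, and the point is merely that such an epic from an object of $\mathsf{F}$ exists. (Dually this fails to be tilting in general, which is why $\mathsf{LCA}$ needs the \emph{left} heart.)

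Finally, for the derived equivalence $\operatorname{D}^b(\mathsf{Q})\overset{\sim}{\to}\operatorname{D}^b(\mathcal{LH}(\mathsf{Q}))$: I would invoke the standard machinery — the inclusion $\mathsf{Q}\hookrightarrow\mathcal{LH}(\mathsf{Q})$ is exact, fully faithful, sends admissible short exact sequences to short exact sequences, and its essential image $\mathsf{F}$ is a cotilting torsion-free class; a general theorem (originally Schneiders \cite[\S1.2]{MR1779315}, reproved by Rump \cite{MR1856638} and Bondal--van den Bergh \cite{MR1996800}) states that this induces an equivalence on bounded derived categories. The mechanism is that every object of $\mathcal{LH}(\mathsf{Q})$ has a two-term resolution by objects of $\mathsf{F}$ (namely $[0\to X]\twoheadrightarrow[X'\to X]$ with kernel again in $\mathsf{F}$ up to shift — precisely the cotilting condition), so $\operatorname{D}^b(\mathcal{LH}(\mathsf{Q}))$ is generated by $\mathsf{F}=\mathsf{Q}$, and one checks the $\operatorname{Ext}$-groups match: $\operatorname{Ext}^i_{\mathcal{LH}(\mathsf{Q})}(X,Y)$ for $X,Y\in\mathsf{Q}$ agrees with the Yoneda $\operatorname{Ext}$ computed via admissible exact sequences in $\mathsf{Q}$. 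I expect this last step — the precise comparison of $\operatorname{Ext}$-groups and the verification that the resolution dimension is bounded — to be the main obstacle, but since it is exactly the content of the cited results of Schneiders, Rump, and Bondal--van den Bergh, I would cite those rather than reprove them, and concentrate the written proof on the torsion-pair axioms and the $\operatorname{Hom}$-vanishing, which are the parts specific to the stated subcategories $\mathsf{T}$ and $\mathsf{F}$.
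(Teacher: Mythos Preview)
Your proposal contains a genuine error in the torsion decomposition. You write that the torsion subobject of $[X'\overset{x}{\to}X]$ is $[X'\overset{=}{\to}X']$, but this complex is acyclic and hence represents the \emph{zero object} in $\mathcal{LH}(\mathsf{Q})\subset\operatorname{D}^b(\mathsf{Q})$. Your proposed sequence $[X'\overset{=}{\to}X']\hookrightarrow[X'\overset{x}{\to}X]\twoheadrightarrow[0\to X]$ would then force $[X'\overset{x}{\to}X]\cong[0\to X]$ for every object, i.e.\ $\mathsf{T}=0$, which is false. Ironically, the instinct you suppressed was the correct one: the factorization $X'\to\operatorname{im}(x)\hookrightarrow X$ \emph{does} exist in any quasi-abelian category, and since $x$ is monic one has $\operatorname{coim}(x)=X'$, so the canonical map $\widehat{x}\colon X'\to\operatorname{im}(x)$ is both monic and epic by \cite[Cor.~1.1.5]{MR1779315} (this is precisely what the paper records just before Lemma~\ref{lemma_S0}). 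The correct torsion decomposition is
\[
[X'\overset{\widehat{x}}{\longrightarrow}\operatorname{im}(x)]\;\hookrightarrow\;[X'\overset{x}{\longrightarrow}X]\;\twoheadrightarrow\;[0\longrightarrow\operatorname{coker}(x)],
\]
with the first term in $\mathsf{T}$ and the last in $\mathsf{F}$; your worry that ``$x$ is only monic, not admissible'' is exactly what makes the first term nontrivial rather than an obstruction to the construction.

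A secondary gap: your $\operatorname{Hom}(T,F)=0$ argument treats morphisms as chain maps, but morphisms in $\mathcal{LH}(\mathsf{Q})$ are roofs through bicartesian squares (Remark~\ref{rmk_SchneidersDescriptionOfLH}). You need the observation that any bicartesian replacement of an object in $\mathsf{T}$ is again in $\mathsf{T}$ --- this is Lemma~\ref{lemma9} --- before the epic argument applies to the apex of the roof. For comparison, the paper itself does not carry out any of these verifications directly: its proof of this proposition is entirely by citation to Schneiders \cite[\S1.2.3, Prop.~1.2.32]{MR1779315}, Bondal--van den Bergh \cite[Prop.~B.3, B.2]{MR1996800}, and Rump \cite[Theorem~2]{MR1856638}.
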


\begin{proof}
This formulation follows Bondal--van den Bergh \cite[Prop. B.3 (1)$\Rightarrow
$(2), then Prop. B.2]{MR1996800}, but most of this is proven already by
Schneiders \cite[\S 1.2.3, Prop. 1.2.32]{MR1779315}, without using the
terminology of torsion pairs. Alternatively, one can also follow Rump's
treatise \cite[Theorem 2]{MR1856638}.
\end{proof}

The characterization of the torsion part $\mathsf{T}$ is stable under quasi-isomorphisms:

\begin{lemma}
\label{lemma9}Suppose $\mathsf{Q}$ is a quasi-abelian category. Suppose%
\[%
{
\begin{tikzcd}
	{X'} && X \\
	\\
	{Y'} && Y
	\arrow["f", from=1-1, to=1-3]
	\arrow[from=1-1, to=3-1]
	\arrow[from=1-3, to=3-3]
	\arrow["g"', from=3-1, to=3-3]
\end{tikzcd}
}%
\]
is a bicartesian square in $\mathsf{Q}$ with both horizontal arrows monic.
Then $f$ is epic if and only if $g$ is epic.
\end{lemma}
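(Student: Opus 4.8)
The plan is to analyze the bicartesian square directly using the structure of pushouts and pullbacks in the quasi-abelian category $\mathsf{Q}$, exploiting that a bicartesian square is simultaneously a pushout and a pullback. First I would record that since $f$ is monic and the square is a pushout along $f$, quasi-abelianness (in the form of the pushout axiom, or rather its consequence for general monics via the exact structure) does not automatically give us what we want, so instead I would work with the long exact sequence / mapping cone viewpoint: the square being bicartesian is equivalent to saying that the induced map on cokernels (in the derived sense) $\operatorname*{cone}(f) \to \operatorname*{cone}(g)$ is an isomorphism, or more elementarily that the sequence $X' \to X \oplus Y' \to Y$ is "exact" in the appropriate sense. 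The cleanest route: embed everything into the left heart $\mathcal{LH}(\mathsf{Q})$, which is abelian, via Proposition \ref{prop_QAbTorsionPairInLeftHeart}.

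In more detail, I would pass to the abelian category $\mathcal{LH}(\mathsf{Q})$, where $\mathsf{Q}$ sits as the torsion-free class $\mathsf{F}$. A bicartesian square in $\mathsf{Q}$ with monic horizontal arrows remains bicartesian in $\mathcal{LH}(\mathsf{Q})$: pushouts along monics in a quasi-abelian category are computed the same way in the left heart (this is part of how the embedding interacts with the exact structure — pushout of an admissible monic is preserved, and for a general monic $f$ one factors through its image and uses that in $\mathcal{LH}(\mathsf{Q})$ the monic $f$ becomes an honest monomorphism so the pushout is the abelian-categorical one). Once the square lives in an abelian category with $f, g$ monomorphisms, the standard diagram lemma applies: for a pushout square along a monomorphism $f$ in an abelian category, $\operatorname*{coker}(f) \xrightarrow{\sim} \operatorname*{coker}(g)$; combined with the square also being a pullback, one gets that the induced map $\operatorname*{coker}(f) \to \operatorname*{coker}(g)$ is an isomorphism, hence $g$ is epic in $\mathcal{LH}(\mathsf{Q})$ iff $f$ is. Finally I would descend this back to $\mathsf{Q}$: a morphism in $\mathsf{Q} = \mathsf{F}$ is epic in $\mathsf{Q}$ if and only if it is epic in $\mathcal{LH}(\mathsf{Q})$ — here one must be slightly careful, since epics in $\mathsf{F}$ as a quasi-abelian category need not coincide with epics in the ambient abelian category; the right statement is that $x$ in $\mathsf{Q}$ is both monic and epic precisely when it is a monomorphism in $\mathcal{LH}(\mathsf{Q})$ with cokernel in $\mathsf{T}$, equivalently when $\operatorname*{coker}(x)$ computed in $\mathcal{LH}(\mathsf{Q})$ lies in the torsion class. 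Since $f$ is already assumed monic in $\mathsf{Q}$ and $g$ is too (given), the assertion "$f$ epic iff $g$ epic" in $\mathsf{Q}$ translates exactly to "$\operatorname*{coker}_{\mathcal{LH}}(f) \in \mathsf{T}$ iff $\operatorname*{coker}_{\mathcal{LH}}(g) \in \mathsf{T}$", which follows at once from the isomorphism $\operatorname*{coker}_{\mathcal{LH}}(f) \cong \operatorname*{coker}_{\mathcal{LH}}(g)$.

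The main obstacle I anticipate is the bookkeeping around the non-admissible monics: the horizontal arrows are only assumed monic, not admissible monic, so I cannot directly invoke the quasi-abelian pushout axiom (which is stated for admissible monics), and I must instead justify carefully that the pushout of a merely monic arrow along an arbitrary arrow, when it exists in $\mathsf{Q}$, is computed compatibly in the left heart. The key technical input is Remark \ref{rmk_SchneidersDescriptionOfLH}: bicartesian squares in $\mathsf{Q}$ (with monic arrows) are exactly the quasi-isomorphisms between the two-term complexes $[X' \to X]$ and $[Y' \to Y]$ viewed as objects of $\mathcal{LH}(\mathsf{Q})$. So actually the slickest proof bypasses pushouts entirely: the bicartesian square says $[X' \xrightarrow{f} X] \cong [Y' \xrightarrow{g} Y]$ as objects of $\mathcal{LH}(\mathsf{Q})$, and by the torsion pair of Proposition \ref{prop_QAbTorsionPairInLeftHeart}, such a two-term complex lies in $\mathsf{T}$ (equivalently, has torsion-free part zero, equivalently $\operatorname*{coker}$ of the arrow vanishes in the heart) if and only if the arrow is both monic and epic in $\mathsf{Q}$; but more relevantly, the torsion-free part $F$ of $[X' \to X]$ is precisely $\operatorname*{coim}(f) \to \operatorname*{im}(f)$'s failure, i.e. is $\operatorname*{coker}(f)$ regarded in $\mathsf{F} = \mathsf{Q}$ when $f$ is monic — and isomorphic objects in the heart have isomorphic torsion-free parts, so $\operatorname*{coker}_{\mathsf{Q}}(f) \cong \operatorname*{coker}_{\mathsf{Q}}(g)$. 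Then $f$ is epic iff $\operatorname*{coker}_{\mathsf{Q}}(f) = 0$ iff $\operatorname*{coker}_{\mathsf{Q}}(g) = 0$ iff $g$ is epic, which is exactly the claim. I would present this last version as the clean proof, with the diagram-chasing version relegated to a remark or omitted.
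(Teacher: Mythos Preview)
Your argument is correct, but it takes a considerably more elaborate route than the paper. The paper gives a five-line direct verification using only the universal property of the pushout: if $g$ is epic and $h\colon X\to Z$ satisfies $h\circ f=0$, the pushout property produces a unique $h'\colon Y\to Z$ with $h'\circ g=0$ and $h'\circ(X\to Y)=h$; then $g$ epic forces $h'=0$, hence $h=0$. The other direction is symmetric. The paper also notes an alternative via \cite[Lemma 1.2.25]{MR1779315}, which directly identifies $\operatorname*{coker}(f)\cong\operatorname*{coker}(g)$ from the bicartesian square.

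Your approach --- pass to $\mathcal{LH}(\mathsf{Q})$, use Remark~\ref{rmk_SchneidersDescriptionOfLH} to read the bicartesian square as an isomorphism $[X'\to X]\cong[Y'\to Y]$ in the heart, then compare torsion-free parts --- is essentially a dressed-up version of that alternative. It is logically sound (Proposition~\ref{prop_QAbTorsionPairInLeftHeart} is imported from external sources and does not depend on this lemma), and the identification of the torsion-free part of $[X'\xrightarrow{f}X]$ with $\operatorname*{coker}_{\mathsf{Q}}(f)$ is correct. What it buys is a conceptual explanation tying the lemma directly to the heart picture it is meant to support. What it costs is that you invoke a fair amount of machinery (the heart, the torsion pair, the computation of the torsion-free functor) to prove a statement that follows in two lines from the definition of a pushout and the definition of an epimorphism. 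For a lemma whose role in the paper is precisely to be an elementary stability check underneath the heart formalism, the direct argument is preferable.
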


Said differently: If the rows represent objects in $\mathcal{LH}(\mathsf{Q})$,
then if one row lies in $\mathsf{T}$ (as defined in Prop.
\ref{prop_QAbTorsionPairInLeftHeart}), then so does the other.

\begin{proof}
Direct verification, e.g.: If $g$ is an epic, let us verify that $f$ is an
epic. Let $h\colon X\rightarrow Z$ be any arrow such that $h\circ f=0$. Then
there is a unique arrow $h^{\prime}\colon Y\rightarrow Z$ defined as the
pushout of $h$ and the zero map from $Y^{\prime}$ to $Z$. Since $h^{\prime
}\circ g=0$ by construction and $g$ is epic, $h^{\prime}=0$, but then it
follows that $h=0$. This shows that $f$ is epic. The converse direction is
done similarly. Alternatively, use \cite[Lemma 1.2.25]{MR1779315}: The
cokernel is zero if and only if the horizontal arrow is an epic.
\end{proof}

\begin{remark}
The analogous statement for $\mathsf{F}$ does not hold: Complexes whose first
object is non-zero can be quasi-isomorphic to objects in $\mathsf{F}$.
\end{remark}

\begin{example}
The category $\mathsf{Ghost}$, as it appears in the introduction, will
correspond precisely to the torsion part $\mathsf{T}$ of $\mathcal{LH}%
(\mathsf{LCA})$.
\end{example}

\section{\label{sect_ExactStructuresOnCatsOfTopGroups}Categories of
topological groups}

We shall freely use topological terms as recalled in
\S \ref{Appendix_LocallyPrecompactGroups}.

\subsection{Definitions}

\begin{definition}
\label{def_a1}Let $\mathsf{C}$ be any of the categories

\begin{itemize}
\item $\mathsf{HA}$ of Hausdorff topological abelian groups,

\item $\mathsf{LCA}$ of locally compact Hausdorff abelian groups, i.e., $0$
has a compact neighbourhood,

\item $\mathsf{LCA}_{\operatorname*{vf}}\subset\mathsf{LCA}$ of those groups
which do not have a real line as a direct summand,

\item $\mathsf{LPA}$ of locally precompact Hausdorff abelian groups, i.e., $0$
has a precompact neighbourhood (see Definition \ref{def_Precompact}),
\end{itemize}

or its full subcategories

\begin{itemize}
\item $\mathsf{PCA}\subset\mathsf{LPA}$ of precompact Hausdorff abelian groups
(see Definition \ref{def_Precompact}),

\item $\mathsf{PGA}\subset\mathsf{LPA}$ of locally precompact, precompactly
generated Hausdorff abelian groups (see Definition \ref{def_Precompact} or the
alternative characterization of Prop. \ref{prop_ComfortLukacs} by Comfort--Luk\'{a}cs).
\end{itemize}
\end{definition}

In each respective case, the objects of the category are pairs $(X,\tau)$
consisting of an abelian group $X$ and a topology $\tau$ on the underlying set
$X$. Morphisms are continuous group homomorphisms. The following arrows
indicate inclusion of a full subcategory:%
\[%
{
\begin{tikzcd}
	{\mathsf{PCA}} & {\mathsf{PGA}} \\
	&& {\mathsf{LPA}} && {\mathsf{HA}} \\
	& {\mathsf{LCA}}
	\arrow[from=1-1, to=1-2]
	\arrow[from=1-2, to=2-3]
	\arrow[from=2-3, to=2-5]
	\arrow[from=3-2, to=2-3]
\end{tikzcd}
}%
\]
In each of the categories of Definition \ref{def_a1} we use the symbol

\begin{enumerate}
\item
{
\begin{tikzcd}
	{X'} & X
	\arrow[hook, from=1-1, to=1-2]
\end{tikzcd}
}
for an injective closed continuous group morphism,

\item
{
\begin{tikzcd}
	{X'} & X
	\arrow[two heads, from=1-1, to=1-2]
\end{tikzcd}
}
for a surjective open continuous group morphism,

\item
{
\begin{tikzcd}
	{X'} & X
	\arrow["\bullet"{marking}, from=1-1, to=1-2]
\end{tikzcd}
}
for a morphism which is both monic and epic.\footnote{As we shall see below in
Lemma \ref{lemma_DescribeKernelsAndCokernelsInHA}, in all listed categories
$\mathsf{C}$ (i.e., $\mathsf{HA}$, $\mathsf{LCA}$, $\mathsf{LPA}$, etc.) this
corresponds to injective morphisms with dense image. Contrastingly, in the
category of all (not necessarily Hausdorff) topological abelian groups, it
would correspond to continuous bijective morphisms.}
\end{enumerate}

An \emph{exact sequence} in $\mathsf{C}$ (with $\mathsf{C}$ again any of the
categories of Definition \ref{def_a1}) is%
\begin{equation}
X^{\prime}\overset{i}{\hookrightarrow}X\overset{j}{\twoheadrightarrow
}X^{\prime\prime} \label{l_4}%
\end{equation}
which is (1) an exact sequence of the underlying abelian groups without
topology, (2) $i$ is a closed continuous map, (3) $j$ is an open continuous map.

\subsection{Basic properties}

\begin{lemma}
\label{lemma_LCAPermanence}Suppose $X\in\mathsf{LCA}$.

\begin{enumerate}
\item For any closed subgroup $Y\subseteq X$ with its induced subspace
topology, we have $Y,X/Y\in\mathsf{LCA}$ and%
\[
Y\hookrightarrow X\twoheadrightarrow X/Y
\]
is an exact sequence in $\mathsf{LCA}$ as described in Eq. \ref{l_4}.

\item If the induced subspace topology of some subgroup $Y\subseteq X$ is
locally compact, then $Y$ must be a closed subset of $X$, and we are in the
situation of (1).
\end{enumerate}
\end{lemma}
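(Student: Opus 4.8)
The plan is to treat the two parts separately, since part (1) is formal bookkeeping while part (2) rests on the classical fact that a locally compact subgroup of a Hausdorff topological group is automatically closed.

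For part (1) I would argue as follows. As $Y$ is closed in the locally compact Hausdorff space $X$, the subspace $Y$ is again locally compact Hausdorff, and being a subgroup it is a topological group, so $Y\in\mathsf{LCA}$. The quotient map $\pi\colon X\rightarrow X/Y$ is a continuous open surjection (quotient maps of topological groups by subgroups are always open); hence if $K$ is a compact neighbourhood of $0$ in $X$, then $\pi(K)$ is compact and contains the open neighbourhood $\pi(\operatorname{int}K)$ of $0$, so $X/Y$ is locally compact, while $X/Y$ is Hausdorff precisely because $\ker\pi=Y$ is closed. Thus $X/Y\in\mathsf{LCA}$. Finally $i\colon Y\hookrightarrow X$ is an injective closed continuous group morphism, $\pi$ is an open continuous surjection, and the underlying sequence of abelian groups is exact, which is exactly the definition of an exact sequence in $\mathsf{LCA}$ from Eq.~\ref{l_4}. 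So part (1) requires no work beyond unwinding definitions and invoking the openness of $\pi$; alternatively one may simply cite the standard permanence properties (e.g.\ Hewitt--Ross, or the Appendix \S\ref{Appendix_LocallyPrecompactGroups}).

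The content is in part (2), and there the plan is to show directly that $\overline{Y}=Y$, the closure being taken in $X$. First I would extract a compact neighbourhood of $0$ that sits inside $X$: choose a compact neighbourhood $C$ of $0$ in $Y$, write $C=Y\cap W$ with $W$ open in $X$, and by regularity of topological groups pick an open $U\ni 0$ in $X$ with $\overline{U}\subseteq W$; then $\overline{U}\cap Y$ is a closed subset of the compact set $C$, hence compact, hence (as $X$ is Hausdorff) closed in $X$. Now take $x\in\overline{Y}$ and a symmetric open neighbourhood $V\ni 0$ with $V+V\subseteq U$. Since $x\in\overline{Y}$ there is $h_{0}\in Y\cap(x+V)$, so $x-h_{0}\in V$, and $x-h_{0}\in\overline{Y}$ because the closure of a subgroup is a subgroup. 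Consider the filter base formed by the sets $(x-h_{0}+V')\cap Y$ with $V'$ ranging over symmetric neighbourhoods of $0$ contained in $V$; each is nonempty and contained in $(x-h_{0}+V)\cap Y\subseteq(V+V)\cap Y\subseteq U\cap Y\subseteq\overline{U}\cap Y$. Since $\overline{U}\cap Y$ is compact, this filter base has a cluster point $p$, which lies in the closure of each $(x-h_{0}+V')\cap Y$, hence in $\overline{U}\cap Y\subseteq Y$, and also in $\bigcap_{V'}(x-h_{0}+\overline{V'})=\{x-h_{0}\}$ by Hausdorffness. Therefore $x-h_{0}=p\in Y$, so $x\in Y$, giving $\overline{Y}=Y$, and we are back in the situation of part (1).

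The main obstacle is purely organizational and lies in part (2): producing a single compact set into which all the relevant traces of $Y$ fall, which is what forces the shrinking from $C$ to $\overline{U}\cap Y$ and the translation by $h_{0}$ that moves the problem to a neighbourhood of $0$. Everything uses only that $X$ is Hausdorff, not its local compactness, although here $X\in\mathsf{LCA}$ supplies both. In the final write-up I would either reproduce the short cluster-point argument above or simply cite the classical statement that locally compact subgroups of Hausdorff topological groups are closed (Hewitt--Ross, Bourbaki).
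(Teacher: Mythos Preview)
Your approach is correct and matches the paper's: for part (1) both you and the paper argue by intersecting a compact neighbourhood of $0$ in $X$ with $Y$ and by openness of the quotient map (the paper cites \cite[Prop.~3.1.23]{MR2433295} for the quotient); for part (2) the paper simply invokes the classical fact that a locally compact subgroup of a Hausdorff topological group is closed (\cite[Prop.~1.4.19]{MR2433295}), which is precisely the citation option you mention at the end, while you additionally supply the standard cluster-point proof of that fact.

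One small slip to fix in your direct argument: a compact neighbourhood $C$ of $0$ in $Y$ need not itself be open in $Y$, so you cannot literally write $C=Y\cap W$ with $W$ open in $X$. What you need is an open $W\ni 0$ in $X$ with $Y\cap W\subseteq C$ (i.e.\ $W$ cuts out a subset of the interior of $C$). With that adjustment the rest goes through unchanged: choose $U$ open in $X$ with $0\in U$ and $\overline{U}\subseteq W$, so $\overline{U}\cap Y\subseteq Y\cap W\subseteq C$ is closed in the compact set $C$, hence compact, hence closed in the Hausdorff space $X$, and your filter-base argument then applies verbatim.
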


\begin{proof}
(1) As is true for all subspaces, $Y$ is also Hausdorff and since $Y$ is
closed, $X/Y$ is Hausdorff. If $C\ni0$ is a compact neighbourhood of $0$ in
$X$, it follows that $C\cap Y$ is both a neighbourhood of $0$ in $Y$, and a
compact set. Hence, $Y$ is also a locally compact group, i.e., $Y\in
\mathsf{LCA}$ and the inclusion is a closed injective map. Moreover, $X/Y$ is
Hausdorff since $Y$ is closed, and locally compact by \cite[Prop.
3.1.23]{MR2433295}\ (2) \cite[Prop. 1.4.19]{MR2433295} shows that $Y$ must be
a closed subset, so (1) applies.
\end{proof}

\begin{lemma}
\label{lemma_LCAvfProps}Suppose $X\in\mathsf{LCA}_{\operatorname*{vf}}$. For
any closed subgroup $Y\subseteq X$ with its induced subspace topology, we have
$Y,X/Y\in\mathsf{LCA}_{\operatorname*{vf}}$ and%
\[
Y\hookrightarrow X\twoheadrightarrow X/Y
\]
is an exact sequence in $\mathsf{LCA}_{\operatorname*{vf}}$ as described in
Eq. \ref{l_4}.
\end{lemma}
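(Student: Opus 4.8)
The plan is to observe that Lemma \ref{lemma_LCAPermanence}(1) already supplies almost everything: it gives $Y,X/Y\in\mathsf{LCA}$ and shows that $Y\hookrightarrow X\twoheadrightarrow X/Y$ is an exact sequence in $\mathsf{LCA}$ in the sense of Eq. \ref{l_4}. Since the three conditions defining such an exact sequence are intrinsic (exactness of the underlying abelian groups, $i$ a closed continuous map, $j$ an open continuous map) and $\mathsf{LCA}_{\operatorname*{vf}}\subset\mathsf{LCA}$ is a full subcategory, the only genuinely new point to establish is that $Y$ and $X/Y$ again have no real line as a direct summand. So everything reduces to the inheritance of vector-freeness under passage to closed subgroups and to quotients by them.

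For this I would use the standard characterization coming from the principal structure theorem for LCA groups: a group in $\mathsf{LCA}$ is vector-free if and only if it possesses a compact open subgroup. (If $G\cong\mathbb{R}^{n}\times G_{0}$ with $G_{0}$ admitting a compact open subgroup, then $n\geq 1$ would exhibit $\mathbb{R}$ as a direct summand, so vector-freeness forces $n=0$; conversely $\mathbb{R}\times G'$ admits no compact open subgroup, because any compact subgroup projects to $0$ in $\mathbb{R}$ and an open subgroup contained in $\{0\}\times G'$ would make $\mathbb{R}$ discrete.) Granting this, fix a compact open subgroup $K\subseteq X$. For $Y$: the subgroup $K\cap Y$ is open in $Y$ because $K$ is open in $X$, and it is compact because it is a closed subset (as $Y$ is closed in $X$) of the compact set $K$; hence $Y$ has a compact open subgroup and is vector-free. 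For $X/Y$: writing $q\colon X\twoheadrightarrow X/Y$ for the open continuous quotient map, the image $q(K)$ is a subgroup of $X/Y$ which is open (since $q$ is an open map) and compact (being the continuous image of the compact $K$); hence $X/Y$ has a compact open subgroup and is vector-free.

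Combining this with Lemma \ref{lemma_LCAPermanence}(1) then finishes the proof: $Y,X/Y\in\mathsf{LCA}_{\operatorname*{vf}}$ and the displayed sequence is an exact sequence in $\mathsf{LCA}_{\operatorname*{vf}}$ in the sense of Eq. \ref{l_4}. I expect the only non-formal ingredient, and hence the main obstacle (although it is entirely standard LCA structure theory), to be the equivalence between vector-freeness and the existence of a compact open subgroup; once that is available, the remainder is routine point-set topology about intersections and images of compact open subgroups, closely parallel to the proof of Lemma \ref{lemma_LCAPermanence}.
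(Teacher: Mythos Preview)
Your proof is correct, but it takes a different route from the paper. The paper argues via the homological properties of $\mathbb{R}$ in $\mathsf{LCA}$: since $\mathbb{R}$ is both injective and projective in $\mathsf{LCA}$ (citing Moskowitz \cite[\S III]{MR0215016}), any direct summand $\mathbb{R}$ of $Y$ would split off from $X$ via injectivity, and any direct summand $\mathbb{R}$ of $X/Y$ would split off from $X$ via projectivity, contradicting $X\in\mathsf{LCA}_{\operatorname*{vf}}$ in either case. Your argument instead invokes the structure theorem to translate vector-freeness into the existence of a compact open subgroup, and then pushes this compact open subgroup through intersections and images by elementary point-set topology. The paper's approach is shorter and more categorical, and it anticipates the role that injectivity/projectivity of $\mathbb{R}$ plays later in the paper (e.g., in Step~1 of Theorem \ref{thm_PrepareRoof_PCA}); your approach is more self-contained, avoids appealing to Moskowitz's result, and makes the permanence properties visibly parallel to those in Lemma \ref{lemma_LCAPermanence}.
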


\begin{proof}
The real line $\mathbb{R}$ is both an injective and projective object in
$\mathsf{LCA}$ \cite[\S III]{MR0215016}, so if it is a direct summand of $Y$
or $X/Y$, it will lift to being a direct summand of $X$.
\end{proof}

\begin{example}
The full subcategory $\mathsf{LCA}_{\operatorname*{vf}}$ is \emph{not}
extension-closed in $\mathsf{LCA}$. The extension $\mathbb{Z}\hookrightarrow
\mathbb{R}\twoheadrightarrow\mathbb{T}$ is a counter-example. In particular,
the exact structure on $\mathsf{LCA}_{\operatorname*{vf}}$ is not the induced
one from $\mathsf{LCA}$. See \cite{braunling2025realvectorspacesregulators}
for more peculiar properties of $\mathsf{LCA}_{\operatorname*{vf}}$.
\end{example}

\begin{lemma}
\label{lemma_TBAPermanence}Suppose $X\in\mathsf{PCA}$. For every subgroup
$Y\subseteq X$ with its induced subspace topology, we have $Y,\overline
{Y},X/\overline{Y}\in\mathsf{PCA}$.
\end{lemma}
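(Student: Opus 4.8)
The plan is to observe that $\mathsf{PCA}$ is nothing but the category of totally bounded Hausdorff abelian groups, so all three assertions reduce to the classical permanence properties of total boundedness under subgroups, closures, and Hausdorff quotients (see \S \ref{Appendix_LocallyPrecompactGroups} and the references there). I would verify the three cases in turn.

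\emph{The subgroup $Y$.} As a subspace of a Hausdorff group it is Hausdorff, so only precompactness in the sense of Definition \ref{def_Precompact} is at issue. Given a $0$-neighbourhood $V$ in $Y$, I would choose a $0$-neighbourhood $U$ in $X$ with $U\cap Y\subseteq V$ and a symmetric open $W$ with $W+W\subseteq U$. Precompactness of $X$ yields a finite cover $X=\bigcup_{i=1}^{n}(x_i+W)$; discarding the $i$ with $(x_i+W)\cap Y=\emptyset$ and picking $y_i\in (x_i+W)\cap Y$ for the rest, one gets for $y\in Y\cap(x_i+W)$ that $y-y_i=(y-x_i)+(x_i-y_i)\in W+W\subseteq U$ by symmetry of $W$, and $y-y_i\in Y$, hence $y-y_i\in U\cap Y\subseteq V$. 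Thus $Y=\bigcup_i(y_i+V)$ with all $y_i\in Y$. This symmetric-neighbourhood manipulation is the only computation in the proof, and it is exactly the standard argument that subgroups of totally bounded groups are totally bounded.

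\emph{The closure $\overline{Y}$ and the quotient $X/\overline{Y}$.} Since the closure of a subgroup of a topological group is again a subgroup, $\overline{Y}$ is a subgroup of $X$, and the case above (applied to $\overline{Y}$ in place of $Y$) gives $\overline{Y}\in\mathsf{PCA}$. For the quotient, $X/\overline{Y}$ is Hausdorff because $\overline{Y}$ is closed, and precompactness transports along the open continuous surjection $\pi\colon X\to X/\overline{Y}$: for a $0$-neighbourhood $U$ downstairs, $\pi^{-1}(U)$ is a $0$-neighbourhood upstairs, so $X=F+\pi^{-1}(U)$ for some finite set $F$, whence $X/\overline{Y}=\pi(F)+U$.

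I do not expect a genuine obstacle here; the only subtlety is the bookkeeping in the definition of precompactness, namely covering $Y$ by translates centred at points of $Y$ rather than of $X$, which the passage to a symmetric $W$ with $W+W\subseteq U$ handles. If one prefers, one could instead argue via completions, embedding $X$ densely into a compact group, but the direct argument above is cleaner, especially for the quotient, and I would use it.
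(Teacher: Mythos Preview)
Your proof is correct and follows the same route as the paper: verify the standard permanence properties of precompactness (total boundedness) under subgroups, closures, and Hausdorff quotients. The only difference is that the paper outsources all three verifications to textbook references (Arhangel'skii--Tkachenko), whereas you write out the elementary arguments directly.
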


\begin{proof}
As is true for all subspaces, $Y,\overline{Y}$ are also Hausdorff, and since
$\overline{Y}$ is closed, $X/\overline{Y}$ is Hausdorff. \cite[Prop.
3.7.4]{MR2433295} shows that $Y$ is precompact, and since $\overline{Y}$ is
also a subgroup (\cite[Cor. 1.4.14]{MR2433295}), this settles the same for
$\overline{Y}$. For $X/\overline{Y}$ use \cite[Prop. 3.7.1]{MR2433295}.
\end{proof}

\begin{lemma}
\label{lemma_DescribeKernelsAndCokernelsInHA}Suppose $\mathsf{C}%
\in\{\mathsf{HA},\mathsf{LCA},\mathsf{LCA}_{\operatorname*{vf}},\mathsf{PCA}%
,\mathsf{LPA},\mathsf{PGA}\}$. Suppose $X,Y\in\mathsf{C}$. Suppose $f\colon
X\rightarrow Y$ is a morphism.

\begin{enumerate}
\item Then $\ker(f)=\{x\in X\mid f(x)=0\}\in\mathsf{C}$ is a concrete choice
for its kernel in the sense of category-theory. The inclusion%
\[
\ker(f)\longrightarrow X
\]
is a closed injective morphism in $\mathsf{C}$.

\item Moreover, $\operatorname*{coker}(f):=Y/\overline{f(X)}\in\mathsf{C}$ is
a concrete choice for its cokernel, and%
\[
Y\longrightarrow\operatorname*{coker}(f)
\]
is an open surjective morphism in $\mathsf{C}$.

\item The monomorphisms (in the sense of category theory) in $\mathsf{C}$ are
injective continuous group homomorphisms, i.e. those maps with kernel being
zero. The category-theoretic image $\ker\left(  Y\longrightarrow
\operatorname*{coker}(f)\right)  $ in $\mathsf{C}$ agrees with $\overline
{f(X)}$.

\item The epimorphisms (in the sense of category theory) in $\mathsf{C}$ are
continuous group homomorphisms with dense set-theoretic image, i.e. those maps
with cokernel being zero. The category-theoretic co-image
$\operatorname*{coker}(\ker(f)\longrightarrow X)$ in $\mathsf{C}$ is
$X/\ker(f)$.

\item The category $\mathsf{C}$ is quasi-abelian.
\end{enumerate}
\end{lemma}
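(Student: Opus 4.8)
The plan is to treat all six categories $\mathsf{C}\in\{\mathsf{HA},\mathsf{LCA},\mathsf{LCA}_{\operatorname*{vf}},\mathsf{PCA},\mathsf{LPA},\mathsf{PGA}\}$ uniformly, reducing everything to the case $\mathsf{HA}$ of all Hausdorff topological abelian groups and then invoking the permanence properties established in Lemmas \ref{lemma_LCAPermanence}, \ref{lemma_LCAvfProps}, \ref{lemma_TBAPermanence} (and their counterparts for $\mathsf{LPA}$, $\mathsf{PGA}$ from the Appendix) to conclude that the concrete kernel and cokernel stay inside the subcategory. Concretely, for (1): given $f\colon X\to Y$ in $\mathsf{HA}$, the set-theoretic kernel $\ker(f)=f^{-1}(0)$ is a closed subgroup because $\{0\}$ is closed in the Hausdorff group $Y$ and $f$ is continuous; equip it with the subspace topology. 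One checks it satisfies the categorical universal property — any continuous homomorphism $g\colon Z\to X$ with $f\circ g=0$ factors set-theoretically through $\ker(f)$, and the factorization is automatically continuous since $\ker(f)$ carries the subspace topology. The inclusion $\ker(f)\hookrightarrow X$ is closed and injective by construction. Then the permanence lemmas tell us $\ker(f)$ lands in $\mathsf{C}$ for each of the listed $\mathsf{C}$.

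For (2): set $\operatorname*{coker}(f):=Y/\overline{f(X)}$ with the quotient topology. Since $\overline{f(X)}$ is a closed subgroup of a Hausdorff group, the quotient is Hausdorff, hence lies in $\mathsf{HA}$; by the permanence lemmas it lies in $\mathsf{C}$. The quotient map $Y\to Y/\overline{f(X)}$ is open (quotient maps of topological groups by subgroups always are) and surjective. For the universal property one checks that a continuous homomorphism $h\colon Y\to Z$ (with $Z$ Hausdorff) satisfying $h\circ f=0$ kills $f(X)$, hence kills $\overline{f(X)}$ by continuity and Hausdorffness of $Z$, hence factors continuously through the quotient by the universal property of the quotient topology. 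Items (3) and (4) then follow formally: a morphism is monic iff its kernel vanishes iff it is injective, and the categorical image $\ker(Y\to\operatorname*{coker}(f))$ is the preimage of $0$ under $Y\to Y/\overline{f(X)}$, which is exactly $\overline{f(X)}$; dually a morphism is epic iff its cokernel vanishes iff $\overline{f(X)}=Y$, i.e.\ $f$ has dense image, and $\operatorname*{coim}(f)=X/\ker(f)$ with the quotient topology.

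For (5), quasi-abelianness, I would first observe from (1)--(4) that the canonical map $\widehat{f}\colon\operatorname*{coim}(f)=X/\ker(f)\to\overline{f(X)}=\operatorname*{im}(f)$ is a continuous bijective group homomorphism, so it is always both monic and epic — this is the ambient structural fact. Then I would verify the two pushout/pullback axioms: the pushout of an admissible monic (i.e.\ a closed embedding with the subspace topology) $A\hookrightarrow B$ along an arbitrary $A\to X$ is realized concretely as $(X\oplus B)/\{(a,-a)\}$, and one must check that the induced map $X\to X\cup_A B$ is again a closed embedding carrying the subspace topology, and dually that pullbacks of admissible monics (fiber products with the subspace topology) are admissible monics. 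The pullback axiom is the easy one, being a subspace construction.

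The hard part will be the pushout axiom, specifically showing that the pushout of a \emph{closed} subgroup embedding stays a closed subgroup embedding with the correct subspace topology, because forming the quotient of $X\oplus B$ by the anti-diagonal copy of $A$ can in principle destroy closedness or alter the topology; here one genuinely needs that $A\hookrightarrow B$ is closed and that the relevant category is closed under the constructions used. For $\mathsf{LCA}$ and $\mathsf{LCA}_{\operatorname*{vf}}$ this is classical (it underlies the exact structure, cf.\ \cite{MR0215016}), and for $\mathsf{HA}$ it can be checked by a direct diagram chase once one knows the anti-diagonal $\{(a,-a):a\in A\}$ is closed in $X\oplus B$ — which it is, being the preimage of the closed set $\overline{A}=A$ under $(x,b)\mapsto\iota(a)$-type maps, or more cleanly since it is the kernel of the continuous map $X\oplus B\to (X\oplus B)/A \to$ something Hausdorff. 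For the precompact and locally precompact variants one invokes Lemmas \ref{lemma_TBAPermanence} and the Appendix permanence statements to see these subcategories are closed under the subquotient operations involved. Alternatively, and more economically, one can cite that $\mathsf{HA}$ is known to be quasi-abelian and that each listed $\mathsf{C}$ is closed in $\mathsf{HA}$ under the kernels, cokernels, pullbacks-of-monics and pushouts-of-monics exhibited above, so the quasi-abelian structure restricts.
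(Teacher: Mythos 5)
Your treatment of $\mathsf{HA}$, $\mathsf{LCA}$, $\mathsf{LCA}_{\operatorname*{vf}}$ and $\mathsf{PCA}$ matches the paper's proof: the concrete kernel is a closed subgroup (closed because $Y$ is Hausdorff), the concrete cokernel is the Hausdorff quotient by $\overline{f(X)}$, quotient maps of topological groups are open, the universal properties are routine, and the permanence statements (Lemmas \ref{lemma_LCAPermanence}, \ref{lemma_LCAvfProps}, \ref{lemma_TBAPermanence}) keep these constructions inside the subcategory. For (5) the paper simply cites \cite[Prop.\ 1.2]{MR2329311}; your alternative of restricting the quasi-abelian structure of $\mathsf{HA}$ is legitimate once one checks closure under finite direct sums, closed subgroups and Hausdorff quotients, and your direct verification of the pushout axiom is also workable (the anti-diagonal is closed because $A\to X\oplus B\to B$ is a closed embedding, which forces $A\to X\oplus B$ to be one).

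The genuine gap is your handling of $\mathsf{LPA}$ and, above all, $\mathsf{PGA}$. You appeal to ``counterparts for $\mathsf{LPA}$, $\mathsf{PGA}$ from the Appendix,'' but the Appendix contains only definitions and the Comfort--Luk\'acs characterization; the permanence statements you actually need are Propositions \ref{prop_LPAClosureProps} and \ref{prop_PGAClosureProps}, which appear \emph{after} this lemma and rest on the exactness of the Weil completion functor (Lemma \ref{lem_w1}), a result the paper emphasizes as not previously recorded. This is precisely why the paper's proof defers these two cases to Lemma \ref{lemma_DescribeKernelsAndCokernelsInHA_2}. For $\mathsf{LPA}$ one could salvage an elementary argument (intersecting a precompact neighbourhood of $0$ with a subgroup, and pushing one forward along an open quotient map, using that subsets and continuous images of precompact sets are precompact). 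But for $\mathsf{PGA}$ there is no off-the-shelf permanence lemma: to see that a closed subgroup or Hausdorff quotient of a locally precompact, precompactly generated group is again precompactly generated, one must pass to completions via Lemma \ref{lemma14} and invoke the structure theory of group-theoretically compactly generated LCA groups (Lemma \ref{lemma11}). Without that input, your items (1), (2) and (5) remain unproved for $\mathsf{C}=\mathsf{PGA}$, since you cannot conclude that $\ker(f)$ and $Y/\overline{f(X)}$ lie in $\mathsf{C}$.
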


\begin{remark}
One has to be very careful with the category-theoretic concept of
\textquotedblleft image\textquotedblright\ in (3), contrasting with the
convention in topology, where we would call $f(X)$ the image. But in most
categories of interest for this paper the true image object would be the
closure $\overline{f(X)}$. We call $f(X)$ the \emph{set-theoretic image} in
cases where confusion is possible, as this is the image object in the category
of sets.
\end{remark}

\begin{proof}
We only discuss this for $\mathsf{C}\in\{\mathsf{HA},\mathsf{LCA}%
,\mathsf{LCA}_{\operatorname*{vf}},\mathsf{PCA}\}$ here and the categories
$\{\mathsf{LPA},\mathsf{PGA}\}$ will be treated later in this text, see Lemma
\ref{lemma_DescribeKernelsAndCokernelsInHA_2}. Among $\mathsf{LCA}$ and
$\mathsf{LCA}_{\operatorname*{vf}}$, it suffices to treat $\mathsf{LCA}$ and
combine it with Lemma \ref{lemma_LCAvfProps}. (1) Since $Y$ is Hausdorff,
$\ker(f)=f^{-1}(\{0\})$ is a closed subgroup of $X$ and itself Hausdorff, so
we have $\ker(f)\in\mathsf{HA}$, and by Lemma \ref{lemma_LCAPermanence} (resp.
\ref{lemma_TBAPermanence}) it follows that this subgroup is also locally
compact (resp. precompact). The inclusion map of a closed subset is a closed
and continuous map. Finally, the universal property of kernels is evidently
satisfied. (2) Since $\overline{f(X)}$ is closed, Lemma
\ref{lemma_LCAPermanence} (resp. \ref{lemma_TBAPermanence}) shows that
$\operatorname*{coker}(f)$ is locally compact (resp. precompact) and
Hausdorff. The quotient map is always an open continuous surjection
(\cite[Lemma 3.2.1]{MR4510389}). (3), (4) follow. (5) The same proof as in
\cite[Prop. 1.2]{MR2329311} applies.
\end{proof}

The concept of exact sequences in Eq. \ref{l_4} agrees with the one induced
from the quasi-abelian structure as in \cite[Prop. 4.4]{MR2606234}. Both
define the same exact structure and render each category $\mathsf{C}$ an exact category.

\begin{example}
In each of the categories of Definition \ref{def_a1} suppose $f$ is a
continuous group morphism. Then

\begin{itemize}
\item $f$ is a monic iff it is injective,

\item $f$ is an admissible monic iff it is injective and a closed map,

\item $f$ is an epic iff it has dense image,

\item $f$ is an admissible epic if it is surjective and an open map.
\end{itemize}

In each choice for $\mathsf{C}$, a morphism can be both monic and epic without
being an isomorphism.
\end{example}

\subsection{Examples}

\begin{definition}
\label{def_SubspaceTopologyNotation}If $X$ is a subset of a topological space
$Y$, we shall write $X_{\subseteq Y}$ for $X$, equipped with the subspace
topology of $Y$.
\end{definition}

\begin{example}
\label{ex_image_coimage}In $\mathsf{HA}$ (or analogously in $\mathsf{LCA}$ or
$\mathsf{LPA}$) the natural map%
\[
\mathbb{Q}\overset{f}{\longrightarrow}\mathbb{T}%
\]
for $\mathbb{T}:=\mathbb{R}/\mathbb{Z}$ is continuous.\ Its kernel is the
inclusion of $\mathbb{Z}$ into $\mathbb{Q}$, its image is (the inclusion of
$\mathbb{T}$ into) $\mathbb{T}$, the co-image is the quotient $\mathbb{Q}%
/\mathbb{Z}$ of $\mathbb{Q}$, the cokernel is the unique map from $\mathbb{T}$
to the zero object $0$. We never consider non-Hausdorff spaces in this text.
However, if we did, then in the category of all (possibly non-Hausdorff)
topological abelian groups, the image of $f$ would be $\mathbb{Q}$ (with the
subspace topology of $\mathbb{T}$) and the cokernel would be $\mathbb{T}%
/(\mathbb{Q}/\mathbb{Z})$ with the indiscrete topology.
\end{example}

\begin{example}
\label{example_4}We review the cotilting torsion decomposition of Definition
\ref{def_TorsionPair}. Let $\alpha\in\mathbb{R}$ be given. As before,
$\mathbb{T}:=\mathbb{R}/\mathbb{Z}$ is the circle and we regard real numbers
as representatives of elements in it so that $1$ is the period. Define an
object%
\[
X_{\alpha}:=\left[
{
\begin{tikzcd}
	{\mathbb{Z}} && {\mathbb{T}}
	\arrow["\alpha"{inner sep=.8ex}, "\bullet"{marking}, from=1-1, to=1-3]
\end{tikzcd}
}%
\right]
\]
in $\mathcal{LH}(\mathsf{LCA})$, sending $1$ in $\mathbb{Z}$ to $\alpha$ in
$\mathbb{T}$. We see that $X_{\alpha}$ lies in the torsion class of
$\mathcal{LH}(\mathsf{LCA})$ precisely iff $\alpha$ is irrational. Whenever
$\alpha$ is rational, $X_{\alpha}\simeq\mathbb{Z}/n\mathbb{Z}$ for some $n$
and this lives in the torsion-free part of $\mathcal{LH}(\mathsf{LCA})$.
\end{example}

\begin{example}
\label{example_1}For any choice of $\mathsf{C}$ in Definition \ref{def_a1},
the axiom $\operatorname*{Hom}(T,F)=0$ of Definition \ref{def_TorsionPair}
arises as follows: If%
\[%
{
\begin{tikzcd}
	{X'} && X \\
	\\
	0 && Y
	\arrow["f"{inner sep=.8ex}, "\bullet"{marking}, from=1-1, to=1-3]
	\arrow[from=1-1, to=3-1]
	\arrow[from=1-3, to=3-3]
	\arrow[from=3-1, to=3-3]
\end{tikzcd}
}%
\]
is a morphism of complexes, then since the top horizontal arrow is injective
and dense, we see that the right downward arrow is zero on a dense subset,
hence must be the zero map by continuity.
\end{example}

Cokernels change, depending on whether we regard them intrinsic to a
quasi-abelian category or in its left heart:

\begin{example}
\label{example_z_in_Cmult}Suppose $z$ is a non-zero complex number. Then%
\begin{equation}
\mathbb{Z}\overset{f}{\longrightarrow}\mathbb{C}^{\times},\qquad1\mapsto z
\label{lm_1}%
\end{equation}
is a morphism in $\mathsf{LCA}$, but behaves differently depending on whether
$z$ is a root of unity or not. If it is, the morphism is admissible, but
neither monic nor epic, and both its image and co-image are $\mathbb{Z}%
/n\mathbb{Z}\simeq\mu_{n}(\mathbb{C})$ for some $n$. On the other hand, if $z$
is not a root of unity but still has $\left\vert z\right\vert =1$, then the
morphism is monic with image $U(1)\simeq S^{1}$, but co-image $\mathbb{Z}$.
The cokernel of $f$ intrinsic to $\mathsf{LCA}$ is isomorphic to the real
line. Only if $\left\vert z\right\vert \neq1$, the morphism is an admissible
monic. Then the cokernel is a $2$-torus (this is essentially the complex
version of Tate's uniformization of an elliptic curve).
\end{example}

\begin{example}
Extending Example \ref{example_z_in_Cmult}, precisely for $\left\vert
z\right\vert =1$ but $z$ not a root of unity, the cokernel of Eq. \ref{lm_1}
as a morphism in $\mathcal{LH}(\mathsf{LCA})$ happens to possess a non-trivial
contribution from $\mathsf{T}$ of $\mathcal{LH}(\mathsf{LCA})$, namely
\[
\left[
{
\begin{tikzcd}
	{\mathbb{Z}} && {S^1}
	\arrow["{1\mapsto z}"{inner sep=.8ex}, "\bullet"{marking}, from=1-1, to=1-3]
\end{tikzcd}
}%
\right]  \longrightarrow\operatorname*{coker}f\overset{\left\vert
\cdot\right\vert }{\longrightarrow}\mathbb{R}_{>0}^{\times}\text{,}%
\]
where the latter arrow is the absolute value morphism. These two examples show
how the cokernel of $f$ changes, depending on whether we regard it intrinsic
to $\mathsf{LCA}$ or in the left heart.
\end{example}

\section{Group-theoretical compact generation}

\begin{definition}
\label{def_CompactlyGeneratedTopologicalGroup}We will call a locally compact
group $X$ \emph{group-theoretically compactly generated} if there exists a
compact subset $C\subseteq X$ which generates $X$ as an abstract group.
\end{definition}

Said differently:\ Every subgroup $Z\subseteq X$ containing $C$ must be equal
to all of $X$. Yet differently: Every element $x\in X$ lies in the
set-theoretic image of%
\begin{align}
C^{n}  &  \longrightarrow X\nonumber\\
(x_{1},\ldots,x_{n})  &  \longmapsto x_{1}-x_{2}+x_{3}-x_{4}+\ldots
\label{l_ppr_1}%
\end{align}
for $n$ large enough (possibly depending on $x$).

\begin{caution}
The term \textquotedblleft compactly generated\textquotedblright\ has a long
history in topological group theory \cite{MR2433295, MR4510389, MR637201,
MR551496}, despite its evident conflicts with at least two other notions of
exactly the same name. In general point-set topology, a topological space $X$
is called `compactly generated' if a set-theoretic map
$X\overset{f}{\longrightarrow}Y$ is continuous if and only if the composition
$C\overset{s}{\longrightarrow}X\overset{f}{\longrightarrow}Y$ is continuous
for all possible choices of continuous maps $s$ from a compact $C$. This
meaning of the term is in active use for example in \cite{condensedmath},
\cite{MR2522659} and all of homotopy theory. All locally compact groups are
compactly generated in this sense, but most precompact groups are \emph{not}!
Moreover, there is the concept of compact generation of an abelian or stable
$\infty$-category, also in active use in the same texts.
\end{caution}

\begin{example}
The real line $\mathbb{R}$ is group-theoretically compactly generated, take
$C:=[-1,1]$.
\end{example}

\begin{example}
Every compact group is group-theoretically compactly generated, take $C$ to be
the entire group.
\end{example}

\begin{example}
\label{example_f0}The $p$-adics $\mathbb{Q}_{p}$ are locally compact, but not
group-theoretically compactly generated:\ If $C$ is any compact subset, its
image under $\mathbb{Q}_{p}\twoheadrightarrow\mathbb{Q}_{p}/\mathbb{Z}_{p}$
must be finite since $\mathbb{Q}_{p}/\mathbb{Z}_{p}$ is discrete as a
topological space. It follows that $C\subseteq\frac{1}{p^{n}}\mathbb{Z}_{p}$
for some large enough integer $n$. However, each $\frac{1}{p^{n}}%
\mathbb{Z}_{p}$ is an abelian subgroup of $\mathbb{Q}_{p}$, so the image under
the map in Eq. \ref{l_ppr_1} will never be able to leave this proper subgroup.
\end{example}

\begin{example}
The Laurent series $\mathbb{F}_{q}((t))$ with the $t$-adic topology are
locally compact, but not group-theoretically compactly generated. Imitate the
same argument as in Example \ref{example_f0}.
\end{example}

\begin{lemma}
[{\cite[Theorem 2.5, Theorem 2.6 (2)]{MR0215016}}]\label{lemma11}Suppose
$X\in\mathsf{LCA}$ is a group-theoretically compactly generated group in the
sense of Definition \ref{def_CompactlyGeneratedTopologicalGroup}.

\begin{enumerate}
\item Closed subgroups of $X$ are again group-theoretically compactly generated.

\item Quotients of $X$ by closed subgroups are again group-theoretically
compactly generated.

\item A group $Y\in\mathsf{LCA}$ is group-theoretically compactly generated if
and only if $Y\simeq\mathbb{R}^{n}\oplus\mathbb{Z}^{m}\oplus C$ for some
(finite) $n,m\geq0$ and $C$ a compact group.

\item The full subcategory of group-theoretically compactly generated groups
is extension-closed inside $\mathsf{LCA}$.
\end{enumerate}
\end{lemma}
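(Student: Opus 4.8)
The plan is to establish the four statements in the order (2), (4), (3), (1), the only substantial input being the structure theorem for LCA groups: every $X\in\mathsf{LCA}$ decomposes as $X\cong\mathbb{R}^{n}\oplus X_{0}$ with $X_{0}$ containing a compact open subgroup $K$, and every closed subgroup of $\mathbb{R}^{n}$ is isomorphic to $\mathbb{R}^{a}\oplus\mathbb{Z}^{b}$. Statement (2) is then immediate: if a compact $C\subseteq X$ generates $X$ abstractly and $\pi\colon X\twoheadrightarrow X/Y$ is the quotient, then $\pi(C)$ is compact and any subgroup of $X/Y$ containing $\pi(C)$ has full preimage $X$, so $\pi(C)$ generates $X/Y$.

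For (4), I would first prove a lifting lemma: along any open surjection $\pi\colon X\twoheadrightarrow X''$ in $\mathsf{LCA}$ and for any compact $C''\subseteq X''$, there is a compact $\widetilde{C}\subseteq X$ with $\pi(\widetilde{C})=C''$. Indeed, pick a compact neighbourhood $K$ of $0$ in $X$; then $\pi(K)$ is a neighbourhood of $0$, so $C''$ is covered by finitely many translates $\pi(K)+t_{i}$ with $t_{i}\in C''$; lifting each $t_{i}$ to $s_{i}\in X$, the set $\widetilde{C}:=\bigl(\bigcup_{i}(K+s_{i})\bigr)\cap\pi^{-1}(C'')$ is compact (being closed inside a compact set) and satisfies $\pi(\widetilde{C})=C''$. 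Given an exact sequence $X'\hookrightarrow X\twoheadrightarrow X''$ with compact generating sets $C'\subseteq X'$ and $C''\subseteq X''$, I claim $C:=C'\cup\widetilde{C}$ generates $X$: for $x\in X$, write $\pi(x)$ as a finite $\pm1$-combination of elements of $\widetilde{C}$ and subtract it from $x$ to land in $\ker\pi=X'=\langle C'\rangle$.

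For (3), write $X\cong\mathbb{R}^{n}\oplus X_{0}$ with $K\subseteq X_{0}$ compact open. Then $X_{0}/K$ is discrete and a quotient of $X$, hence compactly generated by (2), hence finitely generated, so $X_{0}/K\cong\mathbb{Z}^{m}\oplus F$ with $F$ finite. Lifting a basis of $\mathbb{Z}^{m}$ to $X_{0}$ produces a continuous section (the domain is discrete), giving $X_{0}\cong\mathbb{Z}^{m}\oplus X_{0}'$ where $X_{0}'$ is the preimage of $F$, an extension of a finite group by the compact $K$, hence compact; thus $X\cong\mathbb{R}^{n}\oplus\mathbb{Z}^{m}\oplus X_{0}'$. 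The converse is clear, e.g.\ via $C:=[-1,1]^{n}\times\{-1,0,1\}^{m}\times X_{0}'$.

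Finally (1), which I expect to be the main obstacle. Write $X\cong\mathbb{R}^{n}\oplus X_{0}$ with $K\subseteq X_{0}$ compact open, so $W:=\mathbb{R}^{n}\oplus K$ is an \emph{open} subgroup with $X/W\cong X_{0}/K$ discrete and, by (2), finitely generated. For a closed $Y\subseteq X$, the subgroup $Z:=Y\cap W$ is open in $Y$ and $Y/Z$ embeds into the finitely generated $X_{0}/K$, so $Y/Z$ is finitely generated discrete; by (4) it suffices to show $Z$ is compactly generated. Now $Z$ is a closed subgroup of $W=\mathbb{R}^{n}\oplus K$, and the projection $p\colon W\to\mathbb{R}^{n}$ is \emph{proper} (its fibres are the compact $K$), hence a closed map, so $p(Z)$ is a closed subgroup of $\mathbb{R}^{n}$, isomorphic to $\mathbb{R}^{a}\oplus\mathbb{Z}^{b}$ and hence evidently compactly generated. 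Its kernel $Z\cap K$ is a closed subgroup of the compact $K$, hence compact. Since $Z$ is $\sigma$-compact and locally compact and $p(Z)$ is locally compact, the open mapping theorem turns $Z\twoheadrightarrow p(Z)$ into an admissible epic, so $Z\cap K\hookrightarrow Z\twoheadrightarrow p(Z)$ is exact in $\mathsf{LCA}$ with compactly generated ends, and (4) gives that $Z$, and then $Y$, is compactly generated. The delicate point is the use of properness of $p$: without it, projections of closed subgroups of $\mathbb{R}^{n}\oplus K$ may have dense, non-closed image in $\mathbb{R}^{n}$ (compare the continuous injection $\mathbb{Z}^{2}\to\mathbb{R}$ with dense image, $(1,0),(0,1)\mapsto 1,\sqrt{2}$), and the reduction to (4) would fail.
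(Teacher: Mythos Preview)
Your argument is correct. The paper does not supply its own proof of this lemma: it simply cites Moskowitz \cite[Theorem 2.5, Theorem 2.6 (2)]{MR0215016} and moves on. So there is no ``paper's proof'' to compare against beyond the external reference.

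Your route---(2) trivially, then the compact-lifting lemma for (4), then the structure theorem plus (2) for (3), and finally the reduction of (1) via the open subgroup $\mathbb{R}^{n}\oplus K$ and properness of the projection to $\mathbb{R}^{n}$---is the standard one and matches what one finds in the cited source. Two minor remarks. In (3), the splitting you describe is really of the quotient $X_{0}\twoheadrightarrow X_{0}/X_{0}'\cong\mathbb{Z}^{m}$ (with $X_{0}'$ the preimage of $F$), not of $X_{0}\to X_{0}/K$ directly; your conclusion is unaffected. In (1), the open mapping theorem is not strictly needed: since $p\mid_{Z}\colon Z\to p(Z)$ is proper (closed in a proper preimage) with compact kernel, the induced continuous bijection $Z/(Z\cap K)\to p(Z)$ is closed, hence a homeomorphism, so $p\mid_{Z}$ is open. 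Either justification is fine.
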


\begin{lemma}
\label{lemma12}A discrete and group-theoretically compactly generated group
must be a finitely generated discrete abelian group.
\end{lemma}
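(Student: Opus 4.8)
The statement to prove is Lemma \ref{lemma12}: a discrete group that is group-theoretically compactly generated (Definition \ref{def_CompactlyGeneratedTopologicalGroup}) is a finitely generated discrete abelian group. The plan is simply to unwind the two definitions and observe that they immediately match. First I would note that a discrete group is a fortiori locally compact (any singleton is a compact neighbourhood), so $X\in\mathsf{LCA}$ and Definition \ref{def_CompactlyGeneratedTopologicalGroup} applies: there exists a compact subset $C\subseteq X$ generating $X$ as an abstract group.

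The key point is that a compact subset of a discrete space is finite. Indeed, in a discrete space the open cover of $C$ by singletons $\{c\}$, $c\in C$, admits a finite subcover, hence $C$ is finite. Thus $X$ is generated as an abstract group by a finite set $C$, i.e.\ $X$ is a finitely generated abelian group, carrying the discrete topology. That is precisely the assertion.

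I do not anticipate any real obstacle here; the only thing to be careful about is not to confuse the group-theoretic notion of compact generation (Definition \ref{def_CompactlyGeneratedTopologicalGroup}) with the point-set-topological one flagged in the Caution above — but since we are explicitly working with the group-theoretic definition, the argument is the one sketched. Alternatively, one could invoke Lemma \ref{lemma11}(3): a group-theoretically compactly generated $Y\in\mathsf{LCA}$ is of the form $\mathbb{R}^{n}\oplus\mathbb{Z}^{m}\oplus C$ with $C$ compact; imposing discreteness forces $n=0$ and $C$ finite, again yielding a finitely generated discrete abelian group. Either route is a one-line verification.
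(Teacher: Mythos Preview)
Your proposal is correct and follows essentially the same approach as the paper: the paper's proof is the single sentence ``Any possible choice of $C$ in Definition \ref{def_CompactlyGeneratedTopologicalGroup} is necessarily a finite set,'' which is exactly your observation that compact subsets of discrete spaces are finite. Your alternative route via Lemma \ref{lemma11}(3) is also valid but not the one the paper chose.
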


\begin{proof}
Any possible choice of $C$ in Definition
\ref{def_CompactlyGeneratedTopologicalGroup} is necessarily a finite set.
\end{proof}

\section{Exactness of the Completion Functor\label{sect_Completion}}

\subsection{Completion}

We need to show that completion is an exact functor on $\mathsf{LPA}$. We are
not aware of this being widely known nor recorded in the literature. A
Hausdorff topological abelian group $X$ is called (\emph{Raikov-} or
\emph{Weil-}) \emph{complete} if every Cauchy net/filter converges, and we
refer to \cite[Def. 7.1.5]{MR4510389} or equivalently \cite[\S 3.6]{MR2433295}
for details. There is no difference whether one describes this theory via
nets, filters or uniform structures. If the topology on $X$ stems from a
metric, $X$ is complete iff it is complete in the sense of the metric (all
Cauchy sequences converge). We write $\mathsf{HA}^{\operatorname*{compl}%
}\subseteq\mathsf{HA}$ for the full subcategory of complete groups. The
inclusion functor has a left adjoint%
\[
c(-)\colon\mathsf{HA}\longrightarrow\mathsf{HA}^{\operatorname*{compl}%
}\text{,}%
\]
known as the \emph{completion} functor \cite[Def. 7.1.18]{MR4510389}. We write
$\iota_{X}$ for the unit of this adjunction, a natural morphism%
\[
\iota_{X}\colon X\longrightarrow cX
\]
in $\mathsf{HA}$. If the topology on $X$ comes from a metric, we may
concretely realize this functor by taking the metric completion. The metric
completion of a topological group is again a topological group in a canonical
fashion. If the topology is not metrizable, both cited sources provide a
construction of the completion using filters.

\begin{lemma}
For every $X\in\mathsf{HA}$,

\begin{enumerate}
\item the unit%
\[%
{
\begin{tikzcd}
	X && cX
	\arrow["{\iota_{X }}"{inner sep=.8ex}, "\bullet"{marking}, from=1-1, to=1-3]
\end{tikzcd}
}%
\]
is a monic and epic in $\mathsf{HA}$ (i.e., it is injective and has dense
set-theoretic image).

\item $ccX=cX$.
\end{enumerate}
\end{lemma}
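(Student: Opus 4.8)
The plan is to treat the two claims separately, and in each case to reduce the assertion to the universal property of the completion together with the basic topology of how $X$ sits inside $cX$. For (1), the essential point is that $\iota_X\colon X\to cX$ is always \emph{injective with dense image} — this is standard for the Raikov/Weil completion (see \cite[Def.~7.1.18]{MR4510389} or \cite[\S 3.6]{MR2433295}): density is built into the construction of $cX$ as a set of equivalence classes of Cauchy filters, with $X$ mapping to the classes of the neighbourhood filters of points, and injectivity is exactly the statement that $X$ is Hausdorff, since two points with the same neighbourhood filter cannot be separated. By Lemma~\ref{lemma_DescribeKernelsAndCokernelsInHA}(3)--(4), a morphism in $\mathsf{HA}$ is monic iff injective and epic iff it has dense set-theoretic image, so injective-with-dense-image is precisely ``monic and epic'' in $\mathsf{HA}$. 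I would cite those two facts about the completion and invoke Lemma~\ref{lemma_DescribeKernelsAndCokernelsInHA} to conclude.

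For (2), the cleanest argument is purely formal from the adjunction. Since $c(-)$ is left adjoint to the fully faithful inclusion $\mathsf{HA}^{\operatorname{compl}}\hookrightarrow\mathsf{HA}$, the counit of this adjunction is a natural isomorphism; equivalently, for every complete group $C$ the unit $\iota_C\colon C\to cC$ is an isomorphism. Applying this to $C=cX$ (which is complete, being in the image of $c(-)$) gives $\iota_{cX}\colon cX\xrightarrow{\ \sim\ } ccX$, i.e. $ccX = cX$. Alternatively, and perhaps more in the spirit of the paper, one argues directly: $\iota_{cX}$ is injective with dense image by part (1), and $cX$ is already complete, so $\iota_{cX}$ identifies $ccX$ with the completion of a space that is its own completion — a dense subspace of a complete group whose closure is the whole group must be everything, forcing $\iota_{cX}$ to be surjective, hence (being a continuous open bijection, as the completion map of an already-complete group is a homeomorphism onto its image) an isomorphism. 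Either route is short.

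I do not expect a genuine obstacle here: both statements are foundational properties of the completion. If anything needs care, it is making sure one invokes the correct form of completeness (Raikov/Weil, via Cauchy filters or nets, not merely sequential completeness) so that $cX$ is genuinely complete and the adjunction in the sense of \cite[Def.~7.1.18]{MR4510389} applies — but the excerpt has already fixed that convention. So the only real content is the bookkeeping of quoting density and injectivity of $\iota_X$ from the references and then translating ``injective with dense image'' into ``monic and epic'' via Lemma~\ref{lemma_DescribeKernelsAndCokernelsInHA}, plus the one-line adjunction argument for idempotence.
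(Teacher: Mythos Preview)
Your proposal is correct and follows essentially the same approach as the paper: cite the standard reference for injectivity and density of $\iota_X$ (the paper points to \cite[Thm.~7.1.10]{MR4510389}) and then invoke Lemma~\ref{lemma_DescribeKernelsAndCokernelsInHA} to translate this into ``monic and epic'' in $\mathsf{HA}$. Your treatment of (2) via the adjunction is more explicit than the paper's one-line proof, which subsumes both parts into the reference citation, but the content is the same.
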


\begin{proof}
See \cite[Thm. 7.1.10]{MR4510389}, showing that $X\rightarrow cX$ is injective
with dense set-theoretic image, and the categorical characterization follows
from Lemma \ref{lemma_DescribeKernelsAndCokernelsInHA}.
\end{proof}

\begin{lemma}
\label{lemma1}Let $f\colon X\rightarrow Y$ be a continuous map of topological
spaces and $\Sigma\subseteq X$ a dense subset.

\begin{enumerate}
\item Then $f(\Sigma)$ is dense in $f(X)$.

\item If $\Sigma$ is connected, then $X$ is connected.
\end{enumerate}
\end{lemma}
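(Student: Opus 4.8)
Part (1) is the standard fact that continuous images of dense sets are dense, and the plan is to reduce it to the topological definition of closure: to show $f(\Sigma)$ is dense in $f(X)$, I would take an arbitrary nonempty open subset $V\subseteq f(X)$ in the subspace topology, write $V = U\cap f(X)$ for some open $U\subseteq Y$, and pick a point $f(x)\in V$ with $x\in X$; then $f^{-1}(U)$ is a nonempty open subset of $X$ (it contains $x$), so by density of $\Sigma$ it meets $\Sigma$ in some point $\sigma$, and $f(\sigma)\in U\cap f(X) = V$ lies in $f(\Sigma)$. Hence every nonempty open subset of $f(X)$ meets $f(\Sigma)$, which is exactly density. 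Equivalently one may phrase it via $f(\overline{\Sigma})\subseteq\overline{f(\Sigma)}$ applied to $\overline{\Sigma}=X$, but the open-set argument is self-contained and needs nothing beyond the definitions.

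Part (2) is the assertion that if a dense subspace $\Sigma\subseteq X$ is connected, then $X$ is connected. The plan is a proof by contradiction using the clopen characterization of connectedness. Suppose $X = A\sqcup B$ with $A,B$ nonempty, open, and disjoint. Then $\Sigma\cap A$ and $\Sigma\cap B$ are disjoint relatively open subsets of $\Sigma$ covering $\Sigma$; since $\Sigma$ is connected, one of them is empty, say $\Sigma\cap B=\varnothing$, so $\Sigma\subseteq A$. But $A$ is closed (being the complement of the open set $B$) and contains the dense set $\Sigma$, hence $A\supseteq\overline{\Sigma}=X$, forcing $B=\varnothing$, a contradiction. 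Therefore $X$ admits no such separation and is connected. (One could also deduce this from part (1): the inclusion $\Sigma\hookrightarrow X$ is continuous with dense image $\Sigma$, whose closure is $X$, and the continuous image of a connected set has connected closure — but spelling out the clopen argument directly is cleaner.)

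There is essentially no obstacle here; both statements are elementary point-set topology, and the only mild care needed is to work consistently with the subspace topology on $f(X)$ in part (1) — the density must be asserted \emph{inside} $f(X)$, not inside $Y$, since $f(X)$ itself need not be dense in $Y$. The lemma will be invoked later (e.g.\ for the completion unit $\iota_X\colon X\to cX$, whose image is dense, to transport connectedness between $X$ and $cX$), so the statement is kept in exactly the generality needed for those applications.
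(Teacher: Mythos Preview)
Your proposal is correct and essentially matches the paper's approach: for (1) the paper uses the closure characterization $f(\overline{\Sigma})\subseteq\overline{f(\Sigma)}$ (which you mention as an equivalent alternative) to get the sandwich $f(X)\subseteq\overline{f(\Sigma)}\subseteq\overline{f(X)}$, and for (2) the paper simply cites a textbook lemma while you supply the direct clopen argument. Your version is, if anything, more self-contained.
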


\begin{proof}
(1) A map $f\colon X\rightarrow Y$ is continuous if and only if $f(\overline
{A})\subseteq\overline{f(A)}$ holds for all subsets $A\subseteq X$. Using this
for our $f$, we obtain%
\[
f(X)=f(\overline{\Sigma})\subseteq\overline{f(\Sigma)}\subseteq\overline
{f(X)}\text{.}%
\]
Taking the closure, this yields the sandwich $\overline{f(X)}\subseteq
\overline{f\left(  \Sigma\right)  }\subseteq\overline{f\left(  X\right)  }$,
forcing equality. (2) \cite[Lemma B.6.4 (b)]{MR4510389}.
\end{proof}

Various properties of topological groups can be rephrased in terms of their completion:

\begin{lemma}
\label{lemma_PrecompactAreThoseWithCompactClosureInCompletion}Suppose
$X\in\mathsf{HA}$.

\begin{enumerate}
\item A subset $W\subseteq X$ is precompact iff $\overline{W}$ is compact in
$cX$, where $\overline{W}$ is the closure as a subset of $cX$.

\item $X$ is precompact iff $cX$ is compact.

\item $X$ is locally precompact iff $cX$ is locally compact.
\end{enumerate}
\end{lemma}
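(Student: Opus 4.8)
The plan is to prove part (1) directly and then read off (2) and (3) as short consequences. I will freely use a handful of standard facts about completions of topological abelian groups, all available in \cite{MR2433295,MR4510389} (see also \S\ref{Appendix_LocallyPrecompactGroups}): that $\iota_X\colon X\to cX$ identifies $X$ with a \emph{dense topological subgroup} of $cX$ (injectivity and density are already recorded above); that precompactness of a subset is an invariant of its induced subspace uniformity, so a subset of $X$ is precompact in $X$ iff it is precompact when regarded inside $cX$; that the closure of a precompact subset of a topological abelian group is again precompact; that a closed subset of a complete group is complete; and that a Hausdorff topological abelian group which is both complete and precompact is compact (while compact groups are trivially precompact).

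\textbf{Part (1).} Let $W\subseteq X$. By the subspace-uniformity invariance of precompactness and the fact that $X$ sits inside $cX$ as a topological subgroup, $W$ is precompact in $X$ if and only if it is precompact as a subset of $cX$. Assume this. Then its closure $\overline W$, taken in $cX$, is precompact (closure of a precompact set; concretely: shrink a given nonempty open $U$ to an open $V$ whose closure $\overline V$ lies in a translate of $U$, using regularity of topological groups, cover $W$ by finitely many translates of $V$, and pass to the finite — hence closed — union of closures). Moreover $\overline W$ is a closed subset of the complete group $cX$, hence complete; being complete, precompact and Hausdorff, $\overline W$ is compact. Conversely, if $\overline W$ is compact then it is precompact, and so then is its subset $W$. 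This proves (1).

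\textbf{Parts (2) and (3).} Part (2) is the case $W=X$ of (1): since $X$ is dense in $cX$, its closure there is all of $cX$, so ``$X$ is precompact'' becomes ``$cX$ is compact''. For (3), first suppose $X$ is locally precompact; replacing a precompact neighbourhood of $0$ by its interior we obtain an open precompact $U\ni0$, and $\overline U$ (closure in $cX$) is compact by (1). Writing $U=V\cap X$ with $V\subseteq cX$ open, we have $0\in V$, and by density of $X$ in $cX$ the open set $V$ is contained in $\overline{V\cap X}=\overline U$, so $\overline U$ is a compact neighbourhood of $0$ and $cX$ is locally compact. Conversely, if $K\subseteq cX$ is a compact neighbourhood of $0$, then $K\cap X$ is a neighbourhood of $0$ in $X$, and its closure in $cX$ lies inside the closed set $K$, hence is compact; by (1), $K\cap X$ is precompact, so $X$ is locally precompact.

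The argument is essentially routine. The points that deserve the most care are the permanence facts feeding into (1) — that the closure of a precompact set is still precompact, that closed subsets of complete groups are complete, and the criterion that complete $+$ precompact $+$ Hausdorff forces compactness — together with the density argument in the forward direction of (3), which upgrades the compact set $\overline U$ from merely containing $0$ to being an actual neighbourhood of $0$ in $cX$.
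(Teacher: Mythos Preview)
Your proof is correct and follows essentially the same route as the paper. The only difference is cosmetic: the paper dispatches (1) by citing \cite[Theorem 3.7.10]{MR2433295} directly, whereas you unpack the standard ``complete $+$ precompact $\Rightarrow$ compact'' argument; parts (2) and (3) are handled identically, with your density argument for the forward direction of (3) in fact being slightly more explicit than the paper's.
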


\begin{proof}
(1) \cite[Theorem 3.7.10]{MR2433295} for a subset $W\subseteq X$. (2) Since
$\overline{X}=cX$ in the special case of $W$ being the entire space $X$, it
follows that $X$ is precompact iff $cX$ is compact. (3) If $X$ is locally
precompact, $0$ has a precompact neighbourhood $N$, so there is an open $U$ in
$X$ such that $0\in U\subseteq N$ with $N$ precompact. In the completion,
$U=X\cap V$ for some open $V\subseteq cX$ (since $X$ carries the subspace
topology inside its completion \cite[Theorem 3.6.10]{MR2433295}), so $0\in
V\cap\overline{N}\subseteq\overline{N}$ and $\overline{N}$ is compact by (1).
This shows that $0$ has a compact neighbourhood in $cX$. Conversely, if $cX$
is locally compact, let $C$ be a compact neighbourhood of $0$ in $cX$. Then
$C\cap X$ satisfies $\overline{C\cap X}=C$ inside $cX$, so $C\cap X$ is a
precompact neighbourhood of $0$ in $X$ by (1).
\end{proof}

We are not aware that the following fact has been observed in literature.

\begin{lemma}
\label{lem_w1}Suppose%
\[
X^{\prime}\overset{f}{\hookrightarrow}X\overset{g}{\twoheadrightarrow
}X^{\prime\prime}%
\]
is an exact sequence in $\mathsf{HA}$ such that $X$ is locally precompact.
Then $cX^{\prime}\overset{cf}{\hookrightarrow}%
cX\overset{cg}{\twoheadrightarrow}cX^{\prime\prime}$ is exact in $\mathsf{HA}$.
\end{lemma}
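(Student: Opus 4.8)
The plan is to complete the given exact sequence termwise and verify the three conditions defining an exact sequence in $\mathsf{HA}$ (Eq. \ref{l_4}): exactness of underlying abelian groups, $cf$ closed injective, $cg$ open surjective. The key structural input is that completion is additive and that, by Lemma \ref{lemma_PrecompactAreThoseWithCompactClosureInCompletion}(3), $cX$ is locally compact; moreover a locally compact group is complete, so $cX$ lives in $\mathsf{LCA}$. Since $X' \hookrightarrow X$ is a closed subgroup, $X'$ inherits local precompactness as well (a closed subgroup of a locally precompact group is locally precompact, essentially as in Lemma \ref{lemma_LCAPermanence}(1)), hence $cX'$ is also locally compact, and likewise the quotient $X''$ is locally precompact so $cX''$ is locally compact. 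Thus all three completions are genuine LCA groups, which lets me deploy the permanence properties of $\mathsf{LCA}$.

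First I would analyze $cf$. Because $X \hookrightarrow cX$ is the subspace embedding into its completion (\cite[Thm. 3.6.10]{MR2433295}) and $X' \subseteq X$ is closed, $X'$ carries the subspace topology from $cX$ as well; one then checks that the closure $\overline{X'}$ taken inside $cX$ is a complete group containing $X'$ densely and inducing on $X'$ its given topology, so by the universal property $\overline{X'} = cX'$ and $cf$ is precisely the inclusion of this closed subgroup. In particular $cf$ is a closed injective morphism, and by Lemma \ref{lemma_LCAPermanence}(1) the quotient $cX/cX'$ is again in $\mathsf{LCA}$ and $cX \twoheadrightarrow cX/cX'$ is an admissible epic. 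It remains to identify this quotient with $cX''$: the composite $X \twoheadrightarrow X'' \hookrightarrow cX''$ kills $X'$, hence factors through $X/X'$, and since $X/X'$ is dense in $cX/cX'$ (image of a dense set, Lemma \ref{lemma1}(1)) while $cX''$ is complete, we get a continuous map $cX/cX' \to cX''$; conversely the composite $X \hookrightarrow cX \twoheadrightarrow cX/cX'$ kills $X'$ and extends $X \twoheadrightarrow X''$, giving by the universal property of completion a map $cX'' \to cX/cX'$, and these are mutually inverse. So $cg$ gets identified with the admissible epic $cX \twoheadrightarrow cX/cX'$, which is open and surjective, and exactness of the underlying groups is automatic from $cX' = \ker(cg)$ as a set.

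The main obstacle I anticipate is the claim that $cX' = \overline{X'}$ (the closure inside $cX$), i.e. that completing a closed subgroup of a locally precompact group amounts to taking its topological closure in the ambient completion. The subtlety is that completion is only a left adjoint and need not preserve subobjects in general; what rescues the argument here is that $X$ embeds as a \emph{topological} subgroup of the complete group $cX$, so the subspace topology on $X'$ from $cX$ coincides with its intrinsic topology, and then $\overline{X'}$ is a closed — hence complete — subgroup of $cX$ in which $X'$ sits densely, which is exactly the characterizing property of $cX'$. Once this identification is in hand, everything else follows from the already-established permanence properties of $\mathsf{LCA}$ (Lemma \ref{lemma_LCAPermanence}) together with Lemma \ref{lemma_PrecompactAreThoseWithCompactClosureInCompletion}, and no further delicate point-set arguments with nets or filters are needed.
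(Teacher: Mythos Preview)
Your overall strategy is sound and takes a genuinely different route from the paper's. The paper first proves that $cg$ is an admissible epic by invoking a nontrivial result of Arhangel'skii--Tkachenko (the completion of an open surjection is open onto its set-theoretic image) together with the fact that a locally compact subgroup of a Hausdorff group is closed; only afterwards does it identify $\ker(cg)$ with $cX'$. You proceed in the reverse order: first identify $cX'$ with the closure $\overline{X'}$ inside $cX$ (your argument for this is clean and correct), then identify the quotient $cX/cX'$ with $cX''$. Your route, once repaired, sidesteps the cited theorem entirely and is more self-contained.

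There is, however, a genuine gap in your construction of the map $cX/cX' \to cX''$. You propose to extend $\iota_{X''}\colon X/X' \to cX''$ from the dense subgroup $X/X' \subseteq cX/cX'$ using completeness of $cX''$. But the extension theorem for dense subgroups requires $\iota_{X''}$ to be continuous for the \emph{subspace} topology that $X/X'$ inherits from $cX/cX'$, whereas you only know continuity for the intrinsic quotient topology on $X'' = X/X'$. Since $j\colon X'' \to cX/cX'$ is only known to be continuous (not an embedding), the subspace topology could a priori be strictly coarser, and then the extension need not exist. Verifying that $j$ is an embedding is precisely the content hidden in the paper's citation.

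The fix is easy and keeps your argument intact: construct the map $cX/cX' \to cX''$ instead by observing that $cg\colon cX \to cX''$ vanishes on $cX' = \overline{X'}$ (it vanishes on the dense subgroup $X'$, and $cX''$ is Hausdorff), hence factors through the quotient. With this alternative construction of the forward map, your mutual-inverse check goes through by the usual density argument, and the rest of your proof stands.
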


\begin{proof}
Induced from $g$, we get the commutative square%
\begin{equation}%
{
\begin{tikzcd}
	X && {X''} \\
	\\
	cX && {cX'',}
	\arrow["g", two heads, from=1-1, to=1-3]
	\arrow["{{{\iota_X}}}"'{inner sep=.8ex}, "\bullet"{marking}%
, from=1-1, to=3-1]
	\arrow["{{{\iota_{X''}}}}"{inner sep=.8ex}, "\bullet"{marking}%
, from=1-3, to=3-3]
	\arrow["cg"', from=3-1, to=3-3]
\end{tikzcd}
}
\label{l_DF1}%
\end{equation}
where $g$ is a surjective open morphism. We write $cg$ for the induced
morphism. Now denote by $(cg)(cX)$ the set-theoretic image of $cX$ inside
$cX^{\prime\prime}$. This is a subgroup of $cX^{\prime\prime}$ and comes with
its subspace topology inside $cX^{\prime\prime}$. Writing $a$ for $cg$ with
this restricted codomain, makes $a$ a surjective map and it is known to be
open by Arhangel'skii--Tkachenko \cite[Theorem 3.6.19]{MR2433295} (this is
non-trivial!).%
\[%
{
\begin{tikzcd}
	cX &&&& {cX''} \\
	\\
	&& {(cg)(cX)}
	\arrow["cg", from=1-1, to=1-5]
	\arrow["a"', from=1-1, to=3-3]
	\arrow["b"', from=3-3, to=1-5]
\end{tikzcd}
}%
\]
It follows that $a$ is an admissible epic in $\mathsf{HA}$. Since $cX$ is
locally compact (Lemma
\ref{lemma_PrecompactAreThoseWithCompactClosureInCompletion}), it follows that
$(cg)(cX)$ is also locally compact (all quotient spaces of locally compact
spaces are themselves locally compact \cite[Theorem 3.3.15]{MR1039321}).
However, a locally compact subgroup inside a Hausdorff topological group must
be closed (\cite[Prop. 7]{MR0442141} or \cite[Prop. 1.4.19]{MR2433295}). Thus,
$b$ must be an admissible monic. Summarizing this, we deduce that $cg$ is an
admissible morphism as described in Lemma \ref{lemma_S0}. However, $(cg)(cX)$
must also be dense in $cX^{\prime\prime}$. This can be seen in several ways:
For example, since completion is a left adjoint, it sends epics to epics. But
we can also argue directly: As each $\iota_{(-)}$ is injective, we may regard
$X$ (resp. $X^{\prime\prime}$) as subspaces of $cX$ (resp. $cX^{\prime\prime}%
$). Since $X$ is dense in $cX$, $g(X)$ is dense in $(cg)(cX)$ by Lemma
\ref{lemma1}, but $g(X)=X^{\prime\prime}$ since $g$ is surjective. Thus,%
\begin{equation}
X^{\prime\prime}\underset{\operatorname*{dense}}{\subseteq}(cg)(cX)\subseteq
cX^{\prime\prime}\text{.} \label{l_7}%
\end{equation}
Taking closures in $cX^{\prime\prime}$, and using that $\overline
{X^{\prime\prime}}=cX^{\prime\prime}$, the sandwich argument of Eq. \ref{l_7}
implies $\overline{(cg)(cX)}=cX^{\prime\prime}$. Hence, the closed embedding
$b$ is \textit{surjective}, and therefore an isomorphism\footnote{one can also
argue by category theory: $b$ is an epic (it has dense image) and an
admissible monic and hence an isomorphism \cite[Exercise 2.6 (dual)]%
{MR2606234}.}. We conclude that $cg$ is an admissible epic. We now add the
kernel to Diagram \ref{l_DF1}, rendering the lower row an exact sequence:%
\begin{equation}%
{
\begin{tikzcd}
	& {X'} && X && {X''} \\
	{cX'} \\
	& {\operatorname{ker}(cg)} && cX && {cX''}
	\arrow["f", hook, from=1-2, to=1-4]
	\arrow["{{\iota_{X'}}}"', from=1-2, to=2-1]
	\arrow["\eta", dashed, from=1-2, to=3-2]
	\arrow["g", two heads, from=1-4, to=1-6]
	\arrow["{{\iota_X}}"', from=1-4, to=3-4]
	\arrow["{{\iota_{X''}}}", from=1-6, to=3-6]
	\arrow["\phi"', dashed, from=2-1, to=3-2]
	\arrow[hook, from=3-2, to=3-4]
	\arrow["cg"', two heads, from=3-4, to=3-6]
\end{tikzcd}
}
\label{lh_C2}%
\end{equation}
The arrow $\eta$ stems from the universal property of kernels, it must be
injective (as $f$ and $\iota_{X}$ are injective) and has dense image by
\cite[Thm. 3.6.19]{MR2433295}. Since $\ker(cg)$ is locally compact (since it
is a closed subgroup of the locally compact group $cX$, Lemma
\ref{lemma_LCAPermanence}), it is complete \cite[Prop. 8.2.6]{MR4510389}, so
by \cite[Def. 7.1.18]{MR4510389} (applied to $\eta$) a unique arrow $\phi$
exists, making the left triangle commute. Now apply \cite[Cor. 3.6.18]%
{MR2433295} to $\phi$, using $X^{\prime}$ as the dense subgroup in
$cX^{\prime}$ which $\phi$ isomorphically maps onto its set-theoretic image.
It follows that $\phi$ is an isomorphism in $\mathsf{HA}$. Thus, the lower row
in Diagram \ref{lh_C2} is canonically isomorphic to%
\[
cX^{\prime}\hookrightarrow cX\twoheadrightarrow cX^{\prime\prime}%
\]
and thus is also exact.
\end{proof}

Although the previous lemma is broader, the following appears to be a much
more natural formulation.

\begin{proposition}
\label{prop_functor_completion_exact}Restricted to $\mathsf{LPA}$, completion
is an exact functor%
\begin{align*}
\mathsf{LPA}  &  \longrightarrow\mathsf{LCA}\\
X  &  \longmapsto cX\text{.}%
\end{align*}

\end{proposition}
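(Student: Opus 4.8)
The statement packages three claims: that $cX\in\mathsf{LCA}$ whenever $X\in\mathsf{LPA}$, that $c(-)$ lands in $\mathsf{LCA}$ as a functor, and that it is exact. The first is immediate: if $X$ is locally precompact, then Lemma \ref{lemma_PrecompactAreThoseWithCompactClosureInCompletion}(3) tells us $cX$ is locally compact, and $cX$ is Hausdorff by construction (completions of Hausdorff groups are Hausdorff), so $cX\in\mathsf{LCA}$. Functoriality on morphisms is inherited from the functoriality of $c(-)\colon\mathsf{HA}\to\mathsf{HA}^{\operatorname{compl}}$ together with the observation that for $X,Y\in\mathsf{LPA}$ the completions $cX,cY$ already lie in the subcategory $\mathsf{LCA}$, so no modification of the target is needed. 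Thus the only substantive point is exactness.

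For exactness I would argue as follows. An exact sequence in $\mathsf{LPA}$ is in particular an exact sequence $X'\hookrightarrow X\twoheadrightarrow X''$ in $\mathsf{HA}$ with $X$ locally precompact, so Lemma \ref{lem_w1} applies verbatim and yields that $cX'\overset{cf}{\hookrightarrow}cX\overset{cg}{\twoheadrightarrow}cX''$ is exact in $\mathsf{HA}$. Since we have just checked $cX',cX,cX''\in\mathsf{LCA}$, and the notion of exact sequence in $\mathsf{LCA}$ (Eq.\ \ref{l_4}) is the one induced from $\mathsf{HA}$ — closed injective on the left, open surjective on the right, exact on underlying groups — this same sequence is an exact sequence in $\mathsf{LCA}$. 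Hence $c(-)$ carries exact sequences of $\mathsf{LPA}$ to exact sequences of $\mathsf{LCA}$, which is precisely exactness of the functor.

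One should also confirm the functor is additive (it preserves the zero object and finite products: $c(0)=0$ since $0$ is already complete, and $c(X\times Y)\cong cX\times cY$ because a finite product of complete groups is complete and completion is determined by a universal property that respects finite products — alternatively, $c(-)$ is a left adjoint on $\mathsf{HA}$, hence preserves colimits, and on the relevant subcategories finite products agree with finite coproducts). With additivity in hand, the statement follows formally. So the proof is essentially a two-line deduction from Lemma \ref{lem_w1} plus Lemma \ref{lemma_PrecompactAreThoseWithCompactClosureInCompletion}(3), with the only genuine work — the delicate point that the set-theoretic image $(cg)(cX)$ is both open and closed, hence all of $cX''$ — having already been dispatched inside the proof of Lemma \ref{lem_w1}. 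The main obstacle, therefore, is not in this proposition at all but was the preceding lemma; here it only remains to notice that local precompactness of $X$ is exactly what is needed to invoke it and that the three completions automatically land in $\mathsf{LCA}$, so that "exact in $\mathsf{HA}$" upgrades to "exact in $\mathsf{LCA}$" for free.
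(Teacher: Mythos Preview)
Your proposal is correct and follows essentially the same approach as the paper: invoke Lemma \ref{lem_w1} to get exactness in $\mathsf{HA}$, then use Lemma \ref{lemma_PrecompactAreThoseWithCompactClosureInCompletion}(3) to see that the three completions lie in $\mathsf{LCA}$, so the sequence is exact there as well. The paper's proof is even terser (it omits the additivity discussion and simply notes that $\mathsf{LCA}\subseteq\mathsf{HA}$ is a fully exact subcategory), but the substance is identical.
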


\begin{proof}
The heavy lifting has been done in Lemma \ref{lem_w1}: If%
\[
X^{\prime}\overset{f}{\hookrightarrow}X\overset{g}{\twoheadrightarrow
}X^{\prime\prime}%
\]
is an exact sequence in $\mathsf{LPA}$, it is exact in $\mathsf{HA}$. By Lemma
\ref{lem_w1} the induced sequence $cX^{\prime}\overset{cf}{\hookrightarrow
}cX\overset{cg}{\twoheadrightarrow}cX^{\prime\prime}$ of completions is exact
in $\mathsf{HA}$, but by Lemma
\ref{lemma_PrecompactAreThoseWithCompactClosureInCompletion} each $cX^{\prime
},cX,cX^{\prime\prime}$ is locally compact, and since $\mathsf{LCA}%
\subseteq\mathsf{HA}$ is a fully exact subcategory, the claim follows.
\end{proof}

\begin{example}
\label{example_CompletionNeedNotBeExact}In general, completion fails to be
right exact. Suppose $X\in\mathsf{HA}$. Then there exists a group
$TX\in\mathsf{HA}^{\operatorname*{compl}}$ along with an admissible epic
$q\colon TX\twoheadrightarrow X$ \cite[Prop. 11.1]{MR644485}, \cite[Theorem
4.1.48]{MR1368852}. Completion sends this to the epic\footnote{Since
completion is a left adjoint, it necessarily always preserves epics.}
$cq\colon TX\rightarrow cX$ since $TX$ is already complete. Hence, whenever
$X$ is not already complete itself, this shows that completion can send an
admissible epic to a \emph{non-}admissible epic. We get%
\[%
{
\begin{tikzcd}
	Z && TX && X \\
	\\
	Z && TX && cX,
	\arrow[hook, from=1-1, to=1-3]
	\arrow["1"', from=1-1, to=3-1]
	\arrow["q", two heads, from=1-3, to=1-5]
	\arrow["1"', from=1-3, to=3-3]
	\arrow["cq"', from=1-3, to=3-5]
	\arrow["\bullet"{marking}, from=1-5, to=3-5]
	\arrow[hook, from=3-1, to=3-3]
	\arrow[from=3-3, to=3-5]
\end{tikzcd}
}%
\]
where $Z:=\ker q$ is also complete (closed subgroups of complete groups are
complete). For example, take $X:=\mathbb{Q}_{\subset\mathbb{R}}$ ($\mathbb{Q}$
with the subspace topology of the real line).
\end{example}

\begin{remark}
Wigner \cite{MR387476} proves another exactness result for completion, but he
works in the setting of not necessarily Hausdorff abelian groups. The
admissible epics in this setting are genuinely different from our setting
(e.g., $\mathbb{Q}\longrightarrow\mathbb{R}$ is an epic in $\mathsf{HA}$, but
not in all topological abelian groups).
\end{remark}

\begin{lemma}
\label{lemma_OnPCACompletionAgreesWithBohrCompactification}Restricted to the
subcategory $\mathsf{PCA}\subset\mathsf{LPA}$, the completion functor agrees
with Bohr compactification.
\end{lemma}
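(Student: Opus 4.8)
The plan is to identify the completion of a precompact Hausdorff abelian group $X$ with its Bohr compactification $bX$ by checking that both satisfy the same universal property: they are the reflection of $X$ into the full subcategory of compact Hausdorff abelian groups. First I would recall that, by Lemma~\ref{lemma_PrecompactAreThoseWithCompactClosureInCompletion}(2), for $X\in\mathsf{PCA}$ the completion $cX$ is in fact compact, so that $c$ restricted to $\mathsf{PCA}$ lands in the subcategory of compact groups. The unit $\iota_X\colon X\to cX$ is injective with dense image. The Bohr compactification $bX$ is by definition the compact group together with a (continuous, dense-image) morphism $\beta_X\colon X\to bX$ characterized by the property that every continuous morphism from $X$ to a compact Hausdorff abelian group factors uniquely through $\beta_X$; equivalently, $bX$ is the universal compactification in the sense that $\mathrm{Hom}_{\mathsf{HA}}(X,K)\cong\mathrm{Hom}(bX,K)$ naturally for all compact $K$.

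The key steps, in order: (1) Show $cX$ is compact for $X\in\mathsf{PCA}$ — this is immediate from Lemma~\ref{lemma_PrecompactAreThoseWithCompactClosureInCompletion}. (2) Verify that $\iota_X\colon X\to cX$ has the universal property of the Bohr compactification. For this, take any continuous morphism $f\colon X\to K$ with $K$ compact Hausdorff abelian; then $K$ is complete (compact Hausdorff groups are complete), so by the universal property of completion (the left adjointness of $c$, i.e.\ \cite[Def.~7.1.18]{MR4510389}) $f$ factors uniquely as $f=\bar f\circ\iota_X$ with $\bar f\colon cX\to K$ continuous. Since $\iota_X$ has dense image, uniqueness of $\bar f$ holds automatically. (3) Conversely, by the same token, $\beta_X\colon X\to bX$ is a morphism to a compact — hence complete — group, so it factors through $\iota_X$; and the universal property of $bX$ yields a factorization of $\iota_X$ through $\beta_X$. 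These two factorizations are mutually inverse by the usual uniqueness argument (both being identity-inducing on the dense image of $X$). Hence $cX\cong bX$ compatibly with the units, and one checks this isomorphism is natural in $X$, so the functors agree on $\mathsf{PCA}$.

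I do not expect a genuine obstacle here: the statement is essentially a formal consequence of two universal properties coinciding, once the single substantive input — that $cX$ is actually \emph{compact} for $X$ precompact — is in hand, and that has already been established. The only mild subtlety is to make sure one is using the correct characterization of the Bohr compactification; different sources define it either via the universal property into compact groups or, for a precompact group, via the completion of $X$ equipped with its Bohr (= maximal precompact = weak) topology, but for $X$ already precompact the Bohr topology is the given one, so the two definitions coincide and there is nothing to reconcile. Thus the proof reduces to the short adjointness argument sketched above.
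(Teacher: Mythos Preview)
Your proposal is correct and is essentially the same argument as the paper's, just unpacked. The paper cites \cite[Thm.~10.2.15]{MR4510389}, which states $bX\cong\widetilde{X^{+}}$ (completion of $X$ retopologized with its Bohr topology $X^{+}$), and then observes that for $X$ already precompact one has $X^{+}=X$; your final paragraph identifies exactly this point and your universal-property verification is precisely how one proves the cited theorem.
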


\begin{proof}
Restricted to $\mathsf{PCA}\subset\mathsf{LPA}$, completion agrees with Bohr
compactification. This is \cite[Thm. 10.2.15]{MR4510389}: $bX\cong%
\widetilde{X^{+}}$ in the notation of loc. cit. (but $X^{+}$ is defined in
\cite[Prop. 10.2.13]{MR4510389} as the group $X$, but with the topology
replaced by the finest precompact group topology coarser than the original
topology of $X$. Since $X$ is already precompact, $X^{+}=X$).
\end{proof}

\subsection{Consequences of the exactness of completion}

The time has come where we can collect the fruits of our labour.

\begin{proposition}
\label{prop_LPAClosureProps}Suppose $X\in\mathsf{LPA}$.

\begin{enumerate}
\item Closed subgroups of $X$ are again locally precompact.

\item Quotients of $X$ by closed subgroups are again locally precompact.
\end{enumerate}
\end{proposition}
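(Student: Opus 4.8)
The plan is to deduce both statements from the exactness of the completion functor established in Proposition \ref{prop_functor_completion_exact}, together with Lemma \ref{lemma_PrecompactAreThoseWithCompactClosureInCompletion} (which characterizes local precompactness as local compactness of the completion) and the corresponding permanence properties of $\mathsf{LCA}$ recorded in Lemma \ref{lemma_LCAPermanence}.

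For (1), let $Y\subseteq X$ be a closed subgroup, equipped with the subspace topology. First I would note that $Y$ is Hausdorff (as a subspace of the Hausdorff group $X$) and that the inclusion $Y\hookrightarrow X$ is a closed continuous injection; extending it by the quotient $X\twoheadrightarrow X/Y$ (which is Hausdorff since $Y$ is closed) gives an exact sequence $Y\hookrightarrow X\twoheadrightarrow X/Y$ in $\mathsf{HA}$. The point where care is needed is that we do not yet know $Y$ and $X/Y$ are locally precompact — that is exactly what we want to prove — so we cannot directly invoke Proposition \ref{prop_functor_completion_exact}, whose hypothesis is an exact sequence in $\mathsf{LPA}$. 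Instead I would go back to Lemma \ref{lem_w1}, whose hypothesis is only that the middle term $X$ is locally precompact, which holds by assumption. Lemma \ref{lem_w1} then yields an exact sequence $cY\hookrightarrow cX\twoheadrightarrow c(X/Y)$ in $\mathsf{HA}$ with $cX$ locally compact (Lemma \ref{lemma_PrecompactAreThoseWithCompactClosureInCompletion}(3)). Since $cY$ is a closed subgroup of the locally compact group $cX$, it is itself locally compact by Lemma \ref{lemma_LCAPermanence}(1). Applying Lemma \ref{lemma_PrecompactAreThoseWithCompactClosureInCompletion}(3) in the reverse direction, $Y$ is locally precompact, proving (1).

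For (2), the same exact sequence $cY\hookrightarrow cX\twoheadrightarrow c(X/Y)$ from Lemma \ref{lem_w1} shows that $c(X/Y)$ is a quotient of the locally compact group $cX$ by the closed subgroup $cY$, hence locally compact by Lemma \ref{lemma_LCAPermanence}(1) again. Then Lemma \ref{lemma_PrecompactAreThoseWithCompactClosureInCompletion}(3) gives that $X/Y$ is locally precompact, proving (2).

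The main subtlety — really the only one — is the circularity issue just flagged: one must resist the temptation to apply Proposition \ref{prop_functor_completion_exact} directly and instead use the more primitive Lemma \ref{lem_w1}, which is formulated precisely so that only local precompactness of the middle term is required as input. Everything else is a routine chain of applications of the completion-to-$\mathsf{LCA}$ dictionary. I would also remark in passing that $\overline{Y}=Y$ here since $Y$ is already closed, so there is no need to pass to closures as in the precompact analogue (Lemma \ref{lemma_TBAPermanence}); for a general (not necessarily closed) subgroup $Y$ one would only conclude that $Y$ itself is locally precompact from part (1) applied after noting $Y$ is a subspace of $X$, but the statement as given only concerns closed subgroups and their quotients.
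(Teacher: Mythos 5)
Your proof is correct and follows essentially the same route as the paper: both apply Lemma \ref{lem_w1} (rather than Proposition \ref{prop_functor_completion_exact}, precisely to avoid the circularity you flag) to the exact sequence $Y\hookrightarrow X\twoheadrightarrow X/Y$ in $\mathsf{HA}$, then transfer local compactness of $cY$ and $c(X/Y)$ back via Lemma \ref{lemma_PrecompactAreThoseWithCompactClosureInCompletion}. Your explicit remark about why the hypothesis of Lemma \ref{lem_w1} (only the middle term need be locally precompact) is the right one is exactly the point the paper's proof relies on implicitly.
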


\begin{proof}
(1) If $X^{\prime}\hookrightarrow X$ is a closed subgroup, $cX^{\prime
}\hookrightarrow cX$ is an admissible monic by Lemma \ref{lem_w1}. Since $cX$
is locally compact by Lemma
\ref{lemma_PrecompactAreThoseWithCompactClosureInCompletion} and closed
subgroups of locally compact groups are locally compact, it follows that
$cX^{\prime}$ is locally compact. Using Lemma
\ref{lemma_PrecompactAreThoseWithCompactClosureInCompletion} in the reverse
direction as before, it follows that $X^{\prime}$ is locally precompact.
(2)\ Similar. If $X\twoheadrightarrow X/X^{\prime}$ is the quotient by the
closed subgroup $X^{\prime}$, Lemma \ref{lem_w1} shows that
$cX\twoheadrightarrow c(X/X^{\prime})$ is an admissible epic, and since $cX$
is locally compact, so is the quotient. Lemma
\ref{lemma_PrecompactAreThoseWithCompactClosureInCompletion} shows that
$X/X^{\prime}$ is locally precompact.
\end{proof}

\begin{lemma}
[{\cite[Lemma 5.2]{MR2681374}}]\label{lemma14}Let $X\in\mathsf{LPA}$ be a
locally precompact group. Then $X$ is precompactly generated if and only if
its completion $cX$ is group-theoretically compactly generated in the sense of
Definition \ref{def_CompactlyGeneratedTopologicalGroup}.
\end{lemma}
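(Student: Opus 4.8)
The plan is to prove both implications with the same elementary device: in any topological group a subgroup that contains a neighbourhood of $0$ is open, hence also closed (its complement is a union of open cosets), and a closed dense subgroup is the whole group. This is what will let us upgrade density statements to equalities. Throughout I would use that $X$ sits densely in $cX$ with the subspace topology, together with Lemma \ref{lemma_PrecompactAreThoseWithCompactClosureInCompletion}, which says that $W\subseteq X$ is precompact iff its closure in $cX$ is compact and that $cX$ is locally compact. Here ``precompactly generated'' means, in analogy with Definition \ref{def_CompactlyGeneratedTopologicalGroup} (see Definition \ref{def_Precompact}), that some precompact subset of $X$ generates $X$ as an abstract group.

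For the forward implication I would argue as follows. Given a precompact generating set $W$ of $X$, enlarge it to $W\cup N$, where $N$ is a precompact neighbourhood of $0$ in $X$, and pass to $C:=\overline{W}\cup\overline{N}$, the closures taken in $cX$. Then $C$ is compact by Lemma \ref{lemma_PrecompactAreThoseWithCompactClosureInCompletion}, and it contains a neighbourhood of $0$ in $cX$: writing $N=X\cap V_1$ with $V_1\subseteq cX$ open, density of $X$ gives $V_1\subseteq\overline{N}$. Hence the subgroup $\langle C\rangle\subseteq cX$ is open, so closed, and it contains the dense subgroup $\langle W\rangle=X$; therefore $\langle C\rangle=cX$, i.e.\ $cX$ is group-theoretically compactly generated.

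The converse is the substantial direction. Starting from a compact generating set $C$ of $cX$, fix a precompact neighbourhood $N$ of $0$ in $X$ and set $U:=\overline{N}\subseteq cX$ (compact) and $V:=\operatorname{int}_{cX}(U)$ (open, with $0\in V$ since $V\supseteq V_1$ for $N=X\cap V_1$). Using compactness of $C$ and density of $X$ I would replace the generators of $cX$ by elements of $X$: cover $C$ by finitely many translates $c_i+V_0$ of a symmetric open $V_0$ with $V_0+V_0\subseteq V$, then nudge each $c_i$ to a nearby $x_i\in X$, obtaining $C\subseteq\bigcup_{i=1}^k(x_i+V)$ with all $x_i\in X$. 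Put $W:=(X\cap U)\cup\{x_1,\dots,x_k\}\subseteq X$, which is precompact because $X\cap U$ is (its closure in $cX$ lies in the compact $U$) and only finitely many points were added. Let $H:=\langle W\rangle\subseteq X$. Since $X\cap V\subseteq X\cap U\subseteq W$ is a neighbourhood of $0$ in $X$, the subgroup $H$ is open, hence closed, in $X$. On the other hand, each $c\in C$ is $c=x_i+v$ with $v\in V\subseteq\overline{V}$, and points of $X\cap\overline{V}\subseteq W$ converge to $v$ (as $v$ lies in the open set $V$), so $c$ lies in the closure in $cX$ of $x_i+(X\cap\overline{V})\subseteq H$; thus $C\subseteq\overline{H}$, and since $\overline{H}$ is a subgroup, $cX=\langle C\rangle\subseteq\overline{H}$. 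So $H$ is dense in $cX$, a fortiori in $X$, and being also closed in $X$ we get $H=X$. Hence $W$ is a precompact generating set of $X$.

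I expect the main obstacle to be exactly this converse: the compact generators of $cX$ need not lie in $X$, so replacing them by approximations $x_i\in X$ a priori only produces a \emph{dense} subgroup of $cX$. The local precompactness hypothesis is what rescues the argument, since it supplies a precompact neighbourhood $X\cap V$ of $0$ that can be thrown into the generating set, forcing the generated subgroup to be open and closed in $X$; then ``closed $+$ dense $=$ all of $X$'' finishes. A secondary point that needs a little care is the bookkeeping ensuring that a single open set $V$ can serve both as (the interior of) the precompact neighbourhood and as the mesh of the finite cover of $C$.
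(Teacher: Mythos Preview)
Your argument is correct. The paper does not actually prove this lemma; it simply cites it from Comfort--Luk\'{a}cs \cite[Lemma 5.2]{MR2681374}. So you have supplied a self-contained elementary proof where the paper defers to the literature.

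Your proof follows the expected line: push precompact sets forward to compact sets in $cX$ via Lemma~\ref{lemma_PrecompactAreThoseWithCompactClosureInCompletion}, and exploit that a subgroup containing a neighbourhood of the identity is open, hence closed. The forward direction is clean. In the converse you correctly identify the key difficulty---that compact generators of $cX$ need not lie in $X$---and resolve it by the standard nudging trick plus throwing a precompact neighbourhood $X\cap U$ into the generating set, then using ``closed and dense implies everything''. One cosmetic point: you write $N=X\cap V_1$, but a neighbourhood $N$ of $0$ in $X$ only \emph{contains} a set of the form $X\cap V_1$; this does not affect the argument since $V_1\subseteq\overline{X\cap V_1}\subseteq\overline{N}$ still holds by density of $X$. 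Similarly, in the last step $X\cap V$ (rather than $X\cap\overline{V}$) already suffices to approximate $v$, but your version is also fine since $\overline{V}\subseteq U$.
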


\begin{proposition}
\label{prop_PGAClosureProps}Suppose $X\in\mathsf{PGA}$, i.e., $X$ is locally
precompact and precompactly generated.

\begin{enumerate}
\item Closed subgroups of $X$ are again in $\mathsf{PGA}$.

\item Quotients of $X$ by closed subgroups are again in $\mathsf{PGA}$.

\item The full subcategory $\mathsf{PGA}$ is extension-closed in
$\mathsf{LPA}$.
\end{enumerate}
\end{proposition}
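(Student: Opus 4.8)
The plan is to transport all three statements to the corresponding facts about group-theoretically compactly generated LCA groups via the completion functor, using that completion is exact on $\mathsf{LPA}$ (Proposition \ref{prop_functor_completion_exact}, Lemma \ref{lem_w1}), that it sends locally precompact groups to locally compact ones (Lemma \ref{lemma_PrecompactAreThoseWithCompactClosureInCompletion}), and --- crucially --- that by Lemma \ref{lemma14} a locally precompact group is precompactly generated exactly when its completion is group-theoretically compactly generated in the sense of Definition \ref{def_CompactlyGeneratedTopologicalGroup}. In this way Lemma \ref{lemma14} becomes a dictionary translating ``$\mathsf{PGA}$-membership'' inside $\mathsf{LPA}$ into group-theoretic compact generation inside $\mathsf{LCA}$, and Lemma \ref{lemma11} then does the actual work on the LCA side.

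For (1), I would start with a closed subgroup $Y\subseteq X$. Proposition \ref{prop_LPAClosureProps}(1) already gives $Y\in\mathsf{LPA}$, and applying Lemma \ref{lem_w1} to the exact sequence $Y\hookrightarrow X\twoheadrightarrow X/Y$ (whose middle term $X$ is locally precompact) shows that $cY\hookrightarrow cX$ is an admissible monic; by Lemma \ref{lemma_PrecompactAreThoseWithCompactClosureInCompletion} all three completions are LCA, so $cY$ is a genuine closed subgroup of the LCA group $cX$. Since $X\in\mathsf{PGA}$, Lemma \ref{lemma14} makes $cX$ group-theoretically compactly generated, hence so is $cY$ by Lemma \ref{lemma11}(1), and then Lemma \ref{lemma14} read backwards yields that $Y$ is precompactly generated; together with local precompactness this gives $Y\in\mathsf{PGA}$. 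Part (2) is the mirror argument: for a closed subgroup $Y\subseteq X$, Proposition \ref{prop_LPAClosureProps}(2) gives $X/Y\in\mathsf{LPA}$, Lemma \ref{lem_w1} identifies $c(X/Y)$ with the LCA quotient $cX/cY$, Lemma \ref{lemma11}(2) propagates group-theoretic compact generation from $cX$ to $c(X/Y)$, and Lemma \ref{lemma14} converts this back.

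For (3), I would take an exact sequence $X'\hookrightarrow X\twoheadrightarrow X''$ in $\mathsf{LPA}$ with $X',X''\in\mathsf{PGA}$; since the sequence lives in $\mathsf{LPA}$, $X$ is automatically locally precompact, so only precompact generation of $X$ must be checked. By Lemma \ref{lem_w1} (again using that the middle term $X$ is locally precompact) the completed sequence $cX'\hookrightarrow cX\twoheadrightarrow cX''$ is exact in $\mathsf{LCA}$; the outer terms are group-theoretically compactly generated by Lemma \ref{lemma14}, and since that class is extension-closed in $\mathsf{LCA}$ by Lemma \ref{lemma11}(4), so is $cX$. A final application of Lemma \ref{lemma14} shows $X$ is precompactly generated, hence $X\in\mathsf{PGA}$.

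I do not anticipate a genuine obstacle here: the whole argument is transport of structure along the exact completion functor, and the serious analytic input has already been absorbed into Lemma \ref{lem_w1} and Lemma \ref{lemma14}. The only point needing a moment's care in each of (1)--(3) is that Lemma \ref{lem_w1} a priori only outputs a monic/epic/complex in $\mathsf{HA}$, so one must invoke Lemma \ref{lemma_PrecompactAreThoseWithCompactClosureInCompletion} to upgrade the relevant completions to LCA groups before Lemma \ref{lemma11} can be applied; once that is noted, everything goes through mechanically.
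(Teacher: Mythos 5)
Your proposal is correct and follows essentially the same route as the paper: transport along the exact completion functor, using Lemma \ref{lemma14} as the dictionary between precompact generation and group-theoretic compact generation, Lemma \ref{lem_w1} for exactness of completion, Lemma \ref{lemma_PrecompactAreThoseWithCompactClosureInCompletion} to land in $\mathsf{LCA}$, and Lemma \ref{lemma11} to do the work on the LCA side. Nothing to add.
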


\begin{proof}
(1) If $X^{\prime}\hookrightarrow X$ is a closed subgroup, $X^{\prime}%
\in\mathsf{LPA}$ by Prop. \ref{prop_LPAClosureProps} and $cX^{\prime
}\hookrightarrow cX$ is an admissible monic by Lemma \ref{lem_w1}. Since
$X\in\mathsf{PGA}$, $cX$ is group-theoretically compactly generated by Lemma
\ref{lemma14}, and then its closed subgroup $cX^{\prime}$ is also
group-theoretically compactly generated by Lemma \ref{lemma11}, so $X^{\prime
}\in\mathsf{PGA}$ by using Lemma \ref{lemma14} in the reverse direction. (2)
This is analogous. (3) Suppose%
\[
X^{\prime}\overset{f}{\hookrightarrow}X\overset{g}{\twoheadrightarrow
}X^{\prime\prime}%
\]
is an exact sequence in $\mathsf{LPA}$ with $X^{\prime},X^{\prime\prime}$ in
$\mathsf{PGA}$. Then by Lemma \ref{lem_w1} the sequence $cX^{\prime
}\overset{cf}{\hookrightarrow}cX\overset{cg}{\twoheadrightarrow}%
cX^{\prime\prime}$ is exact and $cX^{\prime},cX^{\prime\prime}$ are
group-theoretically compactly generated by Lemma \ref{lemma14} and all three
groups are completions of locally precompact groups, and thus in
$\mathsf{LCA}$ by Lemma
\ref{lemma_PrecompactAreThoseWithCompactClosureInCompletion}. Now the
extension-closedness from Lemma \ref{lemma11} implies that $cX$ is compactly
generated, hence $X\in\mathsf{PGA}$.
\end{proof}

\begin{example}
\label{example_3}Dense subgroups $Y\subseteq X$ of group-theoretically
compactly generated groups $X$ are precompactly generated. To see this, note
that the completion of the inclusion $Y\rightarrow X$ becomes the identity
$cY=cX$ by the assumption that $Y$ is dense in $X$, but $cX=X$ since $X$ was
locally compact by assumption and therefore already complete \cite[Prop.
8.2.6]{MR4510389}. Hence, $cY=X$, so $cY$ is group-theoretically compactly
generated and by Lemma \ref{lemma14}, $Y$ is precompactly generated.
\end{example}

\begin{lemma}
\label{lemma_DescribeKernelsAndCokernelsInHA_2}Lemma
\ref{lemma_DescribeKernelsAndCokernelsInHA} is also true for $\mathsf{C}%
\in\{\mathsf{LPA},\mathsf{PGA}\}$. In particular, $\mathsf{LPA}$ and
$\mathsf{PGA}$ are quasi-abelian categories.
\end{lemma}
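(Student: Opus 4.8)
The plan is to rerun the proof of Lemma~\ref{lemma_DescribeKernelsAndCokernelsInHA} almost verbatim for $\mathsf{C}\in\{\mathsf{LPA},\mathsf{PGA}\}$, the only change being that the two permanence inputs used there — Lemma~\ref{lemma_LCAPermanence} and Lemma~\ref{lemma_TBAPermanence}, which supplied closure under closed subgroups and under quotients by closed subgroups in the $\mathsf{LCA}$ and $\mathsf{PCA}$ cases — get replaced by Proposition~\ref{prop_LPAClosureProps} in the $\mathsf{LPA}$ case and by Proposition~\ref{prop_PGAClosureProps} in the $\mathsf{PGA}$ case. For part~(1) I would observe that $\ker(f)=f^{-1}(\{0\})$ is a closed subgroup of $X$, hence again in $\mathsf{LPA}$ (resp.\ $\mathsf{PGA}$) by Proposition~\ref{prop_LPAClosureProps}(1) (resp.\ Proposition~\ref{prop_PGAClosureProps}(1)), while the inclusion of a closed subgroup is a closed injective continuous morphism and plainly has the universal property of a kernel. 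For part~(2), $\overline{f(X)}$ is a closed subgroup, so $\operatorname*{coker}(f)=Y/\overline{f(X)}$ is in $\mathsf{LPA}$ (resp.\ $\mathsf{PGA}$) by Proposition~\ref{prop_LPAClosureProps}(2) (resp.\ Proposition~\ref{prop_PGAClosureProps}(2)), and the quotient map is an open continuous surjection, as usual. Parts~(3) and~(4), together with the identification of the categorical image with $\overline{f(X)}$ and of the coimage with $X/\ker(f)$, are then purely formal and transfer without change.

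For part~(5), quasi-abelianness, I would again appeal to the argument of \cite[Prop.~1.2]{MR2329311}, but the one point deserving care is that the constructions entering the axioms do not leave $\mathsf{LPA}$ or $\mathsf{PGA}$. The relevant constructions are the pushout of an admissible monic $i\colon X'\hookrightarrow X$ along an arbitrary $\varphi\colon X'\to Y$, and dually the pullback of an admissible epic. The pushout is $\operatorname*{coker}(X'\overset{(i,-\varphi)}{\longrightarrow}X\oplus Y)$; since $i$, being a closed injective continuous homomorphism, is a homeomorphism onto its closed image, the map $(i,-\varphi)$ is a closed embedding, so the pushout is a quotient of the finite direct sum $X\oplus Y$ by a closed subgroup. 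Finite direct sums stay in $\mathsf{LPA}$ (their completions are finite products of the completions, hence locally compact, by Lemma~\ref{lemma_PrecompactAreThoseWithCompactClosureInCompletion}), and in $\mathsf{PGA}$ because $X\oplus Y$ is an extension of $Y$ by $X$ and $\mathsf{PGA}$ is extension-closed in $\mathsf{LPA}$ (Proposition~\ref{prop_PGAClosureProps}(3)); hence by Proposition~\ref{prop_LPAClosureProps}(2) (resp.\ Proposition~\ref{prop_PGAClosureProps}(2)) the pushout again lies in the respective category. The dual statement for the pullback of an admissible epic is handled using closure under closed subgroups instead of quotients. Since the admissible monics and epics in $\mathsf{LPA}$ and $\mathsf{PGA}$ are detected by the very same closedness/openness conditions as in $\mathsf{HA}$ and the factorization $\widehat f$ of Lemma~\ref{lemma_S0} is computed identically, nothing else needs to be checked.

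I do not anticipate a genuine obstacle: all the substantive content — that closed subgroups, quotients by closed subgroups, and (for $\mathsf{PGA}$) extensions stay inside the categories — has already been extracted from the exactness of completion in Propositions~\ref{prop_LPAClosureProps} and~\ref{prop_PGAClosureProps}. The main thing to be careful about is bookkeeping: matching each appeal to a permanence fact in the original proof to the right clause of Proposition~\ref{prop_LPAClosureProps} or~\ref{prop_PGAClosureProps}, and confirming that the finite limits and colimits needed for the quasi-abelian axioms are assembled from operations under which $\mathsf{LPA}$ and $\mathsf{PGA}$ are visibly closed.
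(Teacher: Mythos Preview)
Your proposal is correct and follows essentially the same approach as the paper: rerun the proof of Lemma~\ref{lemma_DescribeKernelsAndCokernelsInHA}, replacing the permanence inputs by Propositions~\ref{prop_LPAClosureProps} and~\ref{prop_PGAClosureProps}. The paper's own proof is in fact even terser than yours (it simply points to those two propositions and says the kernels and cokernels computed in $\mathsf{HA}$ land in the subcategory), whereas you spell out the pushout/pullback and finite-direct-sum closure needed for part~(5); this extra detail is sound and does not diverge from the paper's strategy.
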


\begin{proof}
Follow the proof of Lemma \ref{lemma_DescribeKernelsAndCokernelsInHA} earlier
in this text, and combine it with Prop. \ref{prop_LPAClosureProps} for
$\mathsf{LPA}$ (resp. Prop. \ref{prop_PGAClosureProps} for $\mathsf{PGA}$) to
see that the kernels and cokernels, computed in $\mathsf{HA}$, happen to lie
in $\mathsf{LPA}$ (resp. $\mathsf{PGA}$) and agree with the kernel and
cokernel intrinsic to these full subcategories.
\end{proof}

\section{The Functor $\Theta$}

\begin{definition}
We write

\begin{itemize}
\item $\mathsf{Ab}_{\operatorname*{fin}}$ for the abelian category of finite
abelian groups, and

\item $\mathsf{Ab}_{\operatorname*{fg}}$ for the abelian category of finitely
generated abelian groups.
\end{itemize}
\end{definition}

\begin{definition}
\label{def_ObjectsOfTypeDC_resp_DCG}We call an object%
\[
\left[
{
\begin{tikzcd}
	D && C
	\arrow["f"{inner sep=.8ex}, "\bullet"{marking}, from=1-1, to=1-3]
\end{tikzcd}
}%
\right]
\]
in $\mathcal{LH}(\mathsf{LCA})$ with $f$ monic and epic (intrinsic to
$\mathsf{LCA}$), of type

\begin{enumerate}
\item \emph{d-c} if $D$ is discrete and $C$ compact,

\item \emph{d-cg} if $D$ is discrete and $C$ group-theoretically compactly generated.
\end{enumerate}
\end{definition}

\begin{definition}
We define functors%
\begin{equation}
\Theta\colon\mathsf{PCA}\longrightarrow\mathcal{LH}(\mathsf{LCA}%
_{\operatorname*{vf}})\qquad\text{resp.}\qquad\Theta\colon\mathsf{PGA}%
\longrightarrow\mathcal{LH}(\mathsf{LCA})\text{,} \label{l_h1}%
\end{equation}
both sending an object $X$ to the complex%
\begin{equation}
\left[
{
\begin{tikzcd}
	{X_d} && {cX},
	\arrow["x"{inner sep=.8ex}, "\bullet"{marking}, from=1-1, to=1-3]
\end{tikzcd}
}%
\right]  \label{l_h2}%
\end{equation}
where $X_{d}$ is $X$, but equipped with the discrete topology, and $cX$ is the
Weil completion of $X$. The arrow $x$ is the natural inclusion of
sets.\footnote{This is a continuous map, but not an embedding of topological
groups, i.e. $x$ usually will not induce a topological isomorphism onto its
set-theoretic image in $cX$.}
\end{definition}

\begin{lemma}
\label{lemma15}The functor~$\Theta$ is well-defined and induces functors%
\[
\Theta\colon\mathsf{PCA}/\mathsf{Ab}_{\operatorname*{fin}}\longrightarrow
\mathcal{LH}(\mathsf{LCA}_{\operatorname*{vf}})\qquad\text{resp.}\qquad
\Theta\colon\mathsf{PGA}/\mathsf{Ab}_{\operatorname*{fg}}\longrightarrow
\mathcal{LH}(\mathsf{LCA})\text{.}%
\]
In the situation of $\mathsf{PCA}$, this functor outputs objects of type d-c,
and in the $\mathsf{PGA}$ variant, it outputs objects of type d-cg. The
essential image is contained in the torsion class $\mathsf{T}$ in the sense of
Prop. \ref{prop_QAbTorsionPairInLeftHeart}.
\end{lemma}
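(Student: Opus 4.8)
The plan is to verify the four assertions of the lemma---well-definedness of $\Theta$ as a functor into the left heart, the d-c resp. d-cg type of its values, membership of those values in the torsion class $\mathsf{T}$, and descent to the quotient categories---separately, with only the last requiring real work; the first three are bookkeeping with the results of \S\ref{sect_Completion}.

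I would first record well-definedness. For $X\in\mathsf{PGA}$ (resp. $\mathsf{PCA}$) the group $X_{d}$ is discrete, hence locally compact and free of a real-line summand, so $X_{d}\in\mathsf{LCA}_{\operatorname*{vf}}$; moreover $cX$ is locally compact by Lemma~\ref{lemma_PrecompactAreThoseWithCompactClosureInCompletion}(3), and in the $\mathsf{PCA}$ case it is even compact by part~(2), hence again vector-free. The unit $\iota_{X}\colon X_{d}\to cX$ is injective and continuous (the source is discrete), so it is a monic in $\mathsf{LCA}$, and the two-term complex of Eq.~\ref{l_h2} is a genuine object of $\mathcal{LH}(\mathsf{LCA})$, resp. of $\mathcal{LH}(\mathsf{LCA}_{\operatorname*{vf}})$. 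A morphism $f\colon X\to Y$ of topological groups yields the commuting square whose top arrow is the underlying group homomorphism $f$ (continuous as a map $X_{d}\to Y_{d}$) and whose bottom arrow is $cf$; the square commutes by naturality of $\iota$, and functoriality of $\Theta$ is inherited from that of the completion. The set-theoretic image of $\iota_{X}$ is $X$, which is dense in $cX$, so $\iota_{X}$ is also epic in $\mathsf{LCA}$ by Lemma~\ref{lemma_DescribeKernelsAndCokernelsInHA}; hence $\Theta(X)\in\mathsf{T}$ by the description of the torsion part in Proposition~\ref{prop_QAbTorsionPairInLeftHeart}. Finally $X_{d}$ is discrete and $cX$ is compact when $X\in\mathsf{PCA}$, so $\Theta(X)$ is of type d-c, whereas for $X\in\mathsf{PGA}$ the group $cX$ is locally compact and, by Lemma~\ref{lemma14}, group-theoretically compactly generated, so $\Theta(X)$ is of type d-cg.

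The substance of the lemma is that $\Theta$ sends every arrow of the multiplicative system $S_{\mathsf{Ab}_{\operatorname*{fg}}}$, resp. $S_{\mathsf{Ab}_{\operatorname*{fin}}}$, to an isomorphism; the universal property of the Gabriel--Zisman localization then produces the induced functor on the quotient category. Since that system is generated under composition by admissible monics with discrete finitely generated (resp. finite) cokernel and admissible epics with discrete finitely generated (resp. finite) kernel, and $\Theta$ is a functor, it suffices to treat these two kinds of arrows. Let $f\colon X\hookrightarrow Y$ be an admissible monic in $\mathsf{PGA}$ with $Q:=Y/X$ discrete and finitely generated. Then $X_{d}\hookrightarrow Y_{d}$ is a closed embedding with cokernel $Q$, and Lemma~\ref{lem_w1}, applied to the exact sequence $X\hookrightarrow Y\twoheadrightarrow Q$ (legitimate since $Y$ is locally precompact), shows that $cX\hookrightarrow cY$ is a closed embedding with cokernel $cQ=Q$, the discrete group $Q$ being already complete. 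Hence $\Theta(f)$ sits in a sequence
\[
0\longrightarrow\Theta(X)\longrightarrow\Theta(Y)\longrightarrow\bigl[\,Q\xrightarrow{\iota_{Q}}Q\,\bigr]\longrightarrow 0
\]
of two-term complexes which is degreewise an admissible short exact sequence in $\mathsf{LCA}$; as $\iota_{Q}$ is an isomorphism, the third complex is contractible, hence zero in $\operatorname*{D}\nolimits^{b}(\mathsf{LCA})$, so the associated distinguished triangle makes $\Theta(f)$ an isomorphism. Dually, for an admissible epic $f\colon X\twoheadrightarrow Y$ with $K:=\ker f$ discrete and finitely generated, Lemma~\ref{lem_w1} applied to $K\hookrightarrow X\twoheadrightarrow Y$ gives a degreewise admissible short exact sequence $0\to[\,K\xrightarrow{\iota_{K}}K\,]\to\Theta(X)\to\Theta(Y)\to 0$, and contractibility of the kernel complex again makes $\Theta(f)$ an isomorphism. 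The $\mathsf{PCA}$ variant is identical with finite groups in place of finitely generated ones; one needs only the extra observation that every group appearing---$X_{d}$, $Y_{d}$, $cX$, $cY$ and the finite $Q$ or $K$---is discrete or compact, hence vector-free, and that the short exact sequences used are admissible in $\mathsf{LCA}_{\operatorname*{vf}}$ by Lemma~\ref{lemma_LCAvfProps}, so that the triangles live in $\operatorname*{D}\nolimits^{b}(\mathsf{LCA}_{\operatorname*{vf}})$ even though $\mathsf{LCA}_{\operatorname*{vf}}$ is not extension-closed in $\mathsf{LCA}$.

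I expect the main obstacle to be precisely this last paragraph: identifying $cX$, $cY$ and the cokernel resp. kernel complexes through the exactness result Lemma~\ref{lem_w1}, and---in the vector-free version---checking that the auxiliary short exact sequences never leave $\mathsf{LCA}_{\operatorname*{vf}}$, which works only because every group in sight is discrete or compact. The preliminary fact that $\mathsf{Ab}_{\operatorname*{fin}}\subseteq\mathsf{PCA}$ and $\mathsf{Ab}_{\operatorname*{fg}}\subseteq\mathsf{PGA}$ are Serre subcategories closed under subobjects and quotients---needed merely so that the quotient categories in the statement exist---is routine: subgroups and quotients of discrete finitely generated groups are again of that form, and an extension of such by such is discrete because the sub-object is open.
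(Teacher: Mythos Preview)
Your proof is correct. The first paragraph matches the paper's treatment of well-definedness, the d-c/d-cg type, and membership in $\mathsf{T}$ essentially verbatim.

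For the descent to the quotient categories, you take a somewhat different and more explicit route than the paper. The paper simply observes that for $X$ discrete finite (resp.\ finitely generated) one has $X=X_{d}=cX$, so $\Theta(X)=[X\xrightarrow{1}X]$ is quasi-isomorphic to zero, and then invokes the universal property of localization. Strictly speaking that inference requires knowing that $\Theta$ is exact (so that a morphism with kernel and cokernel mapped to zero becomes an isomorphism), and exactness is only proven in the \emph{next} lemma (Lemma~\ref{lemma_f_exact}); the paper is being a bit informal here. You instead verify directly, via Lemma~\ref{lem_w1} and the contractibility of $[Q\xrightarrow{1}Q]$, that each generating arrow of the multiplicative system $S_{\mathsf{Ab}_{\operatorname*{fg}}}$ (resp.\ $S_{\mathsf{Ab}_{\operatorname*{fin}}}$) is sent to an isomorphism. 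This is self-contained and does not forward-reference exactness; it also essentially anticipates the argument the paper uses later in the Example following Lemma~\ref{lemma15} and in Lemma~\ref{lemma_f_exact}. Your care with the $\mathsf{LCA}_{\operatorname*{vf}}$ variant---checking that the auxiliary sequences stay inside vector-free groups---is a genuine point the paper glosses over.
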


\begin{proof}
We first show that the functors in Eq. \ref{l_h1} are well-defined and land in
$\mathsf{T}$. For any object $X$, we may factor the arrow $x$ in Eq.
\ref{l_h2} as%
\[
x\colon X_{d}\overset{a}{\longrightarrow}X\overset{b}{\longrightarrow
}cX\text{.}%
\]
Here $a$ is the natural arrow from $X$ to itself, but with the source given
the discrete topology. This map is tautologically bijective and continuous
since every subset of $X_{d}$ is open. The arrow $b$ is the natural arrow to
the completion, it is always injective and has dense image \cite[Theorem
7.1.18]{MR4510389}. Hence, the composition is injective with dense image and
therefore a monic and epic (Lemma \ref{lemma_DescribeKernelsAndCokernelsInHA}%
). By Lemma \ref{lemma14}, for $X\in\mathsf{PCA}$ [resp. $\mathsf{PGA}$], $cX$
is compact and therefore $\Theta(X)$ is of type d-c [resp. $cX$
group-theoretically compactly generated and therefore $\Theta(X)$ is of type
d-cg]. In order to show that the functor factors over the quotient category by
$\mathsf{Ab}_{\operatorname*{fin}}$ [resp. $\mathsf{Ab}_{\operatorname*{fg}}%
$], it suffices to note that for a discrete group $X$, we have $X=X_{d}=cX$,
since discrete groups are tautologically complete, so $\Theta$ sends $X$ to
the complex
\begin{equation}
\left[
{
\begin{tikzcd}
	X && X
	\arrow["1"{inner sep=.8ex}, "\bullet"{marking}, from=1-1, to=1-3]
\end{tikzcd}
}%
\right]  \text{,} \label{l_8}%
\end{equation}
which is quasi-isomorphic to zero. Thus, by the universal property of
localizations, the functor $\Theta$ factors over the localization
$\mathsf{PCA}[S_{\mathsf{Ab}_{\operatorname*{fin}}}^{-1}]$, where
$S_{\mathsf{Ab}_{\operatorname*{fin}}}$ is the multiplicative system of
morphisms in $\mathsf{LCA}$ whose kernel and cokernel lies in $\mathsf{Ab}%
_{\operatorname*{fin}}$ [resp. the analogue for $\mathsf{Ab}%
_{\operatorname*{fg}}$].
\end{proof}

\begin{example}
Although we have already constructed $\Theta$, it is worth spelling out what
this functor does on left roofs. Suppose $X\rightarrow Y$ is an arrow in
$\mathsf{PCA}/\mathsf{Ab}_{\operatorname*{fin}}$. Then it can be represented
by a left roof%
\begin{equation}
X\overset{s}{\longleftarrow}Z\longrightarrow Y \label{diag_7}%
\end{equation}
with $s$ having kernel $A:=\ker(s)$ and cokernel $B:=\operatorname*{coker}(s)$
in $\mathsf{Ab}_{\operatorname*{fin}}$. Since $(-)_{d}$ is exact, the arrow
$Z_{d}\rightarrow X_{d}$ has kernel $A_{d}$, but this agrees with $A$ since
$A$ is finite. Analogously, since $c(-)$ is exact, the arrow $cZ\rightarrow
cX$ has kernel $cA$, which again agrees with $A$ since $A$ is finite.
Analogously for the cokernels. The roof in Diagram \ref{diag_7} gets mapped to
the left roof%
\[
\left[
{
\begin{tikzcd}
	{X_d} & {cX}
	\arrow["\bullet"{marking}, from=1-1, to=1-2]
\end{tikzcd}
}%
\right]  \overset{\Theta(s)}{\longleftarrow}\left[
{
\begin{tikzcd}
	{Z_d} & {cZ}
	\arrow["\bullet"{marking}, from=1-1, to=1-2]
\end{tikzcd}
}%
\right]  \longrightarrow\left[
{
\begin{tikzcd}
	{Y_d} & {cY}
	\arrow["\bullet"{marking}, from=1-1, to=1-2]
\end{tikzcd}
}%
\right]  \text{,}%
\]
and $s$ unravels as a pair of arrows $(u^{\prime},u)$ in%
\[%
{
\begin{tikzcd}
	A && A \\
	{Z_d} && cZ \\
	\\
	{Y_d} && cY \\
	B && {B,}
	\arrow["1", from=1-1, to=1-3]
	\arrow[hook, from=1-1, to=2-1]
	\arrow[hook, from=1-3, to=2-3]
	\arrow["\bullet"{marking}, from=2-1, to=2-3]
	\arrow["{{u'}}"', from=2-1, to=4-1]
	\arrow["u", from=2-3, to=4-3]
	\arrow["\bullet"{marking}, from=4-1, to=4-3]
	\arrow[two heads, from=4-1, to=5-1]
	\arrow[two heads, from=4-3, to=5-3]
	\arrow["1"', from=5-1, to=5-3]
\end{tikzcd}
}%
\]
and it follows that the middle square must be bicartesian in $\mathsf{LCA}$.
Thus, $\Theta(s)$ determines a quasi-isomorphism in $\operatorname*{D}%
\nolimits^{b}(\mathsf{LCA})$, as explained in Eq. \ref{l_3d}. For
$\mathsf{PCA}/\mathsf{Ab}_{\operatorname*{fg}}$, the story carries over verbatim.
\end{example}

We make a trivial observation:

\begin{lemma}
\label{lemma_functor_d_exact}The functor%
\begin{align*}
\left(  -\right)  _{d}\colon\mathsf{PGA}  &  \longrightarrow\mathsf{LCA}\\
X  &  \longmapsto X_{d}\text{,}%
\end{align*}
sending a precompactly generated group to itself, but equipped with the
discrete topology, is exact.
\end{lemma}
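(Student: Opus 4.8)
The plan is to check directly that $(-)_{d}$ sends short exact sequences in $\mathsf{PGA}$ to short exact sequences in $\mathsf{LCA}$, where in both categories ``short exact sequence'' means the notion of Eq. \ref{l_4} (these are the quasi-abelian/exact structures fixed in \S\ref{sect_ExactStructuresOnCatsOfTopGroups}). Before that I would record that $(-)_{d}$ is a well-defined additive functor at all: for any abelian group $X$ the discrete topology makes $X$ a discrete --- hence locally compact Hausdorff --- abelian group, so $X_{d}\in\mathsf{LCA}$; and every group homomorphism out of a discrete group is continuous, so morphisms transport. The precompact-generation hypothesis on the source plays no role here: $(-)_{d}$ is really defined on all of $\mathsf{HA}$.

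Next, let $X'\overset{i}{\hookrightarrow}X\overset{j}{\twoheadrightarrow}X''$ be an exact sequence in $\mathsf{PGA}$ in the sense of Eq. \ref{l_4}. Since $(-)_{d}$ leaves underlying abelian groups and underlying homomorphisms unchanged, condition (1) of Eq. \ref{l_4} --- exactness of the underlying sequence of abelian groups --- holds for $X'_{d}\to X_{d}\to X''_{d}$ verbatim. For conditions (2) and (3): in a discrete topological space every subset is simultaneously open and closed, so the injective continuous map $i_{d}\colon X'_{d}\to X_{d}$ is automatically a closed map, and the surjective continuous map $j_{d}\colon X_{d}\to X''_{d}$ is automatically an open map. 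Hence $X'_{d}\hookrightarrow X_{d}\twoheadrightarrow X''_{d}$ is an exact sequence in $\mathsf{LCA}$, which is exactly what it means for $(-)_{d}$ to be exact.

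I do not expect any genuine obstacle --- the whole content is the two observations that a discrete abelian group really does lie in $\mathsf{LCA}$ and that discreteness makes the topological side-conditions (closedness of the monic, openness of the epic) automatic. The one point worth flagging, to pre-empt confusion, is that $(-)_{d}$ does \emph{not} commute with arbitrary cokernels: for a general morphism $f\colon X\to Y$ the cokernel $Y/\overline{f(X)}$ computed in $\mathsf{PGA}$ is replaced after discretization by $Y_{d}/f(X)$, and $\overline{f(X)}$ can be strictly larger than $f(X)$. This discrepancy disappears for cokernels of admissible monics, where the relevant image is already closed, so it is irrelevant to exactness in the sense above, which is all that is claimed here.
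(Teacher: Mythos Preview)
Your proof is correct and follows essentially the same approach as the paper's: both arguments observe that since the output consists of discrete groups, functoriality is automatic and the topological conditions (closedness of the monic, openness of the epic) in Eq.~\ref{l_4} are trivially satisfied, reducing exactness to the underlying abelian-group level. Your version is more explicit and your closing caveat about non-admissible cokernels is a nice clarification, but the core idea is identical.
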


\begin{proof}
This borders on being a triviality: Since the output of the functor consists
entirely of discrete topological abelian groups, any morphism $f\colon
X_{d}\rightarrow Y_{d}$ is trivially continuous once $X\rightarrow Y$ is a
group homomorphism. Moreover, checking exactness is equivalent to checking
exactness on the underlying abelian groups. Hence, by Eq. \ref{l_4}, it is
clear that $\left(  -\right)  _{d}$ is an exact functor.
\end{proof}

\begin{lemma}
\label{lemma_f_exact}The functor $\Theta$ is exact (with respect to the exact
structure on $\mathsf{PGA}$ resp. $\mathsf{PCA}$ as explained in
\S \ref{sect_ExactStructuresOnCatsOfTopGroups}).
\end{lemma}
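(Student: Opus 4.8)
The plan is to factor $\Theta$ through the two functors whose exactness is already in hand and then carry a degreewise-exact sequence of two-term complexes into the left heart. Recall $\Theta(X)=[X_d\overset{x}{\to}cX]$, concentrated in homological degrees $[1,0]$: the degree-$1$ entry is the image of $X$ under the functor $(-)_d$ of Lemma~\ref{lemma_functor_d_exact}, the degree-$0$ entry is its image under the completion functor of Proposition~\ref{prop_functor_completion_exact}, and the structure map $x$ is natural in $X$. So, starting from an exact sequence $X'\overset{f}{\hookrightarrow}X\overset{g}{\twoheadrightarrow}X''$ in $\mathsf{PGA}$ (resp.\ $\mathsf{PCA}$) --- which by Proposition~\ref{prop_PGAClosureProps} and Lemma~\ref{lemma_DescribeKernelsAndCokernelsInHA_2} is also exact in $\mathsf{LPA}$ --- I would apply $\Theta$ to obtain a commutative ladder of two-term complexes in $\mathsf{LCA}$ with top row $X'_d\hookrightarrow X_d\twoheadrightarrow X''_d$, bottom row $cX'\hookrightarrow cX\twoheadrightarrow cX''$, and the naturality maps $X_d\to cX$ as vertical arrows. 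The top row is exact in $\mathsf{LCA}$ by Lemma~\ref{lemma_functor_d_exact} and the bottom row is exact in $\mathsf{LCA}$ by Proposition~\ref{prop_functor_completion_exact}; in the $\mathsf{PCA}$ variant the three completions are moreover compact (Lemma~\ref{lemma_PrecompactAreThoseWithCompactClosureInCompletion}) and the groups $X_d$ discrete, so all entries lie in $\mathsf{LCA}_{\operatorname*{vf}}$ and both rows are exact there by Lemma~\ref{lemma_LCAvfProps}. Thus $\Theta(f)$ is a termwise admissible monic and $\Theta(g)$ a termwise admissible epic, i.e.\ $\Theta(X')\to\Theta(X)\to\Theta(X'')$ is a termwise short exact sequence of bounded complexes over the exact category $\mathsf{LCA}$ (resp.\ $\mathsf{LCA}_{\operatorname*{vf}}$).

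The next step is the standard passage from a termwise admissible short exact sequence of bounded complexes to a distinguished triangle $\Theta(X')\to\Theta(X)\to\Theta(X'')\to\Theta(X')[1]$ in $\operatorname*{D}\nolimits^{b}(\mathsf{LCA})$ (resp.\ $\operatorname*{D}\nolimits^{b}(\mathsf{LCA}_{\operatorname*{vf}})$); see \cite{MR2606234}. By Lemma~\ref{lemma15} all three vertices already lie in the torsion class $\mathsf{T}$ of the left heart, in particular in $\mathcal{LH}(\mathsf{LCA})$ (resp.\ $\mathcal{LH}(\mathsf{LCA}_{\operatorname*{vf}})$). For the $t$-structure of Schneiders whose heart is the left heart \cite{MR1779315}, a distinguished triangle all of whose vertices lie in the heart is exactly the datum of a short exact sequence in that abelian category --- one reads this off the long exact cohomology sequence, using that the cohomologies of the vertices vanish outside the heart's degree. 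Hence $0\to\Theta(X')\to\Theta(X)\to\Theta(X'')\to 0$ is short exact in $\mathcal{LH}(\mathsf{LCA})$, and since $\mathsf{T}$ is extension-closed in the left heart with induced exact structure equal to its intrinsic quasi-abelian one (Lemma~\ref{lemma8}), this sequence is also exact for the exact structure on $\mathsf{Ghost}\subseteq\mathsf{T}$. This gives exactness of $\Theta$; the $\mathsf{PCA}$/$\mathsf{LCA}_{\operatorname*{vf}}$ case is the same argument verbatim.

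The only points that require genuine care are bookkeeping ones. First, one must check that the two rows of the ladder are \emph{admissible} short exact sequences in the exact structure of $\mathsf{LCA}$, not merely exact sequences of abstract groups: for the degree-$1$ row this is automatic since the groups are discrete (every injection is a closed map, every surjection open), and for the degree-$0$ row it is precisely the content of Proposition~\ref{prop_functor_completion_exact}. Second, one must keep the degree convention straight --- $\mathcal{LH}$ sits in homological degrees $[1,0]$ --- so that the cohomology-sequence argument identifying triangles-in-the-heart with short exact sequences comes out correctly. The substantive analytic difficulty, namely exactness of completion on $\mathsf{LPA}$, has already been absorbed into Proposition~\ref{prop_functor_completion_exact}, so no new topological obstacle arises here; the work of this lemma is purely formal.
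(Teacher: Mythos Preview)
Your proof is correct and follows essentially the same strategy as the paper: both use the exactness of $(-)_d$ and of completion to obtain a degreewise admissible short exact sequence of two-term complexes, and then invoke the fact that a distinguished triangle with all vertices in the heart yields a short exact sequence there. The paper packages this last step as a $(3\times 3)$-lemma argument in $\mathcal{LH}(\mathsf{LCA})$ (with the columns being cone sequences), whereas you pass directly from the termwise exact sequence to a triangle; these are two phrasings of the same mechanism.
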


\begin{proof}
We first prove exactness for $\Theta$ on $\mathsf{PGA}$. Suppose%
\[
X^{\prime}\hookrightarrow X\twoheadrightarrow X^{\prime\prime}%
\]
is an exact sequence in $\mathsf{PGA}$. We shall now use that a sequence in
$\mathsf{LCA}$ is exact with respect to the exact structure of $\mathsf{LCA}$
if and only if it is exact in the classical sense in the abelian category
$\mathcal{LH}(\mathsf{LCA})$, see \cite[Cor. 1.2.28]{MR1779315}. Consider the
following $(3\times3)$-diagram in $\mathcal{LH}(\mathsf{LCA})$.
\begin{equation}%
\begin{array}
[c]{ccccc}%
\left[
{
\begin{tikzcd}
	{0} & {X'_d}
	\arrow[from=1-1, to=1-2]
\end{tikzcd}
}%
\right]  & \longrightarrow & \left[
{
\begin{tikzcd}
	{0} & {X_d}
	\arrow[from=1-1, to=1-2]
\end{tikzcd}
}%
\right]  & \longrightarrow & \left[
{
\begin{tikzcd}
	{0} & {X''_d}
	\arrow[from=1-1, to=1-2]
\end{tikzcd}
}%
\right] \\
\downarrow &  & \downarrow &  & \downarrow\\
\left[
{
\begin{tikzcd}
	{0} & {cX'}
	\arrow[from=1-1, to=1-2]
\end{tikzcd}
}%
\right]  & \longrightarrow & \left[
{
\begin{tikzcd}
	{0} & {cX}
	\arrow[from=1-1, to=1-2]
\end{tikzcd}
}%
\right]  & \longrightarrow & \left[
{
\begin{tikzcd}
	{0} & {cX''}
	\arrow[from=1-1, to=1-2]
\end{tikzcd}
}%
\right] \\
\downarrow &  & \downarrow &  & \downarrow\\
\left[
{
\begin{tikzcd}
	{X'_d} & {cX'}
	\arrow[from=1-1, to=1-2]
\end{tikzcd}
}%
\right]  & \longrightarrow & \left[
{
\begin{tikzcd}
	{X_d} & {cX}
	\arrow[from=1-1, to=1-2]
\end{tikzcd}
}%
\right]  & \longrightarrow & \left[
{
\begin{tikzcd}
	{X''_d} & {cX''}
	\arrow[from=1-1, to=1-2]
\end{tikzcd}
}%
\right]
\end{array}
\label{d_6}%
\end{equation}
The first row is exact in $\mathcal{LH}(\mathsf{LCA})$ since $X_{d}^{\prime
}\hookrightarrow X_{d}\twoheadrightarrow X_{d}^{\prime\prime}$ is exact in
$\mathsf{LCA}$ by Lemma \ref{lemma_functor_d_exact}. The second row is exact
in $\mathcal{LH}(\mathsf{LCA})$ because $cX^{\prime}\hookrightarrow
cX\twoheadrightarrow cX^{\prime\prime}$ is exact in $\mathsf{LCA}$ by the
exactness of Weil completion in the present setting, Lemma
\ref{prop_functor_completion_exact}. The columns are all exact since they are
cone sequences\footnote{The third row terms are the cones of the top morphism
between the $2$-term complexes above in the derived category
$\operatorname*{D}^{b}(\mathsf{LCA})$}, and a sequence in the heart
$\mathcal{LH}(\mathsf{LCA})$ is a short exact sequence if and only if it can
be extended to a distinguished triangle in the derived category
$\operatorname*{D}\nolimits^{b}(\mathsf{LCA})$ \cite[Cor. A.7.9]{MR4337423}.
This means that all conditions for the $(3\times3)$-lemma in abelian
categories are satisfied, so we deduce that the bottom horizontal row is also
exact in $\mathcal{LH}(\mathsf{LCA})$, but it agrees with $\Theta(X^{\prime
})\longrightarrow\Theta(X)\longrightarrow\Theta(X^{\prime\prime})$. Hence,
$\Theta$ is an exact functor. The proof for $\Theta$ on $\mathsf{PCA}$ is
exactly the same. In the latter setting, the completions $c(-)$ are compact,
and so they do not contain a real line summand, so we can work in
$\mathcal{LH}(\mathsf{LCA}_{\operatorname*{vf}})$.
\end{proof}

\section{Rewriting Rules for Roofs}

In this section, we will develop a mechanism to represent morphisms in the
left heart in a specific format.

\begin{lemma}
\label{lemmaB2}Suppose%
\begin{equation}%
{
\begin{tikzcd}
	{D'} && {C'} \\
	\\
	D && C
	\arrow["f"{inner sep=.8ex}, "\bullet"{marking}, from=1-1, to=1-3]
	\arrow["u"', from=1-1, to=3-1]
	\arrow["v", from=1-3, to=3-3]
	\arrow["g"'{inner sep=.8ex}, "\bullet"{marking}, from=3-1, to=3-3]
\end{tikzcd}
}
\label{diag_9}%
\end{equation}
is a bicartesian square in $\mathsf{LCA}$ or $\mathsf{LCA}_{\operatorname*{vf}%
}$. Then the columns extend to exact sequences%
\[
\ker(u)\hookrightarrow D^{\prime}\overset{u}{\longrightarrow}%
D\twoheadrightarrow\operatorname*{coker}(u)\qquad\text{and}\qquad
\ker(v)\hookrightarrow C^{\prime}\overset{v}{\longrightarrow}%
C\twoheadrightarrow\operatorname*{coker}(v)
\]
such that $\ker(u)\cong\ker(v)$ as well as $\operatorname*{coker}%
(u)\cong\operatorname*{coker}(v)$.

\begin{enumerate}
\item If both rows in Diag. \ref{diag_9} are d-c, these kernels and cokernels
are finite groups.

\item If both rows in Diag. \ref{diag_9} are d-cg, these kernels and cokernels
are finitely generated discrete groups.
\end{enumerate}
\end{lemma}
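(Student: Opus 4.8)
The plan is to transport the whole picture into the abelian left heart $\mathcal{LH}(\mathsf{LCA})$ (resp.\ $\mathcal{LH}(\mathsf{LCA}_{\operatorname*{vf}})$), run the snake lemma there, and then carry the outcome back to $\mathsf{LCA}$. For a monic $f\colon D'\to C'$ in $\mathsf{LCA}$ the two-term complex $[D'\to C']$ is canonically the cokernel of $f$ computed in $\mathcal{LH}(\mathsf{LCA})$, so there is a short exact sequence $0\to D'\to C'\to[D'\to C']\to 0$ in $\mathcal{LH}(\mathsf{LCA})$; here, via the embedding of Prop.~\ref{prop_QAbTorsionPairInLeftHeart}, $D',C'$ are the images of the LCA groups in $\mathsf{F}$ and $[D'\to C']$ lies in $\mathsf{T}$, so this is the torsion decomposition. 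The same holds for the bottom row. Reading Diagram~\ref{diag_9} with $u$ in homological degree $1$ and $v$ in degree $0$, the square is precisely a morphism between these two short exact sequences, the third component being the chain map $(u,v)\colon[D'\to C']\to[D\to C]$. By Remark~\ref{rmk_SchneidersDescriptionOfLH} a chain map of two-term complexes is an isomorphism in the heart precisely when the underlying square is bicartesian in $\mathsf{LCA}$; by hypothesis $(u,v)$ is therefore an isomorphism in $\mathcal{LH}(\mathsf{LCA})$.

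Applying the snake lemma in the abelian category $\mathcal{LH}(\mathsf{LCA})$ to this morphism of short exact sequences, the vanishing of the kernel and cokernel of the third vertical map forces isomorphisms $\ker u\cong\ker v$ and $\operatorname*{coker}u\cong\operatorname*{coker}v$ in $\mathcal{LH}(\mathsf{LCA})$, where for now all four terms are taken in the heart. Since $D$ and $D'$ are discrete, $u$ factors as $D'\twoheadrightarrow u(D')\hookrightarrow D$ with both arrows admissible in $\mathsf{LCA}$, so $u$ is admissible; because $\mathsf{LCA}\hookrightarrow\mathcal{LH}(\mathsf{LCA})$ is exact and fully faithful, the heart kernel and cokernel of $u$ coincide with $\ker u=u^{-1}(0)$ and $\operatorname*{coker}u=D/u(D')$ taken in $\mathsf{LCA}$, both discrete and hence in $\mathsf{F}$. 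In particular $\operatorname*{coker}v$ lies in $\mathsf{F}$.

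The step I expect to be the main obstacle is to deduce from $\operatorname*{coker}v\in\mathsf{F}$ that $v$ is admissible in $\mathsf{LCA}$. For this I would factor $v$ through its $\mathsf{LCA}$-coimage and $\mathsf{LCA}$-image and chase the resulting short exact sequences in $\mathcal{LH}(\mathsf{LCA})$ to see that the torsion decomposition of $\operatorname*{coker}_{\mathcal{LH}}v$ has torsion part $[\operatorname*{coim}v\to\operatorname*{im}v]\in\mathsf{T}$ and torsion-free part $\operatorname*{coker}_{\mathsf{LCA}}v\in\mathsf{F}$. Hence $\operatorname*{coker}_{\mathcal{LH}}v\in\mathsf{F}$ forces $[\operatorname*{coim}v\to\operatorname*{im}v]=0$, i.e.\ $\widehat v$ is an isomorphism, i.e.\ $v$ is admissible in $\mathsf{LCA}$. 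Therefore the column over $C',C$ extends to an exact sequence $\ker v\hookrightarrow C'\to C\twoheadrightarrow\operatorname*{coker}v$ in $\mathsf{LCA}$, that over $D',D$ extends to $\ker u\hookrightarrow D'\to D\twoheadrightarrow\operatorname*{coker}u$ automatically, $\operatorname*{coker}_{\mathcal{LH}}v=\operatorname*{coker}_{\mathsf{LCA}}v$, and the snake-lemma isomorphisms become honest isomorphisms $\ker u\cong\ker v$ and $\operatorname*{coker}u\cong\operatorname*{coker}v$ in $\mathsf{LCA}$, induced by the horizontal maps $f$ and $g$.

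It remains to read off (1) and (2). In case (1) both $\ker u$ and $\operatorname*{coker}u=D/u(D')$ are discrete, while $\ker v$ (a closed subgroup of the compact $C'$) and $\operatorname*{coker}v=C/\overline{v(C')}$ (a quotient of the compact $C$) are compact, and a group that is at once discrete and compact is finite. In case (2), $\ker v$ is a closed subgroup of the group-theoretically compactly generated $C'$ and $\operatorname*{coker}v$ a quotient of the group-theoretically compactly generated $C$ by a closed subgroup, hence both are group-theoretically compactly generated by Lemma~\ref{lemma11}; being isomorphic to the discrete groups $\ker u$ and $\operatorname*{coker}u$, they are discrete and group-theoretically compactly generated, hence finitely generated discrete abelian by Lemma~\ref{lemma12}. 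For a square lying in $\mathsf{LCA}_{\operatorname*{vf}}$ the same argument runs inside $\mathcal{LH}(\mathsf{LCA}_{\operatorname*{vf}})$, using Lemma~\ref{lemma_LCAvfProps} to stay within $\mathsf{LCA}_{\operatorname*{vf}}$.
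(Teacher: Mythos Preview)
Your argument is correct and takes a genuinely different route from the paper's proof. The paper stays entirely inside the exact category $\mathsf{LCA}$: from the bicartesian square it extracts the exact sequence $D'\hookrightarrow C'\oplus D\twoheadrightarrow C$ (via $(f,u)$ and $v-g$), builds a $3\times 3$ diagram by factoring out the diagonal copies of $C'$ and $D$, and then reads off $\ker u\cong D'\cap C'\cong\ker v$ and $\operatorname{coker}u\cong Q\cong\operatorname{coker}v$ directly, using B\"uhler's toolkit for exact categories. You instead lift everything to the abelian envelope $\mathcal{LH}(\mathsf{LCA})$, where the bicartesian hypothesis becomes ``the third vertical map is an isomorphism'', run the snake lemma, and then descend. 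The key extra step you need is to force $v$ to be admissible, and your observation that the torsion part of $\operatorname{coker}_{\mathcal{LH}}v$ is exactly $[\operatorname{coim}v\to\operatorname{im}v]$, so that $\operatorname{coker}_{\mathcal{LH}}v\in\mathsf{F}$ forces $\widehat{v}$ to be an isomorphism, is a clean way to do this.

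The trade-off is clear: the paper's argument is more elementary and self-contained (it does not invoke the heart at all, which is appropriate since this lemma is used as an input to the study of the heart), while your argument is more conceptual and makes transparent \emph{why} admissibility of $v$ is automatic here. Your approach also illustrates nicely the general principle that statements about bicartesian squares in a quasi-abelian category often become transparent once rephrased in the abelian envelope. The endgame (discrete $\cap$ compact $=$ finite, discrete $\cap$ compactly generated $=$ finitely generated) is identical in both proofs.
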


\begin{proof}
Since the square in Diag. \ref{diag_9} is bicartesian, we have $D^{\prime
}\cong\ker\left(  C^{\prime}\oplus D\overset{v-g}{\longrightarrow}C\right)  $
and $C\cong\operatorname*{coker}\left(  D^{\prime}%
\overset{(f,u)}{\longrightarrow}C^{\prime}\oplus D\right)  $. Hence, we obtain
the exact sequence in the middle row of the following diagram, and the
existence of all solid arrows follows from naturality:%
\[%
{
\begin{tikzcd}
	{D'\cap C'} && {C'} && {C'/(D' \cap C')} \\
	\\
	{D'} && {C' \oplus D} && C \\
	\\
	{D'/(D' \cap C')} && D && Q
	\arrow[hook, from=1-1, to=1-3]
	\arrow[hook, from=1-1, to=3-1]
	\arrow[dashed, two heads, from=1-3, to=1-5]
	\arrow[hook, from=1-3, to=3-3]
	\arrow["v", dashed, hook, from=1-5, to=3-5]
	\arrow["{(f,u)}", hook, from=3-1, to=3-3]
	\arrow[two heads, from=3-1, to=5-1]
	\arrow["{v-g}", two heads, from=3-3, to=3-5]
	\arrow[two heads, from=3-3, to=5-3]
	\arrow[dashed, two heads, from=3-5, to=5-5]
	\arrow["u"', hook, from=5-1, to=5-3]
	\arrow[dashed, two heads, from=5-3, to=5-5]
\end{tikzcd}
}%
\]
The injectivity of the lower left arrow is easy to check. Since the groups in
question are discrete, it is trivial that the injection is an admissible
monic. The existence of all dashed arrows as well as the exactness of the
right column follow from \cite[Exercise 3.7]{MR2606234}. Unravelling the
bottom row and rightmost column, we see that $\ker(u)\cong D^{\prime}\cap
C^{\prime}\cong\ker(v)$ and $\operatorname*{coker}(u)\cong Q\cong%
\operatorname*{coker}(v)$. It remains to analyze their structure:\newline(1)
In the d-c setting, $D^{\prime}\cap C^{\prime}$ is both discrete and compact,
hence finite. Since $Q$ is a quotient of something compact, it must be
compact, and since it is a quotient of something discrete, it is also
discrete. Thus, $Q$ is also finite.\newline(2) In the d-cg setting,
$D^{\prime}\cap C^{\prime}$ is both discrete and a closed subgroup of a
group-theoretically compactly generated group. Thus, by Lemma \ref{lemma11}
and Lemma \ref{lemma12}, $D^{\prime}\cap C^{\prime}$ must be a finitely
generated discrete group. Moreover, $Q$ is a quotient of something discrete
and something group-theoretically compactly generated, so by the same lemmas,
it must also be a finitely generated discrete group.
\end{proof}

In preparation for proving essential surjectivity, we can already describe the
reverse of Lemma \ref{lemma15}.

\begin{lemma}
\label{lemma10}Suppose we are given $\left[
{
\begin{tikzcd}
	{Z'} & Z
	\arrow["z"{inner sep=.8ex}, "\bullet"{marking}, from=1-1, to=1-2]
\end{tikzcd}
}%
\right]  $ in $\mathcal{LH}(\mathsf{LCA})$ with $Z^{\prime}$ discrete and $Z$

\begin{enumerate}
\item of type d-c or

\item of type d-cg.
\end{enumerate}

Then there is a natural isomorphism%
\[
\left[
{
\begin{tikzcd}
	{Z'} & Z
	\arrow["z"{inner sep=.8ex}, "\bullet"{marking}, from=1-1, to=1-2]
\end{tikzcd}
}%
\right]  \cong\Theta(W)
\]
for a uniquely defined object (in case 1) $W\in\mathsf{PCA}$ or (in case 2)
$W\in\mathsf{PGA}$. Concretely: $W$ has $Z^{\prime}$ as the underlying abelian
group, but equipped with the subspace topology as a subset in $Z$. Morphisms
between such objects of type d-c or d-cg in $\mathcal{LH}(\mathsf{LCA})$ lift
uniquely to morphisms in $\mathsf{PCA}$ resp. $\mathsf{PGA}$.
\end{lemma}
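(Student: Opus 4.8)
The plan is to produce $W$ and the isomorphism $\bigl[Z'\overset{z}{\rightarrow}Z\bigr]\cong\Theta(W)$ by a direct construction, then prove uniqueness and the lifting of morphisms. First I would set $W$ to be the abelian group underlying $Z'$, equipped with the subspace topology inherited from the embedding $z\colon Z'\hookrightarrow Z$. Since $z$ is monic and epic in $\mathsf{LCA}$, it is injective with dense image (Lemma~\ref{lemma_DescribeKernelsAndCokernelsInHA}), so $W$ is a dense subgroup of $Z$. In case~(2) the group $Z$ is group-theoretically compactly generated, so by Example~\ref{example_3} the dense subgroup $W$ is precompactly generated, and it is locally precompact because it is a subgroup of a locally compact group (Lemma~\ref{lemma_PrecompactAreThoseWithCompactClosureInCompletion}(3) applied to $cW=Z$); hence $W\in\mathsf{PGA}$. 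In case~(1), $Z$ is compact, so $W$ is precompact by Lemma~\ref{lemma_TBAPermanence} (or directly by Lemma~\ref{lemma_PrecompactAreThoseWithCompactClosureInCompletion}(2)), hence $W\in\mathsf{PCA}$.

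Next I would identify $\Theta(W)$ with the given complex. By definition $\Theta(W)=\bigl[W_d\overset{x}{\rightarrow}cW\bigr]$. Now $W_d$ is $Z'$ with the discrete topology, so there is an obvious bijective continuous map $W_d\to Z'$, which is an isomorphism of abelian groups. On the other side, because $W$ is dense in the \emph{complete} group $Z$ (recall $Z\in\mathsf{LCA}$ is already complete, being locally compact, by \cite[Prop. 8.2.6]{MR4510389}), the inclusion $W\hookrightarrow Z$ induces an isomorphism $cW\xrightarrow{\sim}Z$ on completions — completing a dense subgroup of a complete group recovers that group, which is the universal property of the completion (\cite[Def. 7.1.18]{MR4510389}). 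These two identifications assemble into a commutative square
\[
{\begin{tikzcd}
	{W_d} && {cW} \\
	{Z'} && Z
	\arrow["x", from=1-1, to=1-3]
	\arrow["\wr", from=1-1, to=2-1]
	\arrow["\wr", from=1-3, to=2-3]
	\arrow["z"', from=2-1, to=2-3]
\end{tikzcd}}
\]
whose vertical arrows are isomorphisms in $\mathsf{LCA}$ (the left one because both groups are discrete with the same underlying group; the right one by the completion argument), hence a quasi-isomorphism of two-term complexes, giving $\Theta(W)\cong\bigl[Z'\overset{z}{\rightarrow}Z\bigr]$ in $\mathcal{LH}(\mathsf{LCA})$.

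For uniqueness of $W$: if $\Theta(W_1)\cong\Theta(W_2)$ as objects of $\mathcal{LH}(\mathsf{LCA})$ via an isomorphism, then by Remark~\ref{rmk_SchneidersDescriptionOfLH} this isomorphism is represented by a chain of bicartesian squares between the representing complexes $[W_{i,d}\to cW_i]$; but the bottom-left corner $W_{i,d}$ is discrete and we can read off its subspace topology inside the bottom-right corner $cW_i$, and a bicartesian square with all four corners in $\mathsf{LCA}$ whose horizontal arrows are monic-epic induces, on passing to subspace topologies of the left corners inside the right corners, an isomorphism of topological groups (the vertical arrows are isomorphisms on completions and on discretized groups, hence isomorphisms of the intermediate subspace-topology groups). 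Tracing through the chain yields $W_1\cong W_2$ in $\mathsf{PGA}$ (resp. $\mathsf{PCA}$). The lifting of morphisms is then essentially the same bookkeeping: a morphism $\Theta(W_1)\to\Theta(W_2)$ in the heart is represented by a roof, i.e. a third complex $[V_d\to cV]$ mapping to both, with the map to $[W_{1,d}\to cW_1]$ a bicartesian square (hence a quasi-isomorphism), and the composite recovers a genuine continuous group homomorphism $W_1\to W_2$ by restricting to subspace topologies; fully faithfulness of $\Theta$ on these objects — injectivity and surjectivity on $\operatorname{Hom}$-sets — follows because the round trip $W\mapsto\Theta(W)\mapsto$ (subspace topology of bottom-left in bottom-right) is the identity.

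The main obstacle I expect is the morphism-lifting / full-faithfulness part, namely carefully checking that a roof in $\operatorname{D}^b(\mathsf{LCA})$ between two objects $\Theta(W_1),\Theta(W_2)$ of type d-c (resp. d-cg) can always be rectified — using the rewriting machinery being set up in this section and the structure of bicartesian squares from Lemma~\ref{lemmaB2} — to a single honest morphism of complexes inducing a continuous homomorphism $W_1\to W_2$, with no loss of information. The subtlety is that the roof's apex need not itself be of type d-c or d-cg a priori, so one must first replace it (up to quasi-isomorphism) by one that is, which is exactly why this lemma is stated here as preparation and why the representatives with discrete source and compact (resp. compactly generated) target are so useful; I would lean on Lemma~\ref{lemmaB2} to control the kernels and cokernels appearing in the roof's legs and to ensure the relevant squares stay inside $\mathsf{LCA}$.
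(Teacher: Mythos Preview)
Your construction of $W$ and the identification $\Theta(W)\cong[Z'\to Z]$ are correct and match the paper's proof essentially verbatim: define $W$ as $Z'$ with the subspace topology in $Z$, observe $W\in\mathsf{PCA}$ (resp.\ $\mathsf{PGA}$) via Examples~\ref{example_2} and~\ref{example_3}, and identify $cW\cong Z$ since $Z$ is complete and $W$ is dense in it.

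Where you diverge is in the morphism-lifting claim. You are over-reading the statement: the paper does \emph{not} attempt to lift arbitrary morphisms in the heart (i.e., roofs) at this point. Instead, the paper's proof only treats a morphism given by an honest commutative square
\[
\begin{tikzcd}
W_d \ar[r] \ar[d,"f'"'] & cW \ar[d,"f"] \\
W'_d \ar[r] & cW'
\end{tikzcd}
\]
and observes that the map $f'\colon W_d\to W'_d$ is automatically continuous for the subspace topologies $W\to W'$, because $f\colon cW\to cW'$ is continuous and $f'(W)\subseteq W'$. That is the entire morphism argument in the paper. The problem you flag --- that a roof's apex need not be of type d-c or d-cg --- is real, but it is resolved \emph{later}, in Theorem~\ref{thm_PrepareRoof_PCA} (the rewriting rules for roofs), and then applied in the separate fullness and faithfulness lemmas. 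So your instinct about the obstacle is accurate, but it belongs to the next step of the paper's programme, not to this lemma.

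Similarly, ``uniquely defined'' in the statement just means the explicit construction $W:=Z'_{\subseteq Z}$ pins down a specific object; you do not need the bicartesian-square argument you sketch for uniqueness up to isomorphism.
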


\begin{proof}
Define $W$ to be $Z^{\prime}$, but equipped with the subspace topology coming
from the inclusion $z$. Then the top horizontal arrow in the following diagram
is continuous by construction:%
\begin{equation}%
{
\begin{tikzcd}
	W && Z \\
	& cW
	\arrow["z"{inner sep=.8ex}, "\bullet"{marking}, from=1-1, to=1-3]
	\arrow["{w}"'{inner sep=.8ex}, "\bullet"{marking}, from=1-1, to=2-2]
	\arrow["h"', dashed, from=2-2, to=1-3]
\end{tikzcd}
}
\label{l_bz1}%
\end{equation}
In setting (1), the inclusion exhibits $W$ as a dense subgroup inside a
compact group, so Example \ref{example_2} shows that $W\in\mathsf{PCA}$. In
setting (2) $W$ is dense inside a group-theoretically compactly generated
group, so $W\in\mathsf{PGA}$ by Example \ref{example_3}. The left downward
diagonal arrow $w$ is the canonical map to the completion $cW$ of $W$. It is
injective with dense image. Since $Z$ is locally compact, it is also a
complete group \cite[Theorem 3.6.24]{MR2433295}. By the universal property of
complete groups, the arrow $z\colon W\rightarrow Z$ to a complete group must
uniquely factor over its completion, giving us the arrow $h$ in Diag.
\ref{l_bz1} \cite[Theorem 7.1.18]{MR4510389}. We note that%
\[
h\mid_{W}\colon W\longrightarrow z(W)
\]
(with $W$ regarded as a subset of $cW$, as $w$ is injective) is a topological
isomorphism by construction: It is injective as it agrees with the injective
map $z$. It is clearly surjective, hence bijective. It is continuous since $h$
is continuous. It is open since $W$ was topologized so that it carries
precisely the subspace topology as a subset of $Z$. Now, by \cite[Prop.
3.6.13]{MR2433295} and since $W$ is dense in $cW$, and $z(W)$ is dense in $Z$,
it follows that $h\mid_{W}$ induces an isomorphism on its completions, i.e.,
$h\colon cW\longrightarrow\overline{z(W)}=Z$ is an isomorphism.\footnote{We
have used that for a subgroup inside a complete group (like $Z$), completion
agrees with taking the closure \cite[Exercise 7.3.5]{MR4510389}. Since $z$ was
monic and epic, we know that $\overline{z(W)}=Z$.} Hence,%
\[
\Theta(W)=\left[
{
\begin{tikzcd}
	{W_d} & {cW}
	\arrow["w"{inner sep=.8ex}, "\bullet"{marking}, from=1-1, to=1-2]
\end{tikzcd}
}%
\right]  \overset{\sim}{\longrightarrow}\left[
{
\begin{tikzcd}
	{Z'} & Z
	\arrow["z"{inner sep=.8ex}, "\bullet"{marking}, from=1-1, to=1-2]
\end{tikzcd}
}%
\right]  \text{,}%
\]
where the isomorphism in the middle is induced from the isomorphism $(1,h)$.
We see that the category of objects of the shape $\Theta(W)$ for
$W\in\mathsf{PCA}$ (resp. $\mathsf{PGA}$) is equivalent to the full
subcategory of d-c (resp. d-cg) type objects, so for the rest of the proof we
may work in the strict image category\footnote{If the reader does not like
this, note that the previous construction was natural, so if you prefer to
work in $\mathcal{LH}(\mathsf{LCA})$ on the nose, pre- and post-compose each
morphism with the natural isomorphisms to present source and target as
$\Theta(-)$ for appropriate input objects.} $\Theta(-)$.

It remains to show that morphisms lift uniquely:\ Suppose we are given a
morphism $\left[
{
\begin{tikzcd}
	{Z'} & Z
	\arrow["z"{inner sep=.8ex}, "\bullet"{marking}, from=1-1, to=1-2]
\end{tikzcd}
}%
\right]  \longrightarrow\left[
{
\begin{tikzcd}
	{Y'} & Y
	\arrow["y"{inner sep=.8ex}, "\bullet"{marking}, from=1-1, to=1-2]
\end{tikzcd}
}%
\right]  $. Then by the construction in the first half of the proof, it is
given by a commutative square%
\[%
{
\begin{tikzcd}
	{W_d} && cW \\
	\\
	{W'_d} && {cW'}
	\arrow["z"{inner sep=.8ex}, "\bullet"{marking}, from=1-1, to=1-3]
	\arrow["{f'}"', from=1-1, to=3-1]
	\arrow["f", from=1-3, to=3-3]
	\arrow["y"'{inner sep=.8ex}, "\bullet"{marking}, from=3-1, to=3-3]
\end{tikzcd}
}%
\]
and we need to show that $f^{\prime}$ is continuous as a map $f^{\prime}\colon
W\rightarrow W^{\prime}$ with the topologizations as subspaces of $cW$ resp.
$cW^{\prime}$. However, this follows immediately from the map $f\colon
cW\rightarrow cW^{\prime}$ being continuous (i.e., the functoriality of
completion) and $f^{\prime}(W)\subseteq W^{\prime}$, which stems from the
commutativity of the square.
\end{proof}

Lemma \ref{lemma10} is not yet enough to prove essential surjectivity of
$\Theta$, because the left heart will contain many objects which are not of
type d-c or d-cg. We will refine the above argument and solve this problem in
\S \ref{sect_RewritingRules}.

\begin{theorem}
\label{thm_PrepareRoof_PCA}Suppose $X,Y\in\mathsf{PCA}$ [resp. $X,Y\in
\mathsf{PGA}$]. Suppose we are given a left roof%
\[
\left[
{
\begin{tikzcd}
	{X_d} & {cX}
	\arrow["\bullet"{marking}, from=1-1, to=1-2]
\end{tikzcd}
}%
\right]  \overset{(u^{\prime},u)}{\longleftarrow}\left[
{
\begin{tikzcd}
	{Z'} & Z
	\arrow[from=1-1, to=1-2]
\end{tikzcd}
}%
\right]  \overset{(v^{\prime},v)}{\longrightarrow}\left[
{
\begin{tikzcd}
	{Y_d} & {cY}
	\arrow["\bullet"{marking}, from=1-1, to=1-2]
\end{tikzcd}
}%
\right]
\]
in $\mathcal{LH}(\mathsf{LCA}_{\operatorname*{vf}})$ [resp. $\mathcal{LH}%
(\mathsf{LCA})$] with $(u^{\prime},u)$ given by a bicartesian square in
$\mathsf{LCA}_{\operatorname*{vf}}$ [resp.$\mathsf{LCA}$] and the leftmost and
rightmost object both of type d-c [resp. d-cg] (the middle object can be
anything in the left heart). Then the roof is equivalent to a roof%
\[
\left[
{
\begin{tikzcd}
	{X_d} & {cX}
	\arrow["\bullet"{marking}, from=1-1, to=1-2]
\end{tikzcd}
}%
\right]  \overset{(u_{new}^{\prime},u_{new})}{\longleftarrow}\left[
{
\begin{tikzcd}
	{W'} & W
	\arrow[from=1-1, to=1-2]
\end{tikzcd}
}%
\right]  \overset{(v_{new}^{\prime},v_{new})}{\longrightarrow}\left[
{
\begin{tikzcd}
	{Y_d} & {cY}
	\arrow["\bullet"{marking}, from=1-1, to=1-2]
\end{tikzcd}
}%
\right]
\]
\newline with the middle term also of type d-c [resp. d-cg]. If $(v^{\prime
},v)$ is the zero map, so is $(v_{new}^{\prime},v_{new})$.
\end{theorem}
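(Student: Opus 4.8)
The plan is to produce the target complex $[W'\to W]$ of type d-c [resp. d-cg] \emph{together with} a single morphism of two-term complexes $(p',p)\colon[Z'\to Z]\to[W'\to W]$ through which both legs of the roof factor, i.e.\ with $(u_{new}',u_{new})\circ(p',p)=(u',u)$ and $(v_{new}',v_{new})\circ(p',p)=(v',v)$ for suitable morphisms $(u_{new}',u_{new})$ and $(v_{new}',v_{new})$ out of $[W'\to W]$. Given this, the original roof and $[X_d\to cX]\xleftarrow{(u_{new}',u_{new})}[W'\to W]\xrightarrow{(v_{new}',v_{new})}[Y_d\to cY]$ are equivalent via the common refinement $[Z'\to Z]$, mapping by the identity to the old apex and by $(p',p)$ to the new one; both triangles commute by construction, and the backward composite is $(u',u)$, which lies in the multiplicative system. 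Since $(u',u)$ is a quasi-isomorphism and factors through $(p',p)$, the two-out-of-three property shows that $(p',p)$ is a quasi-isomorphism iff $(u_{new}',u_{new})$ is. So it remains to build $[W'\to W]$ as above and to check that the square defining $(p',p)$ is bicartesian in $\mathsf{LCA}$ [resp. $\mathsf{LCA}_{\operatorname*{vf}}$], which by Remark~\ref{rmk_SchneidersDescriptionOfLH} is the same as $(p',p)$ being a quasi-isomorphism. The final assertion about the zero map will be automatic, as the construction of $(v_{new}',v_{new})$ visibly vanishes with $(v',v)$.

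First I would unpack the hypotheses. Since $(u',u)$ is bicartesian with both rows monic--epic, Lemma~\ref{lemmaB2} applies: the vertical maps $u'$ and $u$ are admissible, $\ker(u')\cong\ker(u)=:K$, $\operatorname*{coker}(u')\cong\operatorname*{coker}(u)=:Q$, and $Q$ is finite [resp. finitely generated discrete]. The pullback description $Z'\cong X_d\times_{cX}Z$ (part of bicartesian-ness) identifies $K$ with $\ker(u)\subseteq Z$ via $z$, and $K$ is open in $Z'$ because $X_d$ is discrete; moreover $\operatorname*{im}(u)\cong Z/K$ is a closed subgroup of $cX$, hence compact [resp. group-theoretically compactly generated] by Lemma~\ref{lemma11}. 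From commutativity of the forward leg one also sees that $v$ restricted to $\ker(u)$ factors through the inclusion $\iota_{cY}\colon Y_d\to cY$.

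Next, the candidate for the new apex is the ``joint image'' of the two legs: set
\[
W:=\overline{(u,v)(Z)}\subseteq cX\oplus cY,\qquad W':=(u',v')(Z')\subseteq X_d\oplus Y_d .
\]
Then $W$ is a closed subgroup of $cX\oplus cY$, which is compact [resp. group-theoretically compactly generated] since $cX,cY$ are and this class of LCA groups is closed under finite products and under passage to closed subgroups (Lemma~\ref{lemma11}); and $W'$ is discrete, being a subgroup of $X_d\oplus Y_d$. The differential $W'\to W$ is the restriction of $X_d\oplus Y_d\hookrightarrow cX\oplus cY$; it is injective, and its image is $(u,v)(z(Z'))$, which is dense in $W$ by Lemma~\ref{lemma1}(1) applied to the dense subgroup $z(Z')\subseteq Z$. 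Hence $[W'\to W]$ is of type d-c [resp. d-cg]. The two coordinate projections restrict to morphisms of complexes $(u_{new}',u_{new})\colon[W'\to W]\to[X_d\to cX]$ and $(v_{new}',v_{new})\colon[W'\to W]\to[Y_d\to cY]$, and the corestrictions of $(u',v')$ and $(u,v)$ give the morphism $(p',p)\colon[Z'\to Z]\to[W'\to W]$ satisfying the two required factorisations; if $(v',v)=0$ then $W'\subseteq X_d\oplus 0$ and $W\subseteq cX\oplus 0$, forcing $(v_{new}',v_{new})=0$.

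Finally I would verify that $(p',p)$ is bicartesian. The pullback identity $Z'\cong W'\times_W Z$ is the essentially formal part: the natural map $Z'\to W'\times_W Z$ is injective, it is surjective because for an element $((x,y),\zeta)$ of the pullback with $(u',v')(z_0')=(x,y)$ the difference $\zeta-z(z_0')$ lies in $\ker(u)\cap\ker(v)=z(\ker(u'))\cap\ker(v)$, and its preimage $k'\in\ker(u')$ is then forced to satisfy $v'(k')=0$ (as $v\circ z(k')=0$ and $\iota_{cY}$ is injective), so $z':=z_0'+k'$ does the job; and the two topologies agree because adjoining the discrete coordinate $v'$ does not refine the subspace topology inherited from $Z$. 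Both horizontal differentials $z$ and $W'\hookrightarrow W$ have vanishing cokernel. The one point that is \emph{not} formal is the strictness needed for the pushout, namely that $(u,v)\colon Z\to W$ be an open surjection, equivalently that $(u,v)(Z)$ already be closed in $cX\oplus cY$. This is where the structural analysis is genuinely used: admissibility of $u$ (so that $\operatorname*{im}(u)=Z/K$ is closed in $cX$) together with the factorisation of $v|_{\ker(u)}$ through the discrete $Y_d$ reduces matters to closedness of a graph over the closed compact[ly generated] subgroup $Z/K$; and if $Z$ is not yet complete in the direction of $(u,v)$, one first passes to a quasi-isomorphic roof for which this holds --- using Lemma~\ref{lemmaB2} and the exactness of the completion functor (Proposition~\ref{prop_functor_completion_exact}) applied to the subgroup $(u,v)(Z)\subseteq cX\oplus cY$ --- before running the closure construction. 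Propagating both legs of the roof through this preliminary normalisation so that all the factorisations above persist is the main technical obstacle; the remainder is bookkeeping with Lemmas~\ref{lemmaB2}, \ref{lemma9} and~\ref{lemma1}.
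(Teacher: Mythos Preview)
Your ``joint image'' strategy is appealing but has a genuine gap at exactly the point you flag. First, a minor issue: your invocation of Lemma~\ref{lemmaB2} in the second paragraph is not licensed as stated, since that lemma requires \emph{both} rows to be of type d-c [resp.\ d-cg], whereas $[Z'\to Z]$ is arbitrary. The conclusions you draw (that $\ker(u')\cong\ker(u)$ is open in $Z'$, that $u'$ and $u$ are admissible, that $Q$ is finite [resp.\ finitely generated discrete]) do happen to be recoverable by adapting the \emph{proof} of Lemma~\ref{lemmaB2}, since $\ker(u')$ open forces $Z'/\ker(u')$ discrete, whence the $3\times 3$ argument goes through---but you should say so rather than cite the lemma directly.

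The real problem is the pushout half of bicartesian-ness for $(p',p)$. Concretely, $Z\cup_{Z'}W' \cong Z/(K\cap L)$ where $K=\ker(u)$, $L=\ker(v)$, and this must map \emph{isomorphically} onto $W=\overline{(u,v)(Z)}$. But $Z/(K\cap L)$ sits in an exact sequence
\[
K/(K\cap L)\hookrightarrow Z/(K\cap L)\twoheadrightarrow Z/K,
\]
with $Z/K$ compact [resp.\ compactly generated] and $K/(K\cap L)\cong v'(K)\subseteq Y_d$ discrete. There is no reason for $v'(K)$ to be finite [resp.\ finitely generated]: it is merely some subgroup of the discrete group $Y_d$. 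When it is infinite in the d-c case, $Z/(K\cap L)$ is non-compact and its image $(u,v)(Z)$ in the compact group $cX\oplus cY$ is genuinely non-closed; your map $p\colon Z\to W$ then has dense but proper image and is not an admissible epic. Your suggested fix---``first pass to a quasi-isomorphic roof for which $(u,v)(Z)$ is closed''---is precisely the content of the theorem, and your sketch of it (via completion and Lemma~\ref{lemmaB2}) does not explain how to achieve this while keeping both legs intact.

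The paper takes a completely different route: rather than pushing $[Z'\to Z]$ forward into $cX\oplus cY$, it simplifies $[Z'\to Z]$ in place by a sequence of explicit quotients and restrictions coming from the structure theory of LCA groups. Step~1 removes any $\mathbb{R}^n$ summand from $Z'$ (it must map to zero in both discrete targets $X_d,Y_d$); Step~2 finds a compact clopen subgroup $C\subseteq Z'$, passes to a finite-index subgroup so that $C$ maps to zero under both legs, and quotients by it to make $Z'$ discrete; Step~3 restricts to a compact [resp.\ compactly generated] clopen subgroup of $Z$ and uses a bicartesian-square argument to show the restricted roof is equivalent. Each step is a concrete manipulation verified to preserve roof-equivalence and to preserve the vanishing of the right leg; no closure-of-image issue arises.
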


\begin{proof}
Suppose we are given a left roof%
\[
\left[
{
\begin{tikzcd}
	{X_d} & {cX}
	\arrow["\bullet"{marking}, from=1-1, to=1-2]
\end{tikzcd}
}%
\right]  \overset{(u^{\prime},u)}{\longleftarrow}\left[
{
\begin{tikzcd}
	{Z'} & Z
	\arrow[from=1-1, to=1-2]
\end{tikzcd}
}%
\right]  \longrightarrow\left[
{
\begin{tikzcd}
	{Y_d} & {cY}
	\arrow["\bullet"{marking}, from=1-1, to=1-2]
\end{tikzcd}
}%
\right]
\]
in $\mathcal{LH}(\mathsf{LCA})$ with $(u^{\prime},u)$ inducing a bicartesian
square, as explained in Remark \ref{rmk_SchneidersDescriptionOfLH}. Firstly,
since $(u^{\prime},u)$ comes from a bicartesian square, Lemma \ref{lemma9}
guarantees that the arrow $z\colon Z^{\prime}\longrightarrow Z$ is both monic
and epic. We now begin with a general observation:

Suppose the roof has the shape%
\begin{equation}%
{
\begin{tikzcd}
	&&& U & U \\
	&&& {Z'} & Z \\
	{X_d} & cX &&&&& {Y_d} & cY
	\arrow["\sim", from=1-4, to=1-5]
	\arrow["{{i'}}"', hook, from=1-4, to=2-4]
	\arrow["i", hook, from=1-5, to=2-5]
	\arrow["z"{inner sep=.8ex}, "\bullet"{marking}, from=2-4, to=2-5]
	\arrow["{{u'}}"', from=2-4, to=3-1]
	\arrow["{{v'}}"', from=2-4, to=3-7]
	\arrow["u", from=2-5, to=3-2]
	\arrow["v", from=2-5, to=3-8]
	\arrow["\bullet"{marking}, from=3-1, to=3-2]
	\arrow["\bullet"{marking}, from=3-7, to=3-8]
\end{tikzcd}
}
\label{diag_4}%
\end{equation}
such that

\begin{enumerate}
\item $z$, restricted to $U\in\mathsf{LCA}$, is an isomorphism,

\item $i$ and $i^{\prime}$ are admissible monics in $\mathsf{LCA}$, and

\item the left and right legs of the roof map $U$ to zero on either side.
\end{enumerate}

Then the diagram%
\[%
{
\begin{tikzcd}
	& {Z^{\bullet}} \\
	{X^{\bullet}} & {Z^{\bullet}} & {Y^{\bullet}} \\
	& {Z^{\bullet}/U}
	\arrow[from=1-2, to=2-1]
	\arrow[from=1-2, to=2-3]
	\arrow["1"', from=2-2, to=1-2]
	\arrow[from=2-2, to=3-2]
	\arrow[from=3-2, to=2-1]
	\arrow[from=3-2, to=2-3]
\end{tikzcd}
}%
\]
(where we write $Z^{\bullet}$ as a contraction for
{
\begin{tikzcd}
	{Z'} & Z
	\arrow["\bullet"{marking}, from=1-1, to=1-2]
\end{tikzcd}
}
and analogously for $X,Y$) shows that the roof in Diag. \ref{diag_4} is
equivalent to the roof%
\[%
{
\begin{tikzcd}
	&&& {Z'/U} & {Z/U} \\
	{X_d} & cX &&&&& {Y_d} & {cY.}
	\arrow["{{z/U}}"{inner sep=.8ex}, "\bullet"{marking}, from=1-4, to=1-5]
	\arrow["{{{u'}}}"', from=1-4, to=2-1]
	\arrow["{{{v'}}}"', from=1-4, to=2-7]
	\arrow["u", from=1-5, to=2-2]
	\arrow["v", from=1-5, to=2-8]
	\arrow["\bullet"{marking}, from=2-1, to=2-2]
	\arrow["\bullet"{marking}, from=2-7, to=2-8]
\end{tikzcd}
}%
\]
The arrow
{
\begin{tikzcd}
	{Z'/U} & {Z/U}
	\arrow["\bullet"{marking}, from=1-1, to=1-2]
\end{tikzcd}
}
is again monic and epic by Lemma \ref{lemma9}. [In more detail: We have the
diagram%
\begin{equation}%
{
\begin{tikzcd}
	U && {Z'} && {Z'/U} \\
	&&& {{(\bigstar)}} \\
	U && Z && {Z/U}
	\arrow["{i'}", hook, from=1-1, to=1-3]
	\arrow["{z\mid_U}"', from=1-1, to=3-1]
	\arrow[two heads, from=1-3, to=1-5]
	\arrow[from=1-3, to=3-3]
	\arrow[from=1-5, to=3-5]
	\arrow["i"', hook, from=3-1, to=3-3]
	\arrow[two heads, from=3-3, to=3-5]
\end{tikzcd}
}
\label{l_3h}%
\end{equation}
with exact rows and by \cite[Prop. 2.12 (dual)]{MR2606234} the square
$(\bigstar)$ is bicartesian iff $z\mid_{U}$ is an isomorphism.]\bigskip

Following this mechanism, we now simplify the input roof.\bigskip

(Step 1)\footnote{This step does nothing in the d-c setting and can be
omitted.} Suppose $Z^{\prime}$ has a vector summand $\mathbb{R}^{n}$, and
assume $n$ is chosen maximally, say $Z^{\prime}\cong Z_{0}^{\prime}%
\oplus\mathbb{R}^{n}$. As the left leg of the roof comes from a bicartesian
square, we have the exact sequence%
\[
\mathbb{R}^{n}\oplus Z_{0}^{\prime}\hookrightarrow Z\oplus X_{d}%
\twoheadrightarrow cX\text{.}%
\]
As $\mathbb{R}^{n}$ is an injective object in $\mathsf{LCA}$ \cite[Theorem
3.2]{MR0215016}, it must also appear as a direct summand of the middle term
(and clearly as a direct summand of $Z$, since $X_{d}$ is discrete). We get
compatible admissible monics\newline%
\begin{equation}%
{
\begin{tikzcd}
	{\mathbb{R}^n} && {\mathbb{R}^n} \\
	\\
	{Z'} && Z
	\arrow["1", from=1-1, to=1-3]
	\arrow[hook, from=1-1, to=3-1]
	\arrow[hook, from=1-3, to=3-3]
	\arrow["z"'{inner sep=.8ex}, "\bullet"{marking}, from=3-1, to=3-3]
\end{tikzcd}
}
\label{diag_8}%
\end{equation}
as they are required for the reduction step explained in Diag. \ref{diag_4},
because the summands $\mathbb{R}^{n}$ are connected and continuous maps to the
discrete groups $X_{d}$ (resp. $Y_{d}$) must necessarily map them entirely to
zero. Since $X_{d}$ (resp. $Y_{d}$) are dense in $cX$ (resp. $cY$), it follows
that the maps to the Weil completions must also send them to zero, as the
maps, restricted to these summands, are zero on a dense subset. Hence, we may
henceforth assume that $Z^{\prime}$ is vector-free.\bigskip

(Step 2) Next, by the structure theorem for LCA groups \cite[Ch. 6, Theorem
25]{MR0442141} $Z^{\prime}$ has a compact (cl)open subgroup $C$. Analogously
to Diagram \ref{diag_8}, we obtain a commutative square%
\[%
{
\begin{tikzcd}
	&&& C && {z(C)} \\
	\\
	&&& {Z'} && Z \\
	{X_d} && cX &&&& {Y_d} && cY
	\arrow["{z\mid_C}", from=1-4, to=1-6]
	\arrow[hook, from=1-4, to=3-4]
	\arrow[dashed, from=1-4, to=4-1]
	\arrow[dashed, from=1-4, to=4-7]
	\arrow[hook, from=1-6, to=3-6]
	\arrow["z"'{inner sep=.8ex}, "\bullet"{marking}, from=3-4, to=3-6]
	\arrow[from=3-4, to=4-1]
	\arrow[from=3-4, to=4-7]
	\arrow[from=3-6, to=4-3]
	\arrow[from=3-6, to=4-9]
	\arrow["\bullet"{marking}, from=4-1, to=4-3]
	\arrow["\bullet"{marking}, from=4-7, to=4-9]
\end{tikzcd}
}%
\]
However, $z\mid_{C}$ is injective (as the downward arrow and $z$ are
injective), surjective by construction, and since $C$ is compact, $z(C)$ is
compact and therefore closed in $Z$. We observe that $z\mid_{C}$ is a
continuous bijective map from a compact space to a Hausdorff space and
therefore an isomorphism of topological abelian groups. Now, the group $C$
need not map to zero under the left and right leg of the roof, but almost:
Under the dashed arrows $C\longrightarrow X_{d}$ (resp. $C\longrightarrow
Y_{d}$) the image of the compact group $C$ must be a \textit{finite} group.
Thus, $C$ has a finite-index subgroup $C^{\prime}\subset C$, restricted to
which the dashed arrows are both zero. As it is finite index, it will also be
compact and clopen in $Z^{\prime}$, so without loss of generality we may
assume to have picked $C:=C^{\prime}$ right from the start. But then we are
again in the situation of Eq. \ref{diag_4} for $U:=C$. In particular, the
construction loc. cit. replaces $\left[
{
\begin{tikzcd}
	{Z'} & {Z}
	\arrow["\bullet"{marking}, from=1-1, to=1-2]
\end{tikzcd}
}%
\right]  $ by $\left[
{
\begin{tikzcd}
	{Z'/C} & {Z/C}
	\arrow["\bullet"{marking}, from=1-1, to=1-2]
\end{tikzcd}
}%
\right]  $ and since $C$ was \textit{open} in $Z^{\prime}$, the quotient
$Z^{\prime}/C$ is now discrete. Hence, from now on we may assume that
$Z^{\prime}$ is discrete.\bigskip

(Step 3) Again by \cite[Ch. 6, Theorem 25]{MR0442141} the group $Z$ has a
(cl)open subgroup $C$ [which can be assumed compact in the d-c setting since
in $\mathsf{LCA}_{\operatorname*{vf}}$ all groups are vector-free, and which
will be compactly generated in the d-cg setting]. We obtain the commutative
diagram below on the left:
\[%
{
\begin{tikzcd}
	&& C \\
	\\
	{Z'} && Z \\
	\\
	&& {Z/C}
	\arrow[hook, from=1-3, to=3-3]
	\arrow["z"'{inner sep=.8ex}, "\bullet"{marking}, from=3-1, to=3-3]
	\arrow["h"', from=3-1, to=5-3]
	\arrow["w", two heads, from=3-3, to=5-3]
\end{tikzcd}
}%
\qquad\qquad%
{
\begin{tikzcd}
	K && C \\
	& {(\bigstar)} \\
	{Z'} && Z \\
	\\
	{Z/C} && {Z/C}
	\arrow["{z\mid_K}"{inner sep=.8ex}, "\bullet"{marking}, from=1-1, to=1-3]
	\arrow[hook, from=1-1, to=3-1]
	\arrow[hook, from=1-3, to=3-3]
	\arrow["z"'{inner sep=.8ex}, "\bullet"{marking}, from=3-1, to=3-3]
	\arrow["h"', two heads, from=3-1, to=5-1]
	\arrow["w", two heads, from=3-3, to=5-3]
	\arrow[equals, from=5-1, to=5-3]
\end{tikzcd}
}%
\]
Since $Z^{\prime}$ is dense in $Z$ (embedded along $z$) and $w$ continuous,
Lemma \ref{lemma1} shows that $h$ maps $Z^{\prime}$ to a dense subset of the
\textit{discrete} space $Z/C$. It follows that $h$ is surjective. And since
$Z^{\prime}$ is discrete, $h$ is tautologically open and thus an admissible
epic in $\mathsf{LCA}$. Define $K:=\ker(h)$. We obtain the diagram above on
the right. Again by \cite[Prop. 2.12 (dual)]{MR2606234} the square
$(\bigstar)$ is bicartesian (this is another instance of the same argument as
for Diagram \ref{l_3h}), so by Lemma \ref{lemma9} $z\mid_{K}$ is monic and
epic. The diagram%
\[%
{
\begin{tikzcd}
	&& {\left[K \overset{\bullet}{\rightarrow} C \right]} \\
	\\
	{X^{\bullet}} && {\left[K \overset{\bullet}{\rightarrow} C \right]}
&& {Y^{\bullet}} \\
	\\
	&& {\left[Z' \overset{\bullet}{\rightarrow} Z \right]}
	\arrow["{{(u'\mid_K,u\mid_C)}}"', from=1-3, to=3-1]
	\arrow["{{(v'\mid_K,v\mid_C)}}", from=1-3, to=3-5]
	\arrow[equals, from=3-3, to=1-3]
	\arrow["{{(\bigstar)}}", from=3-3, to=5-3]
	\arrow["{{(u',u)}}", from=5-3, to=3-1]
	\arrow["{{(v',v)}}"', from=5-3, to=3-5]
\end{tikzcd}
}%
\]
shows that our roof is equivalent to the roof with middle term $\left[
{
\begin{tikzcd}
	{K} & {C}
	\arrow["\bullet"{marking}, from=1-1, to=1-2]
\end{tikzcd}
}%
\right]  $, where $K$ is discrete (it is a subgroup of the discrete group
$Z^{\prime}$), and $C$ is compact [in the d-c setting] or compactly generated
[in the d-cg setting]. The arrow $(u^{\prime}\mid_{K},u\mid_{C})$ comes from a
bicartesian square by the pasting lemma for bicartesian squares: It arises
from concatenating the bicartesian square $(\bigstar)$ with the bicartesian
square coming from $(u^{\prime},u)$.\bigskip

Finally, note that if $(v^{\prime},v)$ was the zero map to start with, the
right leg of the resulting roof will still be the zero map. This completes the proof.
\end{proof}

\begin{lemma}
\label{lemma_f_full}The functor $\Theta$, in either version%
\[
\mathsf{PCA}/\mathsf{Ab}_{\operatorname*{fin}}\longrightarrow\mathcal{LH}%
(\mathsf{LCA}_{\operatorname*{vf}})\qquad\text{resp.}\qquad\mathsf{PGA}%
/\mathsf{Ab}_{\operatorname*{fg}}\longrightarrow\mathcal{LH}(\mathsf{LCA}%
)\text{,}%
\]
is full.
\end{lemma}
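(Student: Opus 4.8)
The plan is to take an arbitrary morphism $\alpha\in\operatorname{Hom}_{\mathcal{LH}(\mathsf{LCA})}(\Theta(X),\Theta(Y))$ (resp. in $\mathcal{LH}(\mathsf{LCA}_{\operatorname{vf}})$, for $X,Y\in\mathsf{PGA}$ resp. $\mathsf{PCA}$) and manufacture a preimage under $\Theta$. First I would use Remark \ref{rmk_SchneidersDescriptionOfLH} to present $\alpha$ by a left roof $\Theta(X)\xleftarrow{(u',u)}[Z'\to Z]\xrightarrow{(v',v)}\Theta(Y)$ whose left leg comes from a bicartesian square in $\mathsf{LCA}$ (resp. $\mathsf{LCA}_{\operatorname{vf}}$). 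Since $\Theta(X)$ and $\Theta(Y)$ are of type d-cg (resp. d-c) by Lemma \ref{lemma15}, Theorem \ref{thm_PrepareRoof_PCA} applies and replaces this roof by an equivalent one $\Theta(X)\xleftarrow{(u'_{new},u_{new})}[W'\to W]\xrightarrow{(v'_{new},v_{new})}\Theta(Y)$ in which the middle term is also of type d-cg (resp. d-c) and the left leg is still a bicartesian square.

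Next I would feed $[W'\to W]$ into Lemma \ref{lemma10}: it produces a unique $V\in\mathsf{PGA}$ (resp. $\mathsf{PCA}$) — concretely $W'$ equipped with the subspace topology inside $W$ — together with a natural isomorphism $[W'\to W]\cong\Theta(V)$, and, since both legs are now morphisms of d-cg (resp. d-c) objects, it lifts them uniquely to morphisms $\sigma\colon V\to X$ and $\tau\colon V\to Y$ in $\mathsf{PGA}$ (resp. $\mathsf{PCA}$) with $\Theta(\sigma)=(u'_{new},u_{new})$ and $\Theta(\tau)=(v'_{new},v_{new})$. Being a bicartesian square, $\Theta(\sigma)$ is an isomorphism in the abelian category $\mathcal{LH}$, so there $\alpha=\Theta(\tau)\circ\Theta(\sigma)^{-1}$. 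It therefore suffices to show that the class of $\sigma$ becomes invertible in $\mathsf{PGA}/\mathsf{Ab}_{\operatorname{fg}}$ (resp. $\mathsf{PCA}/\mathsf{Ab}_{\operatorname{fin}}$); then $q(\tau)\circ q(\sigma)^{-1}\colon X\to Y$ is a morphism of the quotient category and $\Theta\bigl(q(\tau)\circ q(\sigma)^{-1}\bigr)=\alpha$, using that $\Theta$ is the factorization through the quotient (Lemma \ref{lemma15}).

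To analyze $\sigma$ I would first pin down its kernel and cokernel. Applying Lemma \ref{lemmaB2} to the bicartesian square $\Theta(\sigma)=(\sigma_d,c\sigma)$ shows that $\ker\sigma_d\cong\ker(c\sigma)$ and $\operatorname{coker}\sigma_d\cong\operatorname{coker}(c\sigma)$ are finitely generated discrete (resp. finite). Since the square is in particular a pullback, one may identify $V$ with $(c\sigma)^{-1}(X)$ inside $cV$, so that $\ker\sigma=\ker(c\sigma)$ is a discrete closed subgroup of $V$; and since $\Theta$ is exact (Lemma \ref{lemma_f_exact}) and $\Theta(\sigma)$ is an isomorphism, $\Theta(\ker\sigma)=\Theta(\operatorname{coker}\sigma)=0$, forcing $\ker\sigma$ and $\operatorname{coker}\sigma$ to be discrete. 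Hence $\ker\sigma,\operatorname{coker}\sigma\in\mathsf{Ab}_{\operatorname{fg}}$ (resp. $\mathsf{Ab}_{\operatorname{fin}}$), and in the canonical factorization $V\twoheadrightarrow\operatorname{coim}\sigma\xrightarrow{\widehat{\sigma}}\operatorname{im}\sigma\hookrightarrow X$ the outer admissible epic and admissible monic lie in $S_{\mathsf{Ab}_{\operatorname{fg}}}$.

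The genuine obstacle is the middle map $\widehat{\sigma}\colon\operatorname{coim}\sigma\to\operatorname{im}\sigma$: it is monic and epic but a priori not admissible, and one has to show separately that the localization inverts it. Here $\operatorname{coim}\sigma$ and $\operatorname{im}\sigma$ are again of type d-cg (resp. d-c), and $\Theta(\widehat{\sigma})$ is an isomorphism between them; the natural attempt is to invoke the uniqueness clause of Lemma \ref{lemma10} for $\widehat{\sigma}$ and for the lift of $\Theta(\widehat{\sigma})^{-1}$, concluding $\widehat{\sigma}$ is already an isomorphism in $\mathsf{PGA}$ and hence $\sigma\in S_{\mathsf{Ab}_{\operatorname{fg}}}$. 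I expect this last point to be the part that needs the most care — one must be precise about which morphisms in $\mathcal{LH}$ between d-cg objects Lemma \ref{lemma10} actually lifts — and it is where a genuinely new argument, rather than a formal manipulation of roofs, may be required. The vector-free statement is handled identically, replacing finitely generated discrete kernels/cokernels by finite ones and working with compact completions in $\mathcal{LH}(\mathsf{LCA}_{\operatorname{vf}})$.
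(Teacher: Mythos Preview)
Your overall strategy coincides with the paper's: reduce the given roof to one with a d-c (resp.\ d-cg) middle term via Theorem~\ref{thm_PrepareRoof_PCA}, lift both legs to $\mathsf{PCA}$ (resp.\ $\mathsf{PGA}$) via Lemma~\ref{lemma10}, and then invoke Lemma~\ref{lemmaB2} on the bicartesian square to see that the lifted left leg lands in the multiplicative system. The paper's proof stops at ``$s'$ has finite kernel and cokernel, i.e., $s'$ is inverted'', so you have in fact isolated a point the paper takes for granted.

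Your concern about $\widehat{\sigma}$ is legitimate --- having kernel and cokernel in $\mathsf{Ab}_{\operatorname{fg}}$ does not by itself place $\sigma$ in $S_{\mathsf{Ab}_{\operatorname{fg}}}$, which is built from \emph{admissible} monics and epics --- but your proposed resolution via the uniqueness clause of Lemma~\ref{lemma10} does not quite work. That lemma lifts honest commutative squares, not arbitrary morphisms in $\mathcal{LH}$ presented as roofs, so applying it to the inverse $\Theta(\widehat{\sigma})^{-1}$ presupposes what you are trying to prove. The direct fix is to show that $\widehat{\sigma}$ is already an isomorphism in $\mathsf{PGA}$. Since $\Theta(\widehat{\sigma})$ is an isomorphism in $\mathcal{LH}$, its underlying square is bicartesian with both rows of type d-cg; Lemma~\ref{lemmaB2} then forces $(\widehat{\sigma})_d$ and $c(\widehat{\sigma})$ to have the same kernel and cokernel, and since $\widehat{\sigma}$ is monic-epic these are both zero. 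Hence $(\widehat{\sigma})_d$ and $c(\widehat{\sigma})$ are isomorphisms. Because $\operatorname{coim}\sigma$ and $\operatorname{im}\sigma$ each carry the subspace topology of their completions and $c(\widehat{\sigma})$ identifies those completions, $\widehat{\sigma}$ is itself a homeomorphism. Thus $\sigma$ is admissible and factors as an admissible epic with kernel in $\mathsf{Ab}_{\operatorname{fg}}$ followed by an admissible monic with cokernel in $\mathsf{Ab}_{\operatorname{fg}}$, so $\sigma\in S_{\mathsf{Ab}_{\operatorname{fg}}}$ as required.
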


\begin{proof}
Suppose $X,Y$ are objects in $\mathsf{PCA}$ [resp. $\mathsf{PGA}$]. Suppose we
are given a left roof $X\overset{s}{\longleftarrow}%
Z\overset{f}{\longrightarrow}Y$ in the left heart $\mathcal{LH}(\mathsf{LCA}%
_{\operatorname*{vf}})$ [resp. $\mathcal{LH}(\mathsf{LCA})$]. We need to
produce a pre-image under $\Theta$ in the source category. By Theorem
\ref{thm_PrepareRoof_PCA}, this roof has an equivalent representative such
that $Z$ is of type d-c [resp. d-cg] and without loss of generality, we may
assume this is the roof we started with. By Lemma \ref{lemma10} our roof can
be lifted to a left roof%
\begin{equation}%
{
\begin{tikzcd}
	& {{\Theta}(Z)} \\
	{{\Theta}(X)} && {{\Theta}(Y).}
	\arrow["{s'}"', from=1-2, to=2-1]
	\arrow["{f'}", from=1-2, to=2-3]
\end{tikzcd}
}
\label{l_bz4}%
\end{equation}
The morphism $s^{\prime}$ is, by the construction of the cited lemma, induced
from the left downward arrow in%
\begin{equation}%
{
\begin{tikzcd}
	{Z_d} && cZ \\
	\\
	{X_d} && cX,
	\arrow["\bullet"{marking}, from=1-1, to=1-3]
	\arrow["{s'}"', from=1-1, to=3-1]
	\arrow[from=1-3, to=3-3]
	\arrow["\bullet"{marking}, from=3-1, to=3-3]
\end{tikzcd}
}
\label{l_bz5}%
\end{equation}
but re-topologized with the induced subspace topologies of $cZ$ and $cX$.
Since the left heart arises as the localization inverting bicartesian squares
(Remark \ref{rmk_SchneidersDescriptionOfLH}), the square in Diag. \ref{l_bz5}
is bicartesian and therefore Lemma \ref{lemmaB2} shows that $s^{\prime}$ has
finite kernel and cokernel, i.e., $s^{\prime}$ is inverted in $\mathsf{PCA}%
/\mathsf{Ab}_{\operatorname*{fin}}$ [resp. $s^{\prime}$ has finitely generated
discrete kernel and cokernel, i.e., $s^{\prime}$ is inverted in $\mathsf{PGA}%
/\mathsf{Ab}_{\operatorname*{fg}}$], so that the roof in Diag. \ref{l_bz4} is
a well-defined left roof in the source category. Thus, we have found a
pre-image of our morphism.
\end{proof}

\begin{lemma}
\label{lemma_f_faithful}The functor $\Theta$, in either version%
\[
\mathsf{PCA}/\mathsf{Ab}_{\operatorname*{fin}}\longrightarrow\mathcal{LH}%
(\mathsf{LCA}_{\operatorname*{vf}})\qquad\text{resp.}\qquad\mathsf{PGA}%
/\mathsf{Ab}_{\operatorname*{fg}}\longrightarrow\mathcal{LH}(\mathsf{LCA}%
)\text{,}%
\]
is faithful.
\end{lemma}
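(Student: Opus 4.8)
Since $\Theta$ is additive — indeed exact, by Lemma~\ref{lemma_f_exact} — faithfulness is equivalent to the implication ``$\Theta(\alpha)=0\Rightarrow\alpha=0$'', and it is enough to treat the variant $\mathsf{PGA}/\mathsf{Ab}_{\operatorname*{fg}}\to\mathcal{LH}(\mathsf{LCA})$, the $\mathsf{PCA}$ variant being identical. So let $\alpha\colon X\to Y$ be a morphism with $\Theta(\alpha)=0$ in $\mathcal{LH}(\mathsf{LCA})$, and fix a representing left roof $X\overset{s}{\longleftarrow}Z\overset{f}{\longrightarrow}Y$ with $s\in S_{\mathsf{Ab}_{\operatorname*{fg}}}$, i.e. $s$ has finitely generated discrete kernel and cokernel.

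The first point is that $\Theta$ inverts $S_{\mathsf{Ab}_{\operatorname*{fg}}}$: this is precisely the factorization over the localization $\mathsf{PGA}[S_{\mathsf{Ab}_{\operatorname*{fg}}}^{-1}]$ recorded in Lemma~\ref{lemma15} (concretely, $\Theta(s)$ arises from a bicartesian square in $\mathsf{LCA}$ as in the example following that lemma, hence is a quasi-isomorphism). Therefore $\Theta(\alpha)=\Theta(f)\circ\Theta(s)^{-1}$ in the heart, and $\Theta(\alpha)=0$ forces $\Theta(f)=0$. It now suffices to show that the honest morphism $f\colon Z\to Y$ of $\mathsf{PGA}$ already vanishes in $\mathsf{PGA}$, for then the roof $X\overset{s}{\longleftarrow}Z\overset{0}{\longrightarrow}Y$ represents the zero morphism, i.e. $\alpha=0$.

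This last step is where Lemma~\ref{lemma10} does the work: by Lemma~\ref{lemma15} the complexes $\Theta(Z)=[\,Z_{d}\to cZ\,]$ and $\Theta(Y)=[\,Y_{d}\to cY\,]$ are of type d-cg, and Lemma~\ref{lemma10} states that any morphism in $\mathcal{LH}(\mathsf{LCA})$ between objects of type d-cg lifts \emph{uniquely} to a morphism in $\mathsf{PGA}$ — equivalently, $\Theta$ restricts to a fully faithful functor on the full subcategory of d-cg objects. In particular $\Theta$ is injective on $\operatorname{Hom}_{\mathsf{PGA}}(Z,Y)$, so $\Theta(f)=0=\Theta(0)$ yields $f=0$, hence $\alpha=0$. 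I do not expect a serious obstacle beyond what is already contained in Lemmas~\ref{lemma15} and~\ref{lemma10}; the one bookkeeping point to get right is that, when invoking Lemma~\ref{lemma10}, the object it reconstructs out of $\Theta(Z)$ is $Z$ itself (same underlying group, same topology), because $Z$ carries exactly the subspace topology of its completion $cZ$ by \cite[Theorem 3.6.10]{MR2433295}, so that the unique-lifting statement genuinely applies to morphisms with source $Z$ and target $Y$.
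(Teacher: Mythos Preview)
There is a genuine gap in your argument, and it lies precisely in your reading of Lemma~\ref{lemma10}. You claim that this lemma makes $\Theta$ fully faithful from $\mathsf{PGA}$ into $\mathcal{LH}(\mathsf{LCA})$ on d-cg objects, so that $\Theta(f)=0$ in the heart forces $f=0$ in $\mathsf{PGA}$. But this implication is simply false: take any nonzero continuous homomorphism $f\colon Z\to F$ with $F$ a finite group; then $\Theta(F)=[F\overset{1}{\to}F]\cong 0$ in the heart, so $\Theta(f)=0$, yet $f\neq 0$ in $\mathsf{PGA}$. What Lemma~\ref{lemma10} actually proves (look at its proof, not just its statement) is that an honest morphism of complexes --- a commutative square --- between d-cg objects lifts uniquely to $\mathsf{PGA}$. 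It says nothing about the passage from commutative squares to morphisms in $\mathcal{LH}(\mathsf{LCA})$, which is a localization of the \emph{homotopy} category: the equality $\Theta(f)=0$ in the heart only means the square becomes null-homotopic after precomposing with some quasi-isomorphism, not that the square itself is zero.

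This is exactly the subtlety the paper's proof confronts. It unwinds the vanishing of $\Theta(f)$ in the heart into an equivalence of roofs, uses Theorem~\ref{thm_PrepareRoof_PCA} to bring the auxiliary apex into d-cg form, and then extracts a contracting homotopy $\delta\colon cS\to Y_{d}$. Because $cS$ is group-theoretically compactly generated and $Y_{d}$ is discrete, the image of $\delta$ is a finitely generated discrete group $F$, and one deduces that the right leg of the roof factors through $F$. The conclusion is that $f$ becomes zero in $\mathsf{PGA}/\mathsf{Ab}_{\operatorname*{fg}}$ --- \emph{not} in $\mathsf{PGA}$. Your shortcut skips this homotopy analysis entirely, and without it the argument does not go through.
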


\begin{proof}
Suppose $X,Y$ are objects in $\mathsf{PCA}$ [resp. $\mathsf{PGA}$]. Suppose we
are given a left roof $X\overset{s}{\longleftarrow}Z\longrightarrow Y$ in
$\mathsf{PCA}/\mathsf{Ab}_{\operatorname*{fin}}$ [resp. $\mathsf{PGA}%
/\mathsf{Ab}_{\operatorname*{fg}}$]. Suppose that $\Theta$ sends this roof to
zero. This means that there is an equivalence of left roofs%
\[%
{
\begin{tikzcd}
	&& {{\Theta}(Z)} \\
	\\
	{{\Theta}(X)} && S && {{\Theta}(Y)} \\
	\\
	&& N
	\arrow["{{{s}}}"', from=1-3, to=3-1]
	\arrow[from=1-3, to=3-5]
	\arrow[from=3-3, to=1-3]
	\arrow[from=3-3, to=5-3]
	\arrow[from=5-3, to=3-1]
	\arrow["0"', from=5-3, to=3-5]
\end{tikzcd}
}%
\]
in the left heart, for some object $N$ and suitable arrows. We can simplify
this to the commutative diagram%
\[%
{
\begin{tikzcd}
	&& S \\
	\\
	&& {{\Theta}(Z)} \\
	{{\Theta}(X)} &&&& {{\Theta}(Y),}
	\arrow[from=1-3, to=3-3]
	\arrow["{{s'}}"', from=1-3, to=4-1]
	\arrow["0", from=1-3, to=4-5]
	\arrow["{{{s}}}"', from=3-3, to=4-1]
	\arrow[from=3-3, to=4-5]
\end{tikzcd}
}%
\]
where both $s$ and $s^{\prime}$ are invertible in the localization, i.e., they
are induced from bicartesian squares (Remark
\ref{rmk_SchneidersDescriptionOfLH}). By Theorem \ref{thm_PrepareRoof_PCA}, we
may assume that $S$ is of the shape $\left[
{
\begin{tikzcd}
	{S_d} & {cS}
	\arrow["\bullet"{marking}, from=1-1, to=1-2]
\end{tikzcd}
}%
\right]  $ for $S$ in $\mathsf{PCA}$ [resp. $\mathsf{PGA}$] and this does not
destroy the property that the morphism along the right leg is zero. Since the
right leg of the upper roof (with apex $S$) comes from the zero morphism, it
is induced from the solid arrows in the square%
\begin{equation}%
{
\begin{tikzcd}
	{S_d} && cS \\
	\\
	{Y_d} && {cY,}
	\arrow["s"{inner sep=.8ex}, "\bullet"{marking}, from=1-1, to=1-3]
	\arrow["{{f'}}"', from=1-1, to=3-1]
	\arrow["\delta"', dashed, from=1-3, to=3-1]
	\arrow["f", from=1-3, to=3-3]
	\arrow["y"'{inner sep=.8ex}, "\bullet"{marking}, from=3-1, to=3-3]
\end{tikzcd}
}
\label{l_bz6}%
\end{equation}
but beware that the left heart is defined as the localization of the
\textit{homotopy category} $\mathcal{K}(\mathsf{LCA}_{\operatorname*{vf}})$
[resp. $\mathcal{K}(\mathsf{LCA})$] of complexes modulo bicartesian squares
(Remark \ref{rmk_SchneidersDescriptionOfLH}), i.e., the morphism being zero in
$\mathcal{K}(-)$ does not mean that both morphisms $f,f^{\prime}$ are
literally zero in $\mathsf{LCA}_{\operatorname*{vf}}$ [resp. $\mathsf{LCA}%
$].\ Instead, it means that there exists a contracting homotopy $\delta$
(displayed using the dashed arrow in Diag. \ref{l_bz6}), making the diagram
commute.\ However, since $cS$ is compact, say $cS\simeq C$ for $C$ compact
[resp. group-theoretically compactly generated so that (by Lemma
\ref{lemma11}) $cS\simeq\mathbb{R}^{n}\oplus\mathbb{Z}^{m}\oplus C$ for $C$
compact], $\delta(\mathbb{R}^{n})$ must be zero since $\mathbb{R}^{n}$ is
connected and $Y_{d}$ discrete, $\delta(\mathbb{Z}^{m})$ is finitely
generated, and $\delta(C)$ finite, again since $Y_{d}$ is discrete. Hence,
$F:=\delta(S)$ is finite in the d-c setting [resp. finitely generated in the
d-cg setting] subgroup of $Y_{d}$. As the upper left solid triangle in Diag.
\ref{l_bz6} commutes, it follows that $f^{\prime}$ factors over $F$. As the
lower right solid triangle commutes and the lower horizontal arrow is
injective, it also follows that $f$ factors over $F$. We arrive at%
\[%
{
\begin{tikzcd}
	& {S_d} &&& cS \\
	F &&&&& F \\
	& {Y_d} &&& {cY.}
	\arrow["s"{inner sep=.8ex}, "\bullet"{marking}, from=1-2, to=1-5]
	\arrow[dashed, from=1-2, to=2-1]
	\arrow["{{{{f'}}}}"'{pos=0.3}, from=1-2, to=3-2]
	\arrow[dashed, from=1-5, to=2-6]
	\arrow["\delta"{pos=0.7}, from=1-5, to=3-2]
	\arrow["f"{pos=0.3}, from=1-5, to=3-5]
	\arrow["1"'{pos=0.7}, dashed, from=2-1, to=2-6]
	\arrow[dashed, from=2-1, to=3-2]
	\arrow[dashed, from=2-6, to=3-5]
	\arrow["y"'{inner sep=.8ex}, "\bullet"{marking}, from=3-2, to=3-5]
\end{tikzcd}
}%
\]
We have obtained a factorization%
\begin{equation}
\Theta(S)\longrightarrow\Theta(F)\longrightarrow\Theta(Y)
\end{equation}
for the arrow $g$ since $\Theta(F)=\left[
{
\begin{tikzcd}
	{F_d} & {cF}
	\arrow["\bullet"{marking}, from=1-1, to=1-2]
\end{tikzcd}
}%
\right]  $, and $F_{d}=cF$, as $F$ is both discrete and complete, and by
lifting the morphisms using Lemma \ref{lemma10}, this can be promoted to a
factorization%
\begin{equation}
S\longrightarrow F\longrightarrow Y\text{.} \label{l_6}%
\end{equation}
Since the zero map $0\overset{0}{\rightarrow}F$ is an isomorphism in the
localization $\mathsf{PCA}/\mathsf{Ab}_{\operatorname*{fin}}$ (its kernel and
cokernel lie in $\mathsf{Ab}_{\operatorname*{fin}}$) in the d-c setting [resp.
$0\overset{0}{\rightarrow}F$ is an isomorphism $\mathsf{PGA}/\mathsf{Ab}%
_{\operatorname*{fg}}$ in the d-cg setting], it follows that the arrow in Eq.
\ref{l_6} factors over a zero object, hence must be zero. Thus, the left roof
that our proof had started with, had the shape $X\overset{s}{\longleftarrow
}Z\overset{0}{\longrightarrow}Y$. Specifically, the diagram%
\[%
{
\begin{tikzcd}
	& Z \\
	X & Z & Y \\
	& X
	\arrow["s"', from=1-2, to=2-1]
	\arrow["{g=0}", from=1-2, to=2-3]
	\arrow["1"', from=2-2, to=1-2]
	\arrow["s", from=2-2, to=3-2]
	\arrow["1", from=3-2, to=2-1]
	\arrow["0"', from=3-2, to=2-3]
\end{tikzcd}
}%
\]
is the equivalence relation identifying the input left roof with the roof of
the zero map.
\end{proof}

\section{Rewriting Rules for Objects\label{sect_RewritingRules}}

\begin{proposition}
\label{prop_ess_surjectivity}The functor $\Theta$ is essentially surjective
onto the torsion class

\begin{enumerate}
\item $\mathsf{T}$ of $(\mathsf{T},\mathsf{LCA}_{\operatorname*{vf}})$ inside
$\mathcal{LH}(\mathsf{LCA}_{\operatorname*{vf}})$ in the $\mathsf{PCA}$
variant, resp.

\item $\mathsf{T}$ of $(\mathsf{T},\mathsf{LCA})$ inside $\mathcal{LH}%
(\mathsf{LCA})$ in the $\mathsf{PGA}$ variant.
\end{enumerate}
\end{proposition}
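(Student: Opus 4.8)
The plan is to prove essential surjectivity by a ``rewriting rules for objects'' argument parallel to Theorem~\ref{thm_PrepareRoof_PCA}. Since Lemma~\ref{lemma15} already shows that the essential image of $\Theta$ lies inside the torsion class $\mathsf{T}$ (of Prop.~\ref{prop_QAbTorsionPairInLeftHeart}), and Lemma~\ref{lemma10} shows that \emph{every} object of type d-c (resp. d-cg) is isomorphic to $\Theta(W)$ for a unique $W\in\mathsf{PCA}$ (resp. $\mathsf{PGA}$), it suffices to prove: every object $[X'\overset{\bullet}{\to}X]\in\mathsf{T}$ of $\mathcal{LH}(\mathsf{LCA}_{\operatorname{vf}})$ (resp. $\mathcal{LH}(\mathsf{LCA})$) is isomorphic in the heart to an object of type d-c (resp. d-cg). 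So I would fix such an $[X'\overset{\bullet}{\to}X]$, i.e. $X',X$ LCA (resp. vector-free LCA) and $x\colon X'\to X$ injective with dense image, and reduce it in two steps.

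\textbf{Reduction of the target.} By the structure theorem for LCA groups, $X$ has an open subgroup $Y$ which is group-theoretically compactly generated (in the vector-free case $Y$ can even be taken compact, since then $X$ itself has a compact open subgroup). Put $X'_Y:=x^{-1}(Y)$, an open, hence closed, subgroup of $X'$; the restriction $x|_{X'_Y}\colon X'_Y\to Y$ is again injective with dense image (density because $Y$ is open in $X$), and the induced map $X'/X'_Y\to X/Y$ is a continuous bijective homomorphism of discrete groups, hence an isomorphism. The levelwise admissible short exact sequences of two-term complexes built from $X'_Y\hookrightarrow X'\twoheadrightarrow X'/X'_Y$ and $Y\hookrightarrow X\twoheadrightarrow X/Y$ give a distinguished triangle in $\operatorname{D}^{b}$, hence a short exact sequence in the heart whose cokernel term $[X'/X'_Y\overset{\cong}{\to}X/Y]$ is quasi-isomorphic to $0$; therefore $[X'\overset{\bullet}{\to}X]\cong[X'_Y\overset{\bullet}{\to}Y]$. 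Write $[D'\overset{\bullet}{\to}C]$ for the result, $C$ compactly generated (resp. compact), $D'$ an LCA group (resp. vector-free by Lemma~\ref{lemma_LCAvfProps}).

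\textbf{Reduction of the source.} First, in the d-cg case, split off the maximal vector summand $\mathbb{R}^{p}$ of $D'$: the restriction $x|_{\mathbb{R}^{p}}$ maps $\mathbb{R}^{p}$ continuously and injectively onto a locally compact, hence closed, subgroup of $C$, and by the open mapping theorem for $\sigma$-compact locally compact groups it is a closed embedding; as $\mathbb{R}^{p}$ is injective in $\mathsf{LCA}$ it is a direct summand of $C$. Quotienting both sides by the compatible pair $\mathbb{R}^{p}\hookrightarrow D'$, $\mathbb{R}^{p}\hookrightarrow C$ -- the resulting square in $\mathsf{LCA}$ is bicartesian by the criterion used near Eq.~\ref{l_3h} (since $x$ restricts to an isomorphism on $\mathbb{R}^{p}$), and again a short exact sequence in the heart with kernel term $\cong 0$ -- replaces $D'$ by a vector-free group and $C$ by a (still compactly generated) quotient. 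Next, $D'$ has a compact open subgroup $C_{0}'$; its image $x(C_{0}')\subseteq C$ is compact, hence closed, and $x|_{C_{0}'}$ is a continuous bijection from a compact space onto $x(C_{0}')$ inside a Hausdorff group, hence a topological isomorphism. Quotienting both sides by $C_{0}'$ and $x(C_{0}')$ in exactly the same way replaces $D'$ by the discrete group $D'/C_{0}'$ (discrete because $C_{0}'$ is open), while the target $C/x(C_{0}')$ stays compactly generated (resp. compact, being a quotient of $C$). All induced arrows on quotients remain monic (because $x$ is injective and carries nothing outside the subgroup being quotiented into the quotiented subgroup) and epic (dense image passes to quotients), so every complex along the way lies in $\mathsf{T}$. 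After these reductions $[X'\overset{\bullet}{\to}X]$ is isomorphic in the heart to some $[D\overset{\bullet}{\to}C]$ of type d-c (resp. d-cg), and Lemma~\ref{lemma10} finishes the proof.

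\textbf{Main obstacle.} The delicate part is the heart-theoretic bookkeeping at each quotient step: one must check that the induced arrow on quotients is still both monic and epic (to stay inside $\mathsf{T}$), that the relevant square in $\mathsf{LCA}$ is bicartesian -- this is where the criterion ``$x|_{U}$ an isomorphism $\Leftrightarrow$ square bicartesian'' and Lemma~\ref{lemma9} enter -- and that the ``removed'' complex $[U\overset{\cong}{\to}U']$ is genuinely quasi-isomorphic to $0$, so that the short exact sequence in the heart degenerates to the claimed isomorphism. The two topological inputs (that $x$ restricted to a vector summand, resp. to a compact open subgroup, is a closed embedding) are comparatively routine via the open mapping theorem and the compact-to-Hausdorff principle, but they are indispensable: without them the copies of $\mathbb{R}^{p}$ and $C_{0}'$ would not split off, resp. embed as a closed subgroup, and the reduction would break down -- this is precisely the ``there is something to show since $X$ need not be connected'' caveat flagged in the introduction.
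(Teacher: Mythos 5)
Your target reduction and the compact-open-subgroup step of your source reduction are both correct, and in the vector-free/$\mathsf{PCA}$ variant they already complete the proof (there $D'$ is automatically vector-free, so your splitting-off of $\mathbb{R}^{p}$ is vacuous); this matches the paper, where the $\mathsf{PCA}$ case follows directly from Lemmas \ref{lemma_S1} and \ref{lemma_S2}. The fatal gap is in the $\mathsf{PGA}$ variant, at the step where you split off the maximal vector summand $\mathbb{R}^{p}$ of $D'$. You assert that $x|_{\mathbb{R}^{p}}$ maps $\mathbb{R}^{p}$ ``onto a locally compact, hence closed, subgroup of $C$'' and invoke the open mapping theorem. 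This is false in general: the set-theoretic image of a locally compact group under a continuous injection, equipped with the subspace topology, need not be locally compact, and the open mapping theorem requires the codomain to be a Baire (e.g.\ locally compact) group, which is precisely what is unknown here. A counterexample is already an object of $\mathsf{T}$: the irrational winding $\left[\mathbb{R}\overset{\bullet}{\rightarrow}\mathbb{T}^{2}\right]$, $t\mapsto(t,\alpha t)$ with $\alpha$ irrational, is monic and epic, but $x(\mathbb{R})$ is a dense, non-closed, non-locally-compact subgroup of $\mathbb{T}^{2}$, so $\mathbb{R}^{p}$ cannot be realized as a closed subgroup, let alone a direct summand, of the target. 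Lemma \ref{lemma_LCAPermanence}(2) runs in the direction ``locally compact in the subspace topology $\Rightarrow$ closed,'' not the direction you need.

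This is not a presentational slip but the actual mathematical difficulty of the proposition. The paper's proof keeps the vector summand $\mathbb{R}^{n}$ of the source intact through Lemmas \ref{lemma_S1} and \ref{lemma_S2} and then disposes of the vector groups by a genuinely different mechanism: Lemma \ref{lemma_S3} splits off only the subspace $\mathbb{R}^{\ell}\subseteq\mathbb{R}^{n}$ that surjects onto the vector summand of the closure $\overline{x(\mathbb{R}^{n})}\simeq\mathbb{R}^{\ell}\oplus\tilde{C}$ (so that afterwards $x(\mathbb{R}^{n})$ lands inside the compact part of the target); Lemma \ref{lemma_S4} absorbs the remaining vector summand $\mathbb{R}^{m}$ of the target into a compact group by quotienting out a full-rank lattice $E\simeq\mathbb{Z}^{m}$ found inside the discrete part of the source; and finally a Pontryagin-duality flip applies these two lemmas to the dual complex so as to land in type d-cg. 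On the winding example this machinery yields $\left[\mathbb{R}\rightarrow\mathbb{T}^{2}\right]\cong\left[\mathbb{Z}\overset{\alpha}{\rightarrow}\mathbb{T}\right]$, an identification your route cannot reach. To repair your proof you would have to replace your step (i) by arguments of this kind; as written, the $\mathsf{PGA}$ case is not proved.
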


We split this proof into many little steps, but keep the notation uniform
without redefining each term as we jump from lemma to lemma. We treat the
cases of $\mathcal{LH}(\mathsf{LCA}_{\operatorname*{vf}})$ vs. $\mathcal{LH}%
(\mathsf{LCA})$ side by side.

\begin{lemma}
[Step 1]\label{lemma_S1}Every object in $\mathsf{T}$ is isomorphic to an
object of the shape%
\[
\left[
{
\begin{tikzcd}
	{X'} && X
	\arrow["x"{inner sep=.8ex}, "\bullet"{marking}, from=1-1, to=1-3]
\end{tikzcd}
}%
\right]
\]
with $X^{\prime}\simeq\mathbb{R}^{n}\oplus D^{\prime}$ and $D^{\prime}$ discrete.
\end{lemma}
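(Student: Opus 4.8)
The plan is to bring the object into normal form by replacing its degree-$1$ term with a suitable quotient, using the structure theory of LCA groups together with the bicartesian-square mechanism already set up in Steps~2 and 3 of the proof of Theorem~\ref{thm_PrepareRoof_PCA}.

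First I would represent the given object of $\mathsf{T}$ by a complex $\bigl[X'\xrightarrow{x}X\bigr]$ with $x$ both monic and epic, as in Proposition~\ref{prop_QAbTorsionPairInLeftHeart}; in particular $x$ is injective with dense image. By the structure theorem for LCA groups \cite[Ch.~6, Theorem~25]{MR0442141}, write $X'\cong\mathbb{R}^{n}\oplus H$ where $H$ has a compact open subgroup $C$ (in the vector-free variant $n=0$ and $C$ is a compact open subgroup of $X'$ itself). View $C$ as a closed subgroup of $X'$. Since $C$ is open in $H$, the quotient $X'/C\cong\mathbb{R}^{n}\oplus(H/C)$ has $H/C$ discrete, so it is already of the requested shape $\mathbb{R}^{n}\oplus D'$ with $D':=H/C$.

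Next I would transport $C$ into $X$. Since $C$ is compact and $X$ is Hausdorff, $x(C)$ is a compact, hence closed, subgroup of $X$, and the corestriction $x|_{C}\colon C\to x(C)$ is a continuous bijective homomorphism from a compact space onto a Hausdorff space, hence a topological isomorphism. This provides compatible admissible monics $C\hookrightarrow X'$ and $x(C)\hookrightarrow X$ whose ``kernels'' are identified by $x$, which is exactly the data needed to run the reduction of Diagram~\ref{diag_4}. Concretely, in the commutative square with top row $x\colon X'\to X$, bottom row the induced $\bar{x}\colon X'/C\to X/x(C)$, and vertical arrows the two quotient admissible epics, the induced morphism on kernels $C\to x(C)$ is an isomorphism; hence by \cite[Prop.~2.12 (dual)]{MR2606234} (the same argument as around Diagram~\ref{l_3h}) this square is bicartesian in $\mathsf{LCA}$ — and in the vector-free case in $\mathsf{LCA}_{\operatorname*{vf}}$ as well, since $X'/C$ and $X/x(C)$ are vector-free by Lemma~\ref{lemma_LCAvfProps}. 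One checks directly that $\bar{x}$ is again monic and epic: it is injective because $C=x^{-1}(x(C))$, and it has dense image because the composite $X'\to X\twoheadrightarrow X/x(C)$ does; hence $\bigl[X'/C\xrightarrow{\bar{x}}X/x(C)\bigr]$ again lies in $\mathsf{T}$ (or invoke Lemma~\ref{lemma9}). By Remark~\ref{rmk_SchneidersDescriptionOfLH}, a morphism of two-term complexes inducing a bicartesian square becomes an isomorphism in the left heart, so $\bigl[X'\xrightarrow{x}X\bigr]\cong\bigl[X'/C\xrightarrow{\bar{x}}X/x(C)\bigr]$, whose degree-$1$ term $\mathbb{R}^{n}\oplus D'$ is of the claimed form.

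The only step that is not purely formal is the claim that the square above is bicartesian in the \emph{topological} category: on underlying abelian groups it is immediate, and the topological refinement is exactly where closedness of $C$ in both $X'$ and $X$ and the homeomorphism property of $x|_{C}$ enter. As this is precisely the verification already carried out in Steps~2 and 3 of Theorem~\ref{thm_PrepareRoof_PCA}, I anticipate no genuinely new obstacle; the remaining points are bookkeeping with the structure theorem and with the monic-epic property.
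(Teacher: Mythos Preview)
Your proposal is correct and follows essentially the same route as the paper: invoke the structure theorem to find a compact subgroup $C$ of $X'$ whose quotient $X'/C$ has the desired shape $\mathbb{R}^n\oplus D'$, observe that $x|_{C}\colon C\to x(C)$ is a homeomorphism (continuous bijection from compact to Hausdorff), and conclude via the bicartesian-square mechanism that $[X'\to X]\cong[X'/C\to X/x(C)]$. The paper phrases the structure theorem as ``$X'$ has a clopen subgroup $\mathbb{R}^n\oplus C'$'' and you phrase it as ``$X'\cong\mathbb{R}^n\oplus H$ with $C$ compact open in $H$'', but these are equivalent and the rest of the argument is identical.
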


\begin{proof}
By Prop. \ref{prop_QAbTorsionPairInLeftHeart} every object in $\mathsf{T}$ has
the shape%
\[
\left[
{
\begin{tikzcd}
	{X'} && X
	\arrow["x"{inner sep=.8ex}, "\bullet"{marking}, from=1-1, to=1-3]
\end{tikzcd}
}%
\right]
\]
for $X^{\prime},X\in\mathsf{LCA}$ (resp. $\mathsf{LCA}_{\operatorname*{vf}}$).
(Step 1) By the structure theorem for LCA groups there is a (cl)open subgroup
$\mathbb{R}^{n}\oplus C^{\prime}$ with $C^{\prime}$ compact in $X^{\prime}$.
In particular, $D^{\prime}:=X^{\prime}/(\mathbb{R}^{n}\oplus C^{\prime})$ is
discrete. Then the image $x(C^{\prime})$ in $X$ is also compact, hence closed
as $X$ is Hausdorff. Moreover, since $x$ is injective, the map $C^{\prime
}\longrightarrow x(C^{\prime})$ is injective and clearly surjective. As a
continuous bijective map from a compact space to a Hausdorff space,
$x\mid_{C^{\prime}}$ is an isomorphism. Hence, the square%
\[%
{
\begin{tikzcd}
	{C'} && {x(C')} \\
	\\
	{X'} && X
	\arrow["{{x\mid_{C' }}}", from=1-1, to=1-3]
	\arrow[hook, from=1-1, to=3-1]
	\arrow[hook, from=1-3, to=3-3]
	\arrow["x"{inner sep=.8ex}, "\bullet"{marking}, from=3-1, to=3-3]
\end{tikzcd}
}%
\]
exhibits a quasi-isomorphism $\left[
{
\begin{tikzcd}
	{X'} && X
	\arrow["x"{inner sep=.8ex}, "\bullet"{marking}, from=1-1, to=1-3]
\end{tikzcd}
}%
\right]  \overset{\sim}{\longrightarrow}\left[
{
\begin{tikzcd}
	{X'/C'} && {X/C'}
	\arrow["x/C'"{inner sep=.8ex}, "\bullet"{marking}, from=1-1, to=1-3]
\end{tikzcd}
}%
\right]  $ and from now on we may assume that $X^{\prime}\simeq\mathbb{R}%
^{n}\oplus D^{\prime}$ with $D^{\prime}$ (as defined above) discrete.
\end{proof}

\begin{lemma}
[Step 2]\label{lemma_S2}Every object in $\mathsf{T}$ is isomorphic to an
object of the shape%
\[
\left[
{
\begin{tikzcd}
	{X'} && X
	\arrow["x"{inner sep=.8ex}, "\bullet"{marking}, from=1-1, to=1-3]
\end{tikzcd}
}%
\right]
\]
with $X^{\prime}\simeq\mathbb{R}^{n}\oplus D^{\prime}$ and $D^{\prime}$
discrete, and simultaneously $X\simeq\mathbb{R}^{m}\oplus C$ with $C$ compact.
\end{lemma}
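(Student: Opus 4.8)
The plan is to run a variant of the reduction in Lemma~\ref{lemma_S1}, but now \emph{restricting} the complex to the standard open subgroup of $X$ instead of collapsing a common compact subgroup. Start from a representative $[X'\xrightarrow{x}X]$ as produced by Lemma~\ref{lemma_S1}, with $X'\simeq\mathbb{R}^{n}\oplus D'$ and $D'$ discrete. By the structure theorem for LCA groups \cite[Ch.~6, Theorem~25]{MR0442141}, $X$ has an open, hence clopen, subgroup $H\simeq\mathbb{R}^{m}\oplus C$ with $C$ compact, and the quotient $X/H$ is discrete; in the $\mathcal{LH}(\mathsf{LCA}_{\operatorname*{vf}})$-variant one has $m=n=0$, and a compact open subgroup is automatically vector-free, so the whole argument below stays inside $\mathsf{LCA}_{\operatorname*{vf}}$. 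Set $X'_{0}:=x^{-1}(H)\subseteq X'$; by continuity of $x$ this is an open, hence clopen, subgroup of $X'$, and since $H$ is open and $x(X')$ is dense in $X$, the image $x(X'_{0})=x(X')\cap H$ is dense in $H$. Thus $[X'_{0}\to H]$ is again an object of the torsion class $\mathsf{T}$ (the restricted arrow stays monic and epic), and the clopen-subgroup inclusions $X'_{0}\hookrightarrow X'$ and $H\hookrightarrow X$ assemble into a morphism of two-term complexes $[X'_{0}\to H]\longrightarrow[X'\to X]$.

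The key point is that this morphism is an isomorphism in the heart. By Remark~\ref{rmk_SchneidersDescriptionOfLH} it suffices to check that the underlying square is bicartesian in $\mathsf{LCA}$ (resp.\ $\mathsf{LCA}_{\operatorname*{vf}}$). It sits as the left square in a morphism of short exact sequences
\[
\begin{array}{ccccc}
X'_{0} & \hookrightarrow & X' & \twoheadrightarrow & X'/X'_{0}\\
\downarrow & & \downarrow & & \downarrow\\
H & \hookrightarrow & X & \twoheadrightarrow & X/H
\end{array}
\]
whose left column is an admissible monic, so by \cite[Prop.~2.12 (dual)]{MR2606234} the left square is bicartesian if and only if the induced map $X'/X'_{0}\to X/H$ is an isomorphism. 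That map is injective for the obvious reason ($x(a)\in H$ forces $a\in X'_{0}$), and it is surjective because the composite $X'\to X\to X/H$ has dense image in the \emph{discrete} group $X/H$ and is therefore onto. Hence the square is bicartesian, $[X'_{0}\to H]\cong[X'\to X]$ in $\mathcal{LH}(\mathsf{LCA})$ (resp.\ $\mathcal{LH}(\mathsf{LCA}_{\operatorname*{vf}})$), and Lemma~\ref{lemma9} re-confirms that the new representative still lies in $\mathsf{T}$.

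Finally one checks that the new first term retains the required shape: being an open subgroup of $X'\simeq\mathbb{R}^{n}\oplus D'$, the group $X'_{0}$ contains the identity component $\mathbb{R}^{n}$, while $X'_{0}/\mathbb{R}^{n}$ embeds into $D'$ and is therefore discrete; since $\mathbb{R}^{n}$ is injective in $\mathsf{LCA}$ \cite[Theorem~3.2]{MR0215016}, this extension splits, so $X'_{0}\simeq\mathbb{R}^{n}\oplus D'_{0}$ with $D'_{0}$ discrete. Renaming $X'_{0}\rightsquigarrow X'$, $D'_{0}\rightsquigarrow D'$ and $H\rightsquigarrow X$ yields a representative with both $X'\simeq\mathbb{R}^{n}\oplus D'$ ($D'$ discrete) and $X\simeq\mathbb{R}^{m}\oplus C$ ($C$ compact), as wanted. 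I expect the only step that needs genuine (though routine) care to be the bicartesianness of the square --- concretely, that the pullback square of clopen-subgroup inclusions is also a pushout --- which is exactly where the density of $x(X')$ together with the discreteness of $X/H$ enter.
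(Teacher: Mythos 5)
Your argument is correct and is essentially the paper's proof in light disguise: your $X'_{0}=x^{-1}(H)$ is exactly the kernel of the composite $X'\to X\to X/H$ used in the paper, the bicartesianness is justified by the same appeal to the induced isomorphism on the discrete quotients via \cite[Prop.~2.12 (dual)]{MR2606234}, and the density/discreteness argument for surjectivity is identical. The only cosmetic difference is that you identify $X'_{0}\simeq\mathbb{R}^{n}\oplus D'_{0}$ via openness and injectivity of $\mathbb{R}^{n}$, whereas the paper reads this off from the explicit factorization of the map to $X/H$ through $D'$; both are fine.
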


\begin{proof}
Analogously, pick a (cl)open subgroup $\mathbb{R}^{m}\oplus C$ with $C$
compact in $X$. Then the quotient $D:=X/(\mathbb{R}^{m}\oplus C)$ is discrete.
The arrow $h$ in%
\begin{equation}%
{
\begin{tikzcd}
	{\overbrace{\mathbb{R}^n \oplus D'}^{X'}} && X \\
	\\
	&& D
	\arrow["x"{inner sep=.8ex}, "\bullet"{marking}, from=1-1, to=1-3]
	\arrow["h"', from=1-1, to=3-3]
	\arrow["w", two heads, from=1-3, to=3-3]
\end{tikzcd}
}
\label{diag_10}%
\end{equation}
must send the real line factor $\mathbb{R}^{n}$ in $X^{\prime}\simeq
\mathbb{R}^{n}\oplus D^{\prime}$ to zero since it is connected and $D$ is
discrete and therefore $h$ factors over the quotient%
\begin{equation}
h\colon X^{\prime}\twoheadrightarrow D^{\prime}\overset{q}{\rightarrow
}D\text{.} \label{l_9}%
\end{equation}
Moreover, $X^{\prime}$ is dense in $X$, so $h(X^{\prime})$ must be dense in
$D$ by Lemma \ref{lemma1}. As $D$ is discrete, this means that $h$ is
surjective. Hence, the map $q$ in Eq. \ref{l_9} is a surjective map between
discrete groups and therefore tautologically an admissible epic. Thus, $h$ is
an admissible epic in Diag. \ref{diag_10}. Hence, we obtain the lower square
in the diagram below%
\[%
{
\begin{tikzcd}
	{\mathbb{R}^n \oplus\operatorname{ker}(q)} && {\mathbb{R}^m \oplus C} \\
	\\
	{\overbrace{\mathbb{R}^n \oplus D'}^{X'}} && X \\
	\\
	D && D
	\arrow["{\tilde{x}}"{inner sep=.8ex}, "\bullet"{marking}, from=1-1, to=1-3]
	\arrow["{1\oplus\iota}"', from=1-1, to=3-1]
	\arrow[from=1-3, to=3-3]
	\arrow["x"{inner sep=.8ex}, "\bullet"{marking}, from=3-1, to=3-3]
	\arrow["h"', two heads, from=3-1, to=5-1]
	\arrow["w", two heads, from=3-3, to=5-3]
	\arrow["1"', from=5-1, to=5-3]
\end{tikzcd}
}%
\]
and conclude that the top square is bicartesian. By Lemma \ref{lemma9} the
arrow $\tilde{x}$ is a monic and epic, because $x$ is. The upper square
exhibits a quasi-isomorphism%
\[
\left[
{
\begin{tikzcd}
	{X'} && X
	\arrow["x"{inner sep=.8ex}, "\bullet"{marking}, from=1-1, to=1-3]
\end{tikzcd}
}%
\right]  \overset{\sim}{\longrightarrow}\left[
{
\begin{tikzcd}
	{\mathbb{R}^n \oplus\operatorname{ker}(q)} && {\mathbb{R}^m \oplus C}
	\arrow["\tilde{x}'"{inner sep=.8ex}, "\bullet"{marking}, from=1-1, to=1-3]
\end{tikzcd}
}%
\right]  \text{,}%
\]
so from now on we may assume that $X^{\prime}$ is a direct sum of
$\mathbb{R}^{n}$ with some discrete group, and $X$ is a direct sum of
$\mathbb{R}^{m}$ with some compact group.
\end{proof}

\begin{remark}
There is an alternative way to prove Lemma \ref{lemma_S2}: Start with Lemma
\ref{lemma_S1}. Then take the Pontryagin dual of $\mathbb{R}^{n}\oplus
D^{\prime}\longrightarrow X$. Then use the same strategy as in the proof of
Lemma \ref{lemma_S1} for this dual $X^{\vee}\longrightarrow(\mathbb{R}%
^{n})^{\vee}\oplus D^{\prime\vee}$ to quasi-isomorphically replace $X^{\vee}$
by some $\mathbb{R}^{\ell}\oplus D^{\prime\prime}$ with $D^{\prime\prime}$
discrete. A further dualization yields $\mathbb{R}^{n}\oplus D^{\prime
}\longrightarrow X^{\prime\prime}$, where $X^{\prime\prime}\simeq
\mathbb{R}^{\ell}\oplus D^{\prime\prime\vee}$ and $D^{\prime\prime\vee}$ is
compact. Keeping track of how this changes the other end of the complex, gives
an alternative proof.
\end{remark}

The proof of Prop. \ref{prop_ess_surjectivity} for the variant%
\[
\Theta\colon\mathsf{PCA}\longrightarrow\mathcal{LH}(\mathsf{LCA}%
_{\operatorname*{vf}})
\]
following immediately from Lemma \ref{lemma_S2}: We must have $n=m=0$ since no
real line summands can occur in $\mathsf{LCA}_{\operatorname*{vf}}$, so the
object in Eq. \ref{l_10} is of type d-c and by Lemma \ref{lemma10} we have
$\left[
{
\begin{tikzcd}
	{\mathbb{R}^n \oplus D'} && {\mathbb{R}^m \oplus C}
	\arrow["x"{inner sep=.8ex}, "\bullet"{marking}, from=1-1, to=1-3]
\end{tikzcd}
}%
\right]  =\Theta(D_{\subseteq C}^{\prime})$, where $D_{\subseteq C}^{\prime}$
is $D^{\prime}$, equipped with the subspace topology of $C$.

For the rest of the section, we consider the remaining case $\Theta
\colon\mathsf{PGA}\longrightarrow\mathcal{LH}(\mathsf{LCA})$.

\begin{lemma}
[Step 3]\label{lemma_S3}Any object%
\begin{equation}
\left[
{
\begin{tikzcd}
	{\mathbb{R}^n \oplus D'} && {\mathbb{R}^m \oplus C}
	\arrow["x"{inner sep=.8ex}, "\bullet"{marking}, from=1-1, to=1-3]
\end{tikzcd}
}%
\right]  \label{l_10}%
\end{equation}
with $D^{\prime}$ discrete and $C$ compact is quasi-isomorphic to one of the
same shape, but additionally $x(\mathbb{R}^{n})\subseteq C$.
\end{lemma}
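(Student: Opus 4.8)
The plan is to isolate the ``linear part'' of $x$ along the connected summand $\mathbb{R}^{n}$ and to cancel it by one reduction of exactly the shape already performed in Step~2 (Lemma~\ref{lemma_S2}), now with a vector group in place of the discrete group used there. Write $X:=\mathbb{R}^{m}\oplus C$ and $X':=\mathbb{R}^{n}\oplus D'$, let $p\colon X\twoheadrightarrow\mathbb{R}^{m}$ be the projection, and set $\ell:=p\circ x|_{\mathbb{R}^{n}}\colon\mathbb{R}^{n}\to\mathbb{R}^{m}$. As a continuous homomorphism of finite-dimensional real vector spaces $\ell$ is $\mathbb{R}$-linear, so $V:=\ell(\mathbb{R}^{n})$ is a closed linear subspace; fix a linear complement $\mathbb{R}^{m}=V\oplus W$, so $X=V\oplus W\oplus C$, and let $w\colon X\twoheadrightarrow V$ be the projection. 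It is an open surjection, hence an admissible epic in $\mathsf{LCA}$.

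First I would verify that $w\circ x\colon X'\to V$ is again an admissible epic. Since $D'$ is discrete, $\mathbb{R}^{n}$ is an open subgroup of $X'$ (its identity component). For $t\in\mathbb{R}^{n}$ one has $\ell(t)\in V$, so the $W$-component of $x(t)\in V\oplus W\oplus C$ vanishes and $(w\circ x)(t)=\ell(t)$; thus $w\circ x$ restricted to the open subgroup $\mathbb{R}^{n}$ is the surjective open map $\ell\colon\mathbb{R}^{n}\twoheadrightarrow V$. A continuous group homomorphism which is surjective and open on an open subgroup is itself surjective and open, so $w\circ x$ is an admissible epic, as claimed.

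Next I would form, exactly as in the three-row diagram of Lemma~\ref{lemma_S2} but with $V$ in place of the discrete group $D$ there, the commutative square
\[
\begin{tikzcd}
	{\ker(w\circ x)} & {\ker(w)} \\
	{X'} & X
	\arrow["{\widetilde{x}}", from=1-1, to=1-2]
	\arrow[hook, from=1-1, to=2-1]
	\arrow[hook, from=1-2, to=2-2]
	\arrow["x"', from=2-1, to=2-2]
\end{tikzcd}
\]
where $\widetilde{x}$ is the corestriction of $x$. The vertical maps are the kernels of the admissible epics $w\circ x$ and $w$, and the map they induce on cokernels, namely $X'/\ker(w\circ x)\to X/\ker(w)$, is the identity of $V$ once both cokernels are identified with $V$ via $w\circ x$ and $w$ (because $(w\circ x)(z)=w(x(z))$). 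Hence the square is bicartesian in $\mathsf{LCA}$, so by Remark~\ref{rmk_SchneidersDescriptionOfLH} it yields a quasi-isomorphism $[\,X'\xrightarrow{x}X\,]\xrightarrow{\ \sim\ }[\,\ker(w\circ x)\xrightarrow{\widetilde{x}}\ker(w)\,]$, and by Lemma~\ref{lemma9} the arrow $\widetilde{x}$ is again both monic and epic.

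Finally I would read off the shape and check the $\mathbb{R}^{n}$-condition. One has $\ker(w)=W\oplus C\cong\mathbb{R}^{m-\dim V}\oplus C$, again of the form $\mathbb{R}^{m'}\oplus C$. The group $\ker(w\circ x)$ is a closed subgroup of $\mathbb{R}^{n}\oplus D'$; splitting off its maximal vector subgroup (injective in $\mathsf{LCA}$) presents it as $\mathbb{R}^{n'}\oplus D''$ with $D''$ discrete. Moreover the identity component $\mathbb{R}^{n'}$ of $\ker(w\circ x)$ lies inside the identity component $\mathbb{R}^{n}$ of $X'$, and for $t$ in it we get $\ell(t)=(w\circ x)(t)=0$, so $x(t)\in\{0\}\oplus C\subseteq\ker(w)$; hence $\widetilde{x}(\mathbb{R}^{n'})\subseteq C$, which is the desired conclusion. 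The one genuinely delicate point is the choice of $w$ together with the proof that $w\circ x$ is an admissible epic; this is exactly why one first needs the reduction to $X'$ of the form $\mathbb{R}^{n}\oplus(\text{discrete})$ carried out in Step~2, the bicartesian-ness of the square above then being the same cokernel computation as in Lemma~\ref{lemma_S2} (and may alternatively be quoted from \cite[Prop.~2.12 (dual)]{MR2606234}).
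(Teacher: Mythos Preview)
Your argument is correct and takes a genuinely different route from the paper's. The paper analyses the \emph{closure} $\overline{x(\mathbb{R}^{n})}\subseteq X$, invokes the structure theorem for connected LCA groups to write it as $\mathbb{R}^{\ell}\oplus\tilde{C}$, and then \emph{quotients} both $X'$ and $X$ by a compatibly embedded copy of $\mathbb{R}^{\ell}$. You instead look at the \emph{projection} $p\colon X\twoheadrightarrow\mathbb{R}^{m}$, set $V:=(p\circ x)(\mathbb{R}^{n})$, and pass to \emph{kernels} of the further projection $w\colon X\twoheadrightarrow V$; the bicartesian square is then the kernel square of the pair $(w\circ x,w)$ rather than the cokernel square in the paper. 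Both invocations of \cite[Prop.~2.12 (dual)]{MR2606234} are the same mechanism, but yours is more elementary: it avoids the structure theorem for $\overline{x(\mathbb{R}^{n})}$, needing only that a continuous homomorphism $\mathbb{R}^{n}\to\mathbb{R}^{m}$ is linear. It also has the advantage that the final condition $\widetilde{x}(\mathbb{R}^{n'})\subseteq C$ is immediate (since $\mathbb{R}^{n'}=\ker\ell\subseteq\mathbb{R}^{n}$ and $p\circ x$ already vanishes there), whereas in the paper's quotient picture one still has to identify the compact part of $X/\mathbb{R}^{\ell}$ and check the inclusion.

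Two tiny cosmetic points: the morphism of complexes produced by your square runs from $[\ker(w\circ x)\to\ker(w)]$ down to $[X'\to X]$, not in the direction you wrote (harmless, since it is an isomorphism in $\mathcal{LH}$); and the identification $\mathbb{R}^{n'}=\ker\ell$ is exactly the identity component of $\ker(w\circ x)$ because $\ker\ell=\ker(w\circ x)\cap\mathbb{R}^{n}$ is open in $\ker(w\circ x)$ and connected, so your ``maximal vector subgroup'' is canonically this $\ker\ell$.
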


\begin{proof}
The set-theoretic image $x(\mathbb{R}^{n})$ inside the right side must be
connected since $\mathbb{R}^{n}$ is connected. By Lemma \ref{lemma1} its
closure $\overline{x(\mathbb{R}^{n})}$ inside $\mathbb{R}^{m}\oplus C$ is also
connected. We get the commutative diagram%
\[%
{
\begin{tikzcd}
	{\mathbb{R}^n} && {\overline{x(\mathbb{R}^n)}} \\
	\\
	{\mathbb{R}^n \oplus D'} && {\mathbb{R}^m \oplus C.}
	\arrow["{x\mid_{{\mathbb{R}}^n }}", from=1-1, to=1-3]
	\arrow[hook, from=1-1, to=3-1]
	\arrow[hook, from=1-3, to=3-3]
	\arrow["x"{inner sep=.8ex}, "\bullet"{marking}, from=3-1, to=3-3]
\end{tikzcd}
}%
\]
By \cite[Cor. 14.2.11]{MR4510389} we must have $\overline{x(\mathbb{R}^{n}%
)}\simeq\mathbb{R}^{\ell}\oplus\tilde{C}$ for $\tilde{C}$ compact connected.
We obtain the diagram%
\[%
{
\begin{tikzcd}
	&& {\tilde{C}} \\
	\\
	{\mathbb{R}^n} && {\overline{x(\mathbb{R}^n)}} \\
	\\
	&& {\mathbb{R}^{\ell},}
	\arrow[hook, from=1-3, to=3-3]
	\arrow[from=3-1, to=3-3]
	\arrow["t"', from=3-1, to=5-3]
	\arrow["p", shift left=2, two heads, from=3-3, to=5-3]
	\arrow["{{i_0}}", shift left=2, hook', from=5-3, to=3-3]
\end{tikzcd}
}%
\]
where $i_{0}$ is a splitting of the projector onto the summand $\mathbb{R}%
^{\ell}$. Since $x(\mathbb{R}^{n})$ is dense in $\overline{x(\mathbb{R}^{n})}%
$, the set-theoretic image of $t$ is dense in $\mathbb{R}^{\ell}$ by Lemma
\ref{lemma1}. We know that $t$ is a continuous abelian group homomorphism
$\mathbb{R}^{n}\rightarrow\mathbb{R}^{\ell}$. It is easy to see that it must
be an $\mathbb{R}$-linear map then (first show that it is a $\mathbb{Q}%
$-linear map and then use continuity to show $\mathbb{R}$-linearity). But then
its image must be a linear subspace of $\mathbb{R}^{\ell}$, so it can only
have dense image if it is surjective. In particular, $n\geq\ell$. We obtain a
closed immersion as the composition%
\[
\mathbb{R}^{\ell}\overset{i_{0}}{\hookrightarrow}\overline{f(\mathbb{R}^{n}%
)}\hookrightarrow X\text{,}%
\]
where $i_{0}$ is the split inverse of $q$ (and thus a closed immersion) and
the second arrow is the (evidently closed immersion) of the closure
$\overline{f(\mathbb{R}^{n})}$ inside $\mathbb{R}^{m}\oplus C$. Call this
composition $i$. Then we obtain the diagram%
\begin{equation}%
{
\begin{tikzcd}
	{\mathbb{R}^{\ell}} && {\mathbb{R}^{\ell}} \\
	&&& {\overline{f(\mathbb{R}^n)}} \\
	{\mathbb{R}^n\oplus D'} && X \\
	\\
	{\mathbb{R}^{n-\ell}\oplus D'} && {X/{\mathbb{R}^{\ell}}}
	\arrow["1", from=1-1, to=1-3]
	\arrow["{{i'}}"', hook, from=1-1, to=3-1]
	\arrow[dashed, from=1-3, to=2-4]
	\arrow["i", hook, from=1-3, to=3-3]
	\arrow[dashed, hook, from=2-4, to=3-3]
	\arrow["x"{inner sep=.8ex}, "\bullet"{marking}, from=3-1, to=3-3]
	\arrow["p"', two heads, from=3-1, to=5-1]
	\arrow[two heads, from=3-3, to=5-3]
	\arrow["{{x/{\mathbb{R}^{\ell}}}}"'{inner sep=.8ex}, "\bullet"{marking}%
, from=5-1, to=5-3]
\end{tikzcd}
}
\label{diag_11}%
\end{equation}
with exact columns, where $i^{\prime}$ is the closed immersion of the subspace
$\mathbb{R}^{m}$ we had found into the direct summand $\mathbb{R}^{n}$ of
$X^{\prime}$. Since the top horizontal arrow in Diag. \ref{diag_11} is an
isomorphism, the lower square is bicartesian and by Lemma \ref{lemma9} the
arrow $x/\mathbb{R}^{\ell}$ induced on the quotients is also a monic and epic.
\end{proof}

\begin{lemma}
[Step 4]\label{lemma_S4}Every object of the shape%
\[
\left[
{
\begin{tikzcd}
	{{\mathbb{R}^n} \oplus D'} && {{\mathbb{R}^m} \oplus C}
	\arrow["x"{inner sep=.8ex}, "\bullet"{marking}, from=1-1, to=1-3]
\end{tikzcd}
}%
\right]
\]
with $D^{\prime}$ discrete, $C$ compact, and $x(\mathbb{R}^{n})\subseteq C$,
is quasi-isomorphic to%
\[
\left[
{
\begin{tikzcd}
	{{\mathbb{R}^n} \oplus D'_{new}} &&  { C_{new}}
	\arrow["x"{inner sep=.8ex}, "\bullet"{marking}, from=1-1, to=1-3]
\end{tikzcd}
}%
\right]
\]
with $D_{new}^{\prime}$ discrete and $C_{new}$ compact. The value $n$ does not
change under this quasi-isomorphism.
\end{lemma}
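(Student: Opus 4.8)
The plan is to peel off the real summand $\mathbb{R}^{m}$ on the right-hand side by quotienting the two-term complex by a \emph{contractible} subcomplex, in the same spirit as Steps 1--3 above. Write $\mathrm{pr}\colon\mathbb{R}^{m}\oplus C\twoheadrightarrow\mathbb{R}^{m}$ for the projection. Since $x(\mathbb{R}^{n})\subseteq C=\ker\mathrm{pr}$, the composite $\mathrm{pr}\circ x$ vanishes on $\mathbb{R}^{n}$, hence factors as $h\circ\mathrm{pr}_{D'}$ for a homomorphism $h\colon D'\rightarrow\mathbb{R}^{m}$; by Lemma~\ref{lemma1}, $h(D')=\mathrm{pr}\bigl(x(\mathbb{R}^{n}\oplus D')\bigr)$ is dense in $\mathbb{R}^{m}$ and therefore spans $\mathbb{R}^{m}$ as a real vector space. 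I would then choose $a_{1},\dots,a_{m}\in D'$ with $h(a_{1}),\dots,h(a_{m})$ linearly independent over $\mathbb{R}$ and set $A:=\langle a_{1},\dots,a_{m}\rangle$. The $a_{i}$ are $\mathbb{Z}$-linearly independent (a relation would descend to the $h(a_{i})$), so $A\cong\mathbb{Z}^{m}$; it is a subgroup of the discrete group $D'$, hence a closed subgroup of $\mathbb{R}^{n}\oplus D'$, and $h(A)=\sum_{i}\mathbb{Z}h(a_{i})$ is a full-rank (cocompact) lattice in $\mathbb{R}^{m}$.

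Next I would check that $\bigl[A\xrightarrow{x|_{A}}x(A)\bigr]$ is a contractible subcomplex of $E^{\bullet}:=\bigl[\mathbb{R}^{n}\oplus D'\xrightarrow{x}\mathbb{R}^{m}\oplus C\bigr]$ (in homological degrees $[1,0]$). The restriction of $\mathrm{pr}$ to $x(A)$ is the bijection $x(0,a)\mapsto h(a)$ onto the discrete lattice $h(A)$; hence a point of $x(A)$ is isolated in $\mathrm{pr}^{-1}(U)\cap x(A)$ for $U$ a small neighbourhood in $\mathbb{R}^{m}$, so $x(A)$ is a discrete, hence closed, subgroup of $\mathbb{R}^{m}\oplus C$. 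Consequently $x|_{A}\colon A\rightarrow x(A)$ is a continuous bijective homomorphism between discrete groups, i.e.\ an isomorphism, and both $A\hookrightarrow\mathbb{R}^{n}\oplus D'$ and $x(A)\hookrightarrow\mathbb{R}^{m}\oplus C$ are admissible monics. A two-term complex whose differential is an isomorphism is contractible, hence zero in $\mathrm{D}^{b}(\mathsf{LCA})$.

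Quotienting $E^{\bullet}$ by this contractible subcomplex --- exactly the mechanism already used in Lemmas~\ref{lemma_S1}--\ref{lemma_S3} --- produces a quasi-isomorphism
\[
E^{\bullet}\;\xrightarrow{\ \sim\ }\;\Bigl[\,\mathbb{R}^{n}\oplus(D'/A)\xrightarrow{\ \bar{x}\ }C_{\mathrm{new}}\,\Bigr],\qquad C_{\mathrm{new}}:=(\mathbb{R}^{m}\oplus C)/x(A),
\]
with $\bar{x}$ the map induced by $x$. It then remains to verify that the right-hand complex has the required shape. The group $D'/A$ is a quotient of a discrete group, hence discrete, and the real summand is still exactly $\mathbb{R}^{n}$. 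The group $C_{\mathrm{new}}$ surjects onto $\mathbb{R}^{m}/h(A)\cong\mathbb{T}^{m}$ with kernel $(\,h(A)\times C\,)/x(A)$, which equals the image of the compact set $\{0\}\times C$ under the quotient map and is therefore compact; being an extension of compact by compact, $C_{\mathrm{new}}$ is compact. Finally $\bar{x}$ is monic (if $x(r,d)\in x(A)$ then $(r,d)\in A$ by injectivity of $x$, and $A\subseteq D'$ forces $r=0$) and epic (dense image, since $x$ has dense image), so the target is a genuine object of $\mathsf{T}$ of the asserted form; this gives the lemma.

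The delicate point --- and what I expect to be the main obstacle --- is not the bookkeeping but the correct construction of $A$. One must produce a cocompact lattice whose $x$-image is \emph{discrete and closed} in $\mathbb{R}^{m}\oplus C$, and the way to guarantee this is to take $A$ \emph{inside the discrete summand $D'$}, rather than pulling a lattice back from $\mathbb{R}^{m}$ through the (non-embedding) map $x$: only then does $\mathrm{pr}$ carry $x(A)$ bijectively onto an honest lattice $h(A)$, forcing discreteness. This is precisely the place where the standing hypothesis $x(\mathbb{R}^{n})\subseteq C$ (so that $h$ is defined on $D'$ and $h(D')$ is dense) and the compactness of $C$ (so that $C_{\mathrm{new}}$ comes out compact) are both essential; once $A$ is chosen this way, the argument reduces to the familiar ``quotient by a contractible two-term complex'' move.
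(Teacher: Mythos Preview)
Your proof is correct and follows essentially the same route as the paper: both construct a free rank-$m$ subgroup $A\subseteq D'$ whose images under the projection to $\mathbb{R}^{m}$ are $\mathbb{R}$-linearly independent, verify that $x(A)$ is closed and discrete in $\mathbb{R}^{m}\oplus C$ via this projection, and then quotient by the contractible subcomplex $[A\xrightarrow{\sim}x(A)]$ to replace $\mathbb{R}^{m}\oplus C$ by a compact group. Your direct check that $\bar{x}$ is monic and epic plays the role of the paper's appeal to Lemma~\ref{lemma9}, and your compactness argument for $C_{\mathrm{new}}$ is a minor rephrasing of the paper's extension argument.
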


\begin{proof}
The assumption $x(\mathbb{R}^{n})\subseteq C$ means that $x$ factors as in
\[%
{
\begin{tikzcd}
	{\mathbb{R}^n} && C \\
	\\
	{\mathbb{R}^n \oplus D'} && {\mathbb{R}^m \oplus C,}
	\arrow["{x\mid_{\mathbb{R}^n}}", from=1-1, to=1-3]
	\arrow[hook, from=1-1, to=3-1]
	\arrow[hook, from=1-3, to=3-3]
	\arrow["x"'{inner sep=.8ex}, "\bullet"{marking}, from=3-1, to=3-3]
\end{tikzcd}
}%
\]
but a priori we know very little about the nature of the restriction
$x\mid_{\mathbb{R}^{n}}$. Consider the composition $j$ in%
\[%
{
\begin{tikzcd}
	{\mathbb{R}^n \oplus D'} && {\mathbb{R}^m \oplus C} \\
	\\
	&& {\mathbb{R}^m}
	\arrow["x"'{inner sep=.8ex}, "\bullet"{marking}, from=1-1, to=1-3]
	\arrow["j"', from=1-1, to=3-3]
	\arrow["q", two heads, from=1-3, to=3-3]
\end{tikzcd}
}%
\]
Since $x(\mathbb{R}^{n})\subseteq C$ and $q$ is the projection of the opposite
direct summand, we see that $j(\mathbb{R}^{n})=0$. Since $x$ is epic and thus
has dense image, we deduce from Lemma \ref{lemma1} that $\mathbb{R}^{n}\oplus
D^{\prime}$ is dense in $\mathbb{R}^{m}$. But since $j(\mathbb{R}^{n})=0$,
already $j(D^{\prime})$ must be dense in $\mathbb{R}^{m}$. Now successively
pick elements $x_{1},\ldots,x_{m}\in D^{\prime}$ such that $j(x_{i})$ does not
lie in the $\mathbb{R}$-span of $j(x_{1}),\ldots,j(x_{i-1})$ inside
$\mathbb{R}^{m}$. If this were impossible, it would show that the image of
$D^{\prime}$ inside $\mathbb{R}^{m}$ lies inside a vector subspace of
dimension $<m$, but this contradicts $j$ having dense image in $\mathbb{R}%
^{m}$. We observe that%
\[
\dim\mathbb{R}\left\langle j(x_{1}),\ldots,j(x_{i})\right\rangle =i
\]
for all $i\leq m$. Let $E\subseteq D^{\prime}$ be the subgroup generated by
$x_{1},\ldots,x_{m}$. This group must be torsion-free because if some non-zero
vector%
\[
v:=\sum a_{i}x_{i}%
\]
satisfies $k\cdot v=0$ in $D^{\prime}$ for some positive integer $k$, then
$kj(v)=0$ in $\mathbb{R}^{m}$, but the vectors $j(x_{i})$ were chosen linearly
independent. The structure theorem for finitely generated abelian groups shows
that $E\simeq\mathbb{Z}^{m}$. It follows that $j(E)$ is a full rank free
$\mathbb{Z}$-lattice in $\mathbb{R}^{m}$. We obtain the following commutative
diagram%
\begin{equation}%
{
\begin{tikzcd}
	E &&& {x(E)} && C \\
	\\
	&& {D'} &&& {\mathbb{R}^m \oplus C} \\
	\\
	&&&&& {\mathbb{R}^m}
	\arrow["{x\mid_E}", from=1-1, to=1-4]
	\arrow[hook, from=1-1, to=3-3]
	\arrow["i", from=1-4, to=3-6]
	\arrow["{\overline{i}}"'{pos=0.2}, dashed, hook', from=1-4, to=5-6]
	\arrow[hook, from=1-6, to=3-6]
	\arrow["{x\mid_{D'}}"'{inner sep=.8ex}, "\bullet"{marking}, from=3-3, to=3-6]
	\arrow["j"', from=3-3, to=5-6]
	\arrow["q", two heads, from=3-6, to=5-6]
\end{tikzcd}
}
\label{diag_12}%
\end{equation}
Here $x(E)$ is the set-theoretic image of $E$ in $\mathbb{R}^{m}\oplus C$ (so
far we do not know whether this is a closed subgroup). We write $i$ for the
inclusion of the subgroup. By our construction the composition $\overline
{i}:=q\circ i$ is (1) injective and (2) has a full rank $\mathbb{Z}$-lattice
in $\mathbb{R}^{m}$ as its image. As the image of $\overline{i}$ is closed and
discrete, it follows that image of $i$ must also be closed and discrete. We
provide the details: let $y\in\mathbb{R}^{m}\oplus C$ be a point in the image
of $i$. Then $q(y)$ is a point in $\mathbb{R}^{m}$ and since it is isolated,
there exists an open neighbourhood $U$ of $q(y)$ in $\mathbb{R}^{m}$ which
contains no other point coming from $x(E)$. The pre-image $q^{-1}(U)$ is an
open neighbourhood of $y$ in $\mathbb{R}^{m}\oplus C$. Suppose $q^{-1}(U)$
contains a different point $y^{\prime}\neq y$ from $x(E)$. Then $q(y^{\prime
})$ lies in $U$, but since $q(y)$ was the only point from $x(E)$ in this open,
we must have $q(y^{\prime})=q(y)$. This contradicts the injectivity of
$\overline{i}$. Hence, $q^{-1}(U)$ is a neighbourhood of $y$ also not
containing any other point from $x(E)$.\newline It follows that $i$ is a
closed injective map, hence an admissible monic. Moreover, $x\mid_{E}$ is an
injective (because $i$ and $x\mid_{D^{\prime}}$ are) and evidently surjective
map between discrete groups $\mathbb{Z}^{m}$, so $x\mid_{E}$ is an
isomorphism. We obtain%
\[%
{
\begin{adjustbox}{width=\linewidth,center}
\begin{tikzcd}
	E &&& {x(E)} \\
	\\
	&& {\mathbb{R}^n \oplus D'} &&& {\mathbb{R}^m \oplus C} \\
	\\
	&&&& {\mathbb{R}^n \oplus D'/E} &&& {(\mathbb{R}^m \oplus C)/x(E)}
	\arrow["1", from=1-1, to=1-4]
	\arrow["\iota"', hook, from=1-1, to=3-3]
	\arrow["i", hook, from=1-4, to=3-6]
	\arrow["x"'{inner sep=.8ex}, "\bullet"{marking}, from=3-3, to=3-6]
	\arrow["{1 \oplus p}"', two heads, from=3-3, to=5-5]
	\arrow["r", two heads, from=3-6, to=5-8]
	\arrow["{\tilde{x}}"'{inner sep=.8ex}, "\bullet"{marking}, from=5-5, to=5-8]
\end{tikzcd}
\end{adjustbox}
}%
\]
where $\iota$ factors over $D^{\prime}\hookrightarrow\mathbb{R}^{n}\oplus
D^{\prime}$, $p$ is the quotient map by $\iota$, $r$ is the quotient map by
$i$, and both $\iota$ and $i$ are admissible monics. The lower square is
bicartesian since $E\overset{1}{\rightarrow}x(E)$ is an isomorphism. By Lemma
\ref{lemma9} the arrow $\tilde{x}$ induced on the quotients is both a monic
and epic. Now take $D_{new}^{\prime}:=\mathbb{R}^{n}\oplus D^{\prime}/E$ and
$C_{new}^{\prime}:=(\mathbb{R}^{m}\oplus C)/x(E)$. We obtain the diagram%
\[%
{
\begin{tikzcd}
	&& C && C \\
	\\
	{x(E)} && { \mathbb{R}^m \oplus C} && {C'_{new}} \\
	\\
	{ \mathbb{Z}^m} && { \mathbb{R}^m} && { \mathbb{T}^m}.
	\arrow["{r\mid_C}", from=1-3, to=1-5]
	\arrow[hook, from=1-3, to=3-3]
	\arrow[hook, from=1-5, to=3-5]
	\arrow["i", hook, from=3-1, to=3-3]
	\arrow["1"', from=3-1, to=5-1]
	\arrow["r", two heads, from=3-3, to=3-5]
	\arrow["q", two heads, from=3-3, to=5-3]
	\arrow[two heads, from=3-5, to=5-5]
	\arrow[hook, from=5-1, to=5-3]
	\arrow[two heads, from=5-3, to=5-5]
\end{tikzcd}
}%
\]
We have $C\cap x(E)=0$, as it is an intersection of two closed subgroups of
$\mathbb{R}^{m}\oplus C$, and therefore compact (since $C$ is compact) and
discrete (since $x(E)$ is discrete), hence finite, but since $x(E)$ is
torsion-free, so it must be zero. Hence, $r\mid_{C}$ is an isomorphism. We see
that $C_{new}^{\prime}$ is an extension of a (compact) torus $\mathbb{T}^{m}$
by another compact group, hence itself compact. This proves the lemma.
\end{proof}

We are now ready to prove Prop. \ref{prop_ess_surjectivity} for the remaining
variant%
\[
\Theta\colon\mathsf{PGA}\longrightarrow\mathcal{LH}(\mathsf{LCA})\text{.}%
\]
By Lemma \ref{lemma_S2} every object $X^{\bullet}\in\mathsf{T}$ of
$\mathcal{LH}(\mathsf{LCA})$ has the shape%
\[
X^{\bullet}=\left[
{
\begin{tikzcd}
	{\mathbb{R}^n \oplus D'} && {\mathbb{R}^m \oplus C}
	\arrow["x"{inner sep=.8ex}, "\bullet"{marking}, from=1-1, to=1-3]
\end{tikzcd}
}%
\right]
\]
with $D^{\prime}$ discrete and $C$ compact. Then the Pontryagin dual of the
entire complex has the same shape,%
\[
X^{\bullet\vee}=\left[
{
\begin{tikzcd}
	{\mathbb{R}^{n \vee} \oplus D'^{\vee}} && {\mathbb{R}^{m \vee} \oplus
C^{\vee}}
	\arrow["{x^{\vee}}"'{inner sep=.8ex}, "\bullet"{marking}, from=1-3, to=1-1]
\end{tikzcd}
}%
\right]
\]
with $C^{\vee}$ discrete and $D^{\prime\vee}$ compact. Note that the
Pontryagin dual of a morphism which is both monic and epic is again both monic
and epic. Now apply Lemma \ref{lemma_S3} and then Lemma \ref{lemma_S4} to
$X^{\bullet\vee}$, so that we get a quasi-isomorphism%
\[
X^{\bullet\vee}\overset{\sim}{\longrightarrow}\left[
{
\begin{tikzcd}
	{C_{new}} && \mathbb{R}^{n \vee} \oplus{D'_{new}}
	\arrow["{{x}_{new}}"'{inner sep=.8ex}, "\bullet"{marking}, from=1-3, to=1-1]
\end{tikzcd}
}%
\right]
\]
with $D_{new}^{\prime}$ discrete and $C_{new}$ compact, and possibly a
different value for $n$ (to be sure about the place of the vector group
$\mathbb{R}^{n}$, note that we have chosen to write the arrow of the complex
in the reverse direction temporarily). Dualizing back, we obtain%
\begin{equation}
X^{\bullet}\overset{\sim}{\longrightarrow}\left[
{
\begin{tikzcd}
	{C_{new}^{\vee}} && {\mathbb{R}^n \oplus{D'^{\vee}_{new}}}
	\arrow["{{{x}_{new}^{\vee}}}"{inner sep=.8ex}, "\bullet"{marking}%
, from=1-1, to=1-3]
\end{tikzcd}
}%
\right]  \label{l_h3}%
\end{equation}
and since $C_{new}^{\vee}$ is discrete and $D_{new}^{\prime\vee}$ compact,
this shows that $X^{\bullet}$ is isomorphic to a complex of type d-cg. By
Lemma \ref{lemma10} we have $\left[
{
\begin{tikzcd}
	{C_{new}^{\vee}} && {\mathbb{R}^n \oplus{D'^{\vee}_{new}}}
	\arrow["{{{x}_{new}^{\vee}}}"{inner sep=.8ex}, "\bullet"{marking}%
, from=1-1, to=1-3]
\end{tikzcd}
}%
\right]  =\Theta((C_{new}^{\vee})_{\subseteq D_{new}^{\prime\vee}})$, where
$(C_{new}^{\vee})_{\subseteq D_{new}^{\prime\vee}}$ is $C_{new}^{\vee}$,
equipped with the subspace topology of $D_{new}^{\prime\vee}$. This finishes
the proof of Prop. \ref{prop_ess_surjectivity}.

We could even go a bit further. Applying Lemma \ref{lemma_S4} to Eq.
\ref{l_h3} in the special case $n=0$ yields the following:

\begin{corollary}
\label{cor_EveryObjectIsOfTypeDC}Every object in $\mathsf{T}$ in
$\mathcal{LH}(\mathsf{LCA})$ is isomorphic to an object of type d-c, i.e.,%
\[
\left[
{
\begin{tikzcd}
	{D'} && {C},
	\arrow["x"{inner sep=.8ex}, "\bullet"{marking}, from=1-1, to=1-3]
\end{tikzcd}
}%
\right]
\]
with $D^{\prime}$ discrete and $C$ compact.
\end{corollary}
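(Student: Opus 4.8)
The plan is to deduce this corollary directly from Lemma \ref{lemma_S4}, applied one more time to the normal form already obtained during the proof of Proposition \ref{prop_ess_surjectivity}. Recall that for an arbitrary $X^{\bullet}\in\mathsf{T}$ of $\mathcal{LH}(\mathsf{LCA})$ that proof produced a quasi-isomorphism to the complex displayed in Eq.~\ref{l_h3}, namely $X^{\bullet}\overset{\sim}{\longrightarrow}\left[ C_{new}^{\vee}\overset{\bullet}{\rightarrow}\mathbb{R}^{n}\oplus D_{new}^{\prime\vee}\right]$ with $C_{new}^{\vee}$ discrete and $D_{new}^{\prime\vee}$ compact. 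The only feature that prevents this from already being of type d-c is the possible real vector space summand $\mathbb{R}^{n}$ sitting in the degree-$0$ term.

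First I would regard the source $C_{new}^{\vee}$ as $\mathbb{R}^{0}\oplus C_{new}^{\vee}$, so that the complex of Eq.~\ref{l_h3} literally has the shape $\left[\mathbb{R}^{n'}\oplus D'\overset{\bullet}{\rightarrow}\mathbb{R}^{m'}\oplus C\right]$ required in the hypothesis of Lemma \ref{lemma_S4}, with $n'=0$, $D'=C_{new}^{\vee}$ discrete, $m'=n$, and $C=D_{new}^{\prime\vee}$ compact. The one remaining hypothesis of that lemma is that the structure map carries the vector summand of the source into $C$; since here the source has no vector summand at all ($n'=0$), the condition $x(\mathbb{R}^{0})=0\subseteq D_{new}^{\prime\vee}$ is vacuous. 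Lemma \ref{lemma_S4} then supplies a quasi-isomorphism to a complex $\left[\mathbb{R}^{0}\oplus D''\overset{\bullet}{\rightarrow}C''\right]$ with $D''$ discrete and $C''$ compact, and because the lemma keeps the source vector dimension $n'$ unchanged (here it stays $0$), the source of this new complex is again vector-free, hence simply discrete. Composing this with the quasi-isomorphism of Eq.~\ref{l_h3} exhibits $X^{\bullet}\cong\left[ D''\overset{\bullet}{\rightarrow}C''\right]$ in $\mathcal{LH}(\mathsf{LCA})$, which is exactly a complex of type d-c. (If one then also wishes for an interpretation as $\Theta$ of a precompact group, Lemma \ref{lemma10} identifies this complex with $\Theta(D''_{\subseteq C''})$.)

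I do not expect a genuine obstacle here: all of the substance has been pushed into Lemma \ref{lemma_S4} and into the normal form of Eq.~\ref{l_h3}. The only subtlety is pure bookkeeping, namely that one must be careful that the ``special case $n=0$'' refers to the \emph{source} vector dimension in Lemma \ref{lemma_S4}, not to the integer $n$ appearing in Eq.~\ref{l_h3} (that $n$ plays the role of the target exponent $m'$ and is allowed to be arbitrary), and one should double-check that the hypothesis $x(\mathbb{R}^{n'})\subseteq C$ of that lemma is indeed automatically satisfied in this instance, as noted above.
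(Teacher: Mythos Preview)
Your proposal is correct and matches the paper's own argument essentially line for line: the paper also deduces the corollary by applying Lemma~\ref{lemma_S4} to the normal form of Eq.~\ref{l_h3} with source vector dimension zero. Your explicit remark disambiguating the two uses of the letter $n$ (source exponent in Lemma~\ref{lemma_S4} versus target exponent in Eq.~\ref{l_h3}) is a helpful clarification of what the paper leaves implicit.
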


In particular, the composition%
\[
\mathsf{PCA}\longrightarrow\mathsf{PGA}/\mathsf{Ab}_{\operatorname*{fg}%
}\overset{\Theta}{\longrightarrow}\mathsf{T}%
\]
is also essentially surjective.

\begin{example}
In this example we demonstrate that under%
\[
\mathsf{PCA}\longrightarrow\mathsf{PGA}/\mathsf{Ab}_{\operatorname*{fg}%
}\overset{\Theta}{\longrightarrow}\mathsf{T}%
\]
non-isomorphic objects become isomorphic. Suppose $\alpha\in\mathbb{R}%
\setminus\mathbb{Q}$ and $X_{\alpha}$ is the object defined in Example
\ref{example_4}. The preimage of $X_{\alpha}$ in $\mathsf{PCA}$ is
$\alpha\mathbb{Z}_{\subseteq\mathbb{T}}$, meaning $\mathbb{Z}$, but equipped
with the subspace topology of the torus under multiplication by $\alpha$.
(Step 1) We claim that if $\beta$ is a further irrational number and there is
an isomorphism $\alpha\mathbb{Z}_{\subseteq\mathbb{T}}\simeq\beta
\mathbb{Z}_{\subseteq\mathbb{T}}$, then we must have $\frac{\alpha}{\beta}%
\in\{\pm1\}$. To prove this, note that since completion is a functor, we get
an induced isomorphism on the completions, i.e.,%
\[%
{
\begin{tikzcd}
	{\mathbb{Z}} && {\mathbb{R}/\mathbb{Z}} \\
	\\
	{\mathbb{Z}} && {\mathbb{R}/\mathbb{Z}}
	\arrow["\alpha"{inner sep=.8ex}, "\bullet"{marking}, from=1-1, to=1-3]
	\arrow["{f'}"', from=1-1, to=3-1]
	\arrow["f", from=1-3, to=3-3]
	\arrow["\beta"'{inner sep=.8ex}, "\bullet"{marking}, from=3-1, to=3-3]
\end{tikzcd}
}%
\]
where $f^{\prime},f$ are both isomorphisms. Even when we completely disregard
topology, the map $f^{\prime}$ can only be $\pm1$ since it must be a
bijection, and possibly swapping our isomorphism by its negative, we may
assume $f^{\prime}=1$. We have $\operatorname*{Hom}(\mathbb{T},\mathbb{T}%
)\cong\mathbb{Z}$, and again only $\pm1$ is possible to get an isomorphism.
The claim follows. We also deduce that $\operatorname*{Aut}%
\nolimits_{\mathsf{PCA}}(\alpha\mathbb{Z}_{\subseteq\mathbb{T}})=\{\pm1\}$.
(Step 2) Now consider the left roof%
\[%
{
\begin{adjustbox}{width=\linewidth,center}
\begin{tikzcd}
	&&& {\alpha\mathbb{Z} \oplus\mathbb{Z}} && {\mathbb{R}} \\
	\\
	{\alpha\mathbb{Z}} && {\mathbb{R}/\mathbb{Z}} &&&& {\mathbb{Z}}
&& {\mathbb{R}/\alpha\mathbb{Z}} \\
	\\
	{\mathbb{Z}} && {\mathbb{R}/\mathbb{Z}} &&&& {\mathbb{Z}} && {\mathbb
{R}/\mathbb{Z}}
	\arrow["{{\operatorname{incl }}}"{inner sep=.8ex}, "\bullet"{marking}%
, from=1-4, to=1-6]
	\arrow[two heads, from=1-4, to=3-1]
	\arrow[two heads, from=1-4, to=3-7]
	\arrow[two heads, from=1-6, to=3-3]
	\arrow[two heads, from=1-6, to=3-9]
	\arrow["1"'{inner sep=.8ex}, "\bullet"{marking}, from=3-1, to=3-3]
	\arrow["{{\cdot\alpha^{-1}}}"', from=3-1, to=5-1]
	\arrow["1", from=3-3, to=5-3]
	\arrow["1"'{inner sep=.8ex}, "\bullet"{marking}, from=3-7, to=3-9]
	\arrow["1"', from=3-7, to=5-7]
	\arrow["{{\cdot\alpha^{-1}}}", from=3-9, to=5-9]
	\arrow["{{\cdot\alpha}}"', from=5-1, to=5-3]
	\arrow["{{\cdot\alpha^{-1}}}"', from=5-7, to=5-9]
\end{tikzcd}
\end{adjustbox}
}%
\]
in $\mathcal{LH}(\mathsf{LCA})$. Both legs are bicartesian squares, they arise
from quotienting out $\mathbb{Z}\overset{1}{\longrightarrow}\mathbb{Z}$ for
the left leg, resp. $\alpha\mathbb{Z}\overset{1}{\longrightarrow}%
\alpha\mathbb{Z}$ for the right leg. Since both legs stem from bicartesian
squares, this roof determines an isomorphism%
\[
X_{\alpha}\simeq X_{\alpha^{-1}}\text{.}%
\]
Under the correspondence of Thm. \ref{thm_main_correspondence}, we see that
both legs correspond to an epimorphism with kernel in $\mathsf{Ab}%
_{\operatorname*{fg}}$. This cannot correspond to an isomorphism in
$\mathsf{PCA}$.
\end{example}

\section{Proof of the Main Correspondence\label{sect_Proofs}}

We now prove the main claims of the introduction (Theorem \ref{thma_main} and
Theorem \ref{thma_vfversion}). The starting point is the decomposition of the
left heart given by its natural cotilting torsion pair of Prop.
\ref{prop_QAbTorsionPairInLeftHeart}.\footnote{Artusa also uses such a
decomposition in \cite{artusa2025}.}

\begin{theorem}
[Main Correspondence]\label{thm_main_correspondence}The functor $\Theta$
induces exact equivalences of quasi-abelian categories%
\begin{align*}
\Theta\colon\mathsf{PCA}/\mathsf{Ab}_{\operatorname*{fin}}\overset{\sim
}{\longrightarrow}\mathsf{T}  &  \subset\mathcal{LH}(\mathsf{LCA}%
_{\operatorname*{vf}})\\
\Theta\colon\mathsf{PGA}/\mathsf{Ab}_{\operatorname*{fg}}\overset{\sim
}{\longrightarrow}\mathsf{T}  &  \subset\mathcal{LH}(\mathsf{LCA})
\end{align*}
Given $X$ on the left side, $\Theta(X)$ is the complex%
\[
\left[
{
\begin{tikzcd}
	{X_d} && {cX},
	\arrow["x"{inner sep=.8ex}, "\bullet"{marking}, from=1-1, to=1-3]
\end{tikzcd}
}%
\right]  \text{,}%
\]
where $X_{d}$ is $X$, equipped with the discrete topology, and $cX$ is the
Weil completion of $X$. Conversely, in the special case when a complex is
given of the shape%
\begin{equation}
\left[
{
\begin{tikzcd}
	{X'} && {X},
	\arrow["x"{inner sep=.8ex}, "\bullet"{marking}, from=1-1, to=1-3]
\end{tikzcd}
}%
\right]  \label{l_w2}%
\end{equation}
with $X^{\prime}$ discrete and $X$ group-theoretically compactly generated,
its preimage is $X_{\subseteq X}^{\prime}$, i.e., the group $X^{\prime}$, but
equipped with the subspace topology when regarding it as a subset inside $X$.
\end{theorem}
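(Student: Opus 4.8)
The plan is to assemble the structural results of §§\ref{sect_Completion}--\ref{sect_RewritingRules}. By Lemma~\ref{lemma15} the functor $\Theta$ of Eq.~\ref{l_h1} is well-defined, takes values in the torsion class $\mathsf{T}$ of Prop.~\ref{prop_QAbTorsionPairInLeftHeart}, and inverts every morphism of the multiplicative system $S_{\mathsf{Ab}_{\operatorname*{fin}}}$ (resp.~$S_{\mathsf{Ab}_{\operatorname*{fg}}}$); by the universal property of the Gabriel--Zisman localization it therefore descends to functors $\Theta\colon\mathsf{PCA}/\mathsf{Ab}_{\operatorname*{fin}}\to\mathsf{T}\subset\mathcal{LH}(\mathsf{LCA}_{\operatorname*{vf}})$ and $\Theta\colon\mathsf{PGA}/\mathsf{Ab}_{\operatorname*{fg}}\to\mathsf{T}\subset\mathcal{LH}(\mathsf{LCA})$. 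These descended functors are full (Lemma~\ref{lemma_f_full}), faithful (Lemma~\ref{lemma_f_faithful}) and essentially surjective onto $\mathsf{T}$ (Prop.~\ref{prop_ess_surjectivity}); a functor with these three properties is an equivalence of categories, so in both variants $\Theta$ is an equivalence.

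Next I would dispatch the bookkeeping needed to promote this to an \emph{exact} equivalence of \emph{quasi-abelian} categories. The subcategory $\mathsf{Ab}_{\operatorname*{fin}}\subset\mathsf{PCA}$ (resp.~$\mathsf{Ab}_{\operatorname*{fg}}\subset\mathsf{PGA}$) is closed under admissible subobjects and quotients — an admissible sub- or quotient object of a finite (resp.~finitely generated discrete) group is again of that type — and it is extension-closed: if $Z'\hookrightarrow X\twoheadrightarrow Z''$ is exact in $\mathsf{PGA}$ with $Z',Z''$ finitely generated discrete, then $Z'$ is open in $X$ (as $X/Z'=Z''$ is discrete) and discrete, hence $X$ is discrete, and then finitely generated. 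Thus $\mathsf{Ab}_{\operatorname*{fin}}$ (resp.~$\mathsf{Ab}_{\operatorname*{fg}}$) is a Serre subcategory closed under subobjects and quotients, so $\mathsf{PCA}/\mathsf{Ab}_{\operatorname*{fin}}$ (resp.~$\mathsf{PGA}/\mathsf{Ab}_{\operatorname*{fg}}$) is defined and again quasi-abelian with the quotient functor $q$ exact (by the Remark following the definition of the quotient category), while $\mathsf{T}$ is quasi-abelian by Lemma~\ref{lemma8}. By Lemma~\ref{lemma_f_exact} the functor $\Theta$ on $\mathsf{PGA}$ (resp.~$\mathsf{PCA}$) is exact into $\mathcal{LH}(\mathsf{LCA})$ (resp.~$\mathcal{LH}(\mathsf{LCA}_{\operatorname*{vf}})$), and since a sequence in $\mathsf{T}$ is exact iff it is exact in the ambient heart (Lemma~\ref{lemma8}) and $\Theta$ factors through $q$, the descended functor is exact. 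Finally, any equivalence between quasi-abelian categories automatically transports the intrinsic exact structure — admissible monics and epics being characterized purely categorically (Definition~\ref{def_AdmissibleMorphism}, Lemma~\ref{lemma_S0}) — so its quasi-inverse is exact as well, and $\Theta$ is an exact equivalence.

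For the converse statement, let $[X'\overset{\bullet}{\to}X]$ be a complex with $X'$ discrete and $X$ group-theoretically compactly generated, i.e.~of type d-cg (resp.~of type d-c, with $X$ compact). By Lemma~\ref{lemma10} this complex is canonically isomorphic to $\Theta(W)$, where $W$ is $X'$ endowed with the subspace topology it carries as a subset of $X$, i.e.~$W=X'_{\subseteq X}\in\mathsf{PGA}$ (resp.~$\mathsf{PCA}$). Since $\Theta$ is an equivalence this determines the preimage, and Lemma~\ref{lemma10} records the concrete lift, which is exactly the description asserted in the theorem.

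The genuinely substantial content lies in the cited lemmas — essential surjectivity (Prop.~\ref{prop_ess_surjectivity}, obtained through the structure-theoretic rewriting of §\ref{sect_RewritingRules}) and faithfulness (Lemma~\ref{lemma_f_faithful}, via the contracting-homotopy argument). For the theorem itself the only point demanding care is the compatibility of exact structures: one must check that the exact structures on $\mathsf{PGA}$ (resp.~$\mathsf{PCA}$), on the Gabriel--Zisman quotient, and on $\mathsf{T}$ are all the intrinsic quasi-abelian ones, which rests on the agreement of the exact structure of Eq.~\ref{l_4} with the quasi-abelian one (the remark after Lemma~\ref{lemma_DescribeKernelsAndCokernelsInHA}), on the quotient remark, and on Lemma~\ref{lemma8}; once this is in place the statement ``exact equivalence of quasi-abelian categories'' is both meaningful and immediate.
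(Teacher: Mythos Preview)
Your proof is correct and follows essentially the same approach as the paper: assemble Lemma~\ref{lemma_f_full}, Lemma~\ref{lemma_f_faithful}, Prop.~\ref{prop_ess_surjectivity}, and Lemma~\ref{lemma_f_exact} to conclude that $\Theta$ is a fully faithful, essentially surjective, exact functor, hence an exact equivalence, and invoke Lemma~\ref{lemma10} for the explicit inverse description. You supply somewhat more bookkeeping than the paper (which simply writes ``Just combine all the previous results''), in particular spelling out why the quotient categories are quasi-abelian and why the exact structures match up, but the argument is the same.
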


When an object in $\mathsf{T}$ is not of the shape in Eq. \ref{l_w2}, the
tools in \S \ref{sect_RewritingRules} always permit to replace it by an
isomorphic representative such that $X^{\prime}$ is discrete and $X$
group-theoretically compactly generated.

\begin{proof}
Just combine all the previous results:\ By Lemma \ref{lemma_f_full} and Lemma
\ref{lemma_f_faithful}, the functor $\Theta$ is fully faithful. By Prop.
\ref{prop_ess_surjectivity} the functor $\Theta$ is essentially surjective.
Hence, the functor is an equivalence of categories. By Lemma
\ref{lemma_f_exact} the functor is exact and reflects exactness, so this
equivalence preserves the exact structure.
\end{proof}

We also need to suppy the proof for the right heart analogue:

\begin{proof}
[Proof of Theorem \ref{thma_main_rightheartversions}]The basic structure that
every object $X$ in the right heart is uniquely an extension%
\[%
{
\begin{tikzcd}
	B & X & A
	\arrow[hook, from=1-1, to=1-2]
	\arrow[two heads, from=1-2, to=1-3]
\end{tikzcd}
}%
\]
with $B\in\mathsf{LCA}$ [resp. $\mathsf{LCA}_{\operatorname*{vf}}$] and $A$
represented by a monic-epic%
\begin{equation}
\left[
{
\begin{tikzcd}
	{X'} && X
	\arrow["x"{inner sep=.8ex}, "\bullet"{marking}, from=1-1, to=1-3]
\end{tikzcd}
}%
\right]  \label{l_c3}%
\end{equation}
follows from dualizing various statements in Schneiders \cite{MR1779315}, and
both Rump \cite[\S 4, Corollary]{MR1856638} and Bondal--van den Bergh's
article \cite[Prop. B.3, (3)]{MR1996800} explicitly spell out that
$(\mathsf{T}^{\prime},\mathsf{F}^{\prime})$ is a tilting torsion pair with
$\mathsf{T}^{\prime}\cong\mathsf{LCA}$ [resp. $\mathsf{LCA}%
_{\operatorname*{vf}}$] as in Definition \ref{def_TorsionPair}. The
torsion-free part $\mathsf{F}^{\prime}$ still amounts to monic-epic morphisms
as in Eq. \ref{l_c3}, so this is literally the same category as in the left
heart variants. The only change is that it now occurs as the torsion-free part
inside the right heart, while it was the torsion part in the left heart.
\end{proof}

\section{Condensed Picture\label{sect_CondensedPicture}}

The left heart can be interpreted as a full subcategory of $\mathsf{Cond}%
(\mathsf{Ab})$ from \cite{condensedmath}: We set up the solid arrows in the
diagram%
\begin{equation}%
{
\begin{tikzcd}
	{\mathsf{LCA}} && {\mathsf{Cond}(\mathsf{Ab})} \\
	& {\mathcal{LH}(\mathsf{LCA})}
	\arrow["{e_0}", from=1-1, to=1-3]
	\arrow["{i_0}"', from=1-1, to=2-2]
	\arrow["{j_0}"', dashed, from=2-2, to=1-3]
\end{tikzcd}
}
\label{d_w1}%
\end{equation}
as follows: Clausen--Scholze \cite[Lecture 4]{condensedmath} construct the
functor $e_{0}$, even on the derived level. In particular, $e_{0}$ is an exact
functor. The downward diagonal arrow $i_{0}$ is the embedding of Prop.
\ref{prop_QAbTorsionPairInLeftHeart}. The dashed arrow remains to be constructed.

\begin{proposition}
\label{prop_toCondensed}The functors of Diagram \ref{d_w1} induce a
corresponding diagram of derived categories.

\begin{enumerate}
\item On the level of derived categories%
\begin{equation}%
{
\begin{tikzcd}
	{\operatorname*{D}\nolimits^{b}(\mathsf{LCA})} && {\operatorname*{D}%
\nolimits^{b}(\mathsf{Cond}(\mathsf{Ab}))} \\
	& {\operatorname*{D}\nolimits^{b}(\mathcal{LH}(\mathsf{LCA}))}
	\arrow["e", from=1-1, to=1-3]
	\arrow["i"', equals, from=1-1, to=2-2]
	\arrow["j"', from=2-2, to=1-3]
\end{tikzcd}
}
\label{d_w2}%
\end{equation}
the functor $i$ induces a triangulated equivalence of triangulated categories
(and lifts to a functor of stable $\infty$-categories which induces an
equivalence on their respective homotopy categories). The derived version of
$j$ is fully faithful.

\item The dashed arrow $j_{0}$ in Diagram \ref{d_w1} exists and is exact.

\item Diagram \ref{d_w1} commutes, including the dashed arrow $j_{0}$.

\item The functors $i_{0}$ and $j_{0}$ of Diagram \ref{d_w1} are fully
faithful functors, neither of which is essentially surjective.
\end{enumerate}
\end{proposition}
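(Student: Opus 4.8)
The plan is to construct $j_{0}$ as the restriction to hearts of a derived functor, so I would begin with (1). The triangulated equivalence $i\colon\operatorname{D}\nolimits^{b}(\mathsf{LCA})\overset{\sim}{\longrightarrow}\operatorname{D}\nolimits^{b}(\mathcal{LH}(\mathsf{LCA}))$ is nothing but the derived equivalence already recorded in Proposition~\ref{prop_QAbTorsionPairInLeftHeart}. Its upgrade to an equivalence of stable $\infty$-categories is formal: the bounded derived $\infty$-category is functorial for exact categories and exact functors, a functor of stable $\infty$-categories is an equivalence exactly when the induced functor of triangulated homotopy categories is (mapping spaces in a stable $\infty$-category have homotopy groups computed by $\operatorname{Hom}$ in the homotopy category and its shifts), and on homotopy categories the induced functor is $i$. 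For the right-hand edge I would invoke Clausen--Scholze's result \cite{condensedmath} that the derived functor $\operatorname{D}\nolimits^{b}(\mathsf{LCA})\to\operatorname{D}(\mathsf{Cond}(\mathsf{Ab}))$ built from $e_{0}$ is fully faithful. Setting $j:=e\circ i^{-1}$, the derived version of $j$ is a composite of fully faithful functors, one of them an equivalence, hence fully faithful.

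For (2) and (3) I would show that $j$ sends the heart into $\mathsf{Cond}(\mathsf{Ab})$. Under $i^{-1}$ an object $[X'\overset{x}{\to}X]$ of $\mathcal{LH}(\mathsf{LCA})$ becomes the two-term complex $[X'\overset{x}{\to}X]$ in $\operatorname{D}\nolimits^{b}(\mathsf{LCA})$, where $x$ is a monomorphism in $\mathsf{LCA}$; since the underlying-condensed-set functor carries an injective continuous map to a monomorphism, $e_{0}(x)$ is a monomorphism in the abelian category $\mathsf{Cond}(\mathsf{Ab})$, so $e$ of this complex is concentrated in a single cohomological degree, namely $\operatorname{coker}_{\mathsf{Cond}(\mathsf{Ab})}(e_{0}(x))$. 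This produces the dashed functor $j_{0}$; by construction it is the restriction of $j$ to hearts, which settles existence in (2). Exactness of $j_{0}$ then follows at once: a short exact sequence in $\mathcal{LH}(\mathsf{LCA})$ is a distinguished triangle, $j$ carries it to a triangle whose three vertices all lie in cohomological degree $0$, and the long exact cohomology sequence collapses into a short exact sequence in $\mathsf{Cond}(\mathsf{Ab})$. Commutativity of Diagram~\ref{d_w1}, statement (3), is then the one-line computation $j_{0}(i_{0}(X))=j_{0}([0\to X])=\operatorname{coker}(e_{0}(0)\to e_{0}(X))=e_{0}(X)$, and on derived categories it is the identity $j\circ i=e$ built into the definition of $j$.

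For (4): the functor $i_{0}$ is fully faithful because, by Schneiders (see Proposition~\ref{prop_QAbTorsionPairInLeftHeart}), it identifies $\mathsf{LCA}$ with the torsion-free part $\mathsf{F}$, a full subcategory of the heart; it is not essentially surjective since the torsion part $\mathsf{T}$ is nonzero — for instance $X_{\alpha}=[\mathbb{Z}\overset{\alpha}{\to}\mathbb{T}]$ with $\alpha$ irrational lies in $\mathsf{T}$ by Example~\ref{example_4} and so is not isomorphic to any object of $\mathsf{F}$. The functor $j_{0}$ is fully faithful as the restriction of the fully faithful $j$ to the full subcategory $\mathcal{LH}(\mathsf{LCA})\subset\operatorname{D}\nolimits^{b}(\mathcal{LH}(\mathsf{LCA}))$, its essential image landing inside the full subcategory $\mathsf{Cond}(\mathsf{Ab})\subset\operatorname{D}\nolimits^{b}(\mathsf{Cond}(\mathsf{Ab}))$.

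For the failure of essential surjectivity of $j_{0}$ it is enough to exhibit one condensed abelian group outside its essential image. The Main Correspondence (Theorem~\ref{thm_main_correspondence}, together with Theorem~\ref{thma_main}) describes that image explicitly: up to the decomposition of the heart by its cotilting torsion pair, its objects are extensions of classical LCA groups by ghost groups, the latter being locally precompact, precompactly generated groups modulo lattice isogeny. I would take $\underline{V}$ for $V$ an infinite-dimensional Banach space and argue it is not of this form: write $\underline{V}=j_{0}(Y)$ for $Y$ in the heart, decompose $Y$ by the torsion pair, apply the exact and fully faithful $j_{0}$ to obtain a short exact sequence in $\mathsf{Cond}(\mathsf{Ab})$ whose quotient term is $\underline{B}$ for an LCA group $B$, use that $V$ is metrizable (hence a $k$-space, so $e_{0}$ is full on morphisms out of $V$) to descend the quotient map to a continuous homomorphism $V\to B$, and then derive a contradiction from the structure theory of Lemma~\ref{lemma11} and Lemma~\ref{lemma_PrecompactAreThoseWithCompactClosureInCompletion}: a closed subgroup of an infinite-dimensional Banach space with locally compact quotient is, up to a finite-rank lattice, contained in a finite-dimensional subspace, so it cannot be locally precompact (its completion, being itself, is not locally compact) and cannot be a ghost. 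I expect this last verification — reliably transporting the abstract extension in $\mathsf{Cond}(\mathsf{Ab})$ back to a statement about topological groups — to be the principal obstacle; the remainder is bookkeeping with Proposition~\ref{prop_QAbTorsionPairInLeftHeart} and the Clausen--Scholze embedding of Diagram~\ref{d_w1}.
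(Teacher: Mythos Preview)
Your argument is correct and for parts (1), (3), and the full-faithfulness half of (4) it matches the paper's proof essentially verbatim: $i$ is the Schneiders derived equivalence, $e$ is the fully faithful Clausen--Scholze embedding, and $j:=e\circ i^{-1}$; full faithfulness of $j_{0}$ then follows by restricting to $\pi_{0}$ of the $\operatorname{RHom}$.

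For part (2) you take a genuinely different route. The paper invokes Schneiders' universal property of the left heart \cite[Prop.~1.2.34]{MR1779315}, which says that strictly exact functors $\mathsf{LCA}\to\mathsf{Cond}(\mathsf{Ab})$ extend uniquely to exact functors $\mathcal{LH}(\mathsf{LCA})\to\mathsf{Cond}(\mathsf{Ab})$, and defines $j_{0}$ as the extension of $e_{0}$. You instead construct $j_{0}$ directly as the restriction of the derived $j$ to hearts, using that $e_{0}$ preserves monomorphisms so that $j$ lands in cohomological degree $0$ on the heart; exactness then drops out of the long exact sequence. Both approaches yield the same functor (the universal property guarantees uniqueness), and both are short. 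Your version is more explicit and gives the formula $j_{0}([X'\to X])=\operatorname{coker}_{\mathsf{Cond}(\mathsf{Ab})}(e_{0}(X')\to e_{0}(X))$ for free, which the paper only records later in Theorem~\ref{thm_p1}; the paper's version is cleaner in that it does not require checking that monics go to monics.

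On the failure of essential surjectivity in (4): the paper simply declares this an easy exercise, whereas you attempt an explicit argument with an infinite-dimensional Banach space. Your candidate object is fine, but the verification you sketch is considerably more elaborate than necessary and, as you yourself flag, the step transporting the condensed extension back to topological groups is not fully justified. A much shorter route: if $\underline{V}\simeq j_{0}(Y)$ then $\underline{V}$ lies in the essential image of $e\colon\operatorname{D}^{b}(\mathsf{LCA})\to\operatorname{D}^{b}(\mathsf{Cond}(\mathsf{Ab}))$, so $\operatorname{RHom}_{\mathsf{Cond}(\mathsf{Ab})}(\underline{\mathbb{R}},\underline{V})$ must agree with an $\operatorname{RHom}$ computed in $\operatorname{D}^{b}(\mathsf{LCA})$; but the latter is governed by the finite-dimensionality of $\operatorname{Hom}_{\mathsf{LCA}}(\mathbb{R},-)$ on LCA groups, while the former contains $\operatorname{Hom}_{\mathrm{cont}}(\mathbb{R},V)\supseteq V$, which is infinite-dimensional. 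Alternatively, any non-locally-compact Polish abelian group (e.g.\ $\prod_{\mathbb{N}}\mathbb{R}$) works by a similar cardinality or $\operatorname{Hom}$-dimension count.
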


The fully faithful embedding%
\[
\mathcal{LH}(\mathsf{LCA})\longrightarrow\mathsf{Cond}(\mathsf{Ab})
\]
has been noticed by several people. It is, for example, in unpublished notes
of Flach, Geisser--Morin \cite[\S 6]{MR4699875}, a variant can be found
spelled out in Artusa \cite[Lemma 4.3]{MR4831262}. Without doubt many more
people are aware of it.

\begin{proof}
(1) This just combines known results: The claim that $i$ is a derived
equivalence comes from Schneiders \cite[Prop. 1.2.32]{MR1779315}, we had also
recorded this in Prop. \ref{prop_QAbTorsionPairInLeftHeart}. The horizontal
triangulated functor $e$ is fully faithful by Clausen--Scholze \cite[Lecture
4]{condensedmath}. As $i$ is an equivalence, hence invertible, it follows that
$j=e\circ i^{-1}$ must also be fully faithful. (2) Clausen--Scholze shows that
on the level of bounded complexes,%
\[
\operatorname*{Ch}\nolimits^{b}(\mathsf{LCA)}\overset{e_{0}}{\longrightarrow
}\operatorname*{Ch}\nolimits^{b}(\mathsf{Cond}(\mathsf{Ab}))\text{,}%
\]
strictly exact complexes get sent to exact complexes in $\operatorname*{Ch}%
\nolimits^{b}(\mathsf{Cond}(\mathsf{Ab}))$ in the sense of abelian categories
(= acyclic complexes). This means that $e_{0}\colon\mathsf{LCA}\rightarrow
\mathsf{Cond}(\mathsf{Ab})$ is a strictly exact functor in the sense of
Schneiders \cite[Definition 1.1.18, Definition 1.2.10]{MR1779315}. By a
theorem of Schneiders \cite[Prop. 1.2.34]{MR1779315} there is an equivalence
of functor categories between exact functors of abelian categories, depicted
below on the left,%
\begin{equation}
I^{\prime}\colon\operatorname*{Fun}\nolimits_{\text{exact}}(\mathcal{LH}%
(\mathsf{LCA}),\mathsf{Cond}(\mathsf{Ab}))\longrightarrow\operatorname*{Fun}%
\nolimits_{\mathrm{strictly}\text{ exact}}(\mathsf{LCA},\mathsf{Cond}%
(\mathsf{Ab}))\text{,} \label{l_eq2}%
\end{equation}
\newline and strictly exact functors from the quasi-abelian category
$\mathsf{LCA}$ to the abelian category $\mathsf{Cond}(\mathsf{Ab})$.
Alternatively, one can also use a variant of this theorem \cite[Prop.
3.14]{MR4575371} which circumvents the concept of strictly exact functors. The
functor $e_{0}$ thus extends uniquely, up to natural equivalence of functors,
to an exact functor
\[
\mathcal{LH}(\mathsf{LCA})\longrightarrow\mathsf{Cond}(\mathsf{Ab})
\]
and we take this as $j_{0}$ in Diagram \ref{d_w1}. (3) The commutativity of
Diagram \ref{d_w1} follows from the property of the correspondence in Eq.
\ref{l_eq2} that $I^{\prime}$ amounts to restricting a functor on
$\mathcal{LH}(\mathsf{LCA})$ to the embedding of $\mathsf{LCA}$ into the
heart, which is exactly what $i_{0}$ does. (4) The functor $i_{0}$ is fully
faithful by \cite[Cor. 1.2.28]{MR1779315}. The full faithfulness of $j_{0}$
means that%
\[
\operatorname*{Hom}\nolimits_{\mathcal{LH}(\mathsf{LCA})}(X,Y)\longrightarrow
\operatorname*{Hom}\nolimits_{\mathsf{Cond}(\mathsf{Ab})}(j_{0}X,j_{0}Y)
\]
is an isomorphism for all objects $X,Y$ in the left heart. However, we already
know that%
\[
\operatorname*{RHom}\nolimits_{\operatorname*{D}\nolimits^{b}\mathcal{LH}%
(\mathsf{LCA})}(X,Y)\overset{\sim}{\longrightarrow}\operatorname*{RHom}%
\nolimits_{\operatorname*{D}\nolimits^{b}\mathcal{LH}(\mathsf{LCA}%
)}(X,Y)\overset{\sim}{\longrightarrow}\operatorname*{RHom}%
\nolimits_{\operatorname*{D}\nolimits^{b}\mathsf{Cond}(\mathsf{Ab})}%
(j_{0}X,j_{0}Y)\text{,}%
\]
where the first arrow stems from $i$ being a derived equivalence in Diagram
\ref{d_w2} and the second arrow is an isomorphism since $j$ is fully faithful
in Diagram \ref{d_w2}. Now restrict to $\pi_{0}$ of the Hom-complexes. We
leave it as an easy exercise to exhibit examples showing that neither $i_{0}$
nor $j_{0}$ are essentially surjective.
\end{proof}

We obtain an entertaining side result.

\begin{theorem}
\label{thm_p1}The composition $j_{0}\circ\Theta$ induces a fully faithful and
exact embedding%
\[
\mathsf{PGA}/\mathsf{Ab}_{\operatorname*{fg}}\longrightarrow\mathsf{Cond}%
(\mathsf{Ab})
\]
resp.%
\[
\mathsf{PCA}/\mathsf{Ab}_{\operatorname*{fin}}\longrightarrow\mathsf{Cond}%
(\mathsf{Ab})
\]
It sends a precompactly generated group [resp. precompact] $X$ to the cokernel%
\begin{equation}
\operatorname*{coker}\nolimits_{\mathsf{Cond}(\mathsf{Ab})}\left(  e_{0}%
(X_{d})\longrightarrow e_{0}(cX)\right)  \text{,} \label{l_e2}%
\end{equation}
where $X_{d}$ is $X$, equipped with the discrete topology and then regarded as
a condensed abelian group, and $cX$ is the completion of $X$, a locally
compact group, which is then again regarded as a condensed abelian group. The
cokernel is evaluated in $\mathsf{Cond}(\mathsf{Ab})$.
\end{theorem}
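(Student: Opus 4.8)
The plan is to realise $j_0\circ\Theta$ as a composite of three functors, each already known to be exact and fully faithful, and then to compute its effect on objects from the short exact sequence that exhibits a ghost object as a cokernel inside the left heart.

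\textbf{Exactness and full faithfulness.} By Theorem~\ref{thm_main_correspondence}, $\Theta$ is an exact equivalence of quasi-abelian categories from $\mathsf{PGA}/\mathsf{Ab}_{\operatorname*{fg}}$ onto the torsion class $\mathsf{T}\subseteq\mathcal{LH}(\mathsf{LCA})$ [resp.\ from $\mathsf{PCA}/\mathsf{Ab}_{\operatorname*{fin}}$ onto $\mathsf{T}\subseteq\mathcal{LH}(\mathsf{LCA}_{\operatorname*{vf}})$]. By Lemma~\ref{lemma8}, $\mathsf{T}$ is quasi-abelian and carries exactly the exact structure induced from the ambient abelian category; hence the full inclusion $\mathsf{T}\hookrightarrow\mathcal{LH}(\mathsf{LCA})$ is exact, since a conflation in $\mathsf{T}$ is by definition a short exact sequence in $\mathcal{LH}(\mathsf{LCA})$ all of whose terms lie in $\mathsf{T}$. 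Finally, $j_0$ is exact and fully faithful by Proposition~\ref{prop_toCondensed}(2),(4). Composing
\[
\mathsf{PGA}/\mathsf{Ab}_{\operatorname*{fg}}\ \xrightarrow{\ \Theta\ }\ \mathsf{T}\ \hookrightarrow\ \mathcal{LH}(\mathsf{LCA})\ \xrightarrow{\ j_0\ }\ \mathsf{Cond}(\mathsf{Ab})
\]
then gives a functor which is again exact and fully faithful. For the $\mathsf{PCA}$ variant one runs the same argument with $\mathcal{LH}(\mathsf{LCA}_{\operatorname*{vf}})$ in place of $\mathcal{LH}(\mathsf{LCA})$; the only additional point is that the condensed realisation functor is available there too, obtained by repeating the construction of Proposition~\ref{prop_toCondensed} once one observes that $e_0$ restricts to a strictly exact functor on $\mathsf{LCA}_{\operatorname*{vf}}$ (strictly exact complexes in $\mathsf{LCA}_{\operatorname*{vf}}$ being in particular strictly exact in $\mathsf{LCA}$).

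\textbf{The formula on objects.} Fix $X\in\mathsf{PGA}$. By definition $\Theta(X)$ is the two-term complex $[\,X_d\xrightarrow{\,x\,}cX\,]$, where $x$ is the set-theoretic inclusion, which is monic and epic. Under Schneiders' comparison of the two descriptions of the left heart (Remark~\ref{rmk_SchneidersDescriptionOfLH} and Proposition~\ref{prop_QAbTorsionPairInLeftHeart}), for a monic $x\colon X'\to X$ in $\mathsf{LCA}$ the complex $[\,X'\xrightarrow{\,x\,}X\,]$ is canonically the cokernel, taken in the abelian category $\mathcal{LH}(\mathsf{LCA})$, of the monomorphism $i_0(x)\colon i_0(X')\to i_0(X)$; that is, there is a short exact sequence
\[
0\longrightarrow i_0(X_d)\xrightarrow{\;i_0(x)\;}i_0(cX)\longrightarrow\Theta(X)\longrightarrow 0
\]
in $\mathcal{LH}(\mathsf{LCA})$. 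Apply the exact functor $j_0$ and use the commutativity of Diagram~\ref{d_w1}, which gives $j_0\circ i_0=e_0$, to obtain
\[
0\longrightarrow e_0(X_d)\xrightarrow{\;e_0(x)\;}e_0(cX)\longrightarrow(j_0\circ\Theta)(X)\longrightarrow 0
\]
in $\mathsf{Cond}(\mathsf{Ab})$, so that $(j_0\circ\Theta)(X)\cong\operatorname*{coker}\nolimits_{\mathsf{Cond}(\mathsf{Ab})}\!\bigl(e_0(X_d)\to e_0(cX)\bigr)$, which is Eq.~\ref{l_e2}. In the $\mathsf{PCA}$ case the same computation applies, and there $cX$ is moreover the Bohr compactification of $X$, hence compact.

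\textbf{Expected main obstacle.} With Theorem~\ref{thm_main_correspondence} and Proposition~\ref{prop_toCondensed} in hand, most of this is bookkeeping; the delicate point is the passage between the two incarnations of the left heart — the two-term-complex picture in which $\Theta$ is defined and the torsion-pair/abelian picture in which ``cokernel'' makes sense. Concretely one must check that $i_0$ sends the monic $x$ to a genuine monomorphism of $\mathcal{LH}(\mathsf{LCA})$ (part of Proposition~\ref{prop_QAbTorsionPairInLeftHeart}: $i_0$ preserves kernels, cf.\ \cite{MR1779315}) and, more importantly, that the cokernel of $i_0(x)$ is represented by exactly the complex $[\,X_d\to cX\,]$ defining $\Theta(X)$, so that the connecting map in Eq.~\ref{l_e2} is genuinely $e_0$ applied to the set-theoretic inclusion $x$, and not to some quasi-isomorphic but differently presented map. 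Once this identification is pinned down, the statement follows formally from the exactness and commutativity already assembled above.
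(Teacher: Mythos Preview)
Your proof is correct and follows essentially the same route as the paper: factor $j_0\circ\Theta$ as the composite of three fully faithful exact functors (Theorem~\ref{thm_main_correspondence}, the inclusion of the torsion class, and Proposition~\ref{prop_toCondensed}), then read off the cokernel formula. The only cosmetic difference is that the paper derives Eq.~\ref{l_e2} by passing to $\operatorname{D}^b$ and computing $j_0[X_d\to cX]$ as $[e_0(X_d)\to e_0(cX)]\simeq[0\to\operatorname{coker}(\ldots)]$ there, whereas you stay in the abelian heart and apply the exact functor $j_0$ to the short exact sequence $i_0(X_d)\hookrightarrow i_0(cX)\twoheadrightarrow\Theta(X)$; both arguments encode the same identification.
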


The existence of this embedding is somewhat eccentic because if one uses the
usual embedding of CGWH topological groups into $\mathsf{Cond}(\mathsf{Ab})$
of Clausen--Scholze, it will not lead to a faithful functor since groups in
$\mathsf{PGA}$ or $\mathsf{PCA}$ need not be $k$-spaces. So the usual
machinery to treat topological groups in a condensed way is very much unavailable.

\begin{example}
[S. Gabriyelyan \cite{MR3968523}]\label{example_p1}There exist precompact
abelian groups whose underlying topological space is not CGWH. One possible
construction is found in the proof of \cite[Prop. 3.1]{MR3968523}.
\end{example}

\begin{example}
[J. F. Trigos-Arrieta \cite{MR1100517}]\label{example_p2}Every locally compact
topological abelian group $A$ can be equipped with a precompact topology
$\tau^{+}$ such that its $k$-ification is again $A$ \cite[Lemma 1.12]%
{MR1100517}. If $A$ is non-compact, this also yields precompact groups
$(A,\tau^{+})$ which are not CGWH.
\end{example}

\begin{proof}
We only discuss the variant for $\mathsf{PGA}/\mathsf{Ab}_{\operatorname*{fg}%
}$ and leave the necessary modifications for $\mathsf{PCA}/\mathsf{Ab}%
_{\operatorname*{fin}}$ to the reader. We use that%
\[
\mathsf{PGA}/\mathsf{Ab}_{\operatorname*{fg}}\overset{\Theta}{\longrightarrow
}\mathsf{Ghost}\longrightarrow\mathcal{LH}(\mathsf{LCA})\overset{j_{0}%
}{\longrightarrow}\mathsf{Cond}(\mathsf{Ab})
\]
is a composition of fully faithful exact functors (combine Theorem
\ref{thm_main_correspondence}, Prop. \ref{prop_QAbTorsionPairInLeftHeart} and
Prop. \ref{prop_toCondensed}). Since the cotilting torsion pair $(\mathsf{T}%
,\mathsf{F})$ of Prop. \ref{prop_QAbTorsionPairInLeftHeart} is made from full
subcategories, the inclusion of the torsion part $\mathsf{Ghost}:=\mathsf{T}$
into $\mathcal{LH}(\mathsf{LCA})$ is tautologically fully faithful. The
formula in Eq. \ref{l_e2} arises from unravelling each step on the level of
derived categories and then observing that the output lies inside the heart:%
\begin{equation}
\left(  j_{0}\circ\Theta\right)  (X)=j_{0}\left[
{
\begin{tikzcd}
	{X_d} && cX
	\arrow["\bullet"{marking}, from=1-1, to=1-3]
\end{tikzcd}
}%
\right]  _{\operatorname*{D}\nolimits^{b}\mathcal{LH}(\mathsf{LCA})}\nonumber
\end{equation}
Note that while $X_{d}\rightarrow cX$ is a monic and epic in $\mathsf{LCA}$,
its image under $e_{0}$ is only monic in $\mathsf{Cond}(\mathsf{Ab})$, but
(generally) not epic. The functor $j_{0}$ on complexes reduces to being just
$e_{0}$ objectwise:%
\[
=\left[
{
\begin{tikzcd}
	{e_0(X_d)} & {e_0(cX)}
	\arrow[from=1-1, to=1-2]
\end{tikzcd}
}%
\right]  _{\operatorname*{D}\nolimits^{b}\mathsf{Cond}(\mathsf{Ab})}%
\simeq\left[
{
\begin{tikzcd}
	0 & {\operatorname{coker}( \ldots)}
	\arrow[from=1-1, to=1-2]
\end{tikzcd}
}%
\right]  _{\operatorname*{D}\nolimits^{b}\mathsf{Cond}(\mathsf{Ab})}%
\]
The cokernel exists since $\mathsf{Cond}(\mathsf{Ab})$ is an abelian category
and the complexes are visibly quasi-isomorphic. The resulting final complex
lies in the essential image of the embedding $\mathsf{Cond}(\mathsf{Ab})$ into
its derived category, or said differently, it lies in the heart of the
standard $t$-structure on $\operatorname*{D}\nolimits^{b}\mathsf{Cond}%
(\mathsf{Ab})$. Hence, the formula lifts to taking values in $\mathsf{Cond}%
(\mathsf{Ab})$.
\end{proof}

\begin{remark}
For pro-limits, evaluated in topological Hausdorff abelian groups, the above
functor to $\mathsf{Cond}(\mathsf{Ab})$ is compatible with the construction in
\cite{ren2025coherentsixfunctorformalismspro}.
\end{remark}

\section{Duality}

Pontryagin duality induces an exact equivalence of exact categories%
\[
(-)^{\vee}\colon\mathsf{LCA}\longrightarrow\mathsf{LCA}^{op}\text{.}%
\]
This induces a derived equivalence%
\[
\operatorname*{D}\nolimits^{b}(\mathsf{LCA})\longrightarrow\operatorname*{D}%
\nolimits^{b}(\mathsf{LCA}^{op})
\]
which exchanges Schneider's left $t$-structure with the right $t$-structure on
the opposite category. This induces an equivalence of abelian categories
$\mathcal{LH}(\mathsf{LCA})\rightarrow\mathcal{RH}(\mathsf{LCA})^{op}$, given
by%
\begin{equation}
\left[
{
\begin{tikzcd}
	{X'} && X
	\arrow["x", from=1-1, to=1-3]
\end{tikzcd}
}%
\right]  \longmapsto\left[
{
\begin{tikzcd}
	{X^{\vee}} && {{X'}^{\vee}}
	\arrow["{x^{\vee}}", from=1-1, to=1-3]
\end{tikzcd}
}%
\right]  \text{,} \label{l_m1}%
\end{equation}
where $x$ is a (not necessarily admissible) monic and $x^{\vee}$ a (not
necessarily admissible) epic in $\mathsf{LCA}$. This equivalence sends a ghost
object, i.e., an object where $x$ is both monic and epic, to a ghost object.
We obtain an induced equivalence of quasi-abelian categories $\mathsf{Ghost}%
\rightarrow\mathsf{Ghost}^{op}$, given by restricting the functor of Eq.
\ref{l_m1} to the ghost objects:
\[
\left[
{
\begin{tikzcd}
	{X'} && X
	\arrow["x"{inner sep=.8ex}, "\bullet"{marking}, from=1-1, to=1-3]
\end{tikzcd}
}%
\right]  \longmapsto\left[
{
\begin{tikzcd}
	{X^{\vee}} && {X'^{\vee}}
	\arrow["{{x^{\vee}}}"{inner sep=.8ex}, "\bullet"{marking}, from=1-1, to=1-3]
\end{tikzcd}
}%
\right]  \text{.}%
\]
We observe that objects of type d-c (Definition
\ref{def_ObjectsOfTypeDC_resp_DCG}) are preserved under this functor since the
Pontryagin dual of a compact group is discrete, and reversely.

All of the above carries over verbatim to $\mathsf{LCA}_{\operatorname*{vf}}$
and $\mathsf{Ghost}_{\operatorname*{vf}}$. The dual of a group without a real
vector space summand also does not have a real vector space summand.

\begin{definition}
\label{def_WeakDual}Suppose $X$ is a Hausdorff abelian group. The \emph{weak
dual }of $X$ is%
\[
X^{\ast}:=(\operatorname*{Hom}(X,\mathbb{T}),\tau_{\operatorname*{weak}%
})\text{,}%
\]
where $\operatorname*{Hom}(X,\mathbb{T})$ is the abelian group of continuous
group morphisms $X\rightarrow\mathbb{T}$ (the continuous characters on $X$).
We equip this group with the weak topology $\tau_{\operatorname*{weak}}%
$:\ This is the weakest topology such that the pointwise evaluation map%
\[
\operatorname*{ev}\nolimits_{X}\colon\operatorname*{Hom}(X,\mathbb{T}%
)\longrightarrow\prod_{x\in X}\mathbb{T}\text{,}\qquad\chi\longmapsto
(\chi(x))_{x\in X}\text{,}%
\]
is continuous.\footnote{Said differently: The preimages $\operatorname*{ev}%
\nolimits_{X}^{-1}(U)$ of the opens $U\subseteq\prod_{x\in X}\mathbb{T}$
generate the topology on $X^{\ast}$.} The weak topology is also known as the
\emph{finite-open topology} or as the \emph{topology of pointwise convergence}.
\end{definition}

In general, this concept of dual differs from the Pontryagin dual $X^{\vee}$,
where we equip the group $\operatorname*{Hom}(X,\mathbb{T})$ with the
compact-open topology. Both $X^{\vee}$ and $X^{\ast}$ have the same underlying
abelian group, but possibly drastically different topologies.

It is sensible to use the term \textquotedblleft\emph{strong dual}%
\textquotedblright\ for the Pontryagin duals in order to stress the relation
between these two notions of a dual.

\begin{theorem}
[Weak-Strong Duality]\label{thm_dual_1}Under the equivalence of categories
$\Theta$ for $\mathsf{LCA}_{\operatorname*{vf}}$, weak duality gets exchanged
with strong (Pontryagin) duality, i.e.,%
\begin{equation}%
{
\begin{tikzcd}
	{\mathsf{PCA}/{\mathsf{Ab}_{\operatorname{fin} }}} && {\mathsf{Ghost}%
_{\operatorname{vf} }} \\
	\\
	{(\mathsf{PCA}/{\mathsf{Ab}_{\operatorname{fin} }})^{op }} && {\mathsf
{Ghost}_{\operatorname{vf} }^{op }}
	\arrow["\Theta", from=1-1, to=1-3]
	\arrow["{(-)^{\ast}}"', from=1-1, to=3-1]
	\arrow["{(-)^{\vee}}", from=1-3, to=3-3]
	\arrow["{{\Theta}^{op }}"', from=3-1, to=3-3]
\end{tikzcd}
}
\label{d_8}%
\end{equation}
commutes. The functor $(-)^{\ast}$ on the left induces an equivalence of
categories, just like $(-)^{\vee}$ on the right.
\end{theorem}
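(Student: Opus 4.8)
The plan is to prove that the square commutes by showing that $\Theta(X^{\ast})$ and $\Theta(X)^{\vee}$ are literally the same object of $\mathsf{Ghost}_{\operatorname{vf}}$, naturally in $X\in\mathsf{PCA}$, and then to read off the equivalence statement by a formal argument. Throughout, write $\Lambda:=\operatorname{Hom}(X,\mathbb{T})$ for the group of continuous characters of $X$.

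First I would compute $\Theta(X)^{\vee}$. Since $X\in\mathsf{PCA}$, Lemma~\ref{lemma_OnPCACompletionAgreesWithBohrCompactification} identifies $cX$ with the Bohr compactification $bX$, which is compact; thus $\Theta(X)=[\,X_{d}\to bX\,]$ is of type d-c and its Pontryagin dual is the complex $[\,(bX)^{\vee}\to(X_{d})^{\vee}\,]$ with a monic-epic arrow. Here $(bX)^{\vee}$ is discrete, and the universal property of the Bohr compactification makes the restriction map $(bX)^{\vee}\to\Lambda$ a group isomorphism, so $(bX)^{\vee}=\Lambda_{d}$. On the other side, $(X_{d})^{\vee}$ is compact, and since $X_{d}$ is discrete its compact subsets are finite, so the compact-open topology on $(X_{d})^{\vee}=\operatorname{Hom}(X_{d},\mathbb{T})$ coincides with the topology of pointwise convergence, i.e.\ the weak topology generated by the evaluations $\operatorname{ev}_{x}\colon\chi\mapsto\chi(x)$, $x\in X$. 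Hence $\Theta(X)^{\vee}=[\,\Lambda_{d}\to(X_{d})^{\vee}\,]$, where the arrow is the tautological inclusion of continuous characters of $X$ into the group of all characters of the underlying abelian group.

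Next I would compute $\Theta(X^{\ast})$. By Definition~\ref{def_WeakDual} the group $X^{\ast}$ has underlying abelian group $\Lambda$ and carries the weak topology generated by the $\operatorname{ev}_{x}$; it is Hausdorff because the $\operatorname{ev}_{x}$ separate characters, and precompact as a weak topology generated by characters, so $X^{\ast}\in\mathsf{PCA}$. Comparing with the preceding paragraph, the inclusion $\Lambda\subseteq\operatorname{Hom}(X_{d},\mathbb{T})=(X_{d})^{\vee}$ exhibits $X^{\ast}$ as the subgroup $\Lambda$ of the compact group $(X_{d})^{\vee}$ equipped with the subspace topology. This subgroup is \emph{dense}: a continuous character of $(X_{d})^{\vee}$ is, by Pontryagin double duality for the discrete group $X_{d}$, an evaluation $\operatorname{ev}_{x}$ for some $x\in X$, and if such a character vanishes on $\Lambda$ then $\chi(x)=0$ for every continuous character $\chi$ of $X$, forcing $x=0$ since continuous characters separate points of the Hausdorff precompact group $X$; thus no nonzero continuous character of $(X_{d})^{\vee}$ annihilates $\Lambda$, so $\overline{\Lambda}=(X_{d})^{\vee}$. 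A dense subgroup of a compact group has that compact group as its completion (as in the proof of Lemma~\ref{lemma10}), so $c(X^{\ast})=(X_{d})^{\vee}$, while $(X^{\ast})_{d}=\Lambda_{d}$ tautologically. Therefore $\Theta(X^{\ast})=[\,\Lambda_{d}\to(X_{d})^{\vee}\,]$ with the same inclusion as structure map, so $\Theta(X^{\ast})=\Theta(X)^{\vee}$. All the identifications above are manifestly functorial in $X$ (a morphism $f\colon X\to Y$ induces precomposition $Y^{\ast}\to X^{\ast}$, which matches the dual of $\Theta(f)$ under these identifications), so the isomorphism is natural; this is precisely the commutativity of Diagram~\ref{d_8}.

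It remains to deduce that $(-)^{\ast}$ descends to $\mathsf{PCA}/\mathsf{Ab}_{\operatorname{fin}}$ and is an equivalence. By Theorem~\ref{thm_main_correspondence} the functor $\Theta$ is an equivalence, hence conservative, and $(-)^{\vee}$ on $\mathsf{Ghost}_{\operatorname{vf}}$ is an equivalence by the Pontryagin duality discussion opening this section. If $f\colon X\to Y$ in $\mathsf{PCA}$ has finite kernel and cokernel, then $\Theta(f)$ is an isomorphism (as $\Theta$ factors through the quotient by such morphisms), hence $\Theta(f)^{\vee}$, hence (by the natural identification just established) $\Theta(f^{\ast})$ is an isomorphism, and therefore $f^{\ast}$ becomes invertible in the localization $\mathsf{PCA}/\mathsf{Ab}_{\operatorname{fin}}$ since $\Theta$ reflects isomorphisms. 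By the universal property of localization, $(-)^{\ast}$ thus descends to a functor $\mathsf{PCA}/\mathsf{Ab}_{\operatorname{fin}}\to(\mathsf{PCA}/\mathsf{Ab}_{\operatorname{fin}})^{op}$, the square of Diagram~\ref{d_8} commutes, and the descended functor is isomorphic to the composite $(\Theta^{op})^{-1}\circ(-)^{\vee}\circ\Theta$ of equivalences, hence itself an equivalence. The one place where care is needed is the bookkeeping in the third paragraph: matching the completion of $X^{\ast}$ with $(X_{d})^{\vee}$ \emph{on the nose}, with structure map equal to the Pontryagin dual of $X_{d}\hookrightarrow cX$, rather than merely up to abstract isomorphism; once this and the density claim are in place, the rest is standard duality theory for LCA groups and formal nonsense.
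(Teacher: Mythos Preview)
Your argument is correct and more direct than the paper's. The paper does not compute $\Theta(X^{\ast})$ and $\Theta(X)^{\vee}$ explicitly; instead it quotes Rump's duality theory for precompact groups: first that weak duality is an equivalence $\mathsf{PCA}\to\mathsf{PCA}^{op}$ (cited from the literature), then Rump's equivalence $\mathsf{PCA}\to\mathsf{DCA}$ sending $X\mapsto(X_d\to bX)$, and finally a \emph{uniqueness of dualities} result of Rump to conclude that the weak duality, transported through $\mathsf{DCA}$, must agree with Pontryagin duality applied termwise to the arrow $X_d\to bX$. Your route avoids this machinery entirely by identifying both $\Theta(X)^{\vee}$ and $\Theta(X^{\ast})$ with the very same complex $[\Lambda_d\to(X_d)^{\vee}]$: the key observations are that $(bX)^{\vee}\cong\Lambda_d$ via restriction, that the compact-open topology on $(X_d)^{\vee}$ is pointwise convergence, and that continuous characters of a precompact Hausdorff group separate points (so $\Lambda$ is dense in $(X_d)^{\vee}$). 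This gives a self-contained proof and simultaneously reproves, rather than cites, that $X^{\ast}\in\mathsf{PCA}$. The paper's approach is shorter on the page because it outsources the work to Rump; yours is more transparent about why the two dualities match and does not depend on any uniqueness principle.
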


\begin{proof}
This is essentially an argument taken from the ideas of Rump in his insightful
article \cite{MR4535276}. For the proof of this claim we need to rely on the
duality theory of precompact groups which has not played a role so far
anywhere in this text: Weak duality induces an equivalence of categories%
\begin{equation}
\mathsf{PCA}\longrightarrow\mathsf{PCA}^{op},\qquad X\mapsto X^{\ast}\text{,}
\label{l_c1}%
\end{equation}
and in particular the weak dual of a precompact group is (1) functorial and
(2) again precompact. See \cite[Theorem 13.6.2]{MR4510389} or \cite{MR1831580}
for proofs. Moreover, the weak dual of a finite group is again finite, so this
induces the refined equivalence%
\begin{equation}
\mathsf{PCA}/\mathsf{Ab}_{\operatorname*{fin}}\longrightarrow\left(
\mathsf{PCA}/\mathsf{Ab}_{\operatorname*{fin}}\right)  ^{op} \label{l_c2}%
\end{equation}
of the left side in Diagram \ref{d_8}. This is the topological viewpoint on
duality. Our proof will rest on an idea of Rump to give a category-theoretic
description of the same duality. Rump proves in \cite[Prop. 2.1]{MR4535276}
that there is an equivalence of categories%
\begin{align*}
\mathsf{PCA}  &  \longrightarrow\mathsf{DCA}\\
X  &  \longmapsto(X_{d}\longrightarrow bX)\text{,}%
\end{align*}
where $\mathsf{DCA}$ is the category of dense subgroups $D\subseteq C$ of
compact topological groups $C$ (see loc. cit. for details) and $bX$ denotes
the Bohr compactification.\footnote{We follow Rump's viewpoint and phrase this
as Bohr compactification of $X$, but in view of Lemma
\ref{lemma_OnPCACompletionAgreesWithBohrCompactification}, one could also
reformulate this as the completion $cX$, exhibiting a certain parallelism to
our functor $\Theta$ at least in the vector-free situation.} Then he shows by
combining this with \cite[Corollary 3]{MR4535276} and \cite[Theorem
5.1]{MR4535276} that the duality of Eq. \ref{l_c1} (by a uniqueness of
possible dualities) must agree with%
\[
(x\colon X_{d}\longrightarrow bX)\longmapsto(x^{\vee}\colon(bX)^{\vee
}\longrightarrow(X_{d})^{\vee})\text{,}%
\]
i.e., Pontryagin duality on the arrow embedding the dense subgroup into the
compact subgroup. As we only need exactly the same claim modulo finite groups
in the version of Eq. \ref{l_c2}, Rump's argument generalizes to show our claim.
\end{proof}

We spell out the claim of Theorem \ref{thm_dual_1} in terms of the notation of
Definition \ref{def_SubspaceTopologyNotation}:

\begin{corollary}
\label{cor_w1}For any object of type d-c%
\[
\left[
{
\begin{tikzcd}
	D && C
	\arrow["x"{inner sep=.8ex}, "\bullet"{marking}, from=1-1, to=1-3]
\end{tikzcd}
}%
\right]
\]
we have%
\[
\left(  D_{\subseteq C}\right)  ^{\ast}\cong\left.  C^{\vee}\right.
_{\subseteq D^{\vee}}\text{.}%
\]

\end{corollary}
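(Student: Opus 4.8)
The plan is to deduce Corollary~\ref{cor_w1} directly from Theorem~\ref{thm_dual_1} by tracking what the three functors $\Theta$, $(-)^{\ast}$ and $(-)^{\vee}$ do to a type d-c object and its preimage. First I would recall from Lemma~\ref{lemma10} (applied in case (1)) that the object $\left[ D \overset{\bullet}{\to} C \right]$ is canonically isomorphic to $\Theta(D_{\subseteq C})$, i.e.\ its preimage under the equivalence $\Theta\colon \mathsf{PCA}/\mathsf{Ab}_{\operatorname*{fin}} \overset{\sim}{\to} \mathsf{T} = \mathsf{Ghost}_{\operatorname*{vf}}$ is exactly $D_{\subseteq C}$, the group $D$ equipped with the subspace topology from $C$. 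Here we also use $c(D_{\subseteq C}) = C$: since $D$ is dense in the complete (compact) group $C$, the completion of $D_{\subseteq C}$ recovers $C$.

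Next I would apply the commutative square of Theorem~\ref{thm_dual_1}, Diagram~\ref{d_8}. Chasing $D_{\subseteq C}$ around the square: going down then right gives $\Theta^{op}\big((D_{\subseteq C})^{\ast}\big)$, while going right then down gives $\big(\Theta(D_{\subseteq C})\big)^{\vee} = \left[ D \overset{\bullet}{\to} C \right]^{\vee}$. By the explicit description of the duality functor on $\mathsf{Ghost}_{\operatorname*{vf}}$ recorded just before Definition~\ref{def_WeakDual}, the latter is $\left[ C^{\vee} \overset{\bullet}{\to} D^{\vee} \right]$, which is again of type d-c (the dual of a compact group is discrete and vice versa). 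Now run Lemma~\ref{lemma10} once more, this time on the object $\left[ C^{\vee} \overset{\bullet}{\to} D^{\vee} \right]$: its preimage under $\Theta$ is precisely $(C^{\vee})_{\subseteq D^{\vee}}$, the group $C^{\vee}$ with the subspace topology coming from the inclusion $C^{\vee} \hookrightarrow D^{\vee}$. Since $\Theta$ (hence $\Theta^{op}$) is an equivalence and the square commutes, we conclude $(D_{\subseteq C})^{\ast} \cong (C^{\vee})_{\subseteq D^{\vee}}$ in $\mathsf{PCA}/\mathsf{Ab}_{\operatorname*{fin}}$, which is the claim.

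The one point requiring a little care — and the step I expect to be the mildest obstacle — is the bookkeeping of which morphism in $\left[ C^{\vee} \overset{\bullet}{\to} D^{\vee} \right]$ plays the role of ``the dense inclusion'' when invoking Lemma~\ref{lemma10}: one must check that $x^{\vee}\colon C^{\vee} \to D^{\vee}$, being the Pontryagin dual of the monic-and-epic $x$, is itself monic and epic (this is noted in the text), so that $C^{\vee}$ really does sit as a dense subgroup of the compact group $D^{\vee}$ and Lemma~\ref{lemma10} applies verbatim. Everything else is a formal diagram chase through the commuting square of Theorem~\ref{thm_dual_1}, using that all four functors in Diagram~\ref{d_8} are equivalences; no genuinely new topological input is needed beyond what Theorem~\ref{thm_dual_1} already supplies.
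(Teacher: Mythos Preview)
Your proposal is correct and is exactly the approach the paper intends: the corollary is introduced as ``spell[ing] out the claim of Theorem~\ref{thm_dual_1} in terms of the notation of Definition~\ref{def_SubspaceTopologyNotation}'', and your diagram chase through Diagram~\ref{d_8} together with two applications of Lemma~\ref{lemma10} is precisely that unpacking. The care point you flag about $x^{\vee}$ being monic and epic is already recorded in the text just before the corollary, so there is no obstacle.
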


Regarding this corollary, there is no difference whether we are in
$\mathsf{PCA}$ or $\mathsf{PGA}$.

For objects of type d-cg the situation is less clear. It is natural to ask for
a variant of Theorem \ref{thm_dual_1} for the left heart $\mathcal{LH}%
(\mathsf{LCA})$. Certainly, there is also a (tautologically defined) duality
functor $D:=(\Theta^{op})^{-1}\circ\left(  -\right)  ^{\vee}\circ\Theta$ so
that%
\[%
{
\begin{tikzcd}
	{\mathsf{PGA}/{\mathsf{Ab}_{\operatorname{fg} }}} && {\mathsf{Ghost}} \\
	\\
	{(\mathsf{PGA}/{\mathsf{Ab}_{\operatorname{fg} }})^{op }} && {\mathsf
{Ghost}^{op }}
	\arrow["\Theta", from=1-1, to=1-3]
	\arrow["D"', from=1-1, to=3-1]
	\arrow["{(-)^{\vee}}", from=1-3, to=3-3]
	\arrow["{{\Theta}^{op }}"', from=3-1, to=3-3]
\end{tikzcd}
}%
\]
commutes, but we do not know if it has a nice description in terms of
point-set topology like in Definition \ref{def_WeakDual}. By Corollary
\ref{cor_EveryObjectIsOfTypeDC} every object in $\mathsf{Ghost}$ in
$\mathcal{LH}(\mathsf{LCA})$ is isomorphic to an object of type d-c. On such a
representative, $D$ will agree with the description in Corollary \ref{cor_w1}.
However, $D$ cannot possibly agree with weak duality on all of $\mathsf{PGA}%
/\mathsf{Ab}_{\operatorname*{fg}}$:

\begin{itemize}
\item While the weak dual of an object in $\mathsf{Ab}_{\operatorname*{fin}}$
is again in $\mathsf{Ab}_{\operatorname*{fin}}$, this is not true for
$\mathsf{Ab}_{\operatorname*{fg}}$. In fact, the weak dual $\mathbb{Z}^{\ast}$
is the usual circle $\mathbb{T}$ and not even a discrete group (\cite[Example
4]{MR1831580}). Even worse, $\mathbb{Z}^{\ast\ast}\ncong\mathbb{Z}$.

\item More generally, the weak duality functor can be seen not to be
essentially surjective onto $\mathsf{PGA}/\mathsf{Ab}_{\operatorname*{fg}}$,
so it cannot be an equivalence.

\item And the strong dual of objects of type d-cg is not of type d-cg again.
\end{itemize}

This leaves some loose threads as to how to implement duality into our main
correspondence of Theorem \ref{thm_main_correspondence}. G\'{a}bor Luk\'{a}cs
has informed us on some unpublished work of his which might resolve these
matters \cite{gabor1,gabor2}.%

\appendix

\section{Locally (pre)compact groups\label{Appendix_LocallyPrecompactGroups}}

The concepts of \textit{precompact} and\textit{ locally precompact}
topological groups originate from a manuscript of Andr\'{e} Weil
\cite{weil_uniforme}. Weil phrases everything in terms of his new\footnote{new
in 1937} theory of \textit{uniform spaces}, a theory he develops in the same
memoir. This viewpoint has fallen a little out of favour since then, see
\cite{MR644485} though, and we follow the trend to describe the entire
formalism exclusively through the language of topology.

See \cite[Ch. 3, \S 3.7]{MR2433295} or \cite[\S 10]{MR4510389} for excellent
modern textbook treatments.

\begin{definition}
A \emph{neighbourhood} $U$ of a point is a subset which contains an open set
$V$ such that $x\in V\subseteq U$. A\ neighbourhood is \emph{symmetric} if
$x\in U$ implies $-x\in U$.
\end{definition}

If $U$ is any neighbourhood, $U\cup(-U)$ is a symmetric neighbourhood. For any
subset $\Sigma\subseteq X$ in a topological abelian group $X$, write
$\left\langle \Sigma\right\rangle $ for the subgroup (algebraically) generated
by the elements in $\Sigma$. This need not be a closed subgroup, see Caution
\ref{caution_2}.

As a slogan, a group is \emph{precompact}\footnote{Other authors use the term
\textquotedblleft totally bounded\textquotedblright\ for the same concept. For
some authors \emph{precompact} is defined to mean `totally bounded and
Hausdorff', so that totally bounded alone can be used for non-Hausdorff
groups.} if every non-empty open in $X$ can cover all of $\Sigma$ by finitely
many of its translates. We phrase this with a little more detail:

\begin{definition}
[{\cite[\S 5]{MR2681374}, \cite[Def. 3]{MR163985}}]\label{def_Precompact}Let
$X$ be a Hausdorff abelian group.

\begin{enumerate}
\item We call a subset $\Sigma\subseteq X$ \emph{precompact} if one of the
following (then all) hold:

\begin{enumerate}
\item For every non-empty open $U\subseteq X$ there exist finitely many
$x_{1},\ldots,x_{n}\in X$ such that $\Sigma\subseteq\bigcup_{i=1}^{n}%
(x_{i}+U)$.

\item For every open neighbourhood $U$ of $0$ there exist finitely many
$x_{1},\ldots,x_{n}\in X$ such that $\Sigma\subseteq\bigcup_{i=1}^{n}%
(x_{i}+U)$.

\item For every open neighbourhood $U$ of $0$ there exist finitely many
$\sigma_{1},\ldots,\sigma_{n}\in\Sigma$ (this is more restrictive than
before!) such that $\Sigma\subseteq\bigcup_{i=1}^{n}(\sigma_{i}+U)$.
\end{enumerate}

\item The group $X$ is called \emph{locally precompact}\footnote{Some authors
call this \textquotedblleft locally bounded Hausdorff\textquotedblright, e.g.
\cite{MR1326826}.} if $0$ has a precompact neighbourhood.

\item A locally precompact group $X$ is called \emph{precompactly generated}
if $0$ has a precompact neighbourhood which generates $X$ as an abstract
group.\footnote{Said differently: There is a precompact neighbourhood $U$ of
$0$ such that any subgroup $Z\subseteq X$ containing $U$ must satisfy $Z=X$.}
\end{enumerate}
\end{definition}

\begin{proof}
We need to prove that the three characterizations in (1) are equivalent.
(a$\Rightarrow$b) Obvious. (b$\Rightarrow$c) Suppose $V$ is an open
neighbourhood of $0$. By the continuity of $X\times X\rightarrow X$, pick an
open neighbourhood $U$ of $0$ such that $U+U\subseteq V$ and perhaps upon
replacing $U$ by $U\cap-U$, ensure that $U$ is a symmetric neighbourhood of
$U$. By characterization (b), we have $\Sigma\subseteq\bigcup_{i=1}^{n}%
(x_{i}+U)$ and we may assume $\Sigma\cap(x_{i}+U)\neq\varnothing$ for all $i$,
for otherwise a proper subset of $\{x_{1},\ldots,x_{n}\}$ is enough to cover
$\Sigma$. Hence, we can pick $\sigma_{i}\in\Sigma\cap(x_{i}-U)$ (since $-U=U$)
and observe that $x_{i}+U\subseteq\sigma_{i}+U+U\subseteq\sigma_{i}+V$. It
follows that $\Sigma\subseteq\bigcup_{i=1}^{n}(\sigma_{i}+V)$ with all
$\sigma_{i}\in\Sigma$. This shows (c). (c$\Rightarrow$a) Pick any $x_{0}$ such
that the translate $x_{0}+U$ contains $0$. Now use the open $x_{0}+U$ in (c)
and then $\Sigma\subseteq\bigcup_{i=1}^{n}(\sigma_{i}+x_{0}+U)$ so that we may
take $x_{i}^{\prime}:=\sigma_{i}+x_{0}$ in (a).
\end{proof}

\begin{caution}
\label{caution_2}In Definitions \ref{def_CompactlyGeneratedTopologicalGroup}
and \ref{def_Precompact} we speak of generation \textquotedblleft\textit{as an
abstract group}\textquotedblright\ to prevent confusion with the concept of
topological generation. As a contrast: The circle $\mathbb{T}$ cannot be
generated by a single element. For any choice of $x\in\mathbb{T}$, the
subgroup $\left\langle x\right\rangle $ generated by $x$ is either finite
cyclic or a countable dense subgroup. A subset $\Sigma$ \emph{topologically
generates} a topological group $X$ if the closure $\overline{\left\langle
\Sigma\right\rangle }$ of the group algebraically generated by $\Sigma$ is the
entire group. With this definition, all tori $\mathbb{T}^{\kappa}$ with
$\kappa$ a cardinal $\leq\left\vert \mathbb{R}\right\vert $ (at most the
cardinality of the continuum) turn out to be topologically generated by a
single element because the orbit of a single element is just about enough to
be dense. One can also rephrase this concept as follows: For any non-empty
open $U\subset X$, we have $U+\left\langle \Sigma\right\rangle =X$. Although
widely used in other literature, in the present text we \textit{never} use the
concept of topological generators.
\end{caution}

\begin{example}
\label{example_2}Dense subgroups of compact groups are precompact: For any
open neighbourhood $U$ of $0$ in a compact group $C$, the translates $x+U$ are
open and non-empty, so $C=\bigcup_{c\in C}(c+U)$ is an open cover. Since $C$
is compact, finitely many translates suffice. Next, we shall show that for any
given dense subset $X\subseteq C$, finitely many translates $x+U$ with $x$
\emph{taken exclusively from} $X$ suffice. This is easy to see: For any $c\in
C$, the set $(c-U)\cap X$ must be non-empty since $c-U$ is open and $X$ dense
in $C$. Pick some $x_{c}$ in this intersection, so that $c\in x_{c}+U$.
Evidently, we now get $C=\bigcup_{c\in C}(x_{c}+U)\subseteq\bigcup_{x\in
X}(x+U)$. Since $C$ is compact, finitely many of these translates suffice.
Hence, if $X$ is a dense subgroup in $C$ and $V$ an open neighbourhood of $0$
in $X$, we can realize $V=U\cap X$ for some open $U$ around $0$ in $C$, and it
follows that finitely many translates $x+(U\cap X)$ with $x\in X$ cover all of
$X$. Hence, $X$ is precompact.
\end{example}

Using the structure theory for LCA groups, one can give a very different
characterization of the groups in $\mathsf{PGA}$:

\begin{proposition}
[{Comfort--Luk\'{a}cs \cite[Theorem 5.3]{MR2681374}}]%
\label{prop_ComfortLukacs}Let $X$ be a Hausdorff abelian group. Then $X$ is
locally precompact and precompactly generated if and only if $X$ is isomorphic
to a (not necessarily closed) subgroup of a connected locally compact abelian group.
\end{proposition}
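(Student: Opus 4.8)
The plan is to prove the two implications directly, using the structure theorem for LCA groups together with the completion‑based dictionary of \S\ref{sect_Completion} (Lemmas \ref{lemma_PrecompactAreThoseWithCompactClosureInCompletion}, \ref{lemma14}) and the structural Lemma \ref{lemma11}; this reproves the cited Comfort--Luk\'acs result in a way that fits the machinery already set up.

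For the ``if'' direction I would start from a subgroup $X$ of a connected LCA group $G$, with the subspace topology. Since $G$ is locally compact it is complete, so $cX$ may be identified with the closure $\overline{X}$ of $X$ inside $G$; in particular $cX$ is a closed subgroup of $G$, hence itself lies in $\mathsf{LCA}$. A connected LCA group has the form $G\simeq\mathbb{R}^{n}\oplus C$ with $C$ compact, so $G$ is group‑theoretically compactly generated by Lemma \ref{lemma11}(3), and therefore so is the closed subgroup $cX$ by Lemma \ref{lemma11}(1). Since $cX$ is locally compact, Lemma \ref{lemma_PrecompactAreThoseWithCompactClosureInCompletion}(3) gives that $X$ is locally precompact, and then Lemma \ref{lemma14} gives that $X$ is precompactly generated.

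For the ``only if'' direction, given $X$ locally precompact and precompactly generated, I would first pass to the completion: $cX$ is locally compact (Lemma \ref{lemma_PrecompactAreThoseWithCompactClosureInCompletion}(3)) and group‑theoretically compactly generated (Lemma \ref{lemma14}), so Lemma \ref{lemma11}(3) gives a topological isomorphism $cX\simeq\mathbb{R}^{n}\oplus\mathbb{Z}^{m}\oplus C$ with $C$ compact. It then suffices to embed this group topologically into a connected LCA group. I would use the closed embedding $\mathbb{Z}^{m}\hookrightarrow\mathbb{R}^{m}$, and embed $C$ into a product of circles by presenting the discrete group $C^{\vee}$ as a quotient of a free abelian group $\bigoplus_{i\in I}\mathbb{Z}$ and dualizing, using exactness of Pontryagin duality and $\left(\bigoplus_{i\in I}\mathbb{Z}\right)^{\vee}\cong\mathbb{T}^{I}$, to obtain a closed embedding $C\hookrightarrow\mathbb{T}^{I}$. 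Assembling these gives a topological embedding
\[
cX\;\simeq\;\mathbb{R}^{n}\oplus\mathbb{Z}^{m}\oplus C\;\hookrightarrow\;\mathbb{R}^{n+m}\oplus\mathbb{T}^{I}=:G,
\]
where $G$ is connected (a product of connected groups) and locally compact (the product of the locally compact $\mathbb{R}^{n+m}$ with the compact $\mathbb{T}^{I}$). Precomposing with the canonical embedding $X\hookrightarrow cX$, which is a topological embedding since $X$ carries the subspace topology of its completion, exhibits $X$ as a subgroup of the connected LCA group $G$.

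I do not expect a genuine obstacle: once the completion dictionary of \S\ref{sect_Completion} is available, everything reduces to the structure theorem for LCA groups. The only slightly non‑formal ingredient is the embedding of an arbitrary compact abelian group into a product of circles, which is a routine Pontryagin‑duality argument (present the discrete dual as a quotient of a free abelian group and dualize); one should also check that completion, subspace topology, and closure inside a complete ambient group are mutually compatible, but this is exactly what Lemma \ref{lemma_PrecompactAreThoseWithCompactClosureInCompletion} and the cited facts from \cite{MR4510389,MR2433295} provide.
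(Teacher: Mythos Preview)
Your proof is correct. Both directions are sound: in the ``if'' direction you correctly identify $cX$ with $\overline{X}\subseteq G$ (this is exactly the argument the paper gives via the universal property of completion), and then the chain Lemma~\ref{lemma11}(3)$\Rightarrow$(1) together with Lemmas~\ref{lemma_PrecompactAreThoseWithCompactClosureInCompletion}(3) and~\ref{lemma14} does the job. In the ``only if'' direction your explicit embedding $cX\simeq\mathbb{R}^{n}\oplus\mathbb{Z}^{m}\oplus C\hookrightarrow\mathbb{R}^{n+m}\oplus\mathbb{T}^{I}$ is the standard construction, and the target is indeed connected and locally compact.

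The paper takes a different route: it does not reprove Comfort--Luk\'acs but essentially cites \cite[Theorem~5.3]{MR2681374} as a black box, adding only the small observation that a subgroup of a connected LCA group is automatically locally precompact (since the cited theorem assumes this as a standing hypothesis). Your argument is a genuine direct proof using the completion dictionary already set up in \S\ref{sect_Completion}; it is more self-contained within the paper's framework, though it still relies on Comfort--Luk\'acs implicitly via Lemma~\ref{lemma14}. The paper's approach is shorter but less transparent about where the structural content lies; yours makes explicit that the whole statement follows from Lemma~\ref{lemma11} plus the completion lemmas.
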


\begin{proof}
The cited theorem is phrased a little different and requires $X$ to be locally
precompact from the outset, so we need to show that this condition is also
true in our formulation: If $X$ is locally precompact and precompactly
generated, use \cite[Theorem 5.3]{MR2681374} directly. For the converse:
Suppose $X$ is a subgroup of an LCA group $W$. Then $X$ is dense inside
$\overline{X}$, the closure taken with respect to $W$, and the group
$\overline{X}$ is also locally compact since it is a closed subgroup of an
LCA\ group (Lemma \ref{lemma_LCAPermanence}). It follows that $\overline{X}$
is complete \cite[Prop. 8.2.6]{MR4510389} in the sense of
\S \ref{sect_Completion}. The inclusion $i\colon X\rightarrow\overline{X}$
factors uniquely%
\[%
{
\begin{tikzcd}
	X & {\overline{X}} \\
	& cX
	\arrow["i", from=1-1, to=1-2]
	\arrow[from=1-1, to=2-2]
	\arrow["j"', dashed, from=2-2, to=1-2]
\end{tikzcd}
}%
\]
by the universal property of completion \cite[Prop. 7.1.18]{MR4510389} and $j$
must be an isomorphism since it is a closed embedding by \cite[Prop.
7.1.22]{MR4510389} and has dense image by construction. By Lemma
\ref{lemma_PrecompactAreThoseWithCompactClosureInCompletion} we deduce that
$X$ is locally precompact. Now we can again invoke the formulation as provided
in \cite[Theorem 5.3]{MR2681374}.
\end{proof}

\begin{example}
The group $\bigoplus_{I}\mathbb{Z}$ for some index set $I$ is precompactly
generated iff $I$ is finite: Any non-empty open $U\subseteq\mathbb{Z}$
contains at least one element, so the precompact subsets of $\mathbb{Z}$ are
the finite subsets, analogously for $\bigoplus_{I}\mathbb{Z}$. Thus, a
subgroup is precompactly generated if and only if it is finitely generated. If
it is, the embedding $\bigoplus_{I}\mathbb{Z}\subset\bigoplus_{I}\mathbb{R}$
realizes the group as a subgroup of a connected LCA group, as we already knew
abstractly to be possible by Prop. \ref{prop_ComfortLukacs}.
\end{example}

The following examples rely on the notation from Definition
\ref{def_SubspaceTopologyNotation}.

\begin{example}
The group $\mathbb{Z}_{\subseteq\mathbb{Z}_{p}}$ is precompactly generated:
The set $\mathbb{Z}$ is dense in $\mathbb{Z}_{p}$, and as $\mathbb{Z}_{p}$ is
compact, Example \ref{example_2} shows that $\mathbb{Z}_{\subseteq
\mathbb{Z}_{p}}$ is precompact, hence tautologically precompactly generated.
An embedding as described by Prop. \ref{prop_ComfortLukacs} can be produced as
follows: Pick a projective resolution $\mathbb{Z}^{\oplus I}\hookrightarrow
\mathbb{Z}^{\oplus J}\overset{q}{\twoheadrightarrow}\mathbb{Q}_{p}%
/\mathbb{Z}_{p}$ in $\mathsf{LCA}$ and take its Pontryagin dual. Then
$q^{\vee}\colon\mathbb{Z}_{p}\hookrightarrow\prod_{J}\mathbb{T}$ is a closed
immersion and the product of tori is compact and connected.
\end{example}

\begin{example}
The group $\mathbb{Q}_{\subseteq\mathbb{Q}_{p}}$ is not precompactly
generated. If $\Sigma$ is a precompact subset, $\overline{\Sigma}$ will be
compact in $\mathbb{Q}_{p}$ and by the same argument as in Example
\ref{example_f0}, this implies $\Sigma\subseteq\frac{1}{p^{n}}\mathbb{Z}$, so
$\left\langle \Sigma\right\rangle $ will be a proper subgroup of $\mathbb{Q}$.
\end{example}

\begin{acknowledgement}
The name of the category $\mathsf{Ghost}$ is inspired by Oren Ben-Bassat, who
spontaneously called it that during a video call. The idea to work with
complexes $\left[
{
\begin{tikzcd}
	{X'} && X
	\arrow["x"{inner sep=.8ex}, "\bullet"{marking}, from=1-1, to=1-3]
\end{tikzcd}
}%
\right]  $, where $X^{\prime}$ is discrete and the category $\mathsf{LPA}$ can
already found in unpublished work of G\'{a}bor Luk\'{a}cs from around 2006
\cite{gabor1,gabor2,gabor3,gabor4} and an article of Wolfgang Rump
\cite{MR4535276}, but not in the context of the left heart. Both instead were
interested in extensions of Pontryagin duality. We thank Marco Artusa, Oren
Ben-Bassat, Alexei Bondal, Sondre Kvamme, G\'{a}bor Luk\'{a}cs, Wolfgang Rump
and Sven-Ake Wegner for correspondence, Reid Barton and Johan Commelin for a
minicourse (actually two!) on Condensed Maths. We thank Matthias Flach and
G\'{a}bor Luk\'{a}cs for sharing unpublished notes with us, Javier
Trigos-Arrieta for helping us to obtain a copy of his PhD Thesis and related
correspondence, Norbert Hoffmann for spotting a mistake in an earlier
version.\medskip\newline O.B. thanks Sven-Ake Wegner and Marianne Lawson for
interesting discussions in Hamburg.
\end{acknowledgement}

\bibliographystyle{amsalpha}
\bibliography{bibf}

@inproceedings{gabor1,
  author = {Luk{\'{a}}cs, G.},
  title = {{T}alk: Duality theory of locally precompact groups},
  booktitle = {The 21st Summer Conference on Topology and its Applications, July 6-9, 2006},
  year = {2006},
  address = {Georgia Southern University, Statesboro, GA, USA},
}

@inproceedings{gabor2,
  author = {Luk{\'{a}}cs, G.},
  title = {{T}alk: Duality theory of locally precompact groups},
  booktitle = {Category Theory Octoberfest},
  year = {2006},
  address = {Ottawa, Ontario, Canada},
  note = {Presented October 21-22, 2006, at the University of Ottawa}
}

@inproceedings{gabor4,
  author = {Luk{\'{a}}cs, G. and Comfort, W. W.},
  title = {{T}alk: Locally precompact groups: Realcompactness and related notions},
	booktitle = {Canadian Mathematical Society Winter 2008 Meeting, Dynamics of Large Groups and Semigroups Session},
  year = {2008}
}

@inproceedings{gabor3,
  author = {Luk{\'{a}}cs, G.},
  title = {{T}alk: Locally precompact groups},
  booktitle = {The 22nd Summer Conference on Topology and its Applications},
  year = {2007},
  month = {July},
  address = {Castell{\'{o}}n, Spain},
  note = {Presented July 24-27, 2007, at Universidad Jaume I de Castell{\'{o}}n}
}

@misc{ren2025coherentsixfunctorformalismspro,
      title={Coherent six-functor formalisms: Pro vs Solid}, 
      author={Ren, F.},
      year={2025},
      eprint={2506.21082},
      archivePrefix={arXiv},
      primaryClass={math.AG},
      url={https://arxiv.org/abs/2506.21082}, 
}

@misc{braunling2025realvectorspacesregulators,
      title={Without real vector spaces all regulators are rational}, 
      author={Braunling, O.},
      year={2025},
      eprint={2510.10816},
      archivePrefix={arXiv},
      primaryClass={math.NT},
      url={https://arxiv.org/abs/2510.10816}, 
}

@inproceedings {MR1100517,
    AUTHOR = {Trigos-Arrieta, F. J.},
     TITLE = {Continuity, boundedness, connectedness and the {L}indel\"{o}f
              property for topological groups},
 BOOKTITLE = {Proceedings of the {C}onference on {L}ocales and {T}opological
              {G}roups ({C}ura\c{c}ao, 1989)},
   JOURNAL = {J. Pure Appl. Algebra},
  FJOURNAL = {Journal of Pure and Applied Algebra},
    VOLUME = {70},
      YEAR = {1991},
    NUMBER = {1-2},
     PAGES = {199--210},
      ISSN = {0022-4049},
   MRCLASS = {22B05 (54H11)},
  MRNUMBER = {1100517},
MRREVIEWER = {Michael Tka\v{c}enko},
       DOI = {10.1016/0022-4049(91)90018-W},
       URL = {https://doi.org/10.1016/0022-4049(91)90018-W},
}

@article{artusa2025dualitycondensedweiletalerealisation,
      title={Duality for the condensed {W}eil-\'etale realisation of $1$-motives over $p$-adic fields}, 
      author={Artusa, M.},
      year={2025},
      eprint={2502.19910},
      archivePrefix={arXiv},
			journal={url={https://arxiv.org/abs/2502.19910}},
      primaryClass={math.NT},
      url={https://arxiv.org/abs/2502.19910}, 
}

@article {MR2681374,
    AUTHOR = {Comfort, W. W. and Luk\'{a}cs, G.},
     TITLE = {Locally precompact groups: (local) realcompactness and
              connectedness},
   JOURNAL = {J. Lie Theory},
  FJOURNAL = {Journal of Lie Theory},
    VOLUME = {20},
      YEAR = {2010},
    NUMBER = {2},
     PAGES = {347--374},
      ISSN = {0949-5932},
   MRCLASS = {22A05 (22D05 54D05 54D60 54H11)},
  MRNUMBER = {2681374},
MRREVIEWER = {F. Javier Trigos-Arrieta},
}

@book {MR644485,
    AUTHOR = {Roelcke, W. and Dierolf, S.},
     TITLE = {Uniform structures on topological groups and their quotients},
    SERIES = {Advanced Book Program},
 PUBLISHER = {McGraw-Hill International Book Co., New York},
      YEAR = {1981},
     PAGES = {xi+276},
      ISBN = {0-07-0543412-8},
   MRCLASS = {22-02 (22-01 22A05 46Axx 54E15)},
  MRNUMBER = {644485},
MRREVIEWER = {W. W. Comfort},
}

@book {MR1368852,
    AUTHOR = {Arnautov, V. I. and Glavatsky, S. T. and Mikhalev, A. V.},
     TITLE = {Introduction to the theory of topological rings and modules},
    SERIES = {Monographs and Textbooks in Pure and Applied Mathematics},
    VOLUME = {197},
 PUBLISHER = {Marcel Dekker, Inc., New York},
      YEAR = {1996},
     PAGES = {vi+502},
      ISBN = {0-8247-9323-4},
   MRCLASS = {16W80},
  MRNUMBER = {1368852},
MRREVIEWER = {S. Warner},
}

@book {MR4337423,
    AUTHOR = {Achar, P.},
     TITLE = {Perverse sheaves and applications to representation theory},
    SERIES = {Mathematical Surveys and Monographs},
    VOLUME = {258},
 PUBLISHER = {American Mathematical Society, Providence, RI},
      YEAR = {2021},
     PAGES = {xii+562},
      ISBN = {978-1-4704-5597-2},
   MRCLASS = {32-02 (14F08 17B08 17B37 20G05 32S60)},
  MRNUMBER = {4337423},
MRREVIEWER = {Jun Hu},
       DOI = {10.1090/surv/258},
       URL = {https://doi.org/10.1090/surv/258},
}

@book {MR4510389,
    AUTHOR = {Au{\ss }enhofer, L. and Dikranjan, D. and Giordano Bruno,
              A.},
     TITLE = {Topological groups and the {P}ontryagin--van {K}ampen
              duality---an introduction},
    SERIES = {De Gruyter Studies in Mathematics},
    VOLUME = {83},
 PUBLISHER = {De Gruyter, Berlin},
      YEAR = {[2022] \copyright 2022},
     PAGES = {xiv+376},
      ISBN = {978-3-11-065349-6; 978-3-11-065355-7},
   MRCLASS = {22-02 (22D35)},
  MRNUMBER = {4510389},
MRREVIEWER = {Robert S. Doran},
}

@article {MR1856638,
    AUTHOR = {Rump, W.},
     TITLE = {Almost abelian categories},
   JOURNAL = {Cahiers Topologie G\'{e}om. Diff\'{e}rentielle Cat\'{e}g.},
  FJOURNAL = {Cahiers de Topologie et G\'{e}om\'{e}trie Diff\'{e}rentielle Cat\'{e}goriques},
    VOLUME = {42},
      YEAR = {2001},
    NUMBER = {3},
     PAGES = {163--225},
      ISSN = {0008-0004},
   MRCLASS = {18E10},
  MRNUMBER = {1856638},
MRREVIEWER = {A. Verschoren},
}

@book {MR2433295,
    AUTHOR = {{Arhangelskii}, A. and Tkachenko, M.},
     TITLE = {Topological groups and related structures},
    SERIES = {Atlantis Studies in Mathematics},
    VOLUME = {1},
 PUBLISHER = {Atlantis Press, Paris; World Scientific Publishing Co. Pte.
              Ltd., Hackensack, NJ},
      YEAR = {2008},
     PAGES = {xiv+781},
      ISBN = {978-90-78677-06-2},
   MRCLASS = {22Axx (22-02 54H11)},
  MRNUMBER = {2433295},
MRREVIEWER = {Jorge Galindo},
       DOI = {10.2991/978-94-91216-35-0},
       URL = {https://doi.org/10.2991/978-94-91216-35-0},
}

@book {MR1039321,
    AUTHOR = {Engelking, R.},
     TITLE = {{G}eneral {T}opology},
    SERIES = {Sigma Series in Pure Mathematics},
    VOLUME = {6},
   EDITION = {Second},
      NOTE = {Translated from the Polish by the author},
 PUBLISHER = {Heldermann Verlag, Berlin},
      YEAR = {1989},
     PAGES = {viii+529},
      ISBN = {3-88538-006-4},
   MRCLASS = {54-01 (54-02)},
  MRNUMBER = {1039321},
MRREVIEWER = {Gary Gruenhage},
}

@article {MR4535276,
    AUTHOR = {Rump, W.},
     TITLE = {The ample closure of the category of locally compact abelian
              groups},
   JOURNAL = {Cah. Topol. G\'{e}om. Diff\'{e}r. Cat\'{e}g.},
  FJOURNAL = {Cahiers de Topologie et G\'{e}om\'{e}trie Diff\'{e}rentielle Cat\'{e}goriques},
    VOLUME = {62},
      YEAR = {2021},
    NUMBER = {3},
     PAGES = {303--328},
      ISSN = {1245-530X},
   MRCLASS = {22B05 (18E05 43A40 43A95)},
  MRNUMBER = {4535276},
MRREVIEWER = {Nikolay Ivanovich Kryuchkov},
}

@article {hr,
    AUTHOR = {Henrard, R. and {van Roosmalen}, A.-C.},
     TITLE = {Localizations of one-sided exact categories},
   JOURNAL = {\texttt{arXiv:1903.10861}},
  FJOURNAL = {\texttt{arXiv:1903.10861}},
      YEAR = {2019},
}

@article {MR4340852,
    AUTHOR = {Tattar, A.},
     TITLE = {Torsion pairs and quasi-abelian categories},
   JOURNAL = {Algebr. Represent. Theory},
  FJOURNAL = {Algebras and Representation Theory},
    VOLUME = {24},
      YEAR = {2021},
    NUMBER = {6},
     PAGES = {1557--1581},
      ISSN = {1386-923X},
   MRCLASS = {18E05 (18E10)},
  MRNUMBER = {4340852},
MRREVIEWER = {Jorge Vit\'{o}ria},
       DOI = {10.1007/s10468-020-10004-y},
       URL = {https://doi.org/10.1007/s10468-020-10004-y},
}

@unpublished{condensedmath,
    AUTHOR = {Clausen, D. and Scholze, P.},
     TITLE = {{L}ectures on {C}ondensed {M}athematics (after {C}lausen--{S}cholze)},
      YEAR = {2019},
       JOURNAL = {\texttt{https://www.math.uni-bonn.de/people/scholze/Condensed.pdf}},
}

@book {MR2522659,
    AUTHOR = {Lurie, J.},
     TITLE = {Higher topos theory},
    SERIES = {Annals of Mathematics Studies},
    VOLUME = {170},
 PUBLISHER = {Princeton University Press, Princeton, NJ},
      YEAR = {2009},
     PAGES = {xviii+925},
      ISBN = {978-0-691-14049-0; 0-691-14049-9},
   MRCLASS = {18-02 (18B25 18E35 18G30 18G55 55U40)},
  MRNUMBER = {2522659},
MRREVIEWER = {Mark Hovey},
       DOI = {10.1515/9781400830558},
       URL = {https://doi.org/10.1515/9781400830558},
}

@book {MR637201,
    AUTHOR = {Armacost, D.},
     TITLE = {The structure of locally compact abelian groups},
    SERIES = {Monographs and Textbooks in Pure and Applied Mathematics},
    VOLUME = {68},
 PUBLISHER = {Marcel Dekker, Inc., New York},
      YEAR = {1981},
     PAGES = {vii+154},
      ISBN = {0-8247-1507-1},
   MRCLASS = {22B05},
  MRNUMBER = {637201},
MRREVIEWER = {M. I. Kabenyuk},
}

@book {MR0442141,
    AUTHOR = {Morris, S.},
     TITLE = {Pontryagin duality and the structure of locally compact
              abelian groups},
      NOTE = {London Mathematical Society Lecture Note Series, No. 29},
 PUBLISHER = {Cambridge University Press, Cambridge-New York-Melbourne},
      YEAR = {1977},
     PAGES = {viii+128},
   MRCLASS = {22B05 (43A40)},
  MRNUMBER = {0442141},
MRREVIEWER = {K. A. Ross},
}

@book {MR551496,
    AUTHOR = {Hewitt, E. and Ross, K.},
     TITLE = {Abstract harmonic analysis. {V}ol. {I}},
    SERIES = {Grundlehren der Mathematischen Wissenschaften [Fundamental
              Principles of Mathematical Sciences]},
    VOLUME = {115},
   EDITION = {Second},
      NOTE = {Structure of topological groups, integration theory, group
              representations},
 PUBLISHER = {Springer-Verlag, Berlin-New York},
      YEAR = {1979},
     PAGES = {ix+519},
      ISBN = {3-540-09434-2},
   MRCLASS = {43-02 (22-01)},
  MRNUMBER = {551496},
}

@article {MR0215016,
    AUTHOR = {Moskowitz, M.},
     TITLE = {Homological algebra in locally compact abelian groups},
   JOURNAL = {Trans. Amer. Math. Soc.},
  FJOURNAL = {Transactions of the American Mathematical Society},
    VOLUME = {127},
      YEAR = {1967},
     PAGES = {361--404},
      ISSN = {0002-9947},
   MRCLASS = {42.51 (18.00)},
  MRNUMBER = {0215016},
MRREVIEWER = {G. S. Rinehart},
}

@preamble{
   "\def\cprime{$'$} "
}

@book {MR0210125,
    AUTHOR = {Gabriel, P. and Zisman, M.},
     TITLE = {Calculus of fractions and homotopy theory},
    SERIES = {Ergebnisse der Mathematik und ihrer Grenzgebiete, Band 35},
 PUBLISHER = {Springer-Verlag New York, Inc., New York},
      YEAR = {1967},
     PAGES = {x+168},
   MRCLASS = {55.40 (18.00)},
  MRNUMBER = {0210125},
MRREVIEWER = {A. K. Bousfield},
}

@preamble{
   "\def\polhk#1{\setbox0=\hbox{#1}{\ooalign{\hidewidth
    \lower1.5ex\hbox{`}\hidewidth\crcr\unhbox0}}} "
}

@article {MR2329311,
    AUTHOR = {Hoffmann, N. and Spitzweck, M.},
     TITLE = {Homological algebra with locally compact abelian groups},
   JOURNAL = {Adv. Math.},
  FJOURNAL = {Advances in Mathematics},
    VOLUME = {212},
      YEAR = {2007},
    NUMBER = {2},
     PAGES = {504--524},
      ISSN = {0001-8708},
     CODEN = {ADMTA4},
   MRCLASS = {22B05 (18G10)},
  MRNUMBER = {2329311 (2009d:22006)},
       DOI = {10.1016/j.aim.2006.09.019},
       URL = {http://dx.doi.org/10.1016/j.aim.2006.09.019},
}

@incollection {MR3968523,
    AUTHOR = {Gabriyelyan, S.},
     TITLE = {Maximally almost periodic groups and respecting properties},
 BOOKTITLE = {{D}escriptive {T}opology and {F}unctional {A}nalysis. {II}},
    SERIES = {Springer Proc. Math. Stat.},
    VOLUME = {286},
     PAGES = {103--136},
 PUBLISHER = {Springer, Cham},
      YEAR = {2019},
   MRCLASS = {22A05 (54H11)},
  MRNUMBER = {3968523},
MRREVIEWER = {F. Javier Trigos-Arrieta},
       DOI = {10.1007/978-3-030-17376-0\_7},
       URL = {https://doi.org/10.1007/978-3-030-17376-0_7},
}

@article {MR1474192,
    AUTHOR = {L{\"u}ck, W.},
     TITLE = {Hilbert modules and modules over finite von {N}eumann algebras
              and applications to {$L^2$}-invariants},
   JOURNAL = {Math. Ann.},
  FJOURNAL = {Mathematische Annalen},
    VOLUME = {309},
      YEAR = {1997},
    NUMBER = {2},
     PAGES = {247--285},
      ISSN = {0025-5831},
   MRCLASS = {58G12 (46H25 46L10 46L85)},
  MRNUMBER = {1474192},
MRREVIEWER = {Patrick T. McDonald},
       DOI = {10.1007/s002080050112},
       URL = {https://doi.org/10.1007/s002080050112},
}

@article {MR1406667,
    AUTHOR = {Farber, M.},
     TITLE = {Homological algebra of {N}ovikov-{S}hubin invariants and
              {M}orse inequalities},
   JOURNAL = {Geom. Funct. Anal.},
  FJOURNAL = {Geometric and Functional Analysis},
    VOLUME = {6},
      YEAR = {1996},
    NUMBER = {4},
     PAGES = {628--665},
      ISSN = {1016-443X},
   MRCLASS = {58E05 (18E10 18G60 55N35 57R99 58G05)},
  MRNUMBER = {1406667},
MRREVIEWER = {Alan L. Carey},
       DOI = {10.1007/BF02247115},
       URL = {https://doi.org/10.1007/BF02247115},
}

@article{AHL25,
     author = {Eyssidieux, P.},
     title = {$L_2${-Constructible} {Cohomology} and $L_2${-De} {Rham} {Cohomology} for coherent $\mathcal{D}$-modules},
     journal = {Annales Henri Lebesgue},
     pages = {769--810},
     publisher = {\'ENS Rennes},
     volume = {8},
     year = {2025},
     doi = {10.5802/ahl.248},
     language = {en},
     url = {https://ahl.centre-mersenne.org/articles/10.5802/ahl.248/}
}

@article {MR3118627,
    AUTHOR = {Dingoyan, P.},
     TITLE = {Some mixed {H}odge structures on {$l^2$}-cohomology groups of
              coverings of {K}\"{a}hler manifolds},
   JOURNAL = {Math. Ann.},
  FJOURNAL = {Mathematische Annalen},
    VOLUME = {357},
      YEAR = {2013},
    NUMBER = {3},
     PAGES = {1119--1174},
      ISSN = {0025-5831},
   MRCLASS = {32L10 (14C30 32C35 58A14)},
  MRNUMBER = {3118627},
MRREVIEWER = {Christian Schnell},
       DOI = {10.1007/s00208-013-0909-2},
       URL = {https://doi.org/10.1007/s00208-013-0909-2},
}

@article {MR1779315,
    AUTHOR = {Schneiders, J.-P.},
     TITLE = {Quasi-abelian categories and sheaves},
   JOURNAL = {M\'em. Soc. Math. Fr. (N.S.)},
  FJOURNAL = {M\'emoires de la Soci\'et\'e Math\'ematique de France.
              Nouvelle S\'erie},
      YEAR = {1999},
    NUMBER = {76},
     PAGES = {vi+134},
      ISSN = {0249-633X},
   MRCLASS = {18G35 (18E30 18F20)},
  MRNUMBER = {1779315 (2001i:18023)},
MRREVIEWER = {R. H. Street},
}

@article {MR1327209,
    AUTHOR = {Happel, D. and Reiten, I. and Smal{\o} , S.},
     TITLE = {Tilting in abelian categories and quasitilted algebras},
   JOURNAL = {Mem. Amer. Math. Soc.},
  FJOURNAL = {Memoirs of the American Mathematical Society},
    VOLUME = {120},
      YEAR = {1996},
    NUMBER = {575},
     PAGES = {viii+ 88},
      ISSN = {0065-9266},
   MRCLASS = {16D90 (16G10 18E10 18E40)},
  MRNUMBER = {1327209},
MRREVIEWER = {Jeremy Rickard},
       DOI = {10.1090/memo/0575},
       URL = {https://doi.org/10.1090/memo/0575},
}

@article {MR4650627,
    AUTHOR = {Bodzenta, A. and Bondal, A.},
     TITLE = {Reconstruction of a surface from the category of reflexive
              sheaves},
   JOURNAL = {Adv. Math.},
  FJOURNAL = {Advances in Mathematics},
    VOLUME = {434},
      YEAR = {2023},
     PAGES = {Paper No. 109338, 38},
      ISSN = {0001-8708},
   MRCLASS = {14F06 (14E05 14F08 14J60 18E40 18F99)},
  MRNUMBER = {4650627},
MRREVIEWER = {Damian M. Maingi},
       DOI = {10.1016/j.aim.2023.109338},
       URL = {https://doi.org/10.1016/j.aim.2023.109338},
}

@article {MR1996800,
    AUTHOR = {Bondal, A. and van den Bergh, M.},
     TITLE = {Generators and representability of functors in commutative and
              noncommutative geometry},
   JOURNAL = {Mosc. Math. J.},
  FJOURNAL = {Moscow Mathematical Journal},
    VOLUME = {3},
      YEAR = {2003},
    NUMBER = {1},
     PAGES = {1--36, 258},
      ISSN = {1609-3321},
   MRCLASS = {18E30 (14F05)},
  MRNUMBER = {1996800},
MRREVIEWER = {Ioannis Emmanouil},
       DOI = {10.17323/1609-4514-2003-3-1-1-36},
       URL = {https://doi.org/10.17323/1609-4514-2003-3-1-1-36},
}

@article {MR1831580,
    AUTHOR = {Raczkowski, S. U. and Trigos-Arrieta, F. J.},
     TITLE = {Duality of totally bounded abelian groups},
   JOURNAL = {Bol. Soc. Mat. Mexicana (3)},
  FJOURNAL = {Sociedad Matem\'{a}tica Mexicana. Bolet\'{\i}n. Tercera Serie},
    VOLUME = {7},
      YEAR = {2001},
    NUMBER = {1},
     PAGES = {1--12},
      ISSN = {1405-213X},
   MRCLASS = {22A05 (22A10 43A40 54D65)},
  MRNUMBER = {1831580},
MRREVIEWER = {Shobha Madan},
}

@misc{artusa2025,
      title={{D}uality for the condensed {W}eil-\'etale realisation of $1$-motives over $p$-adic fields}, 
      author={Artusa,M.},
      year={2025},
      eprint={2502.19910},
      archivePrefix={arXiv},
      primaryClass={math.NT},
      url={https://arxiv.org/abs/2502.19910}, 
}

@article {MR4831262,
    AUTHOR = {Artusa, M.},
     TITLE = {Duality for condensed cohomology of the {W}eil group of a
              {$p$}-adic field},
   JOURNAL = {Doc. Math.},
  FJOURNAL = {Documenta Mathematica},
    VOLUME = {29},
      YEAR = {2024},
    NUMBER = {6},
     PAGES = {1381--1434},
      ISSN = {1431-0635},
   MRCLASS = {11S25 (11F85 11S31 14F20)},
  MRNUMBER = {4831262},
MRREVIEWER = {Soogil Seo},
       DOI = {10.4171/dm/977},
       URL = {https://doi.org/10.4171/dm/977},
}

@book{weil_uniforme,
language = {fre},
publisher = {Hermann},
series = {Actualit\'es scientifiques et industrielles 551},
title = {Sur les espaces \`a structure uniforme et sur la topologie g\'en\'erale},
author = {Weil, A.},
address = {Paris},
year = {1938},
}

@article {MR3626003,
    AUTHOR = {Ben-Bassat, O. and Kremnizer, K.},
     TITLE = {Non-{A}rchimedean analytic geometry as relative algebraic
              geometry},
   JOURNAL = {Ann. Fac. Sci. Toulouse Math. (6)},
  FJOURNAL = {Annales de la Facult\'{e} des Sciences de Toulouse. Math\'{e}matiques.
              S\'{e}rie 6},
    VOLUME = {26},
      YEAR = {2017},
    NUMBER = {1},
     PAGES = {49--126},
      ISSN = {0240-2963},
   MRCLASS = {14G22 (18D10 18E30 32P05)},
  MRNUMBER = {3626003},
MRREVIEWER = {Alessandra Bertapelle},
       DOI = {10.5802/afst.1526},
       URL = {https://doi.org/10.5802/afst.1526},
}

@article {MR4699875,
    AUTHOR = {Geisser, T. and Morin, B.},
     TITLE = {Pontryagin duality for varieties over {$p$}-adic fields},
   JOURNAL = {J. Inst. Math. Jussieu},
  FJOURNAL = {Journal of the Institute of Mathematics of Jussieu. JIMJ.
              Journal de l'Institut de Math\'{e}matiques de Jussieu},
    VOLUME = {23},
      YEAR = {2024},
    NUMBER = {1},
     PAGES = {425--462},
      ISSN = {1474-7480},
   MRCLASS = {14F42 (11G25)},
  MRNUMBER = {4699875},
       DOI = {10.1017/s1474748022000469},
       URL = {https://doi.org/10.1017/s1474748022000469},
}

@article {MR4711130,
    AUTHOR = {Geisser, T. and Morin, B.},
     TITLE = {On integral class field theory for varieties over {$p$}-adic
              fields},
   JOURNAL = {J. Number Theory},
  FJOURNAL = {Journal of Number Theory},
    VOLUME = {260},
      YEAR = {2024},
     PAGES = {41--70},
      ISSN = {0022-314X},
   MRCLASS = {14G45 (11G25 14F42 14G20)},
  MRNUMBER = {4711130},
MRREVIEWER = {Florence Lecomte},
       DOI = {10.1016/j.jnt.2024.01.006},
       URL = {https://doi.org/10.1016/j.jnt.2024.01.006},
}

@article {MR1326826,
    AUTHOR = {Comfort, W. W. and Trigos-Arrieta, F. J.},
     TITLE = {Locally pseudocompact topological groups},
   JOURNAL = {Topology Appl.},
  FJOURNAL = {Topology and its Applications},
    VOLUME = {62},
      YEAR = {1995},
    NUMBER = {3},
     PAGES = {263--280},
      ISSN = {0166-8641},
   MRCLASS = {22A10 (54H11)},
  MRNUMBER = {1326826},
MRREVIEWER = {Michael Tka\v{c}enko},
       DOI = {10.1016/0166-8641(94)00048-8},
       URL = {https://doi.org/10.1016/0166-8641(94)00048-8},
}

@article {MR163985,
    AUTHOR = {Varopoulos, N.},
     TITLE = {Studies in harmonic analysis},
   JOURNAL = {Proc. Cambridge Philos. Soc.},
  FJOURNAL = {Proceedings of the Cambridge Philosophical Society},
    VOLUME = {60},
      YEAR = {1964},
     PAGES = {465--516},
      ISSN = {0008-1981},
   MRCLASS = {22.20 (42.50)},
  MRNUMBER = {163985},
MRREVIEWER = {K. H. Hofmann},
       DOI = {10.1017/s030500410003797x},
       URL = {https://doi.org/10.1017/s030500410003797x},
}

@article {MR4441467,
    AUTHOR = {Cabello S\'{a}nchez, F. and Navarro Garmendia, J.},
     TITLE = {On the category of quotient {B}anach spaces after {W}egner},
   JOURNAL = {J. Pure Appl. Algebra},
  FJOURNAL = {Journal of Pure and Applied Algebra},
    VOLUME = {227},
      YEAR = {2023},
    NUMBER = {1},
     PAGES = {Paper No. 107153, 20},
      ISSN = {0022-4049},
   MRCLASS = {46M18 (18A20)},
  MRNUMBER = {4441467},
MRREVIEWER = {Anar Adigezal Dosi},
       DOI = {10.1016/j.jpaa.2022.107153},
       URL = {https://doi.org/10.1016/j.jpaa.2022.107153},
}

@article {MR1656223,
    AUTHOR = {Farber, M.},
     TITLE = {von {N}eumann categories and extended {$L^2$}-cohomology},
   JOURNAL = {$K$-Theory},
  FJOURNAL = {$K$-Theory. An Interdisciplinary Journal for the Development,
              Application, and Influence of $K$-Theory in the Mathematical
              Sciences},
    VOLUME = {15},
      YEAR = {1998},
    NUMBER = {4},
     PAGES = {347--405},
      ISSN = {0920-3036},
   MRCLASS = {58J22 (18G60 46L08 46L85 46M15)},
  MRNUMBER = {1656223},
MRREVIEWER = {Patrick T. McDonald},
       DOI = {10.1023/A:1007778529430},
       URL = {https://doi.org/10.1023/A:1007778529430},
}

@article {MR1776117,
    AUTHOR = {Eyssidieux, P.},
     TITLE = {Invariants de von {N}eumann des faisceaux analytiques
              coh\'{e}rents},
   JOURNAL = {Math. Ann.},
  FJOURNAL = {Mathematische Annalen},
    VOLUME = {317},
      YEAR = {2000},
    NUMBER = {3},
     PAGES = {527--566},
      ISSN = {0025-5831},
   MRCLASS = {32L10 (32C35 58J22)},
  MRNUMBER = {1776117},
       DOI = {10.1007/PL00004413},
       URL = {https://doi.org/10.1007/PL00004413},
}

@article {MR4779743,
    AUTHOR = {Lupini, M.},
     TITLE = {({L}ooking for) the heart of abelian {P}olish groups},
   JOURNAL = {Adv. Math.},
  FJOURNAL = {Advances in Mathematics},
    VOLUME = {453},
      YEAR = {2024},
     PAGES = {Paper No. 109865, 46},
      ISSN = {0001-8708},
   MRCLASS = {54H05 (18F60 18G10 20K45 26E30 46M15)},
  MRNUMBER = {4779743},
MRREVIEWER = {Ruzinazar Bebutovich Beshimov},
       DOI = {10.1016/j.aim.2024.109865},
       URL = {https://doi.org/10.1016/j.aim.2024.109865},
}

@article {MR4575371,
    AUTHOR = {Henrard, R. and Kvamme, S. and van Roosmalen,
              A.-C. and Wegner, S.},
     TITLE = {The left heart and exact hull of an additive regular category},
   JOURNAL = {Rev. Mat. Iberoam.},
  FJOURNAL = {Revista Matem\'{a}tica Iberoamericana},
    VOLUME = {39},
      YEAR = {2023},
    NUMBER = {2},
     PAGES = {439--494},
      ISSN = {0213-2230},
   MRCLASS = {18E20 (18G80 46M18)},
  MRNUMBER = {4575371},
MRREVIEWER = {Peder Thompson},
       DOI = {10.4171/rmi/1388},
       URL = {https://doi.org/10.4171/rmi/1388},
}

@article {MR4590331,
    AUTHOR = {Lawson, M. and Wegner, S.},
     TITLE = {The category of {S}ilva spaces is not integral},
   JOURNAL = {Homology Homotopy Appl.},
  FJOURNAL = {Homology, Homotopy and Applications},
    VOLUME = {25},
      YEAR = {2023},
    NUMBER = {1},
     PAGES = {367--374},
      ISSN = {1532-0073},
   MRCLASS = {18E05 (18G80 46A45 46M10)},
  MRNUMBER = {4590331},
       DOI = {10.4310/hha.2023.v25.n1.a19},
       URL = {https://doi.org/10.4310/hha.2023.v25.n1.a19},
}

@article {MR3655709,
    AUTHOR = {Wegner, S.-A.},
     TITLE = {The heart of the {B}anach spaces},
   JOURNAL = {J. Pure Appl. Algebra},
  FJOURNAL = {Journal of Pure and Applied Algebra},
    VOLUME = {221},
      YEAR = {2017},
    NUMBER = {11},
     PAGES = {2880--2909},
      ISSN = {0022-4049},
   MRCLASS = {46M15 (18A20 18E20 18E35 18G50)},
  MRNUMBER = {3655709},
MRREVIEWER = {R. Gylys},
       DOI = {10.1016/j.jpaa.2017.02.006},
       URL = {https://doi.org/10.1016/j.jpaa.2017.02.006},
}

@article {MR387476,
    AUTHOR = {Wigner, D.},
     TITLE = {Inverse limits and the completeness of quotient groups},
   JOURNAL = {Michigan Math. J.},
  FJOURNAL = {Michigan Mathematical Journal},
    VOLUME = {22},
      YEAR = {1975},
    NUMBER = {2},
     PAGES = {165--170},
      ISSN = {0026-2285},
   MRCLASS = {22A05 (18E25)},
  MRNUMBER = {387476},
MRREVIEWER = {K. H. Hofmann},
       URL = {http://projecteuclid.org/euclid.mmj/1029001474},
}

@article {MR2606234,
    AUTHOR = {B{\"u}hler, T.},
     TITLE = {Exact categories},
   JOURNAL = {Expo. Math.},
  FJOURNAL = {Expositiones Mathematicae},
    VOLUME = {28},
      YEAR = {2010},
    NUMBER = {1},
     PAGES = {1--69},
      ISSN = {0723-0869},
   MRCLASS = {18E10 (18-02 18E30)},
  MRNUMBER = {2606234 (2011e:18020)},
MRREVIEWER = {Sunil K. Chebolu},
       DOI = {10.1016/j.exmath.2009.04.004},
       URL = {http://dx.doi.org/10.1016/j.exmath.2009.04.004},
}

\end{document}